\setlist[itemize]{itemsep=0ex}
\setlist[enumerate]{itemsep=0ex}
\setlist[description]{itemsep=0ex}
\def\NAT@spacechar{~}
\crefname{lem}{Lemma}{Lemmas}
\crefname{thm}{Theorem}{Theorems}
\crefname{prop}{Proposition}{Propositions}
\crefname{obs}{Observation}{Observations}
\crefname{conj}{Conjecture}{Conjectures}
\crefname{claim}{Claim}{Claims}
\newtheorem{theorem}{Theorem}
\newtheorem{lemma}[theorem]{Lemma}
\newtheorem{proposition}[theorem]{Proposition}
\newtheorem{corollary}[theorem]{Corollary}
\newtheorem{claim}{Claim}[theorem]
\renewcommand\section{\@startsection {section}{1}{\z@}{-3ex \@plus -1ex \@minus -.2ex}{2ex \@plus.2ex}{\normalfont\large\bfseries}}
\renewcommand\subsection{\@startsection{subsection}{2}{\z@}{-2.5ex\@plus -1ex \@minus -.2ex}{1.5ex \@plus .2ex}{\normalfont\normalsize\bfseries}}
\renewcommand\subsubsection{\@startsection{subsubsection}{3}{\z@}{-2ex\@plus -1ex \@minus -.2ex}{1ex \@plus .2ex}{\normalfont\normalsize\bfseries}}
 \renewcommand\paragraph{\@startsection{paragraph}{4}{\z@}{1.5ex \@plus.5ex \@minus.2ex}{-1em}{\normalfont\normalsize\bfseries}}
\renewcommand\subparagraph{\@startsection{subparagraph}{5}{\parindent}  {1.5ex \@plus.5ex \@minus .2ex}  {-1em} {\normalfont\normalsize\bfseries}}
\def\X {{\mathcal X}}
\def\Se {{\mathcal S}}
\def\C {{\mathcal C}}
\def\V {{\mathcal V}}
\def\Q {{\mathcal{Q}}}
\def\L {{\mathcal L}}
\newcommand{\ngs}[2][]{N_{#1}^{\geq s}(#2)}
\newcommand{\nls}[2][]{N_{#1}^{< s}(#2)}
\newcommand{\ceil}[1]{\lceil{#1}\rceil}
\renewcommand{\geq}{\geqslant}
\renewcommand{\leq}{\leqslant}
\renewcommand{\thefootnote}{\fnsymbol{footnote}}
\newcommand{\defn}[1]{\textcolor{Maroon}{\emph{#1}}\index{#1}}
\newcommand{\mathdef}[2]{\textcolor{Maroon}{\emph{#1-#2}}\index{#2@#1-#2}}
\begin{document}

\title{\bf\Large\boldmath Clustered Graph Coloring\\ and Layered Treewidth\footnote{This material is based upon work supported by the National Science Foundation under Grant No.\ DMS-1664593, DMS-1929851, DMS-1954054 and DMS-2144042.}}
\author{%
Chun-Hung Liu\footnote{Department of Mathematics, Texas A\&M University, Texas, USA, \texttt{chliu@math.tamu.edu}. Partially supported by NSF grants under award No.\ DMS-1664593, DMS-1929851, DMS-1954054 and CAREER award DMS-2144042.}
\qquad David R. Wood\footnote{School of Mathematics, Monash University, Melbourne, Australia, \texttt{david.wood@monash.edu}. Research supported by the Australian Research Council.}
}
\date{}
\footnotetext[0]{First released on 22nd May 2019 as part of arXiv:1905.08969. Revised \today.}
\maketitle

\begin{abstract}
A graph coloring has bounded clustering if each monochromatic component has bounded size. This paper studies such a coloring, where the number of colors depends on an excluded complete bipartite subgraph. This is a much weaker assumption than previous works, where typically the number of colors depends on an excluded minor. This paper focuses on graph classes with bounded layered treewidth, which include planar graphs, graphs of bounded Euler genus, graphs embeddable on a fixed surface with a bounded number of crossings per edge, amongst other examples. Our main theorem says that for fixed integers $s,t,k$, every graph with layered treewidth at most $k$ and with no $K_{s,t}$ subgraph is $(s+2)$-colorable with bounded clustering. The $s=3$ case implies that every graph with a drawing on a fixed surface with a bounded number of crossings per edge is 5-colorable with bounded clustering. Our main theorem is also a critical component in two companion papers that study clustered coloring of graphs with no $K_{s,t}$ subgraph and excluding a fixed minor, odd minor or topological minor. 
\end{abstract}

\renewcommand{\thefootnote}{\arabic{footnote}}

\newpage\tableofcontents

\newpage
\section{Introduction}
\label{Introduction}

This paper considers graph colorings where the condition that adjacent vertices are assigned distinct colors is relaxed. Instead, we require that every monochromatic component has bounded size (for a given graph class). 
More formally, a \defn{coloring} of a graph $G$ is a function that assigns one color to each vertex of $G$. A \defn{monochromatic component} with respect to a coloring of $G$ is a connected component of a subgraph of $G$ induced by all the vertices assigned the same color. 
A coloring has \defn{clustering} $\eta$ if every monochromatic component has at most $\eta$ vertices. Our focus is on minimizing the number of colors, with small monochromatic components as a secondary goal. 
The \defn{clustered chromatic number} of a graph class $\mathcal{G}$ is the minimum integer $k$ such that for some integer $\eta$, every graph in $\mathcal{G}$ is $k$-colorable with clustering $\eta$. There have been several recent papers on this topic~\citep{NSSW19,vdHW18,HST03,ADOV03,CE19,Kawa08,KM07,LMST08,KO19,EJ14,EO16,DN17,LO17,HW19,MRW17}; see \citep{WoodSurvey} for a survey.

This paper is one of four companion papers \citep{LW2,LW3,LW4}. The unifying theme that distinguishes this work from previous contributions is that the number of colors is determined by an excluded subgraph. Excluding a subgraph is a much weaker assumption than excluding a minor, which is a typical assumption in previous results. In particular, we consider graphs with no $K_{s,t}$ subgraph plus various other structural properties, and prove that every such graph is colorable with bounded clustering, where the number of colors depends only on $s$. All the dependence on $t$ and the structural property in question is hidden in the clustering function. 

Note that the case $s=1$ is already interesting, since a graph has no $K_{1,t}$ subgraph if and only if it has maximum degree less than $t$. 
Many of our theorems generalize known results \citep{LO17,EJ14,ADOV03} for graphs of bounded maximum degree to the setting of an excluded $K_{s,t}$ subgraph. 

Moreover, for $s\geq 2$, no result with bounded clustering is possible for graphs with no $K_{s,t}$ subgraph, without making some extra assumption. 
In particular, for every graph $H$ that contains a cycle, and for all $k,\eta\in\mathbb{N}$, if $G$ is a graph with chromatic number greater than $k\eta$ and girth greater than $|V(H)|$ (which exists \citep{Erdos59}), then $G$ contains no $H$ subgraph and $G$ is not $k$-colorable with clustering $\eta$, for otherwise $G$ would be $k\eta$-colorable. 

\subsection{Main Results}

In this paper, the ``other structural property'' mentioned above is ``bounded layered treewidth''. First we explain what this means. A \defn{tree-decomposition} of a graph $G$ is a pair $(T,\mathcal{X} = (X_x:x\in V(T))$, where $T$ is a tree,  and for each node $x\in V(T)$,  $X_x$ is a non-empty subset of $V(G)$ called a \defn{bag}, such that for each vertex $v\in V(G)$, the set $\{x\in V(T):v\in X_x\}$ induces a non-empty (connected) subtree of $T$, and for each edge $vw\in E(G)$ there is a node $x\in V(T)$ such that $\{v,w\}\subseteq X_x$. 
The \defn{width} of a tree-decomposition $(T,\mathcal{X})$ is $\max\{|X_x|-1: x\in V(T)\}$. The \defn{treewidth} of a graph $G$ is the minimum width of a tree-decomposition of $G$. Treewidth is a key parameter in algorithmic and structural graph theory; see \citep{Reed03,Reed97,Bodlaender-TCS98,HW17} for surveys.

A \defn{layering} of a graph $G$ is an ordered partition $(V_1,\dots,V_n)$ of $V(G)$ into (possibly empty) sets such that for each edge $vw\in E(G)$ there exists $i\in[1,n-1]$ such that $\{v,w\}\subseteq V_i\cup V_{i+1}$. 
The \defn{layered treewidth} of a graph $G$ is the minimum nonnegative integer $\ell$ such that $G$ has a tree-decomposition $(T, \mathcal{X} = (X_x:x\in V(T)))$ and a layering $(V_1,\dots,V_n)$, such that $|X_x\cap V_i|\leq\ell$ for each bag $X_x$ and layer $V_i$. This says that the subgraph induced by each layer has bounded treewidth, and moreover, a single tree-decomposition of $G$ has bounded treewidth when restricted to each layer. In fact, these properties hold when considering a bounded sequence of consecutive layers.

Layered treewidth was independently introduced by \citet{DMW17} and \citet{Shahrokhi13}. \citet{DMW17} proved that every planar graph has layered treewidth at most 3;  more generally, that every graph with Euler genus\footnote{The \defn{Euler genus} of an orientable surface with $h$ handles is $2h$. The \defn{Euler genus} of  a non-orientable surface with $c$ cross-caps is $c$. The \defn{Euler genus} of a graph $G$ is the minimum Euler genus of a surface in which $G$ embeds (with no crossings).} at most $g$ has layered treewidth at most $2g+3$; and most generally, that a minor-closed class has bounded layered treewidth if and only if it excludes some apex graph as a minor. Layered treewidth is of interest beyond minor-closed classes, since as described in \cref{Crossings}, there are several natural graph classes that have bounded layered treewidth but contain arbitrarily large complete graph minors. 

Consider coloring a graph $G$ with layered treewidth $w$. Say $(V_1,\dots,V_n)$ is the corresponding layering. 
Then $G[ \bigcup_{i \text{ odd}} V_i]$ has treewidth at most $w-1$, and is thus properly $w$-colorable. Similarly, $G[ \bigcup_{i \text{ even}} V_i]$ is properly $w$-colorable. Thus $G$ is properly $2w$-colorable. This bound is best possible since $K_{2w}$ has layered treewidth $w$. 
In fact, for every integer $d$ and integer $w$, there is a graph $G$ with treewidth $2w-1$, such that every $(2w-1)$-coloring of $G$ has a vertex of monochromatic degree at least $d$, implying there is a monochromatic component with more than $d$ vertices \citep{WoodSurvey}. \citet{BDDEW19} observed that every graph with treewidth $k$ has layered treewidth at most $\ceil{\frac{k+1}{2}}$ (using two layers), so the aforementioned graph $G$ has layered treewidth at most $w$. This says that the clustered chromatic number of the class of graphs with layered treewidth $w$ equals $2w$, and indeed, every such graph is properly $2w$-colorable. 

On the other hand, we prove that for clustered coloring of graphs excluding a $K_{s,t}$ subgraph in addition to having bounded layered treewidth, only $s+2$ colors are needed (no matter how large the upper bound on layered treewidth). This is the main result of the paper. 

\begin{theorem} \label{ltwbasic}
For all $s,t,w\in\mathbb{N}$ there exists $\eta\in\mathbb{N}$ such that every graph with layered treewidth at most $w$ and with no $K_{s,t}$ subgraph is $(s+2)$-colorable with clustering $\eta$. 
\end{theorem}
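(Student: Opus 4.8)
The plan is to prove Theorem~\ref{ltwbasic} in two stages: an inductive ``peeling'' that reduces the parameter $s$ to the case $s=1$, together with a self-contained treatment of that base case.

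\emph{Base case ($s=1$).} Here the goal is that every graph of layered treewidth at most $w$ and maximum degree less than $t$ is $3$-colorable with clustering bounded by a polynomial in $t$ and $w$. I would fix the layering $(V_1,\dots,V_n)$ and group the layers into blocks of $b$ consecutive layers, for a large constant $b=b(t,w)$; then each block induces a subgraph of treewidth less than $bw$, with no edges between non-consecutive blocks. Colouring the blocks with three rotating two-colour palettes, $\{1,2\}$, $\{2,3\}$, $\{3,1\}$ in cyclic order, so that in every window of three consecutive blocks each colour is absent from one palette, forces every monochromatic component into two consecutive blocks and hence into a subgraph of treewidth less than $2bw$ and bounded maximum degree. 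One then colours each block by a clustered $2$-colouring of the (bounded-treewidth, bounded-degree) block and bounds the resulting component sizes by a further recursion: contract each bounded-size monochromatic piece inside a window and pass to the resulting minor, which again has bounded treewidth and bounded maximum degree, and repeat (or induct on $w$, using that a single layer has treewidth less than $w$ and hence layered treewidth less than $w$).

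\emph{Inductive step.} The crux is a peeling lemma: for every $r\ge 2$ there is a function such that if $G$ has layered treewidth at most $w$ and no $K_{r,p}$ subgraph, then $V(G)$ has a subset $S$ for which $G[S]$ has components of bounded size and $G-S$ has no $K_{r-1,p'}$ subgraph, with $p'$ depending only on $r$, $p$, $w$. Given this, Theorem~\ref{ltwbasic} follows by iterating from $r=s$ down to $r=2$: each iteration spends one fresh colour on the corresponding set $S$ --- a single monochromatic class of bounded size --- and after $s-1$ iterations the remaining graph has bounded maximum degree and layered treewidth at most $w$, so the base case colours it with $3$ further colours; in total $s+2$ colours with bounded clustering. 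To build $S$, one examines the $(r-1)$-subsets $A$ having at least $p'$ common neighbours. Since $G$ has no $K_{r,p}$ subgraph, adjoining any vertex to such an $A$ gives an $r$-set with fewer than $p$ common neighbours; and since $G$ has layered treewidth $w$, any such $A$ together with its common neighbourhood lies inside a bounded number of consecutive layers, in which $G$ has bounded treewidth (every bag there meets every layer in at most $w$ vertices). This local boundedness is what should let one delete, for each such $A$, a controlled portion of its neighbourhood, destroying all $K_{r-1,p'}$ subgraphs while keeping the components of the deleted set bounded.

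The main obstacle is this peeling lemma --- destroying all dense $(r-1)$-neighbourhoods at once while keeping the deleted set's components bounded. Bounded layered treewidth is indispensable: without some such hypothesis no bounded-clustering colouring exists at all, by the high-girth, high-chromatic-number graphs mentioned in the introduction, so the argument must use it quantitatively through the tree-decomposition restricted to a bounded band of layers. A secondary difficulty, already present in the base case, is that confining a monochromatic component to finitely many layers bounds only its treewidth, not its size; controlling the ``chaining'' of small monochromatic pieces across blocks --- by contracting those pieces and recursing on the resulting bounded-treewidth, bounded-degree minor --- is what actually bounds the clustering.
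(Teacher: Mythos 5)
Your base case is in essence the paper's Theorem~\ref{ltwdegree} and its outline is sound (the paper controls the chaining of small monochromatic pieces between consecutive blocks by adding auxiliary edges among the neighbours of each piece, via Lemma~\ref{enlarge}, rather than by contracting and recursing, but either device can be made to work). The fatal problem is the peeling lemma in your inductive step: it is false already for $r=2$. For $r=2$ it asserts that every $K_{2,p}$-free graph of bounded layered treewidth admits a set $S$ such that $G[S]$ has components of bounded size while $G-S$ has bounded maximum degree. Take the planar graph obtained from a path $v_1v_2\cdots v_n$ by attaching to each $v_i$ a fan, namely a path $p_{i,1}\cdots p_{i,D}$ every vertex of which is joined to $v_i$. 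No two vertices of this graph have three common neighbours, so it is $K_{2,3}$-free (hence $K_{2,t}$-free for all $t\geq 3$), and it is planar, so its layered treewidth is at most $3$. Suppose $\Delta(G-S)\leq d$ and every component of $G[S]$ has at most $\eta$ vertices, with $D>(d+1)\eta+d$ and $n>\eta$. If some $v_i\notin S$, then at least $D-d$ of the $D$ vertices of the path $p_{i,1}\cdots p_{i,D}$ lie in $S$; removing the at most $d$ missing vertices leaves at most $d+1$ subpaths, one of which has more than $\eta$ vertices, all in $S$ and connected in $G[S]$, a contradiction. Hence every $v_i\in S$, and the spine is then a connected subgraph of $G[S]$ on $n>\eta$ vertices, again a contradiction. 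So no such $S$ exists: a single colour class of bounded clustering cannot absorb all dense neighbourhoods at once, and the colour count $(s-1)+3=s+2$ cannot be realised by spending one colour per level of an induction on $s$.

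This is not a repairable detail but a structural obstruction, and indeed the paper does not induct on $s$ at all. Instead it runs one global construction: vertices are precoloured along a depth-first traversal of a tree-decomposition, each uncoloured vertex keeping a list of $s+1-j$ colours where $j$ is the number of distinctly coloured precoloured neighbours it has so far (the $\V$-standard pair conditions (L1)--(L3) and the notion of $(W,i)$-progress); only the vertices that acquire $s$ precoloured neighbours are forced, and Lemma~\ref{BoundedGrowth} bounds how many such vertices each extension creates, while the fences of Section~\ref{sec:fences} keep the number of precoloured vertices near any bag bounded. In other words, a high-degree vertex such as your $v_i$ is never ``destroyed''; it merely loses colours gradually, and the whole difficulty of the paper lies in isolating the resulting monochromatic components without the losses cascading --- exactly the chaining phenomenon that defeats the peeling lemma.
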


The number of colors in \cref{ltwbasic} is nearly optimal. At least $s+1$ colors are required since for all $s,\eta \in \mathbb{N}$, there exists a graph with treewidth at most $s$ (and hence with bounded layered treewidth) and with no $K_{s,s+2}$ subgraph such that no $s$-coloring has clustering $\eta$ \citep{WoodSurvey}. When $s=1$, $s+2=3$ colors are required, since the Hex Lemma~\citep{Gale79} says that every 2-coloring of the $n\times n$ planar triangular grid (which has layered treewidth 2, maximum degree 6, and thus contains no $K_{1,7}$ subgraph) contains a monochromatic path of length at least $n$.
Note that the $s=1$ case of \cref{ltwbasic} can be proved by a much simpler argument with much improved bounds on the clustering~\citep{LW4, DEMWW22}.

Further motivation for \cref{ltwbasic} is that it is a critical ingredient in the proofs of results in our companion papers~\citep{LW2,LW3} about clustered colorings of graphs excluding a minor, odd minor, or topological minor. These results can be viewed as clustered analogues of weakenings of Hadwiger's Conjecture, the Gerard--Seymour Conjecture, and Haj\'os' Conjecture. 
For example, in \cite{LW3} we prove that every graph with no $K_{t}$-minor is $t$-colorable with bounded clustering and every graph with no $K_t$-topological minor is $(4t-9)$-colorable with bounded clustering.
The former is the currently best result for $K_t$-minor free graphs, and it is known that the number of colors cannot be reduced to $t-2$; the latter is the currently best result for $K_t$-topological minor free graphs and the only known linear bound for the number of colors.
We refer readers to \cite{LW2,LW3} for other results.

In fact, for our applications in \cite{LW2,LW3}, we actually need the following stronger result  proved in \cref{AllowingApexVertices}  (where the $\xi=0$ case is \cref{ltwbasic}). 

\begin{theorem} 
\label{ltwmain}
For all $s,t,w\in\mathbb{N}$ and $\xi \in \mathbb{N}_0$, there exists $\eta\in\mathbb{N}$ such that if $G$ is a graph with no $K_{s,t}$ subgraph and $G-Z$ has layered treewidth at most $w$ for some $Z \subseteq V(G)$ with $\lvert Z \rvert \leq \xi$, then $G$ is $(s+2)$-colorable with clustering $\eta$.
\end{theorem}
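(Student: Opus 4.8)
The plan is to prove \cref{ltwmain} by induction on $\xi$, the case $\xi=0$ being exactly \cref{ltwbasic}. For the inductive step with $\xi\geq 1$, fix $z\in Z$, put $Z':=Z\setminus\{z\}$ and $N:=N_G(z)\setminus Z$. The key structural observation is that $G[N]$ is an induced subgraph of $G-Z$, so it has layered treewidth at most $w$, and that $G[N]$ has no $K_{s-1,t}$ subgraph: a $K_{s-1,t}$ in $G[N]$ together with $z$ (which is adjacent to all of $N$) would be a $K_{s,t}$ in $G$. Also note $G-z$ has no $K_{s,t}$ subgraph and $(G-z)-Z'=G-Z$ has layered treewidth $\leq w$ with $|Z'|=\xi-1$, so the inductive hypothesis applies to $G-z$.

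When $s=1$ this is handled directly: the hypothesis gives $\Delta(G)<t$, so $|N|<t$; take an $(s+2)$-colouring (i.e.\ $3$-colouring) of $G-z$ with clustering $\eta'$ from the inductive hypothesis and colour $z$ arbitrarily. Any monochromatic component of $G$ avoiding $z$ lies inside one of $G-z$, and the component of $z$ meets at most $\deg_G(z)<t+\xi$ monochromatic components of $G-z$, so has fewer than $1+(t+\xi)\eta'$ vertices. When $s\geq 2$, \cref{ltwbasic} applied with $s-1$ in place of $s$ gives an $(s+1)$-colouring of $G[N]$, using colours from $\{1,\dots,s+1\}$, with bounded clustering. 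The goal is then to produce an $(s+2)$-colouring of $G$ that uses colour $s+2$ on $z$ and never on $N$ (so the monochromatic component of $z$ is confined to $\{z\}\cup Z'$, hence has at most $\xi$ vertices together with bounded tails into $G-z$), agrees with the above colouring on $N$, and colours the rest of $G-z$ with bounded clustering.

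The main obstacle is precisely this last coordination. One cannot simply colour $G[N]$ and $G-z-N$ separately and paste the colourings along $N$: since a vertex of $N$ has unbounded degree into $G-z-N$, a single small monochromatic piece inside $N$ can be adjacent to unboundedly many small monochromatic components outside $N$, producing an arbitrarily large monochromatic component in the union. So instead of using \cref{ltwbasic} as a black box on $G-z-N$, the constraint ``avoid colour $s+2$ on $N$'' must be threaded through the layered-treewidth colouring construction of \cref{MainSection} itself (exploiting that $G[N]$ is $K_{s-1,t}$-free so that only $s+1$ colours are spent there); cleanly, one proves the corresponding strengthening of \cref{ltwbasic}/\cref{ltwmain} in which one colour is forbidden on a prescribed $K_{s-1,t}$-free-inducing subset, and carries that strengthened statement through the whole induction. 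This is where the real work lies; the reduction above then assembles the pieces, and when $s=1$ the strengthening is vacuous and the direct argument suffices.
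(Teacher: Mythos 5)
There is a genuine gap. Your reduction defers the entire difficulty to a ``strengthening of \cref{ltwbasic} in which one colour is forbidden on a prescribed $K_{s-1,t}$-free-inducing subset,'' which you neither formulate precisely nor prove; as you yourself say, ``this is where the real work lies.'' Worse, even granting such a strengthening, the induction on $\xi$ does not close: when you pass from $G$ to $G-z$ you must hand the recursive call the constraint ``avoid colour $s+2$ on $N_G(z)\setminus Z'$,'' and at the next level this constraint must be combined with ``avoid colour $s+2$ on $N_{G-z}(z')$'' for the next apex vertex $z'$. The union $N_G(z)\cup N_G(z')$ need not induce a $K_{s-1,t}$-free subgraph (a vertex of the union need only see one of $z,z'$), so the hypothesis of your proposed strengthening is not maintained down the induction; and you cannot instead give distinct apex vertices distinct reserved colours, since $\xi$ may exceed $s+2$. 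There is also a soft spot in the phrase ``bounded tails into $G-z$'': it works out only because the $s+2$-coloured neighbours of $z$ lie in $Z'$ and each drags in one clustering-$\eta'$ component, but this again presupposes the unproved colour-avoidance on $N_G(z)\setminus Z'$.

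For comparison, the paper (\cref{bounded layered tw}) does not induct on $\xi$ and never uses the $K_{s-1,t}$-freeness of $G[N_G(z)]$. It exploits the standard-pair/list framework already built into \cref{no apex}: all of $Z$ (together with $Y_1$) is precoloured at once, and then one takes $(N^{\geq s}(\cdot),i)$-progresses for $i=1,\dots,s+2$. Condition (L2) automatically deletes the colour of $z$ from the list of every neighbour of $z$ having fewer than $s$ precoloured neighbours, while the exceptional neighbours form a set of bounded size by \cref{BoundedGrowth} and are simply absorbed into the precoloured set at the next round; after $s+2$ rounds every monochromatic component meeting $Z$ is confined to a bounded set (\cref{bounded layered tw claim1}). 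The remaining issue, that $Z$ destroys the layering, is handled by splitting each $z\in Z$ into one copy per layer and applying \cref{no apex} to the resulting graph. If you want to salvage your route, the ``strengthening'' you need is essentially this list/standard-pair formulation rather than a global colour-forbidding condition on whole neighbourhoods.
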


Note that \cref{ltwmain} is stronger than \cref{ltwbasic}, so at least $s+1$ colors are required and the number of colors in \cref{ltwmain} is nearly optimal.
In addition, it is optimal when $s\leq 2$: Suppose that the theorem holds with $s=2$, $t=7$, $w=2$ and $\xi=1$, but with only 3 colors. Let $n\gg \eta^2$. Let $G$ be obtained from the $n\times n$ triangular grid by adding one dominant vertex $v$. Then $G$ contains no $K_{2,7}$ subgraph, and $G-v$ has layered treewidth 2. By assumption, $G$ is 3-colorable with clustering $\eta$. Say $v$ is blue. Since $v$ is dominant, at most $\eta$ rows or columns contain a blue vertex. The non-blue induced subgraph contains an $\eta\times\eta$ triangular grid (since $n\gg\eta^2$). This contradicts the Hex Lemma mentioned above. 

We remark that the class of graphs mentioned in \cref{ltwmain} is more general than the class of graphs with bounded layered treewidth. For example, the class of graphs that can be made planar (and hence bounded layered treewidth) by deleting one vertex contains graphs of arbitrarily large layered treewidth.

\subsection{Applications}
\label{Crossings}

We now give several examples of graph classes with bounded layered treewidth, for which \cref{ltwbasic} gives interesting results. 

\subsubsection*{\boldmath $(g,k)$-Planar Graphs}

A graph is \mathdef{$(g,k)$}{planar} if it can be drawn in a surface of Euler genus at most $g$ with at most $k$ crossings on each edge (assuming no three edges cross at a single point). 
Note that $(g,k)$-planar graphs can contain arbitrarily large complete graph minors, even in the $g=0$ and $k=1$ case \cite{DEW17}. 

\citet{DEW17} proved that every $(g,k)$-planar graph has layered treewidth at most $(4g + 6)(k + 1)$. Furthermore, \citet{OOW19} proved that every $(g,k)$-planar graph contains no $K_{3,t}$ subgraph for some $t=O(kg^2)$. Thus \cref{ltwbasic} with $s=3$ implies:

\begin{corollary}
\label{gkPlanar}
For all $g,k\in\mathbb{N}_0$, there exists $\eta\in\mathbb{N}$, such that every $(g,k)$-planar graph is 5-colorable with clustering $\eta$. 
\end{corollary}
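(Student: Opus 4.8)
The plan is to combine the two structural facts about $(g,k)$-planar graphs recalled above with \cref{ltwbasic} applied with $s=3$. Both ingredients are already in hand: \citet{DEW17} bound the layered treewidth of such graphs, and \citet{OOW19} exclude a fixed complete bipartite subgraph; \cref{ltwbasic} is precisely the engine that converts these two boundedness hypotheses into a $5$-coloring with bounded clustering.

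In detail, fix $g,k\in\mathbb{N}_0$. By \citet{DEW17}, every $(g,k)$-planar graph has layered treewidth at most $w:=(4g+6)(k+1)\geq 6$. By \citet{OOW19}, there is an integer $t=t(g,k)\geq 1$ such that no $(g,k)$-planar graph contains $K_{3,t}$ as a subgraph. Now apply \cref{ltwbasic} with $s=3$, with this value of $t$, and with this value of $w$: it yields an integer $\eta=\eta(g,k)$ such that every graph with layered treewidth at most $w$ and with no $K_{3,t}$ subgraph is $5$-colorable with clustering $\eta$. Since every $(g,k)$-planar graph satisfies both hypotheses, it is $5$-colorable with clustering $\eta$, as claimed.

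There is essentially no obstacle here: the corollary is an immediate consequence of \cref{ltwbasic} together with the two cited results, and all of the genuine content lies in the $s=3$ case of \cref{ltwbasic}. The only point worth a moment's thought is that the quoted bound $t=O(kg^2)$ of \citet{OOW19} degenerates when $g=0$ or $k=0$, so one should record separately that $(g,k)$-planar graphs still exclude some fixed $K_{3,t}$ in those cases --- for instance $K_{3,3}$ is non-planar, and more generally the Euler genus of $K_{3,t}$ grows with $t$, so $K_{3,t}$ cannot be drawn in a fixed surface, let alone with a bounded number of crossings per edge, once $t$ is sufficiently large.
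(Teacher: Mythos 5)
Your proposal is correct and follows exactly the paper's own route: bound the layered treewidth via \citet{DEW17}, exclude a $K_{3,t}$ subgraph via \citet{OOW19}, and apply \cref{ltwbasic} with $s=3$ to obtain $s+2=5$ colors. The extra remark about the degenerate cases $g=0$ or $k=0$ of the $O(kg^2)$ bound is a sensible (and harmless) piece of bookkeeping that the paper leaves implicit.
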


\cref{gkPlanar} highlights the utility of excluding a $K_{s,t}$ subgraph. 
It improves an earlier result of \citet{WoodSurvey} who proved that $(g,k)$-planar graphs are 12-colorable with clustering bounded by a function of $g$ and $k$.
It also generalizes a theorem of Esperet and Ochem~\cite{EO16} who proved the $k=0$ case, which says that every graph with bounded Euler genus is 5-colorable with bounded clustering. Note that \citet{DN17} recently proved that every graph with bounded Euler genus is 4-colorable (in fact, 4-choosable) with bounded clustering. It is open whether every $(g,k)$-planar graph is 4-colorable with bounded clustering.

\subsubsection*{Map Graphs}
\label{MapGraphs}

Map graphs are defined as follows. Start with a graph $G_0$ embedded in a surface of Euler genus $g$, with each face labelled a ``nation'' or a ``lake'', where each vertex of $G_0$ is incident with at most $d$ nations. Let $G$ be the graph whose vertices are the nations of $G_0$, where two vertices are adjacent in $G$ if the corresponding faces in $G_0$ share a vertex. Then $G$ is called a \mathdef{$(g,d)$}{map graph}. A $(0,d)$-map graph is called a \emph{(plane)} \mathdef{$d$}{map graph}; such graphs have been extensively studied \citep{FLS-SODA12,Chen07,DFHT05,CGP02,Chen01}. The $(g,3)$-map graphs are precisely the graphs of Euler genus at most $g$ (see \citep{CGP02,DEW17}). 
So $(g,d)$-map graphs provide a natural generalization of graphs embedded in a surface that allows for arbitrarily large cliques even in the $g=0$ case (since if a vertex of $G_0$ is incident with $d$ nations then $G$ contains $K_d$). 

\citet{DEW17} proved that every $(g,d)$-map graph has layered treewidth at most $(2g+3)(2d+1)$. They also proved that every $(g,d)$-map graph is $(g,O(d^2))$-planar.  Thus \cref{gkPlanar} implies:

\begin{corollary}
\label{MapGraph5Color}
For all $g,d\in\mathbb{N}$, there exists $\eta\in\mathbb{N}$, such that every $(g,d)$-map graph is 5-colorable with clustering $\eta$. 
\end{corollary}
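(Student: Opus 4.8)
The plan is to deduce \cref{MapGraph5Color} from \cref{gkPlanar} by invoking the structural relationship between map graphs and $(g,k)$-planar graphs. The key external input is the result of \citet{DEW17} already quoted above: every $(g,d)$-map graph is $(g,O(d^2))$-planar. So given $g,d\in\mathbb{N}$, the first step is to set $k:=k(g,d)=O(d^2)$ to be the constant from that result, so that the class of $(g,d)$-map graphs is contained in the class of $(g,k)$-planar graphs. Then I would apply \cref{gkPlanar} with parameters $g$ and $k$ to obtain an integer $\eta=\eta(g,k)$ such that every $(g,k)$-planar graph is $5$-colorable with clustering $\eta$. Since every $(g,d)$-map graph is $(g,k)$-planar, it follows that every $(g,d)$-map graph is $5$-colorable with clustering $\eta$, and this $\eta$ depends only on $g$ and $d$, as required.

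An alternative route, which avoids even citing \cref{gkPlanar} as a black box, is to argue directly from \cref{ltwbasic}. Here one combines two facts of \citet{DEW17}: every $(g,d)$-map graph has layered treewidth at most $(2g+3)(2d+1)$, and every $(g,d)$-map graph is $(g,O(d^2))$-planar; together with the bound of \citet{OOW19} that every $(g,k)$-planar graph contains no $K_{3,t}$ subgraph for some $t=O(kg^2)$. Composing these, every $(g,d)$-map graph has no $K_{3,t}$ subgraph for some $t=t(g,d)$ and has layered treewidth bounded in terms of $g$ and $d$. Then \cref{ltwbasic} with $s=3$ immediately yields $5$-colorability with clustering bounded in terms of $g$ and $d$.

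I do not expect any real obstacle: the entire content of the corollary is carried by the cited structural theorems, and the deduction is a one-line composition of \cref{gkPlanar} (equivalently \cref{ltwbasic} with $s=3$) with the Dujmović--Eppstein--Wood results. The only point requiring minor care is the bookkeeping of constants --- the crossing bound $k=O(d^2)$ (and, in the alternative route, the $K_{3,t}$-exclusion parameter $t=O(kg^2)$ and the layered treewidth bound) must be tracked as functions of $g$ and $d$ only, so that the resulting $\eta$ likewise depends only on $g$ and $d$. This is routine.
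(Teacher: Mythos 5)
Your proposal is correct and matches the paper's argument: the corollary is deduced from \cref{gkPlanar} via the Dujmović--Eppstein--Wood fact that every $(g,d)$-map graph is $(g,O(d^2))$-planar, and the paper likewise notes the alternative of applying \cref{ltwbasic} directly with $s=3$ (using a direct bound $t\leq 6d(g+1)$ on excluded $K_{3,t}$ subgraphs rather than composing through the $(g,k)$-planar bound, but this is immaterial). No gaps.
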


It is straightforward to prove that, in fact, if a $(g,d)$-map graph $G$ contains a $K_{3,t}$ subgraph, then $t\leq 6d(g+1)$. 
Thus \cref{MapGraph5Color} is also implied by \cref{ltwbasic} directly with $s=3$, and the obtained clustering bounds is slightly improved than the bound obtained by using \cref{gkPlanar}. 

It is open whether every $(g,d)$-map graph is 4-colorable with clustering bounded by a function of $g$ and $d$.

\subsection{Notation}

Let $G$ be a graph with vertex-set $V(G)$ and edge-set $E(G)$. 
For $v\in V(G)$, let $N_G(v):=\{w\in V(G): vw\in E(G)\}$ be the neighborhood of $v$, and let  $N_G[v] :=N_G(v) \cup\{v\}$. 
For $X\subseteq V(G)$, let $N_G(X):= \bigcup_{v\in X} (N_G(v) \setminus X)$ and $N_G[X]:= N_G(X)\cup X$.
Denote the subgraph of $G$ induced by $X$ by $G[X]$.
Let $\mathbb{N}:=\{1,2,\dots\}$ and $\mathbb{N}_0:=\{0,1,2,\dots\}$. 
For $m,n\in\mathbb{N}_0$, let $[m,n]:=\{m,m+1,\dots,n\}$ and $[n]:=[1,n]$.  
Denote the set of all positive real numbers by ${\mathbb R}^+$.
For a coloring $c$ of $G$, a \mathdef{$c$}{monochromatic component} is a monochromatic component with respect to $c$.

\subsection{Organization of the Paper}

We prove (a stronger form of) \cref{ltwbasic} in \cref{MainSection}, assuming the key lemma, \cref{no apex 0}. This lemma is the most complicated part of the paper and is proved in \cref{sec:mainlemma}. An overview of the structure of \cref{sec:mainlemma} is given at the start of that section.

\cref{sec:bddnbrhood,sec:fences} are preparations for proving \cref{no apex 0}. 
 \cref{sec:bddnbrhood} shows that for every subset $X$ of vertices of a graph with no $K_{s,t}$ subgraph, the number of vertices outside $X$ but adjacent to at least $s$ vertices in $X$ is bounded. This helps control the growth of monochromatic components each time we extend a precoloring.  \cref{sec:fences} introduces fences, parades and fans, which are notions defined on tree-decompositions that will be used in the proof of \cref{no apex 0}. These tools are inspired by Robertson and Seymour's separator theory for graphs of bounded treewidth. 

\cref{AllowingApexVertices} proves \cref{ltwmain} using the stronger form of \cref{ltwbasic} proved in \cref{MainSection}. We conclude by mentioning some open problems in \cref{sec:openproblems}.

\section{Bounded Neighborhoods}
\label{sec:bddnbrhood}

We now set out to prove \cref{ltwbasic}. Consider a graph $G$ with no $K_{s,t}$ subgraph for fixed $s$ and $t$. In the $s=1$ case, $G$ has bounded maximum degree, so if $X$ is a bounded-size set of vertices in $G$, then $N_G(X)$ also has bounded size. This fails for $s\geq 2$, since vertices in $X$ might have unbounded degree. To circumvent this issue we focus on those vertices with at least $s$ neighbors in $X$, and then show that there are a bounded number of such vertices. The following notation formalizes this simple but important idea. For a graph $G$, a set $X\subseteq V(G)$, and $s\in\mathbb{N}$, define
\begin{align*}
\ngs[G]{X} & := \{v \in V(G)-X:  \lvert N_G(v) \cap X \rvert \geq s\} \text{ and}\\
\nls[G]{X} & := \{v \in V(G)-X:  1 \leq \lvert N_G(v) \cap X \rvert < s\} .
\end{align*}
When the graph $G$ is clear from the context, we write $\ngs{X}$ instead of $\ngs[G]{X}$ and $\nls{X}$ instead of $\nls[G]{X}$. 

\begin{lemma} 
\label{BoundedGrowth}
For all $s,t\in\mathbb{N}$, there exists a function $f_{s,t}:\mathbb{N}_0\rightarrow\mathbb{N}_0$ such that for every graph $G$ with no $K_{s,t}$ subgraph, if $X\subseteq V(G)$ then $| \ngs{X} |  \leq f_{s,t}(|X|)$.
\end{lemma}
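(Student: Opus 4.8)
The plan is a direct pigeonhole (double-counting) argument; the structural hypothesis is used only through the absence of a $K_{s,t}$ subgraph, and in fact one gets an explicit polynomial bound. I would set
\[
f_{s,t}(n) := (t-1)\binom{n}{s},
\]
with the convention $\binom{n}{s}=0$ when $n<s$, and claim this suffices.

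Fix a graph $G$ with no $K_{s,t}$ subgraph and a set $X\subseteq V(G)$, and write $Y:=\ngs{X}$. For each $v\in Y$ we have $\lvert N_G(v)\cap X\rvert\geq s$, so we may choose an $s$-element subset $S_v\subseteq N_G(v)\cap X$. This defines a map $v\mapsto S_v$ from $Y$ into the family of $s$-subsets of $X$, of which there are exactly $\binom{\lvert X\rvert}{s}$.

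Suppose for contradiction that $\lvert Y\rvert > (t-1)\binom{\lvert X\rvert}{s}$. By pigeonhole there is a single $s$-set $S\subseteq X$ with $\lvert\{v\in Y: S_v=S\}\rvert\geq t$; choose distinct $v_1,\dots,v_t\in Y$ with $S_{v_i}=S$ for all $i$. Since $S\subseteq X$ and $v_1,\dots,v_t\in V(G)-X$, the sets $S$ and $\{v_1,\dots,v_t\}$ are disjoint, and each $v_i$ is adjacent to every vertex of $S_{v_i}=S$. Hence $G[S\cup\{v_1,\dots,v_t\}]$ contains all $st$ edges between $S$ and $\{v_1,\dots,v_t\}$, i.e.\ a $K_{s,t}$ subgraph, contradicting the hypothesis. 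Therefore $\lvert Y\rvert\leq (t-1)\binom{\lvert X\rvert}{s}=f_{s,t}(\lvert X\rvert)$, as required.

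There is no real obstacle here; the only points needing care are the degenerate case $\lvert X\rvert<s$ (where $\ngs{X}=\emptyset$ and the bound is trivial, consistent with $\binom{n}{s}=0$), and the disjointness of the two sides of the putative $K_{s,t}$, which is automatic since one side lies in $X$ and the other in $V(G)-X$. Equivalently, one can phrase the argument as: in the incidence structure recording which $s$-subsets of $X$ each $v\in Y$ dominates, no $s$-subset is dominated by $t$ or more vertices of $Y$, which immediately bounds $\lvert Y\rvert$ by $(t-1)\binom{\lvert X\rvert}{s}$.
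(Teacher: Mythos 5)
Your proof is correct and is essentially identical to the paper's: the same choice of function $f_{s,t}(n)=(t-1)\binom{n}{s}$, the same assignment of an $s$-subset $S_v\subseteq N_G(v)\cap X$ to each $v\in\ngs{X}$, and the same pigeonhole step producing a $K_{s,t}$. No issues.
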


\begin{proof}
Define $f_{s,t}(x)={x \choose s}(t-1)$ for every $x\in\mathbb{N}_0$.
For every $y \in N^{\geq s}(X)$, let $Z_y$ be a subset of $N_G(y) \cap X$ with size $s$.
Since there are exactly ${\lvert X \rvert} \choose s$ subsets of $X$ with size $s$, if $\lvert N^{\geq s}(X) \rvert > f_{s,t}(\lvert X \rvert)$, then there exists a subset $Y$ of $N^{\geq s}(X)$ with size $t$ such that $Z_y$ is identical for all $y \in Y$, implying $G[Y \cup \bigcup_{y \in Y}Z_y]$ contains a $K_{s,t}$ subgraph, which is a contradiction. Thus $| N^{\geq s}(X) | \leq f_{s,t}(|X|)$. 
\end{proof}

The function $f$ in \cref{BoundedGrowth} can be improved if we know more about the graph $G$, which helps to get better upper bounds on the clustering. 
A \mathdef{$1$}{subdivision} of a graph $H$ is the graph obtained from $H$ by subdividing every edge exactly once. For a graph $G$, let $\nabla(G)$ be the maximum average degree of a graph $H$ for which the 1-subdivision of $H$ is a subgraph of $G$. The following result is implicit in \cite{OOW19}. We include the proof for
completeness.

\begin{lemma} \label{nabla}
For all $s,t\in\mathbb{N}$ and positive $\nabla \in \mathbb{R}^+$ there is a number $c:=\max\{t-1,\frac{\nabla}{2} + (t-1) \binom{\nabla}{s-1}\}$, such that for every graph $G$ with no $K_{s,t}$-subgraph and with $\nabla(G)\leq\nabla$, if $X\subseteq V(G)$ then $| N^{\geq s}(X) |  \leq c \lvert X \rvert$.
\end{lemma}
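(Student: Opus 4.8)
The plan is to deal first with the trivial case $s=1$: there, having no $K_{1,t}$ subgraph is the same as having maximum degree at most $t-1$, so $\ngs{X}=N_G(X)$ has at most $(t-1)\lvert X\rvert\leq c\lvert X\rvert$ vertices. So assume $s\geq2$, write $Y:=\ngs{X}$, and for each $y\in Y$ fix an $s$-element set $Z_y\subseteq N_G(y)\cap X$. The goal is to show $\lvert Y\rvert\leq\bigl(\tfrac{\nabla}{2}+(t-1)\binom{\nabla}{s-1}\bigr)\lvert X\rvert$.

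The key construction is an auxiliary graph $H$ whose vertex set is a subset of $X$ (with no isolated vertices), built by a greedy sweep over $Y$: process the vertices $y\in Y$ one at a time; when processing $y$, if some pair of vertices of $Z_y$ is currently a non-edge of $H$, add one such pair as an edge of $H$ and call $y$ the \emph{witness} of that edge; otherwise (all $\binom{s}{2}$ pairs inside $Z_y$ are already edges of $H$) place $y$ in a set $R$. Because $s\geq2$ this is well-defined, distinct edges of $H$ receive distinct witnesses, and each witness $y$ is adjacent in $G$ to both endpoints of its edge (they lie in $Z_y\subseteq N_G(y)$). Hence the $1$-subdivision of $H$ — and, restricting this embedding, of every subgraph of $H$ — is a subgraph of $G$. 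Therefore every subgraph of $H$ has average degree at most $\nabla(G)\leq\nabla$, which yields $\lvert E(H)\rvert\leq\tfrac{\nabla}{2}\lvert V(H)\rvert\leq\tfrac{\nabla}{2}\lvert X\rvert$ and also that $H$ is $\floor{\nabla}$-degenerate. The number of witnesses is exactly $\lvert E(H)\rvert\leq\tfrac{\nabla}{2}\lvert X\rvert$.

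It remains to bound $\lvert R\rvert$. For each $y\in R$, the set $Z_y$ induces a clique of size $s$ in the final graph $H$. Fix an ordering of $V(H)$ witnessing $\floor{\nabla}$-degeneracy, so every vertex has at most $\floor{\nabla}$ neighbors later in the order. To $y\in R$ associate the pair $(v,\,Z_y\setminus\{v\})$, where $v$ is the earliest vertex of $Z_y$; then $Z_y\setminus\{v\}$ is an $(s-1)$-subset of the at most $\floor{\nabla}$ later neighbors of $v$, so there are at most $\lvert X\rvert\binom{\nabla}{s-1}$ possible pairs. Any two vertices of $R$ assigned the same pair have the same set $Z_y$, and at most $t-1$ vertices of $Y$ can share a common $Z_y$ (otherwise $G$ contains $K_{s,t}$). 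Hence $\lvert R\rvert\leq(t-1)\binom{\nabla}{s-1}\lvert X\rvert$, and adding the witness count gives $\lvert Y\rvert\leq\bigl(\tfrac{\nabla}{2}+(t-1)\binom{\nabla}{s-1}\bigr)\lvert X\rvert\leq c\lvert X\rvert$, as required.

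The step I expect to be the crux is getting the auxiliary graph right: one cannot simply turn the bipartite graph between $X$ and $Y$ into a $1$-subdivision, since there may be far more ``useful'' pairs than vertices of $Y$, so the edge-by-edge greedy assignment of witnesses is essential — and then the leftover set $R$ consists exactly of the ``redundant'' vertices, whose number is controlled not by a global density bound but by combining $\floor{\nabla}$-degeneracy with $K_{s,t}$-freeness. A minor point to handle is reading $\binom{\nabla}{s-1}$ for real $\nabla$: if $\nabla<s-1$ then $H$ has no $s$-clique, so $R=\emptyset$ and the estimate is trivial; otherwise $\binom{\floor{\nabla}}{s-1}\leq\binom{\nabla}{s-1}$.
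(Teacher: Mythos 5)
Your proof is correct and follows essentially the same route as the paper's: your greedy witness assignment is exactly a maximal matching in the auxiliary bipartite graph between $N^{\geq s}(X)$ and pairs of $X$, giving $\lvert E(H)\rvert\leq\frac{\nabla}{2}\lvert X\rvert$ via the $1$-subdivision, with the leftover vertices counted through $s$-cliques of a $\nabla$-degenerate graph combined with $K_{s,t}$-freeness. The only (immaterial) difference is that you restrict to the fixed sets $Z_y$ rather than all of $N_G(y)\cap X$.
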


\begin{proof}
In the case $s=1$, the proof of \cref{BoundedGrowth} implies this lemma since $c \geq t-1$. 
Now assume that $s\geq 2$. 
Let $H$ be the bipartite graph with bipartition $\{N^{\geq s}_G(X),\binom{X}{2}\}$, where $v\in N^{\geq s}_G(X)$ is adjacent in $H$ to $\{x,y\}\in\binom{X}{2}$ whenever $x,y\in N_G(v)\cap X$. 
Let $M$ be a maximal matching in $H$. 
Let $Q$ be the graph with vertex-set $X$, where $xy\in E(Q)$ whenever $\{v,\{x,y\}\}\in M$ for some vertex $v\in N^{\geq s}_G(X)$. 
Thus, the 1-subdivision of every subgraph of $Q$ is a subgraph of $G$. 
Hence $|M| =  |E(Q)| \leq\frac{\nabla}{2}|V(Q)| = \frac{\nabla}{2} |X|$. 
Moreover, $Q$ is $\nabla$-degenerate, implying $Q$ contains at most $\binom{\nabla}{s-1}|X|$ cliques of size exactly $s$. 
Exactly $|M|$ vertices in $N^{\geq s}_G(X)$ are incident with an edge in $M$. 
For each vertex $v\in N^{\geq s}_G(X)$ not incident with an edge in $M$, by maximality, $N_G(v)\cap X$ is a clique in $Q$ of size at least $s$.  
Define a mapping from each vertex $v \in N^{\geq s}_G(X)$ to a clique of size exactly $s$ in $Q[N_G(v)\cap X]$.  Since $G$ has no $K_{s,t}$ subgraph, at most $t-1$ vertices $v\in N^{\geq s}_G(X)$ are mapped to each fixed $s$-clique in $Q$. Hence $|N^{\geq s}_G(X)| \leq \frac{\nabla}{2}|X| + (t-1) \binom{\nabla}{s-1}|X| \leq c|X|$. 
\end{proof}

\cref{nabla} is applicable in many instances. 
In particular, every graph $G$ with treewidth $k$ has $\nabla(G)\leq 2k$ (since if a 1-subdivision of some graph $G'$ is a subgraph of $G$, then $G'$ has treewidth at most $k$, and every graph with treewidth at most $k$ has average degree less than $2k$). 
By an analogous argument, using bounds on the average degree independently due to \citet{Thomason84} and \citet{Kostochka82}, every $H$-minor-free graph $G$ has $\nabla(G)\leq O(|V(H)| \sqrt{\log |V(H)|})$. Similarly, using bounds on the average degree independently due to \citet{KS96} and \citet{BT98}, every $H$-topological-minor-free graph $G$ has $\nabla(G)\leq O(|V(H)|^2)$. Finally, \citet[Lemmas 8,9]{DMW17} proved that $\nabla(G) \leq 5w$ for every graph with layered treewidth $w$. In all these cases, \cref{nabla} implies that the function $f$ in \cref{BoundedGrowth} can be made linear.

\begin{corollary} \label{BoundedGrowthLtw}
For all $s,t,w\in\mathbb{N}$, let $f_{s,t,w}:\mathbb{N}_0\rightarrow \mathbb{Q}$ be the function defined by $f_{s,t,w}(x) :=(\frac{5w}{2} + (t-1) \binom{5w}{s-1})x$ for every $x \in \mathbb{N}_0$. 
Then for every graph $G$ with no $K_{s,t}$-subgraph and with layered treewidth at most $w$, if $X$ is a subset of $V(G)$, then $\lvert N^{\geq s}(X) \rvert \leq f_{s,t,w}(\lvert X \rvert)$
\end{corollary}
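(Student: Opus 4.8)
This corollary is an immediate consequence of the two lemmas preceding it, so the proof proposal is short. The plan is to apply \cref{nabla} with the quantity $\nabla$ instantiated using the known bound on the ``greatest reduced average degree'' of graphs with bounded layered treewidth. Specifically, \citet[Lemmas 8,9]{DMW17} (as recalled in the discussion immediately before the corollary) give $\nabla(G) \leq 5w$ whenever $G$ has layered treewidth at most $w$. Thus every graph $G$ with no $K_{s,t}$-subgraph and layered treewidth at most $w$ satisfies $\nabla(G) \leq 5w$, and is therefore eligible for \cref{nabla} with the choice $\nabla := 5w$.

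First I would record that $\nabla(G)\le 5w$, citing the DMW bound. Then, plugging $\nabla=5w$ into the constant $c=\max\{t-1,\frac{\nabla}{2}+(t-1)\binom{\nabla}{s-1}\}$ from \cref{nabla} gives $c=\max\{t-1,\frac{5w}{2}+(t-1)\binom{5w}{s-1}\}$. Since $w,t\ge 1$, the term $\frac{5w}{2}+(t-1)\binom{5w}{s-1}$ is at least $\frac{5}{2}>t-1$ when $t=1$, and in general dominates (or equals, when the binomial term is absent) $t-1$; in any case $c \le \frac{5w}{2}+(t-1)\binom{5w}{s-1} = f_{s,t,w}(1)$, so \cref{nabla} yields $|N^{\ge s}(X)| \le c|X| \le f_{s,t,w}(|X|)$ for every $X\subseteq V(G)$. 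That is the whole argument.

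There is essentially no obstacle here: the only minor point to check is that the stated linear function $f_{s,t,w}(x) = (\frac{5w}{2}+(t-1)\binom{5w}{s-1})x$ is indeed an upper bound on $c\,x$, which follows from $c \le \frac{5w}{2}+(t-1)\binom{5w}{s-1}$ as noted above (one should just make sure the $\max$ with $t-1$ does not cause trouble, but it never does since the second argument of the $\max$ is at least $\tfrac{5w}{2}\ge \tfrac52$ and the coefficient $(t-1)\binom{5w}{s-1}$ already accounts for the remaining slack). Hence the corollary follows directly, and the proof can simply read: ``By \citep[Lemmas 8,9]{DMW17}, $\nabla(G)\le 5w$; now apply \cref{nabla} with $\nabla=5w$ and observe $c \le f_{s,t,w}(1)$.''
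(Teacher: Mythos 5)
Your proof is correct and is exactly the paper's proof: cite \citep[Lemmas 8,9]{DMW17} for $\nabla(G)\leq 5w$ and then apply \cref{nabla} with $\nabla=5w$. One small caveat: your assertion that the second argument of the $\max$ always dominates $t-1$ fails in the degenerate regime $s-1>5w$ (where $\binom{5w}{s-1}=0$ and $\frac{5w}{2}$ can be smaller than $t-1$), but there the proof of \cref{nabla} for $s\geq 2$ never invokes the $t-1$ branch of the $\max$, so the corollary still holds --- the paper's own one-line proof glosses over the same point.
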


\begin{proof}
\citet[Lemmas 8,9]{DMW17} showed that $\nabla(G) \leq 5w$ for every graph with layered treewidth $w$. 
So this result immediately follows from \cref{nabla}.
\end{proof}

\section{Fences in a Tree-decomposition}
\label{sec:fences}

This section introduces the notion of a fence, which is used in the proof of \cref{no apex 0}. The main result is \cref{fence lemma}. We start with a variant of a well-known result about separators in graphs of bounded treewidth.

\begin{lemma} \label{one node sep}
Let $w \in \mathbb{N}_0$. 
Let $G$ be a graph and $(T,\X)$ a tree-decomposition of $G$ of width at most $w$, 
where $\X=(X_t: t \in V(T))$.
If $Q$ is a subset of $V(G)$ with $\lvert Q \rvert \geq 12w+13$, then there exists $t^* \in V(T)$ such that for every component $T'$ of $T-t^*$, $\lvert (Q \cap (\bigcup_{t \in V(T')}X_t)) \cup X_{t^*} \rvert < \frac{2}{3}\lvert Q \rvert$.
\end{lemma}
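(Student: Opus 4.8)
This is a weighted-separator statement for tree-decompositions, and the natural approach is the classical ``centroid'' argument adapted to the weight function given by $Q$. The plan is as follows. For each node $t\in V(T)$ and each component $T'$ of $T-t$, consider the quantity $w(t,T') := \lvert Q\cap\bigcup_{x\in V(T')}X_x\rvert$ (the amount of $Q$ strictly ``beyond'' $t$ in the direction of $T'$, ignoring the bag $X_t$ itself). Say $t$ is \emph{heavy in the direction of $T'$} if $w(t,T') > \tfrac{2}{3}\lvert Q\rvert$ — equivalently, if more than $\tfrac23\lvert Q\rvert$ vertices of $Q$ appear in bags of $T'$ but in no bag outside $T'\cup\{t\}$ (using the subtree property of the decomposition, this is the same as $Q$ restricted to the ``side'' of the separator $X_t$). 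A standard orientation/parity argument shows that for each node $t$ there is at most one such heavy direction, and if we orient each edge $tt'$ of $T$ toward the side that is heavy (when one exists), then following these orientations cannot cycle, so we reach a sink $t^*$: a node with no heavy direction. That is the separator node we want for the ``strict $<\tfrac23\lvert Q\rvert$ without the bag'' part.

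The only subtlety is that the Lemma asks for the stronger conclusion $\lvert (Q\cap\bigcup_{x\in V(T')}X_x)\cup X_{t^*}\rvert < \tfrac23\lvert Q\rvert$, i.e.\ we must also absorb the whole bag $X_{t^*}$, which has up to $w+1$ vertices, and still stay below $\tfrac23\lvert Q\rvert$. This is where the hypothesis $\lvert Q\rvert\ge 12w+13$ enters. Here I would refine the choice of $t^*$: among all ``sink'' nodes (or more robustly, iterating the centroid argument), pick one so that the heaviest branch $T'$ has $w(t^*,T')$ as large as possible subject to being $\le\tfrac23\lvert Q\rvert$; I claim one can always arrange $w(t^*,T')\le\tfrac12\lvert Q\rvert$ for every component $T'$ of $T-t^*$, by the usual centroid property (a true centroid of a weighted tree has every branch weight at most half the total; here the ``weight'' is split slightly awkwardly because of shared-bag vertices, but it can be controlled). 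Then $\lvert (Q\cap\bigcup_{x\in V(T')}X_x)\cup X_{t^*}\rvert \le \tfrac12\lvert Q\rvert + (w+1)$, and since $\lvert Q\rvert\ge 12w+13 > 6(w+1)$ we get $w+1 < \tfrac16\lvert Q\rvert$, hence the bound is at most $\tfrac12\lvert Q\rvert+\tfrac16\lvert Q\rvert = \tfrac23\lvert Q\rvert$, strictly, as required. The constant $12w+13$ is exactly what makes this slack work (with a little room to spare, presumably because the argument bounding the branch weight by $\tfrac12\lvert Q\rvert$ only achieves something like $\tfrac{\lvert Q\rvert + \text{bag overlap}}{2}$, so one pays for the overlap twice).

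In more detail, the key steps in order are: (1) set up the weight function $w(t,T')$ on oriented edges of $T$ and prove each node has at most one heavy outgoing direction; (2) run the orientation argument to locate a candidate node where no branch exceeds $\tfrac23\lvert Q\rvert$, and in fact sharpen it (centroid-style, moving toward the heaviest branch until it drops below half) to a node $t^*$ where every branch $T'$ satisfies $w(t^*,T')\le\tfrac12\lvert Q\rvert$ — being careful that when we step across an edge $tt'$, the weight moved is $\lvert Q\cap\bigcup_{x\in V(T_{t'})}X_x\rvert$ where $T_{t'}$ is the component of $T-t$ containing $t'$, and that these weights change monotonically so the process terminates; (3) finally add back $X_{t^*}$ and use $\lvert X_{t^*}\rvert\le w+1 \le \tfrac{\lvert Q\rvert-1}{12}$ to conclude $\lvert (Q\cap\bigcup_{x\in V(T')}X_x)\cup X_{t^*}\rvert \le \tfrac12\lvert Q\rvert + \tfrac{\lvert Q\rvert}{12} < \tfrac23\lvert Q\rvert$.

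The main obstacle I anticipate is step (2): the naive centroid argument guarantees ``every branch has weight at most half the total weight'' only when the weight is a clean vertex-partition, but here a vertex of $Q$ lying in $X_{t^*}$ is counted in several branches' bag-unions at once. So I expect the real work is to show that one can still find $t^*$ with every branch weight at most $\tfrac12\lvert Q\rvert$ (or some constant times $\lvert Q\rvert$ slightly below $\tfrac23$, which is all we need given the slack from $\lvert Q\rvert\ge12w+13$) — handling the double-counted bag vertices either by charging them carefully or by the simple observation that $\lvert X_{t^*}\rvert$ is tiny compared to $\lvert Q\rvert$ and so contributes negligibly to the overcount. Once that weight bound is in hand, the arithmetic with the constant $12w+13$ is routine.
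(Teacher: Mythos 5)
Your plan is correct, but it takes a genuinely different route from the paper. You work with the bare branch weights $\lvert Q\cap\bigcup_{x\in V(T')}X_x\rvert$ and aim for a centroid-type node where every branch is at most $\tfrac12\lvert Q\rvert$, then absorb $X_{t^*}$ at the end using the slack from $\lvert Q\rvert\ge 12w+13$. The paper instead bakes the bags into the quantity from the start: for each edge $xy$ it compares $\lvert(Q\cap\bigcup_{t\in V(T_i)}X_t)\cup X_x\cup X_y\rvert$ to $\tfrac23\lvert Q\rvert$, shows (by summing over the two sides and using $\lvert X_x\cup X_y\rvert\le 2(w+1)$ against $\lvert Q\rvert\ge 12w+13$) that at most one side of each edge can be heavy, orients each edge toward its heavy side, and takes $t^*$ to be a node of out-degree $0$; the desired bound at $t^*$ then falls out immediately because the bag is already inside the compared quantity. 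The paper's version thus avoids the centroid machinery entirely and never needs the ``half plus a little'' refinement. Your approach buys a conceptually familiar template (weighted centroid) at the cost of having to control the double-counting you correctly flag: as literally stated, ``every branch weight at most $\tfrac12\lvert Q\rvert$'' is not achievable because elements of $Q\cap X_{t^*}$ can be counted in several branches. The clean fix is the one you gesture at: assign each $q\in Q$ to a single node $n_q$ with $q\in X_{n_q}$, apply the standard vertex-weighted centroid lemma to get every branch's \emph{assigned} weight at most $\tfrac12\lvert Q\rvert$, and observe that any $q$ counted in a branch but assigned elsewhere must lie in $X_{t^*}$, so each branch weight is at most $\tfrac12\lvert Q\rvert+(w+1)$; adding $\lvert X_{t^*}\rvert\le w+1$ again gives $\tfrac12\lvert Q\rvert+2(w+1)<\tfrac23\lvert Q\rvert$ since $\lvert Q\rvert\ge 12w+13>12(w+1)$. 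With that step made explicit your argument is complete; note also that your preliminary orientation step (with the bag-free notion of ``heavy'') becomes redundant once the centroid refinement is in place.
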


\begin{proof}
Suppose that there exists an edge $xy$ of $T$ such that $\lvert (Q \cap (\bigcup_{t \in V(T_i)}X_t)) \cup X_x \cup X_y \rvert \geq \frac{2}{3}\lvert Q \rvert$ for each $i \in [2]$, where $T_1,T_2$ are the components of $T-xy$.
Then 
$$\lvert Q \rvert + 2\lvert X_x \cup X_y \rvert \geq \sum_{i=1}^2 \lvert (Q \cap (\bigcup_{t \in V(T_i)}X_t)) \cup X_x \cup X_y \rvert \geq \frac{4}{3} \lvert Q \rvert.$$
Since the width of $(T,\X)$ is at most $w$, $4(w+1) \geq 2\lvert X_x \cup X_y \rvert \geq \frac{1}{3}\lvert Q \rvert \geq 4w+\frac{13}{3}$, a contradiction. 

First assume that there exists an edge $xy$ of $T$ such that $\lvert (Q \cap (\bigcup_{t \in V(T_i)}X_t)) \cup X_x \cup X_y \rvert < \frac{2}{3}\lvert Q \rvert$ for each $i \in [2]$, where $T_1,T_2$ are the components of $T-xy$.
Let $t^*:=x$.
Then for every component $T'$ of $T-t^*$, $(Q \cap (\bigcup_{t \in V(T')}X_t)) \cup X_{t^*} \subseteq (Q \cap (\bigcup_{t \in V(T_i)}X_t)) \cup X_x \cup X_y$ for some $i \in [2]$, and hence $\lvert (Q \cap (\bigcup_{t \in V(T')}X_t)) \cup X_{t^*} \rvert < \frac{2}{3}\lvert Q \rvert$.
So the lemma holds.

Now assume that for every edge $xy$ of $T$, there exists a unique $r \in \{x,y\}$ such that $\lvert (Q \cap (\bigcup_{t \in V(T_r)}X_t)) \cup X_x \cup X_y \rvert \geq \frac{2}{3}\lvert Q \rvert$, where $T_x,T_y$ are the components of $T-xy$ containing $x,y$, respectively. Orient the edge $xy$ so that $r$ is the head of this edge.
We obtain an orientation of $T$.
Note that the sum of the out-degree of the nodes of $T$ equals $\lvert E(T) \rvert=\lvert V(T) \rvert-1$.
So some node $t^*$ has out-degree 0.

For each component $T'$ of $T-t^*$, let $t_{T'}$ be the node in $T'$ adjacent in $T$ to $t^*$. By the definition of the direction of $t_{T'}t^*$, 
\begin{align*}
\lvert (Q \cap (\bigcup_{t \in V(T_{t_{T'} } )} X_t)) \cup X_{t^*} \rvert \leq \lvert (Q \cap (\bigcup_{t \in V(T_{t_{T'}}) } X_t)) \cup X_{t_{T'}} \cup X_{t^*} \rvert < \frac{2}{3}\lvert Q \rvert.
\end{align*}
Note that $T_{t_{T'}}=T'$ for every component $T'$ of $T-t^*$, which completes the proof.
\end{proof}

Let $T$ be a tree and $F$ a subset of $V(T)$.
An \mathdef{$F$}{part} of $T$ is an induced subtree of $T$ obtained from a component of $T-F$ by adding the nodes in $F$ adjacent in $T$ to this component.
For an $F$-part $T'$ of $T$, define $\partial T'$ to be $F \cap V(T')$.

\begin{lemma} \label{good tree part}
Let $(T,\X)$ be a tree-decomposition of a graph $G$ of width at most $w\in \mathbb{N}_0$, where $\X=(X_t: t \in V(T))$.
Then for every $\epsilon \in \mathbb{R}$ with $1 \geq \epsilon \geq \frac{1}{w+1}$ and every $Q \subseteq V(G)$, there exists a subset $F$ of $V(T)$ with $\lvert F \rvert \leq \max\{\epsilon(\lvert Q \rvert-3w-3),0\}$ such that for every $F$-part $T'$ of $T$, $\lvert (\bigcup_{t \in V(T')}X_t \cap Q) \cup \bigcup_{t \in \partial T'}X_t \rvert \leq \frac{1}{\epsilon}\cdot (12w+13)$.
\end{lemma}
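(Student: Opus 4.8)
The plan is to build the set $F$ greedily by repeatedly applying the single-node separator result, \cref{one node sep}, and charging each chosen node against a roughly $\epsilon$-fraction of the $Q$-vertices it splits off. First I would dispose of the trivial case: if $\lvert Q\rvert\le 3w+3$ then $\frac{1}{\epsilon}(12w+13)\ge (w+1)(12w+13)$ is already an upper bound on $\lvert X_t\rvert$ times a constant large enough that $F=\emptyset$ works (since $\lvert\bigcup_{t\in V(T')}X_t\cap Q\rvert\le\lvert Q\rvert\le 3w+3$ and $\partial T'=\emptyset$ when $F=\emptyset$, so the whole tree is one $F$-part with weight at most $3w+3 \le \frac1\epsilon(12w+13)$). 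So assume $\lvert Q\rvert>3w+3$.

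The main construction: maintain a collection of ``active'' subtrees of $T$, starting with $T$ itself. For each active subtree $T'$, look at the set $Q' := (Q\cap\bigcup_{t\in V(T')}X_t)\cup\bigcup_{t\in\partial T'}X_t$ — this is the quantity we must ultimately bound. If $\lvert Q'\rvert\le\frac1\epsilon(12w+13)$, declare $T'$ finished. Otherwise, since $\lvert Q'\rvert>\frac1\epsilon(12w+13)\ge 12w+13$, apply \cref{one node sep} to the tree-decomposition restricted to $T'$ with separating set $Q'$: this gives a node $t^*\in V(T')$ such that every component of $T'-t^*$ sees at most $\frac23\lvert Q'\rvert$ vertices of $Q'$ (counting $X_{t^*}$). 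Add $t^*$ to $F$, and replace $T'$ in the active collection by the $\{t^*\}$-parts of $T'$, i.e. the components of $T'-t^*$ with $t^*$ re-attached. The boundary $\partial$ of each new part is the old $\partial T'$ together with possibly $t^*$; crucially the weight $Q''$ of each child part is a subset of (the old $Q'$ restricted to that component) $\cup X_{t^*}$, which by the lemma is $<\frac23\lvert Q'\rvert$. So the weight strictly decreases by a constant factor at each level, and the process terminates.

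For the size bound on $F$: I would argue that the nodes added to $F$ at the ``top level'' (those $t^*$ chosen for parts with weight still exceeding the threshold) can be charged to disjoint chunks of $Q$. The cleanest accounting is via a potential/counting argument on the recursion tree: a node $t^*$ is only selected when its part has weight $>\frac1\epsilon(12w+13)$, hence more than $\frac1\epsilon(12w+13)-(w+1)\ge\frac{1}{\epsilon}\cdot 12w + \text{(positive)}$ vertices of $Q$ genuinely inside (not on the boundary), and the subparts it creates are weight-disjoint in their interiors. Summing a geometric-type series over the levels of the recursion, the total number of selected nodes is at most $\epsilon$ times (the number of $Q$-vertices), up to the additive $3w+3$ correction built into the statement; this is exactly where the $\epsilon(\lvert Q\rvert-3w-3)$ bound comes from, and where the hypothesis $\epsilon\ge\frac{1}{w+1}$ is used to guarantee $\frac1\epsilon(12w+13)\ge 12w+13$ so \cref{one node sep} always applies.

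The main obstacle is the bookkeeping for $\lvert F\rvert$: one must verify that the boundary vertices $\bigcup_{t\in\partial T'}X_t$, which can be double-counted across sibling parts, do not spoil the charging. I expect the resolution is to charge each selected node $t^*$ against the $Q$-vertices strictly interior to its part and disjoint from its own boundary bag — these regions are genuinely disjoint for nodes selected in different branches — and to bound the number of selection levels by $\log_{3/2}$ of the weight ratio, then observe the geometric sum still comes out to the claimed linear bound after absorbing constants. The recursion-depth and geometric-series estimate, rather than any single inequality, is the delicate part.
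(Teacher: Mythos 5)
Your construction is the same as the paper's: recursively split any part whose weight exceeds $\tfrac1\epsilon(12w+13)$ at the node supplied by \cref{one node sep}, and your verification that each resulting part has small weight (the new weight is contained in $(Q'\cap\bigcup_{t\in V(T'')}X_t)\cup X_{t^*}$, hence drops by a factor $\tfrac23$) is correct. The gap is in the bound on $\lvert F\rvert$, which is the actual content of the lemma. First, your claim that a selected part has at least $\tfrac1\epsilon(12w+13)-(w+1)$ vertices of $Q$ ``genuinely inside'' subtracts only one bag for the boundary, but $\partial T'$ accumulates one node per level of the recursion, so the boundary can contribute up to $(w+1)\cdot\lvert\partial T'\rvert$ to the weight; for $\epsilon$ near $\tfrac1{w+1}$ and recursion depth around $12w$, a part can exceed the threshold while containing almost no interior $Q$-vertices, so the per-node credit of $\Omega(1/\epsilon)$ is not available. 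Second, the regions you charge to are disjoint only across incomparable branches; along a single branch they are nested, and repairing this with the $\log_{3/2}$ depth bound yields $\lvert F\rvert\lesssim\epsilon\lvert Q\rvert\cdot\log(\epsilon\lvert Q\rvert)/(12w+13)$, which exceeds $\epsilon\lvert Q\rvert$ for large $\lvert Q\rvert$ --- the geometric decay is in each part's weight, not in the number of selected nodes per level, so the sum over levels is not geometric. Third, you have misplaced the hypothesis $\epsilon\ge\tfrac1{w+1}$: the inequality $\tfrac1\epsilon(12w+13)\ge 12w+13$ only needs $\epsilon\le 1$.

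The correct completion is a potential argument run as an induction on $\lvert Q\rvert$ with $f(x):=\max\{\epsilon(x-3w-3),0\}$: each heavy child $T_i$ is recursed on with $Q_{T_i}:=(Q\cap\bigcup_{t\in V(T_i)}X_t)\cup X_{t^*}$ playing the role of $Q$, so the accumulated boundary is re-absorbed rather than charged separately. One then checks $1+\sum_i f(\lvert Q_{T_i}\rvert)\le f(\lvert Q\rvert)$ by cases on the number $k$ of heavy children: for $k\le 1$ this follows from $\epsilon\lvert Q\rvert\ge 12w+13$, and for $k\ge 2$ the overcount $\sum_i\lvert Q_{T_i}\rvert\le\lvert Q\rvert+k\lvert X_{t^*}\rvert$ is paid for by the per-child discount $\epsilon(3w+3)=3\epsilon(w+1)\ge 3$ --- this is exactly where $\epsilon\ge\tfrac1{w+1}$ and the constant $3w+3$ in the statement are needed. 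Your sketch names ``a potential/counting argument'' but the concrete charging you propose in its place does not close.
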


\begin{proof}
We shall prove this lemma by induction on $\lvert Q \rvert$.
If $\lvert Q \rvert \leq \frac{1}{\epsilon}(12w+13)$, then let $F:=\emptyset$; then for every $F$-part $T'$ of $T$, $\lvert (\bigcup_{t \in V(T')}X_t \cap Q) \cup \bigcup_{t \in \partial T'}X_t \rvert = \lvert Q \rvert \leq \frac{1}{\epsilon}(12w+13)$.
So we may assume that $\lvert Q \rvert \geq \frac{1}{\epsilon}(12w+13) \geq 12w+13$ and the lemma holds for all $Q$ with smaller size.

By \cref{one node sep}, there exists $t^* \in V(T)$ such that for every component $T'$ of $T-t^*$, $\lvert (Q \cap (\bigcup_{t \in V(T')}X_t)) \cup X_{t^*} \rvert < \frac{2}{3}\lvert Q \rvert$.
That is, for every $\{t^*\}$-part $T'$ of $T$, $\lvert (\bigcup_{t \in V(T')}X_t \cap Q) \cup \bigcup_{t \in \partial T'}X_t \rvert <\frac{2}{3}\lvert Q \rvert$.

For each $\{t^*\}$-part $T'$ of $T$, 
let $Q_{T'}:=(\bigcup_{t \in V(T')}X_t \cap Q) \cup X_{t^*}$, so $\lvert Q_{T'} \rvert <\frac{2}{3}\lvert Q \rvert$.
Let $T_1,T_2,\dots,T_k$ be the $\{t^*\}$-parts $T'$ of $T$ with $\lvert Q_{T'} \rvert \geq \frac{1}{\epsilon}(12w+13)$.

Let $f$ be the function defined by $f(x):=\max\{\epsilon(x-3w-3),0\}$ for every $x\in\mathbb{R}$.
For each $i \in [k]$, since $\lvert Q_{T_i} \rvert < \frac{2}{3}\lvert Q \rvert < \lvert Q \rvert$, the induction hypothesis implies that there exists $F_i \subseteq V(T_i)$ with $\lvert F_i \rvert \leq f(\lvert Q_{T_i} \rvert)$ such that for every $F_i$-part $T'$ of $T_i$, $\lvert (\bigcup_{t \in V(T')}X_t \cap Q_{T_i}) \cup \bigcup_{t \in \partial T'}X_t \rvert \leq \frac{1}{\epsilon}(12w+13)$.

Define $F= \{t^*\} \cup \bigcup_{i=1}^k F_i$.
Note that for every $F$-part $T'$ of $T$, either $T'$ is a $\{t^*\}$-part of $T$ with $\lvert Q_{T'} \rvert \leq \frac{1}{\epsilon}(12w+13)$, or there exists $i \in [k]$ such that $T'$ is an $F_i$-part of $T_i$.
In the former case, $\lvert (\bigcup_{t \in V(T')}X_t \cap Q) \cup \bigcup_{t \in \partial T'}X_t \rvert = \lvert Q_{T'} \rvert \leq \frac{1}{\epsilon}(12w+13)$. In the latter case, $\lvert (\bigcup_{t \in V(T')}X_t \cap Q) \cup \bigcup_{t \in \partial T'}X_t \rvert \leq \lvert (\bigcup_{t \in V(T')}X_t \cap Q_{T_i}) \cup \bigcup_{t \in \partial T'}X_t \rvert \leq \frac{1}{\epsilon}(12w+13)$ since $X_{t^*} \subseteq Q_{T_i}$.
Hence $\lvert (\bigcup_{t \in V(T')}X_t \cap Q) \cup \bigcup_{t \in \partial T'}X_t \rvert \leq \frac{1}{\epsilon}(12w+13)$ for every $F$-part $T'$ of $T$.

To prove this lemma, it suffices to prove that $\lvert F \rvert \leq f(\lvert Q \rvert)$.
Note that $\lvert F \rvert  \leq 1+\sum_{i=1}^k \lvert F_i \rvert \leq 1+\sum_{i=1}^kf(\lvert Q_{T_i} \rvert)$.
Since $\lvert Q_{T_i} \rvert \geq \frac{1}{\epsilon}(12w+13) \geq 12w+13$ for every $i \in [k]$, $\lvert F \rvert \leq 1+\sum_{i=1}^k \epsilon(\lvert Q_{T_i} \rvert - 3w-3)$.

If $k=0$, then $f(\lvert Q \rvert)=\epsilon(\lvert Q \rvert-3w-3) = \epsilon\lvert Q \rvert - \epsilon(3w+3) \geq 12w+13-(3w+3)>1=\lvert F \rvert$ since $\epsilon\lvert Q \rvert \geq 12w+13$ and $\epsilon \leq 1$.
If $k=1$, then $\lvert F \rvert \leq 1+\epsilon(\lvert Q_{T_1} \rvert-3w-3) \leq 1 +\epsilon(\frac{2}{3}\lvert Q \rvert-3w-3) =\epsilon(\lvert Q \rvert-3w-3) +1-\frac{\epsilon}{3}\lvert Q \rvert \leq f(\lvert Q \rvert)$ since $\epsilon\lvert Q \rvert \geq 12w+13$.
Hence we may assume that $k \geq 2$.
Then 
\begin{align*}
\lvert F \rvert 
& \leq 1+\sum_{i=1}^k \epsilon(\lvert Q \cap \bigcup_{t \in V(T_i)-\{t^*\}}X_t \rvert + \lvert X_{t^*} \rvert - 3w-3)\\
& \leq 1+\epsilon\lvert Q \rvert + k\epsilon(\lvert X_{t^*} \rvert - 3w-3)\\ 
& = \epsilon(\lvert Q \rvert - 3w-3) + 1+k\epsilon\lvert X_{t^*} \rvert-(k-1)\epsilon(3w+3) \\
& \leq f(\lvert Q \rvert) + 1+\epsilon(k(w+1)-3(k-1)(w+1))\\
& \leq f(\lvert Q \rvert)
\end{align*} since $k \geq 2$.
This completes the proof.
\end{proof}

\begin{lemma} \label{fence lemma}
Let $(T,\X)$ be a tree-decomposition of a graph $G$ of width at most $w \in \mathbb{N}_0$, where $\X=(X_t: t \in V(T))$.
Then for every $Q \subseteq V(G)$, there exists a subset $F$ of $V(T)$ with $\lvert F \rvert \leq \max\{\lvert Q \rvert-3w-3,0\}$ such that:
	\begin{enumerate}
		\item for every $F$-part $T'$ of $T$, $\lvert \bigcup_{t \in V(T')}X_t \cap Q \rvert \leq 12w+13$, and 
		\item if $Q \neq \emptyset$, then for every $t^* \in F$, there exist at least two $F$-parts $T'$ of $T$ satisfying $t^* \in \partial T'$ and $Q \cap \bigcup_{t \in V(T')}X_t - X_{t^*} \neq \emptyset$. 
	\end{enumerate}
\end{lemma}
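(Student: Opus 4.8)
The plan is to extract condition~1 and the size bound directly from \cref{good tree part}, and then to upgrade the set $F$ so that condition~2 holds as well, without disturbing condition~1 or the bound.

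Applying \cref{good tree part} with $\epsilon=1$ is legal, since $1\geq\frac{1}{w+1}$; and if $|Q|\leq 12w+13$ (in particular if $Q=\emptyset$) it outputs $F=\emptyset$, for which the size bound, condition~1, and condition~2 all hold trivially. So assume $|Q|>12w+13$; then \cref{good tree part} gives $F\subseteq V(T)$ with $|F|\leq|Q|-3w-3$ such that every $F$-part $T'$ satisfies $|(\bigcup_{t\in V(T')}X_t\cap Q)\cup\bigcup_{t\in\partial T'}X_t|\leq 12w+13$, and in particular $|\bigcup_{t\in V(T')}X_t\cap Q|\leq 12w+13$, which is condition~1.

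To obtain condition~2, I would repeatedly delete from $F$ any node $t^*$ that witnesses a failure of condition~2 (fewer than two $F$-parts $T'$ with both $t^*\in\partial T'$ and $Q\cap\bigcup_{t\in V(T')}X_t-X_{t^*}\neq\emptyset$), as long as the deletion is \emph{safe}, i.e.\ keeps condition~1. The key observation is that such a deletion is automatically safe provided $t^*$ has no $T$-neighbour inside $F$ and lies in the boundary of at least one $F$-part. Indeed, let $T_1,\dots,T_m$ with $m\geq 1$ be the $F$-parts with $t^*\in\partial T_i$; under the hypothesis on $t^*$ these are exactly the parts arising from the distinct components of $T-F$ adjacent to $t^*$, deleting $t^*$ merges precisely them into one new part $T^*$ with $V(T^*)=\bigcup_{i=1}^m V(T_i)$, and every other $F$-part is unchanged. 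Since $t^*$ witnesses a failure of condition~2, at most one of $T_1,\dots,T_m$, say $T_1$, has $Q\cap\bigcup_{t\in V(T_i)}X_t-X_{t^*}\neq\emptyset$, so $Q\cap\bigcup_{t\in V(T_i)}X_t\subseteq X_{t^*}$ for $i\geq 2$; hence, as $t^*\in V(T_1)$,
\[
\textstyle\bigcup_{t\in V(T^*)}X_t\cap Q \;=\; \bigcup_{i=1}^m\Bigl(\bigcup_{t\in V(T_i)}X_t\cap Q\Bigr)\;=\;\bigcup_{t\in V(T_1)}X_t\cap Q,
\]
which has at most $12w+13$ elements, so condition~1 survives while $|F|$ strictly decreases. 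Iterating (this terminates as $V(T)$ is finite), we reach an $F$ in which every node still violating condition~2 is ``buried'': it either has a $T$-neighbour inside $F$, or is \emph{enclosed}, meaning $N_T[t^*]\subseteq F$ and deleting $t^*$ would leave an oversized part on $N_T[t^*]$.

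The step I expect to be the main obstacle is eliminating these buried nodes within the budget $|F|\leq|Q|-3w-3$. The cleanest way I can see is to arrange at the outset that $F$ is an independent set of $T$: then no node has a $T$-neighbour in $F$, and consequently each $t^*\in F$ with $\deg_T(t^*)\geq 1$ lies in the boundary of $\deg_T(t^*)\geq 1$ distinct $F$-parts, so neither bad case can occur and the deletion argument above cleanly yields condition~2. One can attempt this by running the recursion of \cref{good tree part} so that each chosen separator avoids the neighbour of the previously chosen one, and, where that is impossible (e.g.\ when a heavy piece is a star whose only separator is its centre), by deleting the offending node and re-cutting the oversized part it leaves behind; the re-cut is affordable because, since the bags $X_s\setminus X_{t^*}$ over the $T$-neighbours $s$ of an enclosed node $t^*$ are pairwise disjoint, an oversized $N_T[t^*]$ forces the subtrees hanging off $t^*$ to be individually substantial, hence individually re-cuttable without overspending. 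Making this bookkeeping precise is where the real work lies; everything else is the deletion argument above.
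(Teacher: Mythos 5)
Your reduction to \cref{good tree part} with $\epsilon=1$, the handling of the case $\lvert Q\rvert\leq 12w+13$, and your computation showing that deleting a condition-2-violator $t^*$ is safe when $t^*$ has no $T$-neighbour in $F$ are all correct, and they reproduce the skeleton of the paper's own argument: the paper takes the set $F$ from \cref{good tree part}, assumes it is minimal, and, whenever condition~2 fails at some $t^*$, deletes $t^*$ and asserts that the merged part $T^*$ satisfies $\lvert\bigcup_{t\in V(T^*)}X_t\cap Q\rvert=\lvert\bigcup_{t\in V(T')}X_t\cap Q\rvert\leq 12w+13$, contradicting minimality. The paper does not restrict this deletion to nodes without $F$-neighbours; it treats the step as immediate for every violator.

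The difficulty is that your write-up stops exactly where the content of the lemma lies. You correctly isolate the ``buried'' case, and then only sketch a repair (forcing $F$ to be independent in $T$ by modifying the recursion of \cref{good tree part}, with re-cutting where that fails), explicitly deferring ``making this bookkeeping precise''. That deferred bookkeeping is the whole problem, and the buried case is not vacuous. Take $w=0$, all bags distinct singletons, $Q=V(G)$, and let $T$ consist of a node $t^*$ with neighbours $s_1,\dots,s_{13}$, each $s_i$ carrying two pendant paths of $12$ nodes. Any $F$ satisfying condition~1 must contain $t^*$ (otherwise the part containing $t^*$ contains all of $N_T[t^*]$, which meets $Q$ in $14>12w+13$ vertices), and $F=\{t^*,s_1,\dots,s_{13}\}$ is inclusion-minimal subject to condition~1; yet $t^*$ is adjacent to no component of $T-F$, so it lies in the boundary of no $F$-part and condition~2 fails, while deleting $t^*$ creates the part $N_T[t^*]$ and destroys condition~1. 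So a minimal $F$ genuinely need not satisfy condition~2, the deletion argument alone cannot finish (and the paper's asserted equality is exactly the step that breaks here), and a valid $F$ for this tree must be strictly larger than the minimal one. To complete your route you must actually construct a condition-2-compliant (e.g.\ independent) fence within the budget $\lvert Q\rvert-3w-3$ and verify the cost accounting; as submitted, the proposal is not a proof.
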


\begin{proof}
Let $Q \subseteq V(G)$.
By \cref{good tree part} with $\epsilon=1$, there exists $F \subseteq V(T)$ with $\lvert F \rvert \leq \max\{\lvert Q \rvert-3w-3,0\}$ such that for every $F$-part $T'$ of $T$, $\lvert \bigcup_{t \in V(T')}X_t \cap Q \rvert \leq 12w+13$. Assume further that $F$ is minimal.

Suppose that $Q \neq \emptyset$ and there exists $t^* \in F$ such that there is at most one $F$-part $T'$ of $T$ satisfying $t^* \in \partial T'$ and $Q \cap \bigcup_{t \in V(T')}X_t-X_{t^*} \neq \emptyset$.
Let $F^* := F-\{t^*\}$.
Note that for every $F$-part $W$ of $T$ with $t^* \not \in \partial W$, $W$ is an $F^*$-part of $T$ and $\partial W \subseteq F-\{t^*\}$, so $\lvert \bigcup_{t \in V(W)}X_t \cap Q \rvert \leq 12w+13$.
In addition, there is exactly one $F^*$-part $T^*$ of $T$ with $t^* \in V(T^*)$, and every $F^*$-part of $T$ other than $T^*$ is an $F$-part of $T$.
Since there is at most one $F$-part $T'$ of $T$ satisfying $t^* \in \partial T'$ and $Q \cap \bigcup_{t \in V(T')}X_t-X_{t^*} \neq \emptyset$, if $T'$ exists, then $\lvert \bigcup_{t \in V(T^*)}X_t \cap Q \rvert = \lvert \bigcup_{t \in V(T')}X_t \cap Q \rvert \leq 12w+13$; otherwise $\lvert \bigcup_{t \in V(T^*)}X_t \cap Q \rvert \leq \lvert X_{t^*} \rvert \leq 12w+13$.
This contradicts the minimality of $F$ and proves the lemma.
\end{proof}

We call the set $F$ mentioned in \cref{fence lemma} a \mathdef{$(T,\X,Q)$}{fence}.

\subsection{Parades and Fans} \label{subsec:paradefans}

This subsection introduces parades and fans, and proves \cref{paradefans} which is used in the proof of \cref{no apex 0} below. 

Let $(T,\X)$ be a rooted tree-decomposition of a graph $G$ of width $w$, where $\X=(X_t: t \in V(T))$.
For every note $t''$ of $T$, let $T_{t''}$ be the maximal subtree of $T$ rooted at $t''$.
Let $t,t'$ be distinct nodes of $T$ with $t' \in V(T_t)$.
For $k \in [0,w+1]$ and $m \in \mathbb{N}_0$, a \mathdef{$(t,t',k)$}{fan} of size $m$ is a sequence $(t_1,t_2,\dots,t_m)$ of nodes of $T_t$ such that:
	\begin{description}
		\item{(FAN1)} for every $j \in [m-1]$, $t_{j+1} \in V(T_{t_j})-\{t_j\}$ and $t' \in V(T_{t_j})$, 
		\item{(FAN2)} for every $j \in [m]$, $\lvert X_{t_j} \cap X_t \rvert = k$, and 
		\item{(FAN3)} $X_{t_j}-X_t$ are pairwise disjoint for all $j \in [m]$.
	\end{description}
Note that we do not require $t' \in V(T_{t_m})$ in (FAN1); in fact, having this requirement or not does not affect our future arguments much because we can drop the last term in a $(t,t',k)$-fan to obtain a $(t,t',k-1)$-fan of size one less satisfying this requirement.
Moreover, by the definition of a tree-decomposition, (FAN2) implies that the sets $X_{t_j} \cap X_t$ are identical for all $j \in [m]$.

Let $T$ be a rooted tree. A \defn{parade} in $T$ is a sequence $(t_1,t_2,\dots,t_k)$ of nodes of $T$ (for some $k \in {\mathbb N})$ such that $t_{\alpha+1} \in V(T_{t_\alpha})-\{t_\alpha\}$ for every $\alpha \in [k-1]$. 
Note that every $(t,t',k)$-fan is a parade by (FAN1).

A sequence $(a_1,a_2,\dots,a_\alpha)$ is a \defn{subsequence} of a sequence $(b_1,b_2,\dots,b_\beta)$ (for some $\alpha,\beta\in\mathbb{N}$) if there exists an injection $\iota: [\alpha] \to [\beta]$ such that $\iota(1)<\iota(2)<\dots<\iota(\alpha)$ and $a_i = b_{\iota(i)}$ for every $i \in [\alpha]$.

\begin{lemma} \label{paradefans}
For every $w \in {\mathbb N}_0$ and $k \in {\mathbb N}$ with $k \geq 2$, there exists $N:=N(w,k) \in {\mathbb N}$ such that for every rooted tree-decomposition $(T,\X)$ of a graph $G$, where $\X=(X_t: t \in V(T))$, and for every parade $(t_1,t_2,\dots,t_N)$ in $T$ with $\lvert X_{t_i} \rvert \leq w+1$ for every $i \in [N]$, if $X_{t_i} \not \subseteq X_{t_j}$ for every $i > j$, then there exist $\ell \in [0,w]$ and a $(t_1',t_k',\ell)$-fan $(t_1',t_2',\dots,t_k')$ of size $k$ that is a subsequence of $(t_1,t_2,\dots,t_N)$.
\end{lemma}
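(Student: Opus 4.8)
The plan is to extract the fan in two stages, using two nested pigeonhole arguments on the parameters of a parade. First I would normalize the parade. Given the parade $(t_1,\dots,t_N)$, each bag $X_{t_i}$ is a subset of the root-path data: for a node $t_i$, the set $X_{t_i}\cap X_{t_1}$ is determined by which vertices of $X_{t_1}$ survive down to $t_i$, and by the laminar structure of a tree-decomposition along the path from $t_1$ to $t_i$, these sets are nested: if $i<j$ then $X_{t_j}\cap X_{t_1}\subseteq X_{t_i}\cap X_{t_1}$ up to the usual caveat about which bags lie on the path. More carefully, the standard fact is that for any $t'$ in the subtree below $t$, the set of vertices of $X_t$ that appear in some bag of $T_{t'}$ is exactly $X_t\cap X_{t'}$-like, and these ``interfaces'' can only shrink as we descend. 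So along the parade, the quantity $\lvert X_{t_i}\cap X_{t_1}\rvert$ is (weakly) decreasing in $i$ once we pass to a subsequence where consecutive nodes are genuinely nested in the $T$-order. Since this quantity lies in $[0,w+1]$, after replacing $N$ by a large enough value I can find a long subparade on which $\lvert X_{t_i}\cap X_{t_1}\rvert$ is constant, equal to some value; but this is not quite (FAN2), because (FAN2) asks for $\lvert X_{t_j}\cap X_{t'}\rvert=k$ where $t'$ is the \emph{last} node of the fan, not the first. So the first real step is: pass to a subparade on which the interface with the bag \emph{two steps ahead} stabilizes, which by a Ramsey/pigeonhole argument (iterating $w+2$ times, each time losing a bounded factor) yields a subparade where \emph{all} pairwise interfaces $\lvert X_{t_a}\cap X_{t_b}\rvert$ (for $a<b$ in the subparade) equal a common value $\ell$; this handles (FAN2) and (FAN1) simultaneously.

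The second step handles (FAN3): I need the sets $X_{t_j}-X_t$ to be pairwise disjoint, where $t=t_1'$ is the first node of the fan. Having arranged that every pairwise interface equals $\ell$, I claim disjointness is automatic. Indeed, if $v\in (X_{t_a}-X_{t_1'})\cap(X_{t_b}-X_{t_1'})$ with $1'<a<b$ in the subparade, then since $\{x:v\in X_x\}$ is a connected subtree of $T$ containing $t_a$ and $t_b$, and $t_a$ lies on the path from $t_1'$ to $t_b$ (this uses (FAN1), that $t_b\in V(T_{t_a})$ and everything is below $t_1'$), actually I want to derive a contradiction from the hypothesis $X_{t_i}\not\subseteq X_{t_j}$ for $i>j$ together with the stabilized interfaces. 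The cleaner route: if all interfaces with $X_{t_1'}$ are the same set $S$ of size $\ell$ (not just same size — I should upgrade the pigeonhole to fix the actual set, which costs only a $2^{w+1}$ factor), and if some vertex $v$ appeared in two of the $X_{t_a}-S$, then by the subtree-connectivity $v$ would appear in every bag between them, in particular in $X_{t_a}$ and in bags further down, forcing $X_{t_b}\cap X_{t_a}\supsetneq S$ — but I also arranged $X_{t_a}\cap X_{t_b}=\ell$, and since $S\subseteq X_{t_a}\cap X_{t_b}$ always, equality of sizes forces $X_{t_a}\cap X_{t_b}=S$, so $v\notin X_{t_a}\cap X_{t_b}$, contradiction. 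Hence (FAN3) holds for free once (FAN2) is strengthened to fix the set, not just its size.

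Combining: choose $N(w,k)$ large enough that $k$ iterations of ``shrink the parade to fix one more interface value and one more interface set'' survive — concretely $N$ can be taken to be $k$ times an iterated exponential tower of height $O(w)$ in base $2^{w+1}$, which is a finite function of $w$ and $k$. The main obstacle I anticipate is bookkeeping the order of the two pigeonhole stages correctly: the fan condition (FAN2) references the \emph{last} element $t_k'$ of the fan, so I cannot simply fix interfaces with the first node and be done — I must argue that once all $\binom{m}{2}$ pairwise interfaces within a long subparade coincide (both as sets and in size), then in particular the interface of each $t_j'$ with $t_k'$ has size $\ell$, which is (FAN2), and the interface of $t_j'$ with $t_1'$ has the same value $\ell$, which is what (FAN3) needs. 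Making ``all pairwise interfaces coincide'' precise is a standard but slightly delicate Ramsey-type statement on a chain; I would prove it by induction on the number of distinct possible interface sets ($\leq 2^{w+1}$), at each stage extracting the most popular value and passing to the corresponding subsequence, and I expect that to be the only genuinely technical point. The hypothesis $X_{t_i}\not\subseteq X_{t_j}$ for $i>j$ is used to guarantee $\ell\le w$ (strictly less than $w+1$): if the common interface had size $w+1$ it would equal all of $X_{t_1'}$, forcing $X_{t_1'}\subseteq X_{t_j}$ for later $j$, contradicting the hypothesis; this is where that assumption enters.
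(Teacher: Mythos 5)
Your overall strategy is genuinely different from the paper's: the paper inducts on the width $w$ (if there are no $k$ pairwise disjoint bags, it finds, via a greedy dominating subparade and two pigeonhole steps, many bags whose intersections with one fixed bag $X_{q}$ are nested sets of equal size, hence all equal to a common nonempty set $Z$; it deletes $Z$, recurses with width reduced by $\lvert Z\rvert$, and adds $Z$ back), whereas you propose to extract the fan directly by a Ramsey-type argument along the chain. Your sufficiency analysis is essentially right: a subparade on which all pairwise interfaces equal a fixed set $S$ does yield the fan, with (FAN3) coming for free and $\ell\le w$ forced by the non-containment hypothesis. (Two small slips there: (FAN2) in the paper's definition measures intersections with the \emph{first} parameter $t=t_1'$, not the last node as you state; and if $\ell=w+1$ the contradiction is that two of the \emph{later} bags would coincide, since the hypothesis only forbids a later bag being contained in an earlier one, not the containment $X_{t_1'}\subseteq X_{t_j}$ you derive.)

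The genuine gap is in the central extraction step. ``Iterating $w+2$ times'' is not the right count, and the proposed induction ``on the number of distinct possible interface sets ($\le 2^{w+1}$), at each stage extracting the most popular value and passing to the corresponding subsequence'' does not work: the interface $X_{t_a}\cap X_{t_b}$ is a function on \emph{pairs}, and the candidate sets range over different (unbounded) universes as $a$ varies, so there is no fixed palette of $2^{w+1}$ colors, and even for a genuine pair-coloring the ``most popular color'' pigeonhole does not produce a monochromatic subsequence. What does work, and what your plan is really reaching for, is the Ramsey-tree recursion adapted to the chain: repeatedly take the current first node $p_j$, pigeonhole the remaining nodes by $\lvert X_q\cap X_{p_j}\rvert\in[0,w+1]$ (nestedness of interfaces along a parade then upgrades equal size to an equal set $S_j$), keep that subsequence, discard $p_j$'s interface data, and recurse on the tail. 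The point you are missing is why the resulting sets ever agree across rounds: one must observe that $S_1\subseteq S_2\subseteq\cdots$ (because $S_{j}=X_q\cap X_{p_{j}}\subseteq X_{p_{j+1}}$ for $q$ beyond $p_{j+1}$), so this is an increasing chain of sets of size at most $w$, and after about $k(w+1)$ rounds some $k$ consecutive rounds share the same $S$; those $k$ first-elements are your fan. Without this monotonicity-and-stabilization step, fixing each node's interface with its successors gives sets $S_j$ that may all differ, and the pairwise interfaces $X_{p_a}\cap X_{p_b}=S_a$ are then not constant.
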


\begin{proof}
Define $N(0,k) := \max\{k,2\}$, and for every $x \in [w]$, define $N(x,k): = N(x-1,k) \cdot (k-1)(w+1)$.

We proceed by induction on $w$.
When $w=0$, since $X_{t_i} \not \subseteq X_{t_j}$ for every $i > j$, $(t_1,t_2,\dots,t_N)$ is a $(t_1,t_N,0)$-fan of size $N \geq k$.
So we may assume that $w \geq 1$ and this lemma holds for every smaller $w$.

Suppose to the contrary that for every $\ell \in [0,w]$, there exists no $(t_1',t_k',\ell)$-fan $(t_1',t_2',\dots,t'_k)$ of size $k$ that is a subsequence of $(t_1,t_2,\dots,t_N)$.
In particular, there exists no $k$ elements in $\{t_1,t_2,\dots,t_N\}$ with pairwise disjoint bags.
By greedily collecting nodes in $\{t_1,t_2,\dots, t_N\}$ with pairwise disjoint bags starting from $t_1$, we obtain a parade $(q_1,q_2,\dots,q_{k'})$ that is a subsequence of $(t_1,t_2,\dots,t_N)$ for some $k' \in [k-1]$ such that for every $i \in [N]$, there exists $i' \in [k']$ such that $X_{t_i} \cap X_{q_{i'}} \neq \emptyset$ and $t_i \in V(T_{q_{i'}})$.
Since $N \geq k-1$, we may assume $k'=k-1$ by adding redundant nodes into $(q_1,q_2,\dots,q_{k'})$.
For each $i \in [k-1]$, let $S_i := \{t_j \in V(T_{q_i}): j \in [N], X_{t_j} \cap X_{q_i} \neq \emptyset\}$.
So there exists $i_1 \in [k-1]$ such that $\lvert S_{i_1} \rvert \geq N/(k-1)$.
For every $\alpha \in [w+1]$, let $S'_\alpha := \{t_j \in S_{i_1}: \lvert X_{t_j} \cap X_{q_{i_1}} \rvert = \alpha\}$.
So there exists $\alpha^* \in [w+1]$ such that $\lvert S'_{\alpha^*} \rvert \geq \lvert S_{i_1} \rvert/(w+1) \geq \frac{N}{(k-1)(w+1)} \geq 2$.
Since $\lvert X_{t_i} \rvert \leq w+1$ for every $i \in [N]$, and $X_{t_i} \not \subseteq X_{t_j}$ for every $i > j$, we know $\alpha^* \in [w]$.

Let $(z_1,z_2,\dots,z_{\lvert S'_{\alpha^*} \rvert})$ be the parade formed by the elements of $S'_{\alpha^*}$.
Since $(T,\X)$ is a tree-decomposition, there exists $Z \subseteq X_{q_{i_1}}$ with $\lvert Z \rvert=\alpha^*$ such that for every $i \in [\lvert S'_{\alpha^*} \rvert]$, $X_{z_i} \cap X_{q_{i_1}} = Z$.

For every $t \in V(T)$, let $X'_t := X_t-Z$.
Let $\X' := (X'_t: t \in V(T))$.
Then $\lvert X'_{z_i} \rvert = \lvert X_{z_i} \rvert - \lvert Z \rvert \leq (w-\lvert Z \rvert)+1$ for every $i \in [\lvert S'_{\alpha^*} \rvert]$.
If there exist $i,j \in [\lvert S'_{\alpha^*} \rvert]$ with $i>j$ such that $X'_{z_i} \subseteq X'_{z_j}$, then $X_{z_i} = X'_{z_i} \cup Z \subseteq X'_{z_j} \cup Z = X_{z_j}$, a contradiction.
So for any $i,j \in [\lvert S'_{\alpha^*} \rvert]$ with $i>j$, $X'_{z_i} \not \subseteq X'_{z_j}$.
Since $\lvert S'_{\alpha^*} \rvert \geq \frac{N}{(k-1)(w+1)} \geq N(w-1,k) \geq N(w-\lvert Z \rvert,k)$, by the induction hypothesis, there exist $\ell \in [0,w-\lvert Z \rvert]$ and a $(t_1',t_k',\ell)$-fan $(t_1',t_2',\dots,t_k')$ in $(T,\X')$ of size $k$ that is a subsequence of $(z_1,z_2,\dots,z_{\lvert S'_{\alpha^*} \rvert})$ and hence is a subsequence of $(t_1,t_2,\dots,t_N)$.
Since $Z \subseteq X_{z_i}$ for every $i \in [\lvert S'_{\alpha^*} \rvert]$, $(t_1',t_2',\dots,t_k')$ is a $(t_1',t_k',\ell+\lvert Z \rvert)$-fan in $(T,\X)$.
Note that $\ell+\lvert Z \rvert \in [0,w]$.
This proves the lemma.
\end{proof}

\section{Proof of Main Theorem}
\label{MainSection}

This section presents several definitions that lead to the statement of \cref{no apex}, which implies and strengthens \cref{ltwbasic}. This lemma is proved assuming \cref{no apex 0}, which is then proved in \cref{sec:mainlemma} below. 

\subsection{List-Coloring Setup}

The proof of \cref{ltwbasic} uses a list-coloring argument, where if $(V_1,V_2,\dots)$ is a layering of our graph, then we assume that color $i$ does not appear in the lists of vertices in layers $V_j$ with $j\equiv i \pmod{s+2}$. This ensures that each monochromatic component is contained within at most $s+1$ consecutive layers. 

For our purposes a \defn{color} is an element of $\mathbb{Z}$. A \defn{list-assignment} of a graph $G$ is a function $L$ with domain containing $V(G)$, such that $L(v)$ is a non-empty set of colors for each vertex $v\in V(G)$. 
For a list-assignment $L$ of $G$, an \mathdef{$L$}{coloring} of $G$ is a function $c$ with domain $V(G)$ such that $c(v) \in L(v)$ for every $v \in V(G)$. So an $L$-coloring has clustering $\eta$ if every monochromatic component has at most $\eta$ vertices. A list-assignment $L$ of a graph $G$ is an \mathdef{$\ell$}{list-assignment} if $|L(v)|\geq\ell$ for every vertex $v\in V(G)$. 

Let $G$ be a graph and $Z\subseteq V(G)$. A \mathdef{$Z$}{layering} $\V$ of $G$ is an ordered partition $(V_1,V_2,\dots)$ of $V(G)-Z$ into (possibly empty) sets such that for every edge $e$ of $G-Z$, there exists $i\in\mathbb{N}$ such that both ends of $e$ are contained in $V_i \cup V_{i+1}$. Note that a layering is equivalent to an $\emptyset$-layering. For a tree-decomposition $(T,\X)$ of $G$, with $\X=(X_t: t \in V(T))$, the \mathdef{$\V$}{width} of $(T,\X)$ is  
$$\max_{i\in\mathbb{N}} \max_{t \in V(T)}\lvert X_t \cap V_i \rvert.$$

Let $G$ be a graph and let $Z\subseteq V(G)$. 
For every $s\in\mathbb{N}$ and $\ell \in [s+2]$, an \mathdef{$s$}{segment} of a $Z$-layering $(V_1,V_2,\dots)$ of \defn{level $\ell$} is $\bigcup_{j=a}^{a+s}V_j$ for some (possibly non-positive) integer $a$ with $a \equiv \ell+1$ (mod $s+2$), where $V_a=\emptyset$ if $a \leq 0$. When the integer $s$ is clear from the context, we write \defn{segment} instead of $s$-segment. 

Let $G$ be a graph and let $s\in\mathbb{N}$. 
Let $Z \subseteq V(G)$ and $\V=(V_1,V_2,\dots)$ be a $Z$-layering of $G$.
A list-assignment $L$ of $G$ is \mathdef{$(s,\V)$}{compatible} if the following conditions hold:
	\begin{itemize}
		\item $L(v) \subseteq [s+2]$ for every $v \in V(G)$.
		\item $i \not \in L(v)$ for every $i \in [s+2]$ and $v \in \bigcup (V_j: j \equiv i \pmod{s+2})$. 
	\end{itemize}
Note that there is no condition on $L(v)$ for $v \in Z$ other than $L(v) \subseteq [s+2]$. 

We remark that for every $i \in [s+2]$, if $v \in V(G)$ with $i \in L(v)$, then either $v \in Z$, or $v$ belongs to a segment of $\V$ with level $i$.
This leads to the following easy observation that we frequently use.

\begin{proposition} \label{inside segment suffices}
Let $G$ be a graph and let $s\in\mathbb{N}$. 
Let $\V=(V_1,V_2,\dots)$ be a layering of $G$, and let $L$ be an $(s,\V)$-compatible list-assignment. If $k\in\mathbb{N}$ and $c$ is an $L$-coloring such that for every $i \in [s+2]$ and every $s$-segment $S$ with level $i$, every $c$-monochromatic component contained in $G[S]$ with color $i$ contains at most $k$ vertices, then $c$ has clustering at most $k$.
\end{proposition}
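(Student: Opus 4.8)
The plan is to take an arbitrary $c$-monochromatic component $M$ with some color $i\in[s+2]$, and show that $M$ is contained in $G[S]$ for a single $s$-segment $S$ of level $i$; then the hypothesis applied to that segment gives $|V(M)|\le k$. Since $M$ is a connected subgraph of $G$ all of whose vertices are colored $i$, and since $i\notin L(v)$ for every $v\in\bigcup(V_j:j\equiv i\pmod{s+2})$, every vertex of $M$ lies in some layer $V_j$ with $j\not\equiv i\pmod{s+2}$. The first step is therefore to record the layer index $\lambda(v)$ of each vertex $v\in V(M)$, and observe that $\lambda(v)\not\equiv i\pmod{s+2}$ for all such $v$.

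The second step is to control how much the layer index can vary across $M$. Because $M$ is connected, it suffices to bound $|\lambda(u)-\lambda(v)|$ for adjacent $u,v\in V(M)$ and then chain along a path in $M$. For an edge $uv$ of $G$ (hence of $M$), the definition of a layering gives some $j$ with $\{u,v\}\subseteq V_j\cup V_{j+1}$, so $|\lambda(u)-\lambda(v)|\le 1$. Consequently $\{\lambda(v):v\in V(M)\}$ is a set of consecutive-ish integers — more precisely, its diameter is at most $|V(M)|-1$, but the key point is that it is an \emph{interval-free of the residue class $i$}: since every index in the set avoids residue $i\pmod{s+2}$ and consecutive indices differ by at most one along edges, the index set cannot ``jump over'' a multiple-of-the-pattern barrier. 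Formally, let $a$ be the largest integer with $a\equiv i+1\pmod{s+2}$ and $a\le \min\{\lambda(v):v\in V(M)\}$; I claim $\lambda(v)\in[a,a+s]$ for every $v\in V(M)$. Indeed $a+s+1\equiv i\pmod{s+2}$, so no vertex of $M$ has layer index $a+s+1$; if some vertex had layer index $>a+s+1$, then walking along a path in $M$ from a vertex of layer index in $[a,a+s]$ (one exists, namely a minimizer, which lies in $[a,a+s]$ since $a\le\min\le a+s$ as $\min\not\equiv i$ forces $\min<a+s+1$) to that vertex, some edge would have to step from $\le a+s$ to $\ge a+s+2$, contradicting the step-size-$1$ bound, or land exactly on $a+s+1$, which is forbidden. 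Hence all layer indices of $M$ lie in $[a,a+s]$.

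The third and final step is bookkeeping: set $S:=\bigcup_{j=a}^{a+s}V_j$, which by definition is an $s$-segment of level $i$ (using $a\equiv i+1\pmod{s+2}$, and the convention $V_a=\emptyset$ when $a\le 0$). By the previous step $V(M)\subseteq S$, so $M$ is a $c$-monochromatic component contained in $G[S]$ with color $i$, and the hypothesis yields $|V(M)|\le k$. Since $M$ was arbitrary, $c$ has clustering at most $k$. I expect the only mildly delicate point to be the ``interval'' argument in step two — making sure the minimizing layer index really lies in $[a,a+s]$ and that connectivity plus the step-size bound genuinely prevents escaping this window across the forbidden residue — but this is a short parity-plus-connectivity argument with no real obstacle.
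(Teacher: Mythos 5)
Your proposal is correct and follows essentially the same route as the paper: the paper's one-line proof observes that every vertex $v$ with $i\in L(v)$ lies in a segment of level $i$, and concludes by connectivity that a color-$i$ component sits inside a single such segment. Your write-up just makes explicit the parity-plus-connectivity details (layer indices avoid residue $i$, adjacent vertices differ by at most one layer, so the component cannot cross a forbidden layer), all of which check out.
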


\begin{proof}
For every $i \in [s+2]$, if $v \in V(G)$ with $i \in L(v)$, then $v$ belongs to a segment of $\V$ with level $i$. So every $c$-monochromatic component with color $i$ is contained in some segment of $\V$ with level $i$.
\end{proof}

Let $G$ be a graph, $Z \subseteq V(G)$, $\V$ a $Z$-layering of $G$, $s\in\mathbb{N}$, and $L$ an $(s,\V)$-compatible list-assignment of $G$. For $Y_1\subseteq V(G)$, we say that $(Y_1,L)$ is an \mathdef{$(s,\V)$}{standard pair} if the following conditions hold:
	\begin{itemize}
		\item[(L1)] $Y_1 = \{v \in V(G): \lvert L(v) \rvert=1\}$. 
		\item[(L2)] For every $y \in N^{< s}(Y_1)$, 
		$$\lvert L(y) \rvert = s+1-\lvert N_G(y) \cap Y_1 \rvert ,$$ 
		and $L(y) \cap L(u)=\emptyset$ for every $u \in N_G(y) \cap Y_1$. (Note that $| L(y) | \geq 2$.)\ 
		\item[(L3)] For every $v \in V(G)-N_G[Y_1]$, we have $\lvert L(v) \rvert =s+1$.
	\end{itemize}

This definition says nothing about $L(v)$ for $v\in \ngs{Y_1}$. Indeed, it is possible that for such a vertex $v$, each color in $L(v)$ appears on some neighbor of $v$ in $Y_1$. However, this is not a problem since we use standard pairs only when $G$ has no $K_{s,t}$ subgraph, in which case $\ngs{Y_1}$ has bounded size by \cref{BoundedGrowth}. 

\subsection{Proof of Main Theorem} \label{subsec:ProofMainThm}

The next lemma and \cref{no apex 0} in the next section	motivate the above definitions, since they provide conditions under which a graph with no $K_{s,t}$ subgraph is $L$-colorable for some $(s,\V)$-compatible list-assignment $L$ and $(s,\V)$-standard pair $(Y_1,L)$. Most of the work in proving \cref{no apex} is done by \cref{no apex 0}, where we make an extra assumption about the precolored set $Y_1$. So we prove \cref{no apex} first, assuming \cref{no apex 0}. \cref{no apex} is also used to prove \cref{ltwmain} in \cref{AllowingApexVertices}.

\begin{lemma} 
\label{no apex}
For every $s,t,w \in \mathbb{N}$ and $\eta \in \mathbb{N}_0$, there exists $\eta^*=\eta^*(s,t,w,\eta) \in \mathbb{N}$ such that if $G$ is a graph with no $K_{s,t}$-subgraph, $\V$ is a layering of $G$, $L$ is an $(s,\V)$-compatible list-assignment, $Y_1$ is a subset of $V(G)$, $(Y_1,L)$ is an $(s,\V)$-standard pair, and $(T,\X)$ is a tree-decomposition of $G$ of $\V$-width at most $w$ such that $\lvert Y_1 \cap S \rvert \leq \eta$ for every $s$-segment\footnote{The condition $\lvert Y_1 \cap S \rvert \leq \eta$ for every $s$-segment $S$ is equivalent to the condition $\lvert Y_1 \cap V_i \rvert \leq \eta'$ for every layer $V_i$ of $\V$, for another constant $\eta'$.} $S$ of $\V$, then there exists an $L$-coloring of $G$ with clustering $\eta^*$.
\end{lemma}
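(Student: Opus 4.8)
The plan is to reduce Lemma~\ref{no apex} to Lemma~\ref{no apex 0}, so the entire task is to massage a $\V$-standard pair $(Y_1,L)$ that satisfies only the per-segment bound $\lvert Y_1\cap S\rvert\le\eta$ into an input that Lemma~\ref{no apex 0} can digest. Since Lemma~\ref{no apex 0} is not displayed in the excerpt, I will describe the argument in the form that any such ``main lemma'' in this setting would take: it handles the case where the precolored set $Y_1$ is, in a strong sense, \emph{well-separated} inside the tree-decomposition --- e.g.\ where some fence or bounded-size separator structure isolates small chunks of $Y_1$ --- and produces an $L$-coloring with bounded clustering. So the work here is bookkeeping to get from ``bounded in each $s$-segment'' to ``controlled by a fence''.

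First I would fix the parameters: given $s,t,w,\eta$, I invoke Corollary~\ref{BoundedGrowthLtw} to get the linear growth function $f_{s,t,w}$, and I set $\eta^*$ to be whatever value Lemma~\ref{no apex 0} outputs on inputs whose relevant quantities are bounded in terms of $s,t,w,\eta$. The key structural observation is that a tree-decomposition of $\V$-width at most $w$, when restricted to any block of $s+1$ consecutive layers (that is, to $G[S]$ for an $s$-segment $S$), has ordinary width at most $(s+1)(w+1)-1$; and because $L$ is $(s,\V)$-compatible, Proposition~\ref{inside segment suffices} tells us every monochromatic component lives inside one such $G[S]$. So it suffices to produce an $L$-coloring which, segment by segment and color by color, has bounded clustering, and the whole problem localises to a single $s$-segment $S$ with its induced tree-decomposition of bounded width and with $\lvert Y_1\cap S\rvert\le\eta$ precolored vertices.

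Within a single segment I would apply the fence machinery of Section~\ref{sec:fences}: take $Q=Y_1\cap V(G[S])$ (more precisely the projection of $Y_1$ into the bags meeting $S$) and let $F$ be a $(T',\X',Q)$-fence where $(T',\X')$ is the restricted tree-decomposition. By Lemma~\ref{fence lemma}, $\lvert F\rvert\le\max\{\lvert Q\rvert-3w'-3,0\}$ and each $F$-part sees at most $12w'+13$ vertices of $Q$, where $w'=(s+1)(w+1)-1$; so each piece of the segment carries only a bounded number of precolored vertices, and moreover the boundary bags $\bigcup_{t\in\partial T'}X_t$ that glue pieces together are bounded. This is exactly the ``isolated bounded chunk of $Y_1$'' situation that Lemma~\ref{no apex 0} is designed for, applied within a single $F$-part; I then extend the coloring across the fence, using Lemma~\ref{BoundedGrowth}/Corollary~\ref{BoundedGrowthLtw} to argue that when two bounded-clustering colorings of adjacent $F$-parts are combined, the sets of vertices with at least $s$ neighbours in a given already-colored monochromatic component are bounded, so merging does not blow up component sizes. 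Here a $(W,i)$-progress is invoked to record the colors forced on the boundary vertices before moving to the next piece.

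I expect the main obstacle to be precisely this gluing step: ensuring that the clustering stays bounded when the colorings of the individual $F$-parts are assembled into a global $L$-coloring of $G$. The danger is that a monochromatic component, each of whose intersections with an $F$-part is small, could still be large by threading through many parts; controlling this is what the $\V$-compatibility (confining components to $s+1$ layers), the fence property (each part meets $Q$ boundedly), and the bounded-neighborhood lemma (Corollary~\ref{BoundedGrowthLtw}, bounding $\lvert N^{\ge s}(X)\rvert$ linearly in $\lvert X\rvert$, so that one can charge new precolored vertices and keep the per-segment $Y_1$-budget bounded throughout) must jointly accomplish. Everything else --- choosing $\eta^*$, setting up the restricted tree-decompositions, verifying that $(Y_1,L)$ restricted to a part is still a $\V$-standard pair --- is routine, and the heavy combinatorial lifting is deferred to Lemma~\ref{no apex 0}.
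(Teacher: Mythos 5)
There is a genuine gap, and it stems from misreading what \cref{no apex 0} provides. That lemma \emph{is} stated in the paper, and its one extra hypothesis relative to \cref{no apex} is simply that \emph{some bag of the tree-decomposition contains $Y_1$}. The intended reduction is therefore a two-line trick: replace every bag $X_t$ by $X_t\cup Y_1$. This is still a tree-decomposition, every bag now contains $Y_1$, and --- this is exactly where the hypothesis $\lvert Y_1\cap S\rvert\le\eta$ for every $s$-segment $S$ is used --- each layer $V_i$ lies in some segment, so $\lvert (X_t\cup Y_1)\cap V_i\rvert\le w+\eta$, i.e.\ the $\V$-width grows only to $w+\eta$. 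One then applies \cref{no apex 0} with $w+\eta$ in place of $w$ and takes $\eta^*:=\eta^*(s,t,w+\eta)$. Your proposal never identifies this hypothesis and never performs this step.

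Instead, you propose to localise to a single $s$-segment, build fences there, colour the $F$-parts separately, and glue. This does not work as a proof of \cref{no apex} for two reasons. First, the problem does not localise to one segment: segments of different levels overlap (each vertex lies in $s+2$ of them, one per level), so you cannot construct the colouring ``segment by segment''; \cref{inside segment suffices} only lets you \emph{verify} clustering inside segments of the appropriate level, not build the colouring independently on each. Second, the fence-plus-gluing programme you sketch is essentially the content of the \emph{proof of \cref{no apex 0} itself} (belts, interfaces, fences, isolating components via progresses, controlling growth with \cref{BoundedGrowthLtw}), and the step you yourself flag as the obstacle --- showing that a monochromatic component threading through many $F$-parts stays bounded --- is precisely the hard part of that hundred-odd-claim argument; deferring it to ``Lemma~\ref{no apex 0}'' is circular, since that lemma takes a single precoloured bag as input, not a fence decomposition. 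In short: the reduction you need is much easier than what you attempted, and what you attempted is not completed.
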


\begin{proof}[Proof of \cref{no apex} assuming \cref{no apex 0}] 
Let $s,t,w \in \mathbb{N}$ and $\eta \in \mathbb{N}_0$. Let  $\eta^*$ be the number $\eta^*(s,t,w+\eta)$ from \cref{no apex 0}. Let $G$ be a graph with no $K_{s,t}$-subgraph, $\V$ a layering of $G$, $L$ an $(s,\V)$-compatible list-assignment, $Y_1$ a subset of $V(G)$, $(Y_1,L)$ an $(s,\V)$-standard pair, and $(T,\X)$ a tree-decomposition of $G$ of $\V$-width at most $w$ such that $\lvert Y_1 \cap S \rvert \leq \eta$ for every $s$-segment $S$ of $\V$. Say $\X=(X_t: t \in V(T))$.

For each $t \in V(T)$, let $X^*_{t} := X_t \cup Y_1$.
Let $\X^* = (X^*_t: t \in V(T))$. Let $t^*$ be a node of $T$.
Then $(T,\X^*)$ is a tree-decomposition of $G$ such that $X^*_{t^*}$ contains $Y_1$.
Since for every $s$-segment $S$ of $\V$, $\lvert Y_1 \cap S \rvert \leq \eta$, the $\V$-width of $(T,\X^*)$ is at most $w+\eta$.
Therefore, by \cref{no apex 0}, there exists an $L$-coloring of $G$ with clustering $\eta^*$.
\end{proof}

We now show that \cref{no apex} implies \cref{ltwbasic}. Indeed, 
\cref{no apex} is stronger than \cref{ltwbasic} in that it allows a non-empty precolored set $Y_1$ of vertices (satisfying certain technical assumptions relative to the given layering $\V$). 

\begin{proof}[Proof of \cref{ltwbasic}] 
Given $s,t,w \in \mathbb{N}$, let $\eta$ be the number $\eta^*(s,t,w,0)$ from \cref{no apex}. 
Let $G$ be a graph with no $K_{s,t}$ subgraph and with layered treewidth at most $w$. 
By definition, $G$ has a tree-decomposition with $\V$-width at most $w$ for some layering $\V=(V_1,V_2,\dots)$ of $G$. 
Define $L(v):=[s+2]\setminus\{i\}$ for each $v\in \bigcup( V_j : j \equiv i \pmod{s+2})$ and for each $i\in[s+2]$. 
Then $L$ is an $(s,\V)$-compatible list-assignment and $(\emptyset,L)$ is an $(s,\V)$-standard pair. 
By \cref{no apex} with $Y_1=\emptyset$, there exists an $L$-coloring of $G$ with clustering $\eta$. 
The number of colors is at most $s+2$.
\end{proof}

\subsection{Progress and Gates}

Before jumping to the proof of \cref{no apex 0} we define two notions that are used in its proof.

Let $G$ be a graph, $Z \subseteq V(G)$, $\V$ a $Z$-layering, and $(Y_1,L)$ an $(s,\V)$-standard pair. For a subset $W\subseteq V(G)$ and color $i$ (not necessarily belonging to $\bigcup_{v \in V(G)}L(v)$), a \mathdef{$(W,i)$}{progress} of $(Y_1,L)$ is a pair $(Y_1',L')$ defined as follows:
\begin{itemize}
	\item Let $Y_1':=Y_1 \cup W$. 
	\item For every $y \in Y_1$, let $L'(y):=L(y)$.
	\item For every $y \in Y_1'-Y_1$, let $L'(y)$ be a 1-element subset of $L(y)-\{i\}$, which exists by (L1).
	\item For each $v \in N^{< s}(Y_1')$, let $L'(v)$ be a subset of 
	$$L(v)- \bigcup \{L'(w): w \in N_G(v) \cap (W-Y_1)\}$$ of size $\lvert L(v) \rvert - \lvert N_G(v) \cap (W-Y_1) \rvert$, which exists since $|L'(w)|=1$ for $w \in N_G(v) \cap (W-Y_1)$.
	
	\item For every $v \in V(G)-(Y_1' \cup N^{< s}(Y_1'))$, define $L'(v):=L(v)$.
\end{itemize}

The intuition here is that in a $(W,i)$-progress, vertices in $W-Y_1$ are assigned a color different from $i$. As explained in the definition, a $(W,i)$-progress $(Y_1',L')$ exists. Since $L'(v)\subseteq L(v)$ for each $v\in V(G)$, every $L'$-coloring of $G$ is an $L$-coloring of $G$. Moreover, since $L$ is an $(s,\V)$-compatible list-assignment, so too is $L'$. By construction, $(Y_1',L')$ is an $(s,\V)$-standard pair. See \citep{LW2} for a more general notion of ``progress''.

Let $(Y_1,L)$ be an $(s,\V)$-standard pair in a graph $G$.
For $y \in Y_1$, a \defn{gate} for $y$ (with respect to $(Y_1,L)$) is a vertex $v \in N_G(y)-Y_1$ such that $L(v) \cap L(y) \neq \emptyset$.
For $W\subseteq Y_1$, let $$A_{L}(W):=\{v \in V(G)-Y_1: v\text{ is a gate  for some }y \in W\text{ with respect to }(Y_1,L)\}.$$
Note that $Y_1$ is determined by $L$ by (L1).
Gates are important since this is where a monochromatic component of $G[Y_1]$ can grow into the remainder of $G$.

\section{Main Lemma}
\label{sec:mainlemma}

The next lemma is the heart of the proof of  \cref{ltwbasic}.

\begin{lemma} \label{no apex 0}
For every $s,t,w \in \mathbb{N}$, there exists $\eta^*:=\eta^*(s,t,w) \in \mathbb{N}$ such that if $G$ is a graph with no $K_{s,t}$-subgraph, $\V$ is a layering of $G$, $L$ is an $(s,\V)$-compatible list-assignment, $Y_1$ is a subset of $V(G)$, $(Y_1,L)$ is an $(s,\V)$-standard pair, and $(T,\X)$ is a tree-decomposition of $G$ of $\V$-width at most $w$ such that some bag contains $Y_1$, then there exists an $L$-coloring of $G$ with clustering $\eta^*$.
\end{lemma}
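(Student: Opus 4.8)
The plan is to produce the required $L$-coloring by an algorithm that repeatedly enlarges the precolored set $Y_1$ via \emph{progress} operations and halts when $Y_1 = V(G)$; the final (all-singleton) list-assignment is then the coloring, and since every $L'$-coloring arising from a progress step is an $L$-coloring, the only thing to control is clustering. By \cref{inside segment suffices} it suffices to bound, for each level $i \in [s+2]$ and each $s$-segment $S$ of level $i$, the size of every color-$i$ monochromatic component contained in $G[S]$; since the $s$-segments of a fixed level are pairwise disjoint and together capture every vertex that can receive that color, this localizes the problem. I would therefore run the algorithm so that the layering $\V$ is processed in one monotone sweep, in blocks of $O(s)$ consecutive layers, so that once the ``active region'' has moved past a segment, its vertices are never touched again and their monochromatic components are final.

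The geometric input is that, because $(T,\X)$ has $\V$-width $w$, for any set $A$ of consecutive layers the piece $G[\bigcup_{i\in A}V_i]$ inherits a tree-decomposition of width at most $|A|\,w-1$ (intersect each bag with $\bigcup_{i\in A}V_i$); applied to a single block this gives \emph{bounded} treewidth, so the separator machinery of \cref{sec:fences} --- in particular \cref{good tree part} and \cref{fence lemma} --- is available inside each block. The algorithm maintains a \emph{belt}: the set of already-precolored vertices lying in the $O(1)$ blocks forming the current active window. A stage of the algorithm proceeds as follows: set $Q$ to be the belt together with the vertices still needing a decision in the next segment to be finalized; take a $(T',\X',Q)$-fence $F$ in the bounded-treewidth piece attached to that block; apply a $(W,i)$-progress where $W = \bigcup_{t\in F}X_t$ together with $N^{\geq s}$ of that set --- this set is bounded by \cref{BoundedGrowth} (or \cref{BoundedGrowthLtw}), since each fence bag meets $Q$ in only $O(w)$ vertices and $|F|$ is linear in $|Q|$ --- and $i$ is the level currently being finalized, so that every newly precolored fence vertex \emph{avoids} color $i$; finally, extend the precoloring into each $F$-part separately, where only $O(w)$ vertices of $Q$ are present, using $O(1)$ further progress steps.

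The first half of the correctness proof shows that the belt never grows unboundedly. The danger is a long ``thin protrusion'' of the tree-decomposition --- a chain of bags, pairwise incomparable under inclusion --- along which precolored vertices could pile up without ever leaving the active window. This is exactly where the parade/fan lemma \cref{paradefans} enters: a sufficiently long parade whose bags are pairwise incomparable contains a large \emph{fan}, i.e.\ many nodes whose bags meet a fixed bag $X_t$ in the same number of vertices and are otherwise pairwise disjoint, and such a fan can be used either to force genuine longitudinal progress or to certify that the belt may be truncated to bounded size. Together with the fence size bound ($|F| = O(|Q|)$, each part meeting $Q$ in $O(w)$ vertices) and \cref{BoundedGrowth}, this yields a uniform bound, in terms of $s,t,w$, on the number of precolored vertices in the active region at any time.

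Granting a bounded belt, the remaining and most delicate task is to bound the two kinds of monochromatic components. A color-$i$ component confined to a level-$i$ segment $S$ cannot cross the fence of the block containing $S$, because those fence vertices were precolored to avoid color $i$; hence it lies in a single $F$-part, where only $O(w)$ vertices of $Q$ were present and the precoloring was completed in $O(1)$ progress steps --- and, thanks to the belt bound and \cref{BoundedGrowth}, each progress step enlarges a given component by only an $O(1)$ amount, so the component is bounded. This is the ``central'' case. The second, ``middle'', case concerns a component of a color that is still active throughout the current window: here one must additionally bound how far such a component can be carried across the $O(1)$ blocks of the active window before its segment is finalized, again by bounding the number of contributing progress steps and the size of each contribution. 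The real obstacle is the bookkeeping needed to reconcile the three competing demands --- (i) monotone longitudinal progress, (ii) a permanently bounded belt, and (iii) each monochromatic component meeting only boundedly many stages --- which is why the argument is organized as a carefully staged algorithm verified by a long chain of interlocking claims rather than a short induction, and why the final bound $\eta^*(s,t,w)$ is obtained only after assembling all of these estimates and invoking \cref{inside segment suffices}.
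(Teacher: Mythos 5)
Your outline correctly names several ingredients of the argument (localization to $s$-segments via \cref{inside segment suffices}, progress operations, fences, the parade/fan lemma, and the split into two kinds of monochromatic components), but it is not a proof: the two places where the lemma is actually hard are either misidentified or replaced by an assertion. First, the organizing principle of a ``monotone sweep through the layering'' does no work. Because $L$ is $(s,\V)$-compatible, every color-$i$ monochromatic component is already confined to $s+1$ consecutive layers; the direction in which a component can grow without bound is along the tree $T$, inside a single segment. Accordingly the paper's algorithm is a depth-first traversal of $T$ (processing all belts simultaneously at each node $t$), and your statement that once the active window ``has moved past a segment, its vertices are never touched again and their monochromatic components are final'' is precisely what needs proof, not a consequence of the sweep order.

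Second, the sentence ``each progress step enlarges a given component by only an $O(1)$ amount, so the component is bounded'' conceals the core difficulty. A single component can meet $X_{V(T_t)}$ for unboundedly many nodes $t$, so a per-step bound does not yield a global bound; worse, isolating one component by precoloring its gates can cause two other same-colored components to merge, and the order in which components are isolated at a node determines which ones get isolated at all. This is why the paper introduces the linear order $\sigma$, the ``fake edges'' $E^{(i)}_{j,t}$ and the associated pseudocomponents (to anticipate future mergers), and then spends the second half of the proof --- the signature, truncation and blocker claims, culminating in the bounds on the height and degree of the tree $T_M$ --- showing that the set $K^*(M)$ of nodes at which a component $M$ genuinely acquires new vertices is bounded. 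Your proposal contains nothing playing this role: the fan lemma (\cref{paradefans}) is invoked only as a vague certificate that ``the belt may be truncated,'' whereas in the paper it enters through the blocker definition used to limit the creation of fake edges. Without a mechanism bounding the number of growth events of a single component across the tree, the clustering bound does not follow.
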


All of \cref{sec:mainlemma} is dedicated to the proof of \cref{no apex 0}, which is completed at the end of the section. We now describe the structure of \cref{sec:mainlemma}. In \cref{subsec:const_main_lemma}, we define constants, including the number $\eta^*$ stated in \cref{no apex 0}, various functions and other notions that are used throughout the proof of \cref{no apex 0}. We also give a superficial sketch of the proof. \cref{no apex 0} is proved by constructing the desired coloring using an algorithm. We give an intuition of the algorithm in \cref{subsec:intuition_algo}, where notation is defined informally. The reader should consult \cref{subsec:alog} for formal definitions and the complete statement of the algorithm. 

The rest of \cref{sec:mainlemma} is dedicated to showing that the algorithm indeed constructs the desired coloring. The proof is split into a series of claims that can be verified independently. The claims are grouped into subsections according to the required knowledge about the parts of the algorithm or the partial goal of the proof. In each subsection that requires the knowledge of a new portion of the algorithm, 
for the convenience of the reader, we first restate the relevant part of the algorithm before proving the related claims. Later subsections might require the knowledge of other parts of the algorithm stated in earlier subsections.

These claims can be read in two parts. 
Sections~\ref{subsec:claims_fences}--\ref{subsec:PrecoloredSets} show that one type of monochromatic component has bounded size and that the number of precolored vertices in the ``interesting region'' is always bounded.
Then Sections~\ref{subsec:KKstar}--\ref{subsec:SecondType} show that the second type of monochromatic component has bounded size.
These two parts use different techniques and can be read separately, though reading the first part might be helpful for being acquainted with our algorithm.

\subsection{Constants, functions and notions} \label{subsec:const_main_lemma}

Let $s,t,w \in \mathbb{N}$.
We start by defining several values used throughout the proof:
	\begin{itemize}
		\item Let $f$ be the function $f_{s,t,w}$ in \cref{BoundedGrowthLtw}.
		\item Let $f_0$ be the identity function $f$ on $\mathbb{N}_0$; for every $i \geq 1$, let $f_i$ be the function from $\mathbb{N}_0$ to $\mathbb{N}_0$ such that $f_i(x)=f_{i-1}(x)+f(f_{i-1}(x))$ for every $x \in \mathbb{N}_0$.
		\item Let $s^*:=24(s+2)$.
		\item Let $w_0:=12w\,s^*+13$.
		\item Let $g_0: \mathbb{N}_0 \rightarrow \mathbb{N}_0$ be the function defined by $g_0(0) :=w_0$ and $g_0(x):=f_1(g_0(x-1))+2w_0$ for every $x \in \mathbb{N}$.
		\item Let $g_1: \mathbb{N}_0 \rightarrow \mathbb{N}_0$ be the function defined by $g_1(0):=g_0(w_0)$ and $g_1(x):=f_1(g_1(x-1)+3w_0)$ for every $x \in \mathbb{N}$.
		\item Let $g_2: \mathbb{N}_0 \times \mathbb{N}_0 \rightarrow \mathbb{N}_0$ be the function defined by  $g_2(0,y):=y+w_0$ for every $y \in \mathbb{N}_0$, and $g_2(x,y):=f_2(g_2(x-1,y))+3w_0$ for every $x \in \mathbb{N}$.
		\item Let $g_3: \mathbb{N}_0 \rightarrow \mathbb{N}_0$ be the function defined by $g_3(0):=g_1(s+2)$, and  $g_3(x):=g_2(s+2,g_3(x-1))$ for every $x \in \mathbb{N}$.
		\item Let $\eta_1:=w_0g_1(s+2)+3w_0g_3(4w_0)+w_0$.
		\item Let $g_4: \mathbb{N}_0 \times \mathbb{N}_0 \rightarrow \mathbb{N}_0$ be the function defined by
		 $g_4(0,y):=y$ and $g_4(x,y):=(s+2) \cdot f_1(g_4(x-1,y)+2\eta_1)$ 
		   for every $x \in \mathbb{N}$ and $y \in \mathbb{N}_0$,
		\item Let $g_5: \mathbb{N}_0 \rightarrow \mathbb{N}_0$ be the function defined by $g_5(x):=g_4(s+2,g_4(w_0,x))+2g_3(4w_0)$ for every $x \in \mathbb{N}_0$. 
		
		\item Let $g_6: \mathbb{N}_0 \rightarrow \mathbb{N}_0$ be the function defined by $g_6(0):=w_0$, and  $g_6(x):=g_5(g_6(x-1))+w_0$ for every $x \in \mathbb{N}$.
		\item Let $\eta_2:=g_6(5w_0(2w_0+1)+9w_0)$.
		\item Let $\eta_3:=(\eta_1+2\eta_2+1)w_0$.
		\item Let $g_7: \mathbb{N}_0 \times \mathbb{N}_0 \rightarrow \mathbb{N}_0$ be the function defined by
		$g_7(0,y):=y$ and $g_7(x,y):=f_1(g_7(x-1,y))+w_0$ for every $x \in \mathbb{N}$ and $y \in \mathbb{N}_0$.
		\item Let $g_8: \mathbb{N}_0 \rightarrow \mathbb{N}_0$ 
		be the function defined by $g_8(0):=\eta_1+2\eta_2$ and $g_8(x):=g_7(s+2,g_8(x-1))$ for every $x \in \mathbb{N}$.
		\item Let $g_9: \mathbb{N}_0 \rightarrow \mathbb{N}_0$ be the function defined by $g_9(0):=2g_8(w_0)+\eta_2$ and  $g_9(x):=f_1(g_9(x-1))+w_0$ for every $x \in \mathbb{N}$.
		\item Let $\eta_4:=g_9(s+2)$.
		\item Let $g_{10}: \mathbb{N}_0 \rightarrow \mathbb{N}_0$  be the function defined by  
		$g_{10}(0) := 2\eta_4$ and $g_{10}(x) := f_{s+3}(g_{10}(x-1)+\eta_3+2\eta_4)$ for every $x \in \mathbb{N}$.
		\item Let $\eta_5 := 3g_{10}(\eta_3)$.
		\item Let $\psi_1: [0,w_0]^2 \rightarrow {\mathbb N}$ be the function such that for every $(x_1,x_2) \in [0,w_0]^2$, $\psi_1(x_1,x_2) := (10(f_{w_0}(\eta_5))^{3w_0+3}w_0^3 + w_0) \cdot (\min\{x_1,x_2\} \cdot (w_0+1) \cdot (f(\eta_5)+1)^{w_0+1}+1)$.
		
		\item Let $\psi_2: [0,w_0]^2 \rightarrow {\mathbb N}$ be the function such that for every $(x_1,x_2) \in [0,w_0]^2$, $\psi_2(x_1,x_2) := \psi_1(100x_1,100x_2)$.
		
		\item Let $\psi_3: [0,w_0]^2 \rightarrow {\mathbb N}$ be the function such that for every $(x_1,x_2) \in [0,w_0]^2$, $\psi_3(x_1,x_2) := \psi_1(x_1,x_2) \cdot \frac{(f(\eta_5))^{w_0}}{\psi_1(0,0)}$.
		\item Let $\kappa_0 := \psi_2(w_0,w_0)+\psi_3(w_0,w_0)$. 
		\item Let $\kappa_1 := w_0^2 \cdot N_{\ref{paradefans}}(w_0,\kappa_0+w_0+3)$, where $N_{\ref{paradefans}}$ is the number $N$ mentioned in \cref{paradefans}.
		\item Let $\phi_1: {\mathbb N} \rightarrow \mathbb{N}$ be the function defined by $\phi_1(1):=\kappa_1$, and $\phi_1(x+1) := \kappa_1 \cdot (1+\sum_{i=1}^{x}\phi_1(i))+1$ for every $x \in {\mathbb N}$.
		\item Let $\phi_2: {\mathbb N}_0 \rightarrow \mathbb{N}_0$ be the function defined by $\phi_2(0):=0$, and $\phi_2(x) := ((\phi_1(x)+1)w_0f(\eta_5)+1) \cdot (1+\sum_{i=1}^{x-1}\phi_2(i))$ for every $x \in {\mathbb N}$.
		\item Let $\phi_3: {\mathbb N} \rightarrow \mathbb{N}$ be the function defined by $\phi_3(x):= (1+\sum_{i=1}^x\phi_2(i)) \cdot(\phi_1(x)+1) \cdot x$ for every $x \in {\mathbb N}$.
		\item Let $h: {\mathbb N}_0 \rightarrow {\mathbb N}$ be the function defined by $h(0) := 2^{2w_0+6} (\phi_3(w_0) \cdot f(\eta_5) \cdot w_0^2)^2(\eta_5+3)$, and for every $x \in {\mathbb N_0}$, $h(x+1) := (h(x)+2) \cdot 2^{2w_0+7} (\phi_3(w_0) \cdot f(\eta_5) \cdot w_0^2)^2(\eta_5+3)$.
		
		\item Let $\eta_6:=h(w_0-1)$. 
		\item Let $\eta_7:=\eta_5 \cdot (f(\eta_5))^{\eta_6}$.
		\item Define $\eta^* := \eta_4+\eta_7$.
	\end{itemize}

Let $G$ be a graph with no $K_{s,t}$-subgraph, $\V=(V_1,V_2,\dots)$ a layering of $G$, $L$ an $(s,\V)$-compatible list-assignment, $Y_1$ a subset of $V(G)$, $(Y_1,L)$ an $(s,\V)$-standard pair, and $(T,\X)$ a tree-decomposition of $G$ of $\V$-width at most $w$ such that some bag contains $Y_1$, where  $\X=(X_t: t \in V(T))$.
For every integer $i$ with $i \not \in [\lvert \V \rvert]$, define $V_i=\emptyset$.

We also use the following notation:
    \begin{itemize}
        \item For every $R \subseteq V(T)$, define $X_R := \bigcup_{t \in R}X_t$.
        \item For every $U \subseteq V(G)$, define $\X|_U=(X_t \cap U: t \in V(T))$.
        \item Let $r^*$ be the node of $T$ such that $X_{r^*}$ contains $Y_1$. 
    Consider $T$ to be rooted at $r^*$. 
Orient the edges of $T$ away from $r^*$. 
        \item For each node $q$ of $T$, let $T_q$ be the subtree of $T$ induced by $q$ and all the descendants of $q$.
        \item For each vertex $v$ of $G$, let $r_v$ be the node of $T$ closest to $r^*$ with $v \in X_{r_v}$. 
    \end{itemize}

\medskip
We construct the desired $L$-coloring of $G$ by an algorithm. We now give an informal description of several notions that are used in the algorithm below. The layers are partitioned into pairwise disjoint belts, where each belt consists of a very large (but still bounded) set of consecutive layers. An interface consists of roughly the last one third of the layers within one belt, along with the first roughly one third of the layers in the next belt. Then the interior of an interface is the last few layers of the first belt, along with the first few layers of the next belt. The algorithm also uses a linear ordering of $V(G)$ that never changes. We associate with each subgraph of $G$ the first vertex in the ordering that is in the subgraph. We use this vertex to order subgraphs, whereby a  subgraph that has a vertex early in the ordering is considered to be ``old''. We now formalize these ideas. 

\medskip 
Define $\sigma_T$ to be a depth-first-search order of $T$ rooted at $r^*$.
That is, $\sigma_T(r^*)=0$, and if a node $t$ of $T$ is visited earlier than a node $t'$ of $T$ in the depth-first-search starting at $r^*$ then $\sigma_T(t)<\sigma_T(t')$.
For every node $t$ of $T$, we define $i_t$ to be the integer such that $\sigma_T(t)=i_t$. 

Define $\sigma$ to be a linear order of $V(G)$ such that for any distinct vertices $u,v$, if $\sigma_T(r_u)<\sigma_T(r_v)$, then $\sigma(u)<\sigma(v)$.
For every subgraph $H$ of $G$, define $\sigma(H):=\min\{\sigma(v): v \in V(H)\}$.
Note that for any $k \in \mathbb{N}_0$, any set $Z$ consisting of $k$ consecutive layers, any $t \in V(T)$, any $(s,\V)$-standard pair $(Y',L')$, any monochromatic component $M$ with respect to any $L'$-coloring in $G[Y']$ intersecting $Z \cap X_t$, there exist at most $\lvert Z \cap X_t \rvert \leq kw$ monochromatic components $M'$ with respect to any $L'$-coloring in $G[Y']$ intersecting $Z \cap X_t$ such that $\sigma(M')<\sigma(M)$.

A \defn{belt} of $(T,\X)$ is a subset of $V(G)$ of the form $\bigcup_{i=a+1}^{a+s^*}V_i$ for some nonnegative integer $a$ with $a \equiv 0$ (mod $s^*$).
(Recall that $s^*$ has been defined earlier in this section.)
Note that each belt consists of $s^*$ layers.
So for every belt $B$ of $(T,\X)$, $(T,\X|_B)$ is a tree-decomposition of $G[B]$ of width at most $s^*w$.

For every $j \in [\lvert \V \rvert-1]$, let 
\begin{equation*}
I_j:=\bigcup_{i=(j-\frac{1}{3})s^*}^{(j+\frac{1}{3})s^*}V_i \quad\quad\text{and}\quad\quad
\overline{I_j} := \bigcup_{i=(j-\frac{1}{3})s^*-(2s+5)}^{(j+\frac{1}{3})s^*+(2s+5)}V_i.
\end{equation*}
$I_j$ is called the \defn{interface} at $j$. 
(Note that $s^*$ is a multiple of 3, so the indices in the definition of $I_j$ and $\overline{I_j}$ are integers.)\ 
For every $j \in [0,\lvert \V \rvert-1]$, define 
\begin{equation*}
I_{j,0}:=\bigcup_{i=js^*-(s+2)+1}^{js^*}V_i\quad\quad\text{and}\quad\quad
\overline{I_{j,0}}:=\bigcup_{i=js^*-2(s+2)+1}^{js^*}V_i,
\end{equation*}
and define 
\begin{equation*}
I_{j,1}:=\bigcup_{i=js^*+1}^{js^*+s+2}V_i\quad\quad\text{and}\quad\quad
\overline{I_{j,1}}:=\bigcup_{i=js^*+1}^{js^*+2(s+2)}V_i.
\end{equation*}
Also define $I_j^\circ := I_{j,0} \cup I_{j,1}$ and $\overline{I_j^\circ} := \overline{I_{j,0}} \cup \overline{I_{j,1}}$.
For every $j \in [\lvert \V \rvert-1]$, define $\Se_j^\circ$ to be the set of all $s$-segments $S$ intersecting $I_j^\circ$.
Note that $I_j^\circ \subseteq \bigcup_{S \in \Se_j^\circ}S \subseteq \overline{I_j^\circ} \subseteq I_j$ for every $j \in [\lvert \V \rvert-1]$.
In addition, for every $j \in [\lvert \V \rvert-1]$, $I_j$ is contained in a union of two consecutive belts, so for every $t \in V(T)$, $\lvert I_j \cap X_t \rvert \leq 2s^*w \leq w_0$.
(Recall that $w_0$ has been defined earlier in this section.)

For a collection $E$ of 2-element subsets of $V(G)$ and a coloring $c$ of $G$, a \defn{monochromatic $E$-pseudocomponent} (with respect to $c$) is a component of the subgraph of $G+E$ induced by a color class of $c$.

\subsection{Intuition for the algorithm} \label{subsec:intuition_algo}

We now give some intuition about the algorithm which is formally stated in \cref{subsec:alog}. The input to the algorithm is a tree-decomposition of $G$ with bounded layered width, a set $Y_1$ of precolored vertices, and a list-assignment $L$ of $G$. Throughout the algorithm, $Y^{(i,\star,\star)}$ refers to the current set of colored vertices, where the superscript $(i,\star,\star)$ indicates the stage of the algorithm. This vector is incremented in lexicographic order as the algorithm proceeds. The algorithm starts with stage $(0,-1,0)$ which initializes several variables. 
Throughout the algorithm, a monochromatic component refers to a component of the subgraph of $G$ induced by the current precolored set of vertices (or to be more precise, vertices with one color in their list). 

The algorithm does a depth-first-search on the tree $T$ indexing the tree-decomposition, considering the nodes $t$ of $T$ with $\sigma_T(t)=i$ in turn (for $i=0,1,2,\dots$).
The algorithm first builds fences around subgraphs of vertices in bags of nodes of some subtrees rooted at node $t$, where the subtrees vary for different belts.
At stage $(i,-1,\star)$, the algorithm tries to isolate the $k$-th oldest component intersecting a segment that intersects $I_j^\circ$. 
Then at stage $(i,0,\star)$, the algorithm isolates the other monochromatic components intersecting $X_t$. In both these stages, $\star$ refers to the color given to vertices around the component that we are trying to isolate. Then stage $(i,\star,\star)$ isolates the fences associated with subtree $T_{j,t}$. Then we add ``fake edges'' to the graph between precolored vertices to merge some monochromatic components according to certain rules.
Finally, the algorithm moves to the next node in the tree, and builds new fences with respect to the next node in the tree. 


Now we give some more detailed intuition of the algorithm. We simplify some notation in this explanation for ease of presentation. The formal description of the algorithm is described later. During the algorithm, we construct the following.

\begin{itemize}
	\item Subsets $Y^{(i,j,k)}$ of $V(G)$ for $i \in [0,\lvert V(T) \rvert]$, $j \in \{-1\} \cup [0,\lvert V(T) \rvert+1]$ and $k \in [0,s+2]$:
		The set $Y^{(i,j,k)}$ is the set of precolored vertices at stage $(i,j,k)$, where a vertex is said to be \defn{precolored} if its list contains only one color.
		At each stage, we precolor more vertices, so these sets have the property that $Y^{(i,j,k)} \subseteq Y^{(i',j',k')}$ if $(i,j,k)$ is lexicographically smaller than $(i',j',k')$.

	\item Subsets $F_{j,t}$ of $V(T)$ for $j \in [\lvert \V \rvert-1]$ and $t \in V(T)$:
		Each of these sets $F_{j,t}$ is a union of fences.
		For any fixed $j \in [\lvert \V \rvert-1]$ and node $t$ of $T$, assuming a subset $F_{j,p}'$ of $V(T)$ is given, where $p$ is the parent of $t$, we do the following:
		\begin{itemize}
			\item We first construct $F_{j,p}$ as follows:
				We consider the $F_{j,p}' \cap V(T_t)$-parts of $T_t$ containing $t$.
				Note that it is possible to have more than one such part since $t$ could be in $F_{j,p}'$.
				Then for each such part $T'$, we restrict ourselves to the subgraph of $G$ induced by $X_{V(T')} \cap \overline{I_j}$.
				That is, the subgraph induced by the vertices in the bags of the nodes in the part $T'$ and in the closure of the interface at $j$.
				Note that this subgraph has a tree-decomposition naturally given by $(T,\X)$.
				We construct a fence $F_{j,T'}$ in this tree-decomposition based on the current set of precolored vertices $Y^{(i,-1,0)}$ in this subgraph, where $i=\sigma(t)$. 
				Then $F_{j,p}$ is the union of $F'_{j,p}$ and $F_{j,T'}$ among all such parts $T'$.
			\item When $F_{j,p}$ is defined, we extend our precolored set $Y^{(i,-1,0)}$ to $Y^{(i+1,-1,0)}$ by some procedure that is explained later. After $Y^{(i+1,-1,0)}$ is given, we construct $F_{j,t}'$ in a similar way as we construct $F_{j,p}$ from $F_{j,p}'$, except we replace $F_{j,p}'$ by $F_{j,p}$ and replace $Y^{(i,-1,0)}$ by $Y^{(i+1,-1,0)}$.
		\end{itemize}
		The key idea is to ensure that for every $F_{j,p} \cap V(T_t)$-part containing $t$, the subgraph induced by the intersection of $\overline{I_j}$ and bags of nodes in this part only contains few precolored vertices.
	\item Subtrees $T_{j,t}$ of $T_t$ and the ``boundary'' $\partial T_{j,t}$ of $T_{j,t}$, for $j \in [\lvert \V \rvert-1]$ and $t \in V(T)$:
		$T_{j,t}$ is the union of all $F_{j,p} \cap V(T_t)$-parts containing $t$, and $\partial T_{j,t}$ is the union of the boundary of those parts but we do not include $t$.
  (Note that in \cref{sec:fences}, we define $\partial T'$ for a part $T'$ when a fence is given. Here $T_{j,t}$ is a union of parts instead of a part, so the notion $\partial T_{j,t}$ is a new notion and was not defined in \cref{sec:fences}.)
		Roughly speaking, $T_{j,t}$ is a small portion of $T_t$ containing $t$, and $X_{V(T_{j,t})}$ contains only few precolored vertices by the construction of the fences.
	\item Subsets $Z_t$ of $X_{V(T_t)}$ for every $t \in V(T)$:
		$Z_t$ consists of the vertices in $X_{V(T_t)}$ that are either not in the interior of any interface, or in the interior of the interface at $j$ for some $j$ but also in $X_{V(T_{j,t})}$.
	
	\item Subsets $D^{(*,*,*)}$ of $V(G)$: 
		These sets help to extend the current set of precolored vertices.
		We explain the usage of these sets later.
	\item Subsets $S_{j,t}^{(*,*)}$ of $V(G)$ for $j \in [\lvert \V \rvert-1]$ and $t \in V(T)$: These sets play a minor role in the algorithm. 
		Some vertices trigger the extension of the coloring during the algorithm and become ``useless'' afterwards.
		These sets collect such vertices and are used in the proof in order to bound the size of the monochromatic components.
	\item Sets $E_{j,t}^{(i)}$ and $E_{j,t}^{(i,*)}$ of ``fake edges'', for $i \in [0,\lvert V(T) \rvert]$, $j \in [\lvert \V \rvert-1]$ and $t \in V(T)$:
		Each fake edge is a pair of two distinct vertices that are contained in the same $s$-segment in $\Se_j^\circ$.
		At any moment of the algorithm, the current set of precolored vertices form some monochromatic components, and it is possible that more than one of them will be merged into a bigger monochromatic component after we further color more vertices.
		These fake edges join current monochromatic components that will possibly be merged into a bigger monochromatic component in the future to form ``pseudocomponents''. Whether a pair of two vertices is a fake edge depends on $j$ and $t$.
\end{itemize}

Now we sketch the algorithm and explain the intuition.
See \cref{subsec:alog} for a detailed and formal description of the algorithm.

Recall that during the algorithm, $Y^{(i,*,*)}$ always contains all vertices of $G$ contained in $X_t$ with $\sigma_T(t) \leq i$. 
So at the very beginning of the algorithm, we color every uncolored vertex in $X_{r^*}$ to obtain $Y^{(0,-1,0)}$.
Then we execute the algorithm for $i=0,1,2,\dots$.
Fix $i\in\mathbb{N}_0$ and the node $t$ of $T$ with $\sigma_T(t)=i$. 
Here we explain what the algorithm does at this stage. 
\begin{itemize}
	\item First, for each $j \in [\lvert \V \rvert-1]$, we build fences $F'_{j,p}$ and $F_{j,p}$, where $p$ is the parent of $t$, and subtrees $T_{j,t}$ of $T_t$.
		Note that it defines $Z_t$.
		We  further color vertices in $Z_t$.
		The idea for constructing those fences and $Z_t$ is to ensure that few precolored vertices in $Z_t$ are close to the boundary of two different belts.
	\item Then we move to stage $(i,-1,*)$.
		This stage  ``isolates'' the monochromatic pseudocomponents in $Z_t$ intersecting $X_t$ and some segment in $\Se_j^\circ$ for some $j \in [\lvert \V \rvert-1]$.
		That is, we color some vertices in $Z_t$ to stop the growth of those monochromatic components by coloring their gates using a color different from the color of the monochromatic pseudocomponent.
		We cannot isolate all such components at once, since some vertex can be a gate of two monochromatic pseudocomponents with different colors. 
		We break ties according to the ordering $\sigma$ of those monochromatic components.
		For each $j \in [\lvert \V \rvert-1]$, we isolate the monochromatic pseudocomponent with minimum $\sigma$-order, and then isolate the monochromatic pseudocomponent with second minimum $\sigma$-order and so on.
		Say we are isolating the $k$-th monochromatic pseudocomponent among all those monochromatic pseudocomponents intersecting $X_t$ and some segment in $\Se_j^\circ$ for each $j$.
		Then we denote by $W_0^{(i,-1,k)}$ the set consisting of the vertices in the $k$-th monochromatic pseudocomponent for all $j$.
		Then for each $j$, there uniquely exists a color $q+1$ such that this monochromatic pseudocomponent uses color $q+1$.
		We write $W_1^{(i,-1,k,q+1)}$ to denote the vertices in $W_0^{(i,-1,k)}$ with color $q+1$.
		Then we color the gates of $W_1^{(i,-1,k,q+1)}$ contained in $Z_t$ by using a color different from $q+1$.
		Then we move to color the $(k+1)$-th monochromatic pseudocomponent, and so on.
		Note that the $(k+1)$-th monochromatic pseudocomponent can become larger due to the above process of coloring gates.
	\item Now we move to stage $(i,0,*)$.
		In this stage we isolated all monochromatic pseudocomponents in $Z_t$ intersecting $X_t$.
		Note that all monochromatic pseudocomponents intersecting some $s$-segment in $\Se_j^\circ$ for some $j \in [\lvert \V \rvert-1]$ were isolated in the previous stage.
		So all the remaining monochromatic pseudocomponents are disjoint from the $s$-segments in $\Se_j^\circ$ for every $j \in [\lvert \V \rvert-1]$, and hence they are disjoint from all fake edges.
		Hence all monochromatic pseudocomponents that are dealt with at this stage are indeed monochromatic components.
		We isolate all such monochromatic components with color 1, and then the ones with color 2 and so on.
	\item Now we move to stage $(i, \geq 1, *)$.
		In this stage we restrict to $X_{V(T_{j,t})} \cap I_j$, for each $j \in [\lvert \V \rvert-1]$.
		We denote the union of them by $W_4^{(i)}$.
		Now we fix $j$ to be an integer in $[\lvert \V \rvert-1]$.
		The objective is that at the end of this stage, the monochromatic components contained in $X_{V(T_{j,t})} \cap I_j$ intersecting $\partial T_{j,t}$ will not have more new vertices in $X_{V(T_{j,t})} \cap I_j$ in the future.
		The idea for achieving this is to color $X_{t'} \cap I_j$ for some nodes $t' \in \partial T_{j,t}$ in the following way.
		Consider sets $W_3^{(i,*)}$ of ``base nodes''.
		At the beginning (that is, for $W_3^{(i,-1)}$), the only base node is $t$.
		If there exists a monochromatic path $P$ from the bag of a base node to the bag of a node $t'$ in $\partial T_{j,t}$, then we add $t'$ into the set of base nodes (i.e. we add $t'$ into $W_3^{(i,*)}$ to obtain $W_3^{(i,*+1)}$) and color $X_{t'} \cap I_j$, unless the vertex in $V(P) \cap X_{t'}$ belongs to $D^{(i,*,*)}-X_t$ or is in the intersection of $D^{(i,*,*)} \cap X_t$ and a bag of a some base nodes at stage $(i',*,*)$ for some $i'<i$, where $D^{(i,*,*)}$ is a special set of vertices roughly consisting of all vertices contained in the previous monochromatic paths for generating new base nodes.
		Then we isolate those newly colored vertices by coloring their gates.
		It might create new monochromatic paths from base nodes to other node in $\partial T_{j,t}$.
		We repeat the above process until no new base nodes can be added.
		The intuition of considering $D^{(i,*,*)}$ is to make sure that we only include ``necessary'' nodes in $\partial T_{j,t}$ in the set of base nodes in order to limit the size of the set of precolored vertices.
	\item Then we move to the stage for adding new fake edges.
 We first explain some intuition.
		Recall that when coloring more vertices, we always try to isolate some monochromatic pseudocomponents.
		But such isolation is ``incomplete'' because of the definition of $Z_t$.
		In addition, the isolation process depends on the order of the pseudocomponents.
		So we require some extra information to indicate whether two monochromatic pseudocomponents will be merged in the future so that they should be treated the same when we order the pseudocomponents.
		This is the purpose of the fake edges.
		Whether a pair of vertices forms a fake edge depends on where we ``observe'' it. 
		That is, when ``standing at'' a node $t'$ of $T$, we consider whether we should add a fake edge to join two vertices if in the future there will be monochromatic paths in $X_{V(T_{t})}$ connecting these two vertices. 
		So it is only relevant when $t' \in V(T_t)$.
		In addition, all nodes $t' \in V(T_t)-\{t\}$ will get the same new fake edges at this stage; and the sets $E^{(i_t)}_{j,t'}$ and $E^{(i_t)}_{j,t''}$ of fake edges for two different nodes $t'$ and $t''$, respectively, are different only when $t$ is a descendant of the common ancestor of $t'$ and $t''$ with the largest $\sigma_T$-value.
		
		Consider a node $t' \in V(T_t)$.
		Each fake edge is a pair of distinct vertices $u,v$ in $X_{t''} \cap S$ for some $j \in [\lvert \V \rvert-1]$, $t'' \in \partial T_{j,t}$ and $S \in \Se_j^\circ$.
		Also, $u$ and $v$ must be already colored by the same color.
		So there exist monochromatic pseudocomponents $M_u$ and $M_v$ containing $u$ and $v$, respectively.	
		And $M_u \neq M_v$, otherwise $u$ and $v$ are already contained in the same monochromatic pseudocomponent and there is no need to add a fake edge $uv$.
		And $S$ is chosen so that the level of $S$ is the color of $u$ and $v$.
  So no matter how $M_u$ or $M_v$ will grow, they are contained in $S$.
		By symmetry, we may assume $\sigma(M_u)<\sigma(M_v)$.
		Note that we want to add a fake edge to link $u$ and $v$ if it is possible that $M_u$ and $M_v$ will be joined into one monochromatic component in the future.
		So each $M_u$ and $M_v$ must have gates contained in $X_{V(T_{t''})}-X_{t''}$.
		Recall that during the isolation process, the monochromatic pseudocomponent with minimum $\sigma$-order will be isolated first, so it will not grow in the future.
		Hence $M_u$ and $M_v$ cannot be the monochromatic pseudocomponent with the minimum $\sigma$-order.	

		In addition, the need for adding a fake edge is ``triggered'' by another monochromatic pseudocomponent intersecting $X_{t''}$ with smaller $\sigma$-value.
		That is, there exists a monochromatic pseudocomponent intersecting $X_{t''}$ with smaller $\sigma$-order than $\sigma(M_u)$ and $\sigma(M_v)$ so that isolating this monochromatic pseudocomponent might lead to possible needs for coloring more vertices to eventually merge $M_u$ and $M_v$  into one monochromatic pseudocomponent.
		Hence, there is no danger for merging $M_u$ and $M_v$ into one monochromatic pseudocomponent in the future if the gates of those monochromatic pseudocomponents with smaller $\sigma$-value than $\sigma(M_u)$ and $\sigma(M_v)$ are separated by a ``thick buffer'' from the gates of $M_u$ and $M_v$.
		Here a ``thick buffer'' means a sequence of nodes that is similar to a fan defined in the previous section, so it is a sequence of many nodes of $T$ such that they more or less have disjoint bags, and there exist a directed path in $T$ passing through all of them. 

		Moreover, due to the way we isolate monochromatic pseudocomponents in stages $(\leq i, *, *)$, there is no need to add the fake edge $\{u,v\}$ if $T_{j,t} \subseteq T_{j,q}$ for some node $q$ with $i_q<i_t=i$, and there exists some ``evidence'' that if we decide not to add a fake edge to join $M_u$ and $M_v$ at the stage $(i_q,*,*)$, then there is no need to add the fake edge now.
	\item Finally, color all vertices in the bag of the node with $\sigma_T$-value $i+1$, build fences $F_{j,t}'$ and move to the next $i$.
\end{itemize}


\subsection{Formal description of the algorithm} \label{subsec:alog}

This subsection gives the formal description of the algorithm.
Before reading this section, we recommend reading the intuitive description of the algorithm in  \cref{subsec:intuition_algo}.
Some other comments for certain parts of the algorithm that might be helpful for understanding the algorithm will be included in the footnotes when they are described.
Note that in future subsections, we will prove why this algorithm constructs a desired coloring and recall parts of the algorithm.

\medskip
\noindent\textbf{\boldmath Stage $(0,-1,0)$: Initialization}
\begin{itemize}
	\item Let $(Y^{(0,-1,0)},L^{(0,-1,0)})$ be an $(X_{r^*},0)$-progress of $(Y_1,L)$.
	\item Let $D^{(0,0,0)} := \emptyset$. 
	\item Let $F_{j,0} := \emptyset$ for every $j \in [\lvert \V \rvert-1]$.
	\item Let $S^{(0,0)}_{j,t} := \emptyset$, $S^{(0,1)}_{j,t}:=\emptyset$ and $S^{(0,2)}_{j,t} := \emptyset$ for every $j \in [\lvert V \rvert-1]$ and $t \in V(T)$.
	\item Let $E^{(0)}_{j,t} := \emptyset$ for every $j \in [\lvert \V \rvert-1]$ and $t \in V(T)$.
\end{itemize}
For $i=0,1,2,\dots$, let $t$ be the node of $T$ with $\sigma_T(t)=i$ and perform the following steps:\\[1ex]
\hspace*{4mm} 
\textbf{\boldmath Stage: Building Fences}
		\begin{itemize}
 			\item Define the following:
				\begin{itemize}
					\item For every $j \in [\lvert \V \rvert-1]$, 
						\begin{itemize}
\item Let $F'_{j,p}:=F_{j,0}$ if $t=r^*$, and let $p$ be the parent of $t$ if $t \neq r^*$, \footnote{Note that when $t \neq r^*$, $p$ has been seen earlier so $F'_{j,p}$ has been defined.}
\item For every $F'_{j,p} \cap V(T_t)$-part $T'$ of $T_t$ containing $t$, define $F_{j,T'}$ to be a\\
   $(T', \X|_{X_{V(T')} \cap \overline{I_j}},  (Y^{(i,-1,0)} \cup X_{\partial T'}) \cap X_{V(T')} \cap \overline{I_j})$-fence. 
							\item Let $F_{j,p} := F'_{j,p} \cup \bigcup_{T'}F_{j,T'}$, where the union is over all $F'_{j,p} \cap V(T_t)$-parts $T'$ of $T_t$ containing $t$. 
							\item Let $T_{j,t} := \bigcup_{T'}T'$, where the union is over all $F_{j,p} \cap V(T_t)$-parts $T'$ of $T_t$ containing $t$.
							\item Let $\partial T_{j,t} := \bigcup_{T'} (\partial T'-\{t\})$, where the union is over all $F_{j,p} \cap V(T_t)$-parts $T'$ of $T_t$ containing $t$.
						\end{itemize}
					\item Let $Z_t := (X_{V(T_t)}-\bigcup_{j=1}^{\lvert \V \rvert-1}I_j^\circ) \cup (\bigcup_{j=1}^{\lvert \V \rvert-1} (I_j^\circ \cap X_{V(T_{j,t})}))$. 
				\end{itemize}
		
\hspace*{-6mm} 
\noindent\textbf{\boldmath Stage $(i,-1,\star)$: Isolate the $k$-th Oldest Component Intersecting a Segment in $\Se_j^\circ$:}
 				\item
				For each $k \in [0,w_0-1]$, define the following:
				\begin{itemize}
					\item For every $j \in [\lvert \V \rvert-1]$, define $M^{(t)}_{j,k}$ to be the monochromatic $E_{j,t}^{(i)}$-pseudocomponent in $G[Y^{(i,-1,k)}]$ such that $\sigma(M^{(t)}_{j,k})$ is the smallest among all monochromatic \linebreak $E^{(t)}_{j,t}$-pseudocomponents $M$ in $G[Y^{(i,-1,k)}]$ intersecting $X_t$ with $A_{L^{(i,-1,k)}}(V(M)) \cap X_{V(T_t)}-X_t \neq \emptyset$ \footnote{Recall that $A_{L^{(i,-1,k)}}(V(M))$ denotes the set of gates for vertices in $V(M)$ as defined in \cref{subsec:ProofMainThm}.} and contained in some $s$-segment $S \in \Se_j^\circ$ whose level equals the color of $M$ such that $V(M) \cap \bigcup_{\ell=0}^{k-1}V(M_{j,\ell}^{(t)})=\emptyset$.					
					\item Let $W_0^{(i,-1,k)} := X_{V(T_t)} \cap \bigcup_{j \in [\lvert \V \rvert-1]}V(M_{j,k}^{(t)})$.
					\item Let $(Y^{(i,-1,k,0)},L^{(i,-1,k,0)}) := (Y^{(i,-1,k)},L^{(i,-1,k)})$.
					\item For every $q \in [0,s+1]$, define the following:
						\begin{itemize}
							\item Let $W_1^{(i,-1,k,q)} := \{v \in W_0^{(i,-1,k)}: L^{(i,-1,k)}(v)=q+1\}$.
							\item Let $W_2^{(i,-1,k,q)} := A_{L^{(i,-1,k,q)}}(W_1^{(i,-1,k,q)}) \cap Z_t$. 
							\item Let $(Y^{(i,-1,k,q+1)},L^{(i,-1,k,q+1)})$ be a $(W_2^{(i,-1,k,q)},q+1)$-progress of $(Y^{(i,-1,k,q)},L^{(i,-1,k,q)})$.
						\end{itemize}
					\item Let $(Y^{(i,-1,k+1)},L^{(i,-1,k+1)}):= (Y^{(i,-1,k,s+2)},L^{(i,-1,k,s+2)})$.
				\end{itemize}
				\hspace*{-4mm}\textbf{\boldmath  Stage $(i,0,\star)$: Isolate the other components intersecting $X_t$}
				\item Let $(Y^{(i,0,0)},L^{(i,0,0)}) := (Y^{(i,-1,w_0)},L^{(i,-1,w_0)})$.
				
				\item 
				For each $k \in [0,s+1]$, define the following:
				\begin{itemize}
					\item 
					\begin{minipage}[t]{100mm}
\vspace*{-5ex}
\begin{align*}
\text{Let }& W_0^{(i,0,k)} :=  \{v \in Y^{(i,0,k)} \cap X_{V(T_t)} :   L^{(i,0,k)}(v)=\{k+1\} \text{ and}\\
& \text{there exists a path } P \text{ in } G[Y^{(i,0,k)} \cap X_{V(T_t)}]  \text{ from }v \text{ to } X_t \\
&  \text{such that }  L^{(i,0,k)}(q)=\{k+1\} \text{ for all }q \in V(P)\}.
						\end{align*}
					\end{minipage}
				\item Let $W_2^{(i,0,k)} := A_{L^{(i,0,k)}}(W_0^{(i,0,k)}) \cap Z_t$. 
				\item Let $(Y^{(i,0,k+1)},L^{(i,0,k+1)})$ be a $(W_2^{(i,0,k)},k+1)$-progress of $(Y^{(i,0,k)},L^{(i,0,k)})$.
				\end{itemize}

\hspace*{-4mm}\textbf{\boldmath Stage $(i,\geq 1,\star)$: Isolating the Fences of $T_{j,t}$}
				\item Let $W_3^{(i,-1)} := \bigcup_{j=1}^{\lvert \V \rvert-1}(X_{t} \cap I_j)$. 
				\item Let $W_4^{(i)} := \bigcup_{j=1}^{\lvert \V \rvert-1}(X_{V(T_{j,t})} \cap I_j)$. 
				\item For each $\ell \in [0,\lvert V(T) \rvert]$, define the following:
					\begin{itemize}
						\item Let $W_3^{(i,\ell)} := W_3^{(i,\ell-1)} \cup \bigcup_{j=1}^{\lvert \V \rvert-1}\bigcup_{q} (X_q \cap I_j)$, where the last union is over all nodes $q$ in $\partial T_{j,t}$ for which there exists a monochromatic path in $G[Y^{(i,\ell,s+2)} \cap I_j \cap W_4^{(i)}]$ from $W_3^{(i,\ell-1)}$ to 
						\begin{align*}
						X_q \cap & I_j-((D^{(i,\ell,0)}-X_t) \cup \{v \in D^{(i,\ell,0)} \cap X_t \cap X_{q'} \cap I_j: q' \in V(T_t)-\{t\}, \\
						& q'\text{ is a witness for }X_{q'} \cap I_j \subseteq W_3^{(i',\ell')}\text{ for some }i' \in [0,i-1]\text{ and }\ell' \in [0,\lvert V(T) \rvert]\})
						\end{align*}
						internally disjoint from $X_t \cup X_{\partial T_{j,t}}$.
						\item Let $(Y^{(i,\ell+1,0)}, L^{(i,\ell+1,0)})$ be a $(W_{3}^{(i,\ell)},0)$-progress of $(Y^{(i,\ell,s+2)},L^{(i,\ell,s+2)})$.
						\item For each $k \in [0,s+1]$, define the following:
										\begin{itemize}
											\item 
											\begin{minipage}[t]{100mm}
												\vspace*{-5ex}
\begin{align*}
\text{Let }&W_{0}^{(i,\ell+1,k)}  := \{v \in Y^{(i,\ell+1,k)} \cap W_{4}^{(i)}: L^{(i,\ell+1,k)}(v)=\{k+1\} \\
& \text{ and there exists a path } P \text{ in } G[Y^{(i,\ell+1,k)} \cap W_{4}^{(i)}] \text{ from }v \text{ to } W_{3}^{(i,\ell)} \\
& \text{ such that } L^{(i,\ell+1,k)}(q)=\{k+1\} \text{ for all }q \in V(P)\}.
												\end{align*}
											\end{minipage}
											\item Let $W_2^{(i,\ell+1,k)} := A_{L^{(i,\ell+1,k)}}(W_0^{(i,\ell+1,k)}) \cap W_4^{(i)}$. 
											\item Let $(Y^{(i,\ell+1,k+1)},L^{(i,\ell+1,k+1)})$ be the $(W_2^{(i,\ell+1,k)},k+1)$-progress of $(Y^{(i,\ell+1,k)},L^{(i,\ell+1,k)})$.
											\item Let $D^{(i,\ell,k+1)} := D^{(i,\ell,k)} \cup W_0^{(i,\ell+1,k)}$.
										\end{itemize}
						\item Let $D^{(i,\ell+1,0)}:=D^{(i,\ell,s+2)}$.
					\end{itemize}

\hspace*{-10mm}\textbf{Stage: Adding Fake Edges}
				\item For each $j \in [\lvert \V \rvert-1]$ and $t' \in V(T)-(V(T_t)-\{t\})$, let $E^{(i+1)}_{j,t'}:=E^{(i)}_{j,t'}$.
				\item For each $j \in [\lvert \V \rvert-1]$, let $E^{(i,0)}_{j,t}:=E^{(i)}_{j,t}$.
				\item For each $j \in [\lvert \V \rvert-1]$ and $t' \in V(T_t)-\{t\}$, let $E^{(i+1)}_{j,t'}:=E^{(i,\lvert V(G) \rvert)}_{j,t}$, where for every $\ell \in [0,\lvert V(G) \rvert-1]$, let $E^{(i,\ell+1)}_{j,t}$ be the union of $E^{(i,\ell)}_{j,t}$ and the set consisting of the 2-element sets $\{u,v\}$ satisfying the following.
				\begin{itemize}
					\item $\{u,v\} \subseteq Y^{(i,\lvert V(T) \rvert+1,s+2)}$. 
					\item $L^{(i,\lvert V(T) \rvert+1,s+2)}(u) = L^{(i,\lvert V(T) \rvert+1,s+2)}(v)$. 
					\item There exists an $s$-segment $S$ in $\Se_{j}^\circ$ whose level equals the color of $u$ and $v$ such that $\{u,v\} \subseteq S$. \footnote{The above three conditions imply that both $u$ and $v$ are precolored by using the same color, and it is possible that $u$ and $v$ would be contained in the same monochromatic component in the future.}
					\item There exists $t'' \in \partial T_{j,t}$ such that: \footnote{Roughly speaking, the following conditions for $t''$ come from the intuition that we need the fake edge $uv$ to indicate that $u,v$ could possibly lie in the same monochromatic component in the future because we have to color the gates for some monochromatic pseudocomponents with smaller $\sigma$-values than the pseudocomponents containing $u$ or $v$, and those gates are contained in $X_{V(T_{t''})}-X_{t''}$.}
					\begin{itemize}
						\item $\{u,v\} \subseteq X_{t''}$,
						\item $V(M) \cap X_{t''} \neq \emptyset$, where $M$ is the monochromatic $E^{(i,\ell)}_{j,t}$-pseudocomponent in \linebreak $G[Y^{(i,\lvert V(T) \rvert+1,s+2)}]$ such that $\sigma(M)$ is the $(\ell+1)$-th smallest among all monochromatic $E^{(i,\ell)}_{j,t}$-pseudocomponents in  $G[Y^{(i,\lvert V(T) \rvert+1,s+2)}]$,
						\item $M$ is contained in some $s$-segment in $\Se_{j}^\circ$ whose level equals the color of $M$,
						\item $t''$ is a witness for $X_{t''} \cap I_j \subseteq W_3^{(i,\ell')}$ for some $\ell' \in [0,\lvert V(T) \rvert]$,
						\item $A_{L^{(i,\lvert V(T) \rvert+1,s+2)}}(V(M)) \cap X_{V(T_{t''})}-X_{t''} \neq \emptyset$,
						\item $A_{L^{(i,\lvert V(T) \rvert+1,s+2)}}(V(M_u)) \cap X_{V(T_{t''})}-X_{t''} \neq \emptyset$,
						\item $A_{L^{(i,\lvert V(T) \rvert+1,s+2)}}(V(M_v)) \cap X_{V(T_{t''})}-X_{t''} \neq \emptyset$, and
						\item $\sigma(M) < \min\{\sigma(M_u),\sigma(M_v)\}$, 
						
					\end{itemize}
					where $M_u$ and $M_v$ are the monochromatic $E^{(i,\ell)}_{j,t}$-pseudocomponents in $G[Y^{(i,\lvert V(T) \rvert+1,s+2)}]$ containing $u$ and $v$, respectively.
				\item Let $M$, $M_u$ and $M_v$ be the monochromatic $E_{j,t}^{(i,\ell)}$-pseudocomponents as in the previous bullet.
						If $(V(M_u) \cup V(M_v)) \cap X_t \neq \emptyset$, then at least one of the following does not hold\footnote{The previous bullet describes conditions showing that $u$ and $v$ can possibly be in the same monochromatic component in the future because we have to color gates for other monochromatic pseudocomponents. This bullet say that if all of the following conditions hold, then actually coloring those gates would not make $u$ and $v$ be in the same monochromatic component in the future, so we should not add a fake edge. We will prove why it is true in later subsections.},
						\begin{itemize}
							\item for every monochromatic $E_{j,t}^{(i,\ell)}$-pseudocomponent $M'$ in $G[Y^{(i,\lvert V(T) \rvert+1,s+2)}]$ with $\sigma(M') \leq \sigma(M)$ such that $M'$ is contained in some $s$-segment in $\Se_{j}^\circ$ whose level equals the color of $M'$, $M'$ satisfies that $A_{L^{(i,\lvert V(T) \rvert+1,s+2)}}(V(M')) \cap X_{V(T_{t})}-X_{t} \subseteq X_{V(T_{t''})}-(X_{t''} \cup Z_t)$,
							\item there exists a parade $(t_{-\beta_{u,v,\ell}},t_{\beta_{u,v,\ell}+1},\dots,t_{-1},t_1,t_2,\dots,t_{\alpha_{u,v,\ell}})$ for some $\alpha_{u,v,\ell} \in {\mathbb N}$ and $\beta_{u,v,\ell} \in {\mathbb N}$ such that 
								\begin{itemize}
									\item $t_{-\beta_{u,v,\ell}} \in V(T_{t''})-\{t''\}$, 
									\item $\alpha_{u,v,\ell} = \psi_2(\alpha_{u,\ell},\alpha_{v,\ell})$ and $\beta_{u,v,\ell} = \psi_3(\alpha_{u,\ell},\alpha_{v,\ell})$, \footnote{Recall that $\psi_2$ and $\psi_3$ were defined in \cref{subsec:const_main_lemma}.}\\
									where $\alpha_{u,\ell}$ and $\alpha_{v,\ell}$ are the integers such that $\sigma(M_{u,\ell})$ and $\sigma(M_{v,\ell})$ are the $\alpha_{u,\ell}$-th and the $\alpha_{v,\ell}$-th smallest among all monochromatic $E_{j,t}^{(i,\ell)}$-pseudocomponents $M'$ in $G[Y^{(i,\lvert V(T) \rvert+1,s+2)}]$ contained in some $s$-segment in $\Se_j^\circ$ whose color equals the color of $M_u$ and $M_v$ with $A_{L^{(i,\lvert V(T) \rvert+1,s+2)}}(V(M')) \cap X_{V(T_{t''})}-X_{t''} \neq \emptyset$, respectively, 
									\item for any distinct $\ell_1,\ell_2 \in [-\beta_{u,v,\ell}, \alpha_{u,v,\ell}]-\{0\}$, $X_{t_{\ell_1}} \cap I_j - X_{t''}$ and $X_{t_{\ell_2}} \cap I_j - X_{t''}$ are disjoint non-empty sets,
									\item $\bigcup_{M''} (A_{L^{(i,\lvert V(T) \rvert+1,s+2)}}(V(M'')) \cap X_{V(T_{t})}-X_{t}) \subseteq (X_{V(T_{t_{\alpha_{u,v,\ell}}})}-X_{t_{\alpha_{u,v,\ell}}}) \cap I_j^\circ$, where the union is over all monochromatic $E_{j,t}^{(i,\ell)}$-pseudocomponents $M''$ in \linebreak $G[Y^{(i,\lvert V(T) \rvert+1,s+2)}]$ such that $V(M'')$ is contained in some $s$-segment in $\Se_j^\circ$ whose level equals the color of $M''$, $\sigma(M'') \leq \sigma(M)$, $V(M'') \cap X_{t''} \neq \emptyset$, and \linebreak $A_{L^{(i,\lvert V(T) \rvert+1,s+2)}}(V(M'')) \cap X_{V(T_{t''})}-X_{t''} \neq \emptyset$,
									\item there exists $x_\ell \in \{u,v\}$ such that $A_{L^{(i,\lvert V(T) \rvert+1,s+2)}}(V(M_{x_\ell})) \cap (X_{V(T_{t})}-X_{t}) \cap X_{V(T_{t_1})}-X_{t_1} = \emptyset$, and $A_{L^{(i,\lvert V(T) \rvert+1,s+2)}}(V(M_{x_\ell})) \cap (X_{V(T_{t})}-X_{t}) \cap X_{V(T_{t_{-\beta_{u,v,\ell}}})}-(X_{t_{-\beta_{u,v,\ell}}} \cup I_j^\circ) = \emptyset$, 
								\end{itemize}
							\item there exists a monochromatic $E_{j,t}^{(i)}$-pseudocomponent $M^*$ in $G[Y^{(i,\lvert V(T) \rvert+1,s+2)}]$ with $\sigma(M^*) \leq \sigma(M)$ such that $M^*$ is contained in some $s$-segment in $\Se_{j}^\circ$ whose level equals the color of $M^*$ satisfying that $A_{L^{(i,\lvert V(T) \rvert+1,s+2)}}(V(M^*)) \cap X_{V(T_{t})}-X_{t} \subseteq X_{V(T_{t''})}-(X_{t''} \cup Z_t)$.
						\end{itemize}
					\item There do not exist $q \in V(T)$ with $T_{j,t} \subseteq T_{j,q}$ and $i_q<i_t$, a witness $q' \in \partial T_{j,q}$ for $X_{q'} \cap I_j \subseteq W_3^{(i_{q},\ell')}$ for some $\ell' \in [0,\lvert V(T) \rvert]$, and a monochromatic $E_{j,q}^{(i_{q})}$-pseudocomponent in $G[Y^{(i_{q},\lvert V(T) \rvert+1,s+2)}]$ intersecting $X_{q'}$ and $\{u,v\}$.
				\end{itemize}

\hspace*{-10mm}\textbf{Stage: Moving to the Next Node in the Tree}
				\item Let $(Y^{(i+1,-1,0)},L^{(i+1,-1,0)})$ be a $(X_{t'},0)$-progress of $(Y^{(i,\lvert V(T) \rvert+1,s+2)},L^{(i,\lvert V(T) \rvert+1,s+2)})$, where $t'$ is the node of $T$ with $\sigma_T(t')=i+1$. 
				\item Let $D^{(i+1,0,0)}:=D^{(i,\lvert V(T) \rvert,s+2)}$.
				\item For every node $t'$ with $t '\in V(T_t)-\{t\}$ and $j \in [\lvert V \rvert-1]$, 
					\begin{itemize}
						\item If $t' \in V(T_{j,t})-\partial T_{j,t}$, then define 
							\begin{itemize}
								\item $S^{(i+1,0)}_{j,t'} := S^{(i,0)}_{j,t'} \cup \bigcup_{\ell=0}^{\lvert V(T) \rvert}\bigcup_{k=0}^{s+1}W_0^{(i,\ell+1,k)}$,
								\item $S^{(i+1,1)}_{j,t'} := S^{(i,1)}_{j,t'} \cup \bigcup_{j=1}^{\lvert \V \rvert-1}\bigcup_q (X_q \cap I_j)$, where the last union is over all nodes $q$ of $T$ such that $q \in V(T_{t'})-\{t'\}$ and $q$ is a witness for $X_q \cap I_j \subseteq W_3^{(i,\ell')}$ for some $\ell' \in [0,\lvert V(T) \rvert]$, and 
								\item $S^{(i+1,2)}_{j,t'} := S^{(i,2)}_{j,t'} \cup \bigcup_{k=0}^{w_0-1}W_0^{(i,-1,k)} \cup \bigcup_{k=0}^{s+1}W_0^{(i,0,k)}$;
							\end{itemize}
						\item otherwise, define $S^{(i+1,0)}_{j,t'} := S^{(i,0)}_{j,t'}$, $S^{(i+1,1)}_{j,t'} := S^{(i,1)}_{j,t'}$ and $S^{(i+1,2)}_{j,t'} := S^{(i,2)}_{j,t'}$.
					\end{itemize}
\hspace*{-10mm}\textbf{Stage: Building a New Fence}
				\item For every $j \in [\lvert \V \rvert-1]$, define the following:
				\begin{itemize}
					\item For every $F_{j,p} \cap V(T_t)$-part $T'$ of $T_t$ containing $t$, let $F'_{j,T'}$ be a \\
$(T', \X|_{X_{V(T')} \cap \overline{I_j}},  (Y^{(i+1,-1,0)} \cup X_{\partial T'}) \cap X_{V(T')} \cap \overline{I_j})$-fence. 
					\item Let $F'_{j,t} := F_{j,p} \cup \bigcup_{T'}F'_{j,T'}$, where the union is over all $F_{j,p} \cap V(T_t)$-parts $T'$ of $T_t$ containing $t$. 
				\end{itemize}
\end{itemize}

\subsection{Remark about the main goal}

It is clear that if a triple $(i',j',k')$ is lexicographically smaller than another triple $(i,j,k)$, then $L^{(i,j,k)}(v) \subseteq L^{(i',j',k')}(v)$ for every $v \in V(G)$, so $Y^{(i,j,k)} \supseteq Y^{(i',j',k')}$. 
In addition, $X_t \subseteq Y^{(i_t,-1,0)}$. 
Recall that $i_t$ is the number such that $\sigma_T(t)=i_t$.
In particular, $\lvert L^{(\lvert V(T) \rvert,-1,0)}(v) \rvert=1$ for every $v \in V(G)$, so there exists a unique $L^{(\lvert V(T) \rvert,-1,0)}$-coloring $c$ of $G$. By construction, $c$ is an $L$-coloring. We prove below that $c$ has clustering $\eta^*$.

\subsection{Claims related to fences} \label{subsec:claims_fences}

We first recall the relevant parts of the algorithm.

\medskip
\noindent\textbf{\boldmath Stage $(0,-1,0)$: Initialization}

	$(Y^{(0,-1,0)},L^{(0,-1,0)})$ is defined, and $F_{j,0} := \emptyset$ for every $j \in [\lvert \V \rvert-1]$. 
	Some other sets are defined in this stage. 
	See \cref{subsec:alog} for details.

\medskip

\noindent For $i=0,1,2,\dots$, let $t$ be the node of $T$ with $\sigma_T(t)=i$ and perform the following steps:\\[1ex]
\hspace*{4mm} 
\textbf{\boldmath Stage: Building Fences}
		\begin{itemize}
 			\item Define the following:
				\begin{itemize}
					\item For every $j \in [\lvert \V \rvert-1]$, 
						\begin{itemize}
\item Let $F'_{j,p}:=F_{j,0}$ if $t=r^*$, and let $p$ be the parent of $t$ if $t \neq r^*$,
\item For every $F'_{j,p} \cap V(T_t)$-part $T'$ of $T_t$ containing $t$, define $F_{j,T'}$ to be a\\
   $(T', \X|_{X_{V(T')} \cap \overline{I_j}},  (Y^{(i,-1,0)} \cup X_{\partial T'}) \cap X_{V(T')} \cap \overline{I_j})$-fence. 
							\item Let $F_{j,p} := F'_{j,p} \cup \bigcup_{T'}F_{j,T'}$, where the union is over all $F'_{j,p} \cap V(T_t)$-parts $T'$ of $T_t$ containing $t$. 
							\item Let $T_{j,t} := \bigcup_{T'}T'$, where the union is over all $F_{j,p} \cap V(T_t)$-parts $T'$ of $T_t$ containing $t$.
							\item Let $\partial T_{j,t} := \bigcup_{T'} (\partial T'-\{t\})$, where the union is over all $F_{j,p} \cap V(T_t)$-parts $T'$ of $T_t$ containing $t$.
						\end{itemize}
					\item Let $Z_t := (X_{V(T_t)}-\bigcup_{j=1}^{\lvert \V \rvert-1}I_j^\circ) \cup (\bigcup_{j=1}^{\lvert \V \rvert-1} (I_j^\circ \cap X_{V(T_{j,t})}))$. 
				\end{itemize}
		\end{itemize}		
\hspace*{4mm} 
\noindent\textbf{\boldmath Stage $(i,-1,\star)$:} See \cref{subsec:alog} for details. \\
\hspace*{4mm}
\noindent\textbf{\boldmath  Stage $(i,0,\star)$:} See \cref{subsec:alog} for details. \\
\hspace*{4mm}
\noindent\textbf{\boldmath Stage $(i,\geq 1,\star)$:} See \cref{subsec:alog} for details. \\
\hspace*{4mm}
\noindent\textbf{Stage: Adding Fake Edges:} See \cref{subsec:alog} for details. \\
\hspace*{4mm}
\noindent\textbf{Stage: Moving to the Next Node in the Tree}

			$(Y^{(i+1,-1,0)},L^{(i+1,-1,0)})$ is defined. 
			Other sets are defined.
			See \cref{subsec:alog} for details.\\
\hspace*{4mm}
\noindent\textbf{Stage: Building a New Fence}
				
				For every $j \in [\lvert \V \rvert-1]$, define the following:
				\begin{itemize}
					\item For every $F_{j,p} \cap V(T_t)$-part $T'$ of $T_t$ containing $t$, let $F'_{j,T'}$ be a \\
$(T', \X|_{X_{V(T')} \cap \overline{I_j}},  (Y^{(i+1,-1,0)} \cup X_{\partial T'}) \cap X_{V(T')} \cap \overline{I_j})$-fence. 
					\item Let $F'_{j,t} := F_{j,p} \cup \bigcup_{T'}F'_{j,T'}$, where the union is over all $F_{j,p} \cap V(T_t)$-parts $T'$ of $T_t$ containing $t$. 
				\end{itemize}

Now we prove claims related to fences.

\begin{claim} \label{claim:boundaryZ}
Let $t \in V(T)$ and $j \in [\lvert V \rvert-1]$.
Let $(Y',L')$ be an $(s,\V)$-standard pair and let $Z \subseteq Y'$.
Then $A_{L'}(Z) \subseteq N^{\geq s}_G[Z]$, $A_{L'}(Z) \cap X_{V(T_{j,t})} \subseteq N_G^{\geq s}(Z \cap X_{V(T_{j,t})}) \cup X_{\partial T_{j,t}} \cup X_t$, and $\lvert A_{L'}(Z) \cap X_{V(T_{j,t})} \rvert \leq f(\lvert Z \cap X_{V(T_{j,t})} \rvert) + \lvert X_{\partial T_{j,t}} \rvert + \lvert X_t \rvert$.
\end{claim}

\begin{proof}
It is obvious that $A_{L'}(Z) \subseteq N^{\geq s}_G[Z]$ since $(Y',L')$ is an $(s,\V)$-standard pair.
Since $(Y',L')$ is an $(s,\V)$-standard pair, $A_{L'}(Z) \cap X_{V(T_{j,t})} \subseteq N_G^{\geq s}(Z) \cap X_{V(T_{j,t})} \subseteq N_G^{\geq s}(Z \cap X_{V(T_{j,t})}) \cup (X_{\partial T_{j,t}} \cup X_t)$.
So $\lvert A_{L'}(Z) \cap X_{V(T_{j,t})} \rvert \leq \lvert  N_G^{\geq s}(Z \cap X_{V(T_{j,t})}) \rvert + \lvert X_{\partial T_{j,t}} \rvert + \lvert X_t \rvert \leq f(\lvert Z \cap X_{V(T_{j,t})} \rvert) + \lvert X_{\partial T_{j,t}} \rvert + \lvert X_t \rvert$ by \cref{BoundedGrowthLtw}.
\end{proof}

\begin{claim} \label{claim:I_jbasic1}
Let $i,i' \in \mathbb{N}_0$ with $i'<i$, and let $t$ and $t'$ be nodes of $T$ with $\sigma_T(t)=i$ and $\sigma_T(t')=i'$. 
Let $j \in [\lvert V \rvert-1]$.
Then the following hold:
	\begin{itemize}
		\item $\lvert Y^{(i',-1,0)} \cap \overline{I_j} \cap X_{V(T_t)} \cap X_{V(T_{j,t'})} \rvert \leq w_0$.
		\item $\lvert X_{\partial T_{j,t'}} \cap X_{V(T_t)} \cap \overline{I_j} \rvert \leq w_0$.
	\end{itemize}
\end{claim}

\begin{proof}
We may assume that $t \in V(T_{t'})$, for otherwise, $X_{V(T_t)} \cap X_{V(T_{j,t'})} = \emptyset$ and we are done.

Let $F'_{j,p} := F_{j,0}$ if $t'=r^*$; let $p$ be the parent of $t'$ if $t' \neq r^*$.
By definition, $F_{j,p} = F'_{j,p} \cup \bigcup_{T'}F_{j,T'}$, where the union is over all $F'_{j,p} \cap V(T_{t'})$-parts $T'$ of $T_{t'}$ containing $t'$, and each $F_{j,T'}$ is a $(T', \X_{X_{V(T')} \cap \overline{I_j}}, (Y^{(i',-1,0)} \cup X_{\partial T'}) \cap X_{V(T')} \cap \overline{I_j})$-fence.
So for each $F_{j,p} \cap V(T_{t'})$-part $T'$ of $T_{t'}$, $\lvert (Y^{(i',-1,0)} \cap \overline{I_j} \cap X_{V(T')}) \cup (X_{\partial T'} \cap \overline{I_j}) \rvert \leq w_0$.
By definition, $T_{j,t'} = \bigcup_{T'}T'$, where the union is over all $F_{j,p} \cap V(T_{t'})$-parts $T'$ of $T_{t'}$ containing $t'$.
Since $t \in V(T_{t'})-\{t'\}$, there exists at most one $F_{j,p} \cap V(T_{t'})$-part of $T_{t'}$ containing both $t'$ and $t$.
If there exists an $F_{j,p} \cap V(T_{t'})$-part of $T_{t'}$ containing both $t'$ and $t$, then denote it by $T^*$; otherwise  let $T^*:=\emptyset$.
So $X_{V(T_t)} \cap X_{V(T_{j,t'})} \subseteq X_{V(T^*)}$.
Hence $\lvert Y^{(i',-1,0)} \cap \overline{I_j} \cap X_{V(T_t)} \cap X_{V(T_{j,t'})} \rvert \leq \lvert Y^{(i',-1,0)} \cap \overline{I_j} \cap X_{V(T^*)} \rvert \leq w_0$.
Similarly, $\lvert X_{\partial T_{j,t'}} \cap X_{V(T_t)} \cap \overline{I_j} \rvert \leq \lvert X_{\partial T_{j,t'}} \cap X_{V(T_{j,t'})} \cap X_{V(T_t)} \cap \overline{I_j} \rvert \leq \lvert X_{\partial T_{j,t'}} \cap X_{V(T^*)} \cap \overline{I_j} \rvert \leq \lvert X_{\partial T^*} \cap \overline{I_j} \rvert \leq w_0$.
\end{proof}

\begin{claim} \label{claim:fencedownside}
Let $i \in \mathbb{N}_0$ and let $t \in V(T)$ with $\sigma_T(t)=i$.
Let $p$ be the parent of $t$ if $t \neq r^*$; otherwise let $F_{j,p}$ be the set mentioned in the algorithm when $t=r^*$.
Then 
	\begin{itemize}
		\item for every $q \in F_{j,p}$, $Y^{(i,-1,0)} \cap \overline{I_j} \cap X_{V(T_q)}-X_q \neq \emptyset$, and 
		\item for every $q \in F'_{j,t}$, $Y^{(i+1,-1,0)} \cap \overline{I_j} \cap X_{V(T_q)}-X_q \neq \emptyset$.
	\end{itemize}
\end{claim}

\begin{proof}
We shall prove this claim by induction on $i$.
We first assume $i=0$.
So $t=r^*$.
Since $F_{j,0}=\emptyset$, $T$ is the only $F_{j,0} \cap V(T_{t})$-part containing $t$, and $\partial T = \emptyset$.
Since $Y^{(0,-1,0)} \subseteq X_{r^*}$, we know $F_{j,T} = \emptyset$ by the definition of a fence (Statement~2 in \cref{fence lemma}).
Hence the set $F_{j,p}$ defined in the algorithm is $\emptyset$.
So Statement~1 in this claim holds.
Hence we may assume that $q \in F'_{j,t}$.
Since $F_{j,p}=\emptyset$, $q \in F'_{j,T'}$ for some $\emptyset$-part $T'$ of $T_t$ containing $t$.
So $T'=T$, and $F'_{j,T'}$ is a $(T,\X|_{X_{V(T)} \cap \overline{I_j}}, Y^{(1,-1,0)} \cap \overline{I_j})$-fence.
By the definition of a fence (Statement~2 in \cref{fence lemma}), there exist at least two $F'_{j,T'}$-parts $T''$ satisfying $q \in \partial T''$ and $X_{V(T'')} \cap Y^{(1,-1,0)} \cap \overline{I_j} - X_{q} \neq \emptyset$.
Note that some such part $T''$ is contained in $T_q$.
So $Y^{(1,-1,0)} \cap \overline{I_j} \cap X_{V(T_q)}-X_q \neq \emptyset$.
Hence the claim holds when $i=0$.

So we may assume $i>0$, and the lemma holds for every smaller $i$.
We first assume $q \in F_{j,p}$ and prove Statement~1.
So either $q \in F'_{j,p}$ or $q \in F_{j,T'}-F'_{j,p}$ for some $F'_{j,p} \cap V(T_t)$-part $T'$ of $T_t$ containing $t$.
If $F'_{j,p} \cap V(T_q) \neq \emptyset$, then there exists $q' \in F'_{j,p} \cap V(T_q)$ and $Y^{(i,-1,0)} \cap \overline{I_j} \cap X_{V(T_q)}-X_q \supseteq Y^{(i_p+1,-1,0)} \cap \overline{I_j} \cap X_{V(T_{q'})}-X_{q'} \neq \emptyset$ by the induction hypothesis, where $i_p = \sigma_T(p)$.
So we may assume $F'_{j,p} \cap V(T_q) = \emptyset$.
In particular, $q \in F_{j,T'}-F'_{j,p}$ for some $F'_{j,p} \cap V(T_t)$-part $T'$ of $T_t$ containing $t$. 
By the definition of a fence (Statement~2 in \cref{fence lemma}), there exist at least two $F_{j,T'}$-parts $T''$ satisfying $q \in \partial T''$ and $(Y^{(i,-1,0)} \cup X_{\partial T'}) \cap X_{V(T'')} \cap \overline{I_j} - X_{q} \neq \emptyset$.
So one such $T''$ is contained in $T_q$.
For every $t' \in \partial T'$, $t' \in F'_{j,p}$, so by the induction hypothesis, $Y^{(i_p+1,-1,0)} \cap \overline{I_j} \cap X_{V(T_{t'})}-X_{t'} \neq \emptyset$.
For every $t' \in \partial T' \cap V(T'')$, $T_{t'} \subseteq T_q$, so $Y^{(i,-1,0)} \cap \overline{I_j} \cap X_{V(T_{q})}-X_{q} \supseteq Y^{(i_p+1,-1,0)} \cap \overline{I_j} \cap X_{V(T_{t'})}-X_{t'} \neq \emptyset$.
Hence if $X_{\partial T'} \cap X_{V(T'')} \neq \emptyset$, then $\partial T' \cap V(T'') \neq \emptyset$, so $Y^{(i,-1,0)} \cap \overline{I_j} \cap X_{V(T_{q})}-X_{q} \neq \emptyset$; otherwise, $Y^{(i,-1,0)} \cap \overline{I_j} \cap X_{V(T_q)}-X_q \supseteq (Y^{(i,-1,0)} \cup X_{\partial T'}) \cap X_{V(T'')} \cap \overline{I_j} \cap - X_{q} \neq \emptyset$.
This proves Statement~1 of this claim.

Now we assume $q \in F'_{j,t}$ and prove Statement 2 of this claim.
So either $q \in F_{j,p}$ or $q \in F'_{j,T'}-F_{j,p}$ for some $F_{j,p} \cap V(T_t)$-part $T'$ of $T_t$ containing $t$.
If $q \in F_{j,p}$, then $Y^{(i,-1,0)} \cap \overline{I_j} \cap X_{V(T_q)}-X_q \neq \emptyset$ by Statement 1 of this claim.
So we may assume $q \in F'_{j,T'}-F_{j,p}$ for some $F_{j,p} \cap V(T_t)$-part $T'$ of $T_t$ containing $t$. 
By the definition of a fence (Statement~2 in \cref{fence lemma}), there exist at least two $F'_{j,T'}$-parts $T''$ satisfying $q \in \partial T''$ and $(Y^{(i+1,-1,0)} \cup X_{\partial T'}) \cap X_{V(T'')} \cap \overline{I_j} - X_{q} \neq \emptyset$.
So one such $T''$ is contained in $T_q$.
For every $t' \in \partial T'$, $t' \in F_{j,p}$, so by Statement 1 of this claim, $Y^{(i,-1,0)} \cap \overline{I_j} \cap X_{V(T_{t'})}-X_{t'} \neq \emptyset$.
For every $t' \in \partial T' \cap V(T'')$, $T_{t'} \subseteq T_q$, so $Y^{(i+1,-1,0)} \cap \overline{I_j} \cap X_{V(T_{q})}-X_{q} \supseteq Y^{(i,-1,0)} \cap \overline{I_j} \cap X_{V(T_{t'})}-X_{t'} \neq \emptyset$.
Hence if $X_{\partial T'} \cap X_{V(T'')} \neq \emptyset$, then $\partial T' \cap V(T'') \neq \emptyset$, so $Y^{(i+1,-1,0)} \cap \overline{I_j} \cap X_{V(T_{q})}-X_{q} \neq \emptyset$; otherwise, $Y^{(i+1,-1,0)} \cap \overline{I_j} \cap X_{V(T_q)}-X_q \supseteq (Y^{(i+1,-1,0)} \cup X_{\partial T'}) \cap X_{V(T'')} \cap \overline{I_j} \cap - X_{q} \neq \emptyset$.
This proves Statement~2 of this claim.
\end{proof}

\subsection{Claims related to Stage $(i,-1,\star)$ and Stage $(i,0,\star)$}

We first recall the relevant part of the algorithm.

\medskip
\noindent\textbf{\boldmath Stage $(0,-1,0)$: Initialization}

	Let $(Y^{(0,-1,0)},L^{(0,-1,0)})$ be an $(X_{r^*},0)$-progress of $(Y_1,L)$.
	Let $E^{(0)}_{j,t} := \emptyset$ for every $j \in [\lvert \V \rvert-1]$ and $t \in V(T)$.
	Other sets are defined.
	See \cref{subsec:alog} for details.

\medskip

\noindent For $i=0,1,2,\dots$, let $t$ be the node of $T$ with $\sigma_T(t)=i$ and perform the following steps:\\[1ex]
\hspace*{4mm} 
\textbf{\boldmath Stage: Building Fences:}
	Let $Z_t := (X_{V(T_t)}-\bigcup_{j=1}^{\lvert \V \rvert-1}I_j^\circ) \cup (\bigcup_{j=1}^{\lvert \V \rvert-1} (I_j^\circ \cap X_{V(T_{j,t})}))$. 
	Other sets are defined.
	See \cref{subsec:alog} for details.\\
\hspace*{4mm} 
\noindent\textbf{\boldmath Stage $(i,-1,\star)$: Isolate the $k$-th Oldest Component Intersecting a Segment in $\Se_j^\circ$}
		\begin{itemize}
			\item For each $k \in [0,w_0-1]$, define the following:
				\begin{itemize}
					\item For every $j \in [\lvert \V \rvert-1]$, define $M^{(t)}_{j,k}$ to be the monochromatic $E_{j,t}^{(i)}$-pseudocomponent in $G[Y^{(i,-1,k)}]$ such that $\sigma(M^{(t)}_{j,k})$ is the smallest among all monochromatic \linebreak $E^{(t)}_{j,t}$-pseudocomponents $M$ in $G[Y^{(i,-1,k)}]$ intersecting $X_t$ with $A_{L^{(i,-1,k)}}(V(M)) \cap X_{V(T_t)}-X_t \neq \emptyset$ and contained in some $s$-segment $S \in \Se_j^\circ$ whose level equals the color of $M$ such that $V(M) \cap \bigcup_{\ell=0}^{k-1}V(M_{j,\ell}^{(t)})=\emptyset$.					
					\item Let $W_0^{(i,-1,k)} := X_{V(T_t)} \cap \bigcup_{j \in [\lvert \V \rvert-1]}V(M_{j,k}^{(t)})$.
					\item Let $(Y^{(i,-1,k,0)},L^{(i,-1,k,0)}) := (Y^{(i,-1,k)},L^{(i,-1,k)})$.
					\item For every $q \in [0,s+1]$, define the following:
						\begin{itemize}
							\item Let $W_1^{(i,-1,k,q)} := \{v \in W_0^{(i,-1,k)}: L^{(i,-1,k)}(v)=q+1\}$.
							\item Let $W_2^{(i,-1,k,q)} := A_{L^{(i,-1,k,q)}}(W_1^{(i,-1,k,q)}) \cap Z_t$. 
							\item Let $(Y^{(i,-1,k,q+1)},L^{(i,-1,k,q+1)})$ be a $(W_2^{(i,-1,k,q)},q+1)$-progress of $(Y^{(i,-1,k,q)},L^{(i,-1,k,q)})$.
						\end{itemize}
					\item Let $(Y^{(i,-1,k+1)},L^{(i,-1,k+1)}):= (Y^{(i,-1,k,s+2)},L^{(i,-1,k,s+2)})$.
				\end{itemize}
		\end{itemize}
\hspace*{4mm}
\noindent\textbf{\boldmath  Stage $(i,0,\star)$: Isolate the other components intersecting $X_t$}
		\begin{itemize}
				\item Let $(Y^{(i,0,0)},L^{(i,0,0)}) := (Y^{(i,-1,w_0)},L^{(i,-1,w_0)})$.
				
				\item 
				For each $k \in [0,s+1]$, define the following:
				\begin{itemize}
					\item 
					\begin{minipage}[t]{100mm}
\vspace*{-5ex}
\begin{align*}
\text{Let }& W_0^{(i,0,k)} :=  \{v \in Y^{(i,0,k)} \cap X_{V(T_t)} :   L^{(i,0,k)}(v)=\{k+1\} \text{ and}\\
& \text{there exists a path } P \text{ in } G[Y^{(i,0,k)} \cap X_{V(T_t)}]  \text{ from }v \text{ to } X_t \\
&  \text{such that }  L^{(i,0,k)}(q)=\{k+1\} \text{ for all }q \in V(P)\}.
						\end{align*}
					\end{minipage}
				\item Let $W_2^{(i,0,k)} := A_{L^{(i,0,k)}}(W_0^{(i,0,k)}) \cap Z_t$. 
				\item Let $(Y^{(i,0,k+1)},L^{(i,0,k+1)})$ be a $(W_2^{(i,0,k)},k+1)$-progress of $(Y^{(i,0,k)},L^{(i,0,k)})$.
				\end{itemize}
		\end{itemize}
\hspace*{4mm}
\noindent\textbf{\boldmath Stage $(i,\geq 1,\star)$:} 
	See \cref{subsec:alog} for details.\\
\hspace*{4mm}
\noindent\textbf{Stage: Adding Fake Edges:}
	For each $j \in [\lvert \V \rvert-1]$ and $t' \in V(T)$, $E^{(i+1)}_{j,t'}$ is defined.
	See \cref{subsec:alog} for details.\\
\hspace*{4mm}
\noindent\textbf{Stage: Moving to the Next Node in the Tree:}
	See \cref{subsec:alog} for details.\\
\hspace*{4mm}
\noindent\textbf{Stage: Building a New Fence:}
	See \cref{subsec:alog} for details.

	\medskip

Now we prove related claims.
Those claims bound the size of precolored vertices in certain regions.

\begin{claim} \label{claim:-1jumpsize}
Let $i,i' \in \mathbb{N}_0$ with $i' < i$, and let $t$ and $t'$ be the nodes of $T$ with $\sigma_T(t)=i$ and $\sigma_T(t')=i'$.
Let $j \in [\lvert V \rvert-1]$.
Then $\lvert Y^{(i',0,0)} \cap \overline{I_j} \cap X_{V(T_t)} \cap X_{V(T_{j,t'})} \rvert \leq g_0(w_0)$.
\end{claim}

\begin{proof}
For every $k \in [0,w_0-1]$, let $q_k \in [0,s+1]$ be the number such that the color of $M_{j,k}^{(t')}$ is $q_k+1$.
So for every $k \in [0,w_0-1]$, 
\begin{align*}
& (Y^{(i',-1,k+1)}-Y^{(i',-1,k)}) \cap \overline{I_j} \cap X_{V(T_t)} \cap X_{V(T_{j,t'})}-X_t \\
\subseteq \;& A_{L^{(i',-1,k,q_k)}}(W_1^{(i',-1,k,q_k)}) \cap Z_{t'} \cap \overline{I_j} \cap X_{V(T_t)} \cap X_{V(T_{j,t'})}-X_t \\
\subseteq \;& (N_G^{\geq s}(W_1^{(i',-1,k,q_k)} \cap X_{V(T_{j,t'})}) \cup X_{\partial T_{j,t'}} \cup X_{t'}) \cap Z_{t'} \cap \overline{I_j} \cap X_{V(T_t)} \cap X_{V(T_{j,t'})}-X_t,
\end{align*} where the last inclusion follows from \cref{claim:boundaryZ}.
Note that for every $k \in [0,w_0-1]$, since $W_1^{(i',-1,k,q_k)} \cap \overline{I_j} \subseteq \bigcup_{S \in \Se_j^\circ}S$, we know $N_G[W_1^{(i',-1,k,q_k)} \cap \overline{I_j}] \subseteq \overline{I_j}$. 
So for every $k \in [0,w_0-1]$, 
\begin{align*}
& (N_G^{\geq s}(W_1^{(i',-1,k,q_k)} \cap X_{V(T_{j,t'})}) \cup X_{\partial T_{j,t'}} \cup X_{t'}) \cap Z_{t'} \cap \overline{I_j} \cap X_{V(T_t)} \cap X_{V(T_{j,t'})}-X_t \\
\subseteq \;& (N_G^{\geq s}(W_1^{(i',-1,k,q_k)} \cap X_{V(T_{j,t'})}) \cap Z_{t'} \cap \overline{I_j} \cap X_{V(T_t)} \cap X_{V(T_{j,t'})}-X_t) \cup (X_{\partial T_{j,t'}} \cap \overline{I_j} \cap X_{V(T_t)}) \\
\subseteq \;& (N_G^{\geq s}(W_1^{(i',-1,k,q_k)} \cap X_{V(T_{j,t'})} \cap \overline{I_j}) \cap \overline{I_j} \cap X_{V(T_t)} \cap X_{V(T_{j,t'})}-X_t) \cup (X_{\partial T_{j,t'}} \cap \overline{I_j} \cap X_{V(T_t)}) \\
\subseteq \;& N_G^{\geq s}(W_1^{(i',-1,k,q_k)} \cap X_{V(T_{j,t'})} \cap \overline{I_j} \cap X_{V(T_t)}) \cup (X_{\partial T_{j,t'}} \cap \overline{I_j} \cap X_{V(T_t)}) \\
\subseteq \;& N_G^{\geq s}(Y^{(i',-1,k)} \cap X_{V(T_{j,t'})} \cap \overline{I_j} \cap X_{V(T_t)}) \cup (X_{\partial T_{j,t'}} \cap \overline{I_j} \cap X_{V(T_t)}).
\end{align*}
Hence for every $k \in [0,w_0-1]$, 
\begin{align*}
& \lvert Y^{(i',-1,k+1)} \cap \overline{I_j} \cap X_{V(T_t)} \cap X_{V(T_{j,t'})} \rvert \\
\leq \;& \lvert Y^{(i',-1,k)} \cap \overline{I_j} \cap X_{V(T_t)} \cap X_{V(T_{j,t'})} \rvert \\
& \quad + \lvert X_t \cap \overline{I_j} \rvert + \lvert (Y^{(i',-1,k+1)}-Y^{(i',-1,k)}) \cap \overline{I_j} \cap X_{V(T_t)} \cap X_{V(T_{j,t'})}-X_t \rvert \\
\leq \;& \lvert Y^{(i',-1,k)} \cap \overline{I_j} \cap X_{V(T_t)} \cap X_{V(T_{j,t'})} \rvert \\
& \quad + \lvert X_t \cap \overline{I_j} \rvert + f(\lvert Y^{(i',-1,k)} \cap X_{V(T_{j,t'})} \cap \overline{I_j} \cap X_{V(T_t)} \rvert) + \lvert X_{\partial T_{j,t'}} \cap \overline{I_j} \cap X_{V(T_t)} \rvert\\
 = \;& f_1(\lvert Y^{(i',-1,k)} \cap \overline{I_j} \cap X_{V(T_t)} \cap X_{V(T_{j,t'})} \rvert) + \lvert X_t \cap \overline{I_j} \rvert + \lvert X_{\partial T_{j,t'}} \cap X_{V(T_t)} \cap \overline{I_j} \rvert \\
\leq\; & f_1(\lvert Y^{(i',-1,k)} \cap \overline{I_j} \cap X_{V(T_t)} \cap X_{V(T_{j,t'})} \rvert) + 2w_0, 
\end{align*}
where the last inequality follows from \cref{claim:I_jbasic1}.

By \cref{claim:I_jbasic1}, $\lvert Y^{(i',-1,0)} \cap \overline{I_j} \cap X_{V(T_t)} \cap X_{V(T_{j,t'})} \rvert \leq w_0=g_0(0)$.
So it is easy to verify that for every $k \in [0,w_0]$, $\lvert Y^{(i',-1,k)} \cap \overline{I_j} \cap X_{V(T_t)} \cap X_{V(T_{j,t'})} \rvert \leq g_0(k)$ by induction on $k$.
Therefore, $\lvert Y^{(i',0,0)} \cap \overline{I_j} \cap X_{V(T_t)} \cap X_{V(T_{j,t'})} \rvert = \lvert Y^{(i',-1,w_0)} \cap \overline{I_j} \cap X_{V(T_t)} \cap X_{V(T_{j,t'})} \rvert \leq g_0(w_0)$.
\end{proof}

\begin{claim} \label{claim:I_jsize_prep}
Let $i,i' \in \mathbb{N}_0$ with $i'\leq i$, and let $t$ and $t'$ be the nodes of $T$ with $\sigma_T(t)=i$ and $\sigma_T(t')=i'$. 
Let $j \in [\lvert V \rvert-1]$.
Let $a,b$ be integers such that $I_j = \bigcup_{\alpha=a}^bV_\alpha$.
Then the following hold:
	\begin{itemize}
		\item For every $k \in [0,s+1]$, 
		\begin{align*}
		& \lvert Y^{(i',0,k+1)} \cap (\bigcup_{\alpha=a-(s+3)+(k+1)}^{b+(s+3)-(k+1)}V_\alpha) \cap X_{V(T_t)} \cap X_{V(T_{j,t'})} \rvert \\
		\leq\; & f_1(\lvert Y^{(i',0,k)} \cap (\bigcup_{\alpha=a-(s+3)+k}^{b+(s+3)-k}V_\alpha) \cap X_{V(T_t)} \cap X_{V(T_{j,t'})} \rvert) + \lvert X_{\partial T_{j,t'}} \cap X_{V(T_t)} \cap \overline{I_j} \rvert + 2w_0.
		\end{align*}
		\item If $i'<i$, then $\lvert Y^{(i',0,s+2)} \cap (\bigcup_{\alpha=a-1}^{b+1}V_\alpha) \cap X_{V(T_t)} \cap X_{V(T_{j,t'})} \rvert \leq g_1(s+2)$.
	\end{itemize}
\end{claim}

\begin{proof}
We may assume that $t \in V(T_{t'})$, for otherwise $X_{V(T_t)} \cap X_{V(T_{j,t'})}=\emptyset$ and we are done.

We first prove Statement~1.
For every $k \in [0,s+1]$, 
\begin{align*}
(Y^{(i',0,k+1)}-Y^{(i',0,k)}) \cap \overline{I_j} \cap X_{V(T_t)} 
& \;\subseteq W_2^{(i',0,k)} \cap \overline{I_j} \cap X_{V(T_t)} \\
& \;\subseteq A_{L^{(i',0,k)}}(Y^{(i',0,k)}) \cap Z_{t'} \cap \overline{I_j} \cap X_{V(T_t)}.
\end{align*}
So $(Y^{(i',0,k+1)}-Y^{(i',0,k)}) \cap \overline{I_j} \cap X_{V(T_t)} \cap X_{V(T_{j,t})} \subseteq A_{L^{(i',0,k)}}(Y^{(i',0,k)}) \cap \overline{I_j} \cap X_{V(T_t)} \cap X_{V(T_{j,t})}$.
By \cref{claim:boundaryZ}, 
\begin{align*}
& A_{L^{(i',0,k)}}(Y^{(i',0,k)}) \cap (\bigcup_{\alpha=a-(s+3)+(k+1)}^{b+(s+3)-(k+1)}V_\alpha) \cap X_{V(T_t)}-X_t \\
\subseteq\; & A_{L^{(i',0,k)}}(Y^{(i',0,k)} \cap X_{V(T_t)} \cap (\bigcup_{\alpha=a-(s+3)+k}^{b+(s+3)-k}V_\alpha)) \cap (\bigcup_{\alpha=a-(s+3)+k+1}^{b+(s+3)-(k+1)}V_\alpha) \cap X_{V(T_t)}-X_t, 
\end{align*}
and 
\begin{align*}
A_{L^{(i',0,k)}}(Y^{(i',0,k)} \cap X_{V(T_t)}) \cap X_{V(T_{j,t'})} \subseteq N^{\geq s}_G(Y^{(i',0,k)} \cap X_{V(T_t)} \cap X_{V(T_{j,t'})}) \cup X_{\partial T_{j,t'}} \cup X_{t'}.
\end{align*}
Therefore, 
\begin{align*}
& (Y^{(i',0,k+1)}-Y^{(i',0,k)}) \cap (\bigcup_{\alpha=a-(s+3)+(k+1)}^{b+(s+3)-(k+1)}V_\alpha) \cap X_{V(T_t)} \cap X_{V(T_{j,t'})} \\
\subseteq\; & \Big(A_{L^{(i',0,k)}}(Y^{(i',0,k)}) \cap X_{V(T_{j,t'})} \cap (\bigcup_{\alpha=a-(s+3)+(k+1)}^{b+(s+3)-(k+1)}V_\alpha) \cap X_{V(T_t)}-X_t\Big) \cup (X_t \cap \overline{I_j}) \\
\subseteq\; & \Big( A_{L^{(i',0,k)}}(Y^{(i',0,k)} \cap X_{V(T_t)} \cap (\bigcup_{\alpha=a-(s+3)+k}^{b+(s+3)-k}V_\alpha)) \cap (\bigcup_{\alpha=a-(s+3)+(k+1)}^{b+(s+3)-(k+1)}V_\alpha) \cap X_{V(T_{j,t'})} \cap X_{V(T_t)}-X_t\Big)\\ 
& \quad \cup (X_t \cap \overline{I_j}) \\
\subseteq\; & \Big(  (N_G^{\geq s}(Y^{(i',0,k)} \cap (\bigcup_{\alpha=a-(s+3)+k}^{b+(s+3)-k}V_\alpha) \cap X_{V(T_t)} \cap X_{V(T_{j,t'})}) \cup X_{\partial T_{j,t'}} \cup X_{t'})  \\
 & \quad \cap (\bigcup_{\alpha=a-(s+3)+k+1}^{b+(s+3)-(k+1)}V_\alpha) \cap X_{V(T_t)}-X_t \Big) \cup (X_t \cap \overline{I_j}),
\end{align*}
 where the last inequality follows from \cref{claim:boundaryZ}.
Hence 
\begin{align*}
& \lvert (Y^{(i',0,k+1)}-Y^{(i',0,k)}) \cap (\bigcup_{\alpha=a-(s+3)+(k+1)}^{b+(s+3)-(k+1)}V_\alpha) \cap X_{V(T_t)} \cap X_{V(T_{j,t'})} \rvert \\
\leq\; & \lvert N_G^{\geq s}(Y^{(i',0,k)} \cap (\bigcup_{\alpha=a-(s+3)+k}^{b+(s+3)-k}V_\alpha) \cap X_{V(T_t)} \cap X_{V(T_{j,t'})}) \cap  (\bigcup_{\alpha=a-(s+3)+k+1}^{b+(s+3)-(k+1)}V_\alpha) \cap X_{V(T_t)}-X_t \rvert \\
& \quad + \lvert (X_{\partial T_{j,t'}} \cup X_{t'}) \cap  (\bigcup_{\alpha=a-(s+3)+k+1}^{b+(s+3)-(k+1)}V_\alpha ) \cap X_{V(T_t)}-X_t \rvert + \lvert X_t \cap \overline{I_j} \rvert \\
\leq \;& f(\lvert Y^{(i',0,k)} \cap (\bigcup_{\alpha=a-(s+3)+k}^{b+(s+3)-k}V_\alpha) \cap X_{V(T_t)} \cap X_{V(T_{j,t'})} \rvert) + \lvert X_{\partial T_{j,t'}} \cap X_{V(T_t)} \cap \overline{I_j} \rvert + \lvert X_{t'} \cap \overline{I_j} \rvert + \lvert X_t \cap \overline{I_j} \rvert \\
\leq \; &  f(\lvert Y^{(i',0,k)} \cap (\bigcup_{\alpha=a-(s+3)+k}^{b+(s+3)-k}V_\alpha) \cap X_{V(T_t)} \cap X_{V(T_{j,t'})} \rvert) + \lvert X_{\partial T_{j,t'}} \cap X_{V(T_t)} \cap \overline{I_j} \rvert + 2w_0.
\end{align*}
So 
\begin{align*}
& \lvert Y^{(i',0,k+1)} \cap (\bigcup_{\alpha=a-(s+3)+(k+1)}^{b+(s+3)-(k+1)}V_\alpha) \cap X_{V(T_t)} \cap X_{V(T_{j,t'})} \rvert \\
=\; & \lvert Y^{(i',0,k)} \cap (\bigcup_{\alpha=a-(s+3)+(k+1)}^{b+(s+3)-(k+1)}V_\alpha) \cap X_{V(T_t)} \cap X_{V(T_{j,t'})} \rvert \\
& \quad + \lvert (Y^{(i',0,k+1)}-Y^{(i',0,k)}) \cap (\bigcup_{\alpha=a-(s+3)+(k+1)}^{b+(s+3)-(k+1)}V_\alpha) \cap X_{V(T_t)} \cap X_{V(T_{j,t'})} \rvert \\
\leq\; &  \lvert Y^{(i',0,k)} \cap (\bigcup_{\alpha=a-(s+3)+k}^{b+(s+3)-k}V_\alpha) \cap X_{V(T_t)} \cap X_{V(T_{j,t'})} \rvert\\
& \quad +f(\lvert Y^{(i',0,k)} \cap (\bigcup_{\alpha=a-(s+3)+k}^{b+(s+3)-k}V_\alpha) \cap X_{V(T_t)} \cap X_{V(T_{j,t'})} \rvert) + \lvert X_{\partial T_{j,t'}} \cap X_{V(T_t)} \cap \overline{I_j} \rvert + 2w_0 \\
=\; & f_1(\lvert Y^{(i',0,k)} \cap (\bigcup_{\alpha=a-(s+3)+k}^{b+(s+3)-k}V_\alpha) \cap X_{V(T_t)} \cap X_{V(T_{j,t'})} \rvert) + \lvert X_{\partial T_{j,t'}} \cap X_{V(T_t)} \cap \overline{I_j} \rvert + 2w_0.
\end{align*}
This proves Statement~1.

Now we prove Statement~2 of this claim.
Assume $i'<i$.
By \cref{claim:I_jbasic1} and Statement~1 of this claim, for every $k \in [0,s+1]$, 
\begin{align*}
& \lvert Y^{(i',0,k+1)} \cap (\bigcup_{\alpha=a-(s+3)+(k+1)}^{b+(s+3)-(k+1)}V_\alpha) \cap X_{V(T_t)} \cap X_{V(T_{j,t'})} \rvert \\
\leq\; & f_1(\lvert Y^{(i',0,k)} \cap (\bigcup_{\alpha=a-(s+3)+k}^{b+(s+3)-k}V_\alpha) \cap X_{V(T_t)} \cap X_{V(T_{j,t'})} \rvert) + \lvert X_{\partial T_{j,t'}} \cap X_{V(T_t)} \cap \overline{I_j} \rvert + 2w_0\\
 \leq\; & f_1(\lvert Y^{(i',0,k)} \cap (\bigcup_{\alpha=a-(s+3)+k}^{b+(s+3)-k}V_\alpha) \cap X_{V(T_t)} \cap X_{V(T_{j,t'})} \rvert) + 3w_0.
\end{align*}
Then it is easy to verify that for every $k \in [0,s+2]$, $\lvert Y^{(i',0,k)} \cap (\bigcup_{\alpha=a-(s+3)+k)}^{b+(s+3)-k}V_\alpha) \cap X_{V(T_t)} \cap X_{V(T_{j,t'})} \rvert \leq g_1(k)$ by induction on $k$, where the base case $k=0$ follows from \cref{claim:-1jumpsize}.  
Then Statement~2 follows from the case $k=s+2$.
\end{proof}

\subsection{Claims related to Stage $(i, \geq 1,\star)$}
\label{i1star}

We first recall the relevant parts of the algorithm.

\medskip
\noindent\textbf{\boldmath Stage $(0,-1,0)$: Initialization:}
	\begin{itemize}
		\item Let $D^{(0,0,0)} := \emptyset$. 
		\item Let $S^{(0,0)}_{j,t} := \emptyset$, $S^{(0,1)}_{j,t}:=\emptyset$ and $S^{(0,2)}_{j,t} := \emptyset$ for every $j \in [\lvert V \rvert-1]$ and $t \in V(T)$.
		\item Other sets are defined.
See \cref{subsec:alog} for details. 
	\end{itemize}
For $i=0,1,2,\dots$, let $t$ be the node of $T$ with $\sigma_T(t)=i$ and perform the following steps:\\[1ex]
\hspace*{4mm} 
\noindent\textbf{\boldmath Stage: Building Fences:}
	See \cref{subsec:alog} for details.\\		
\hspace*{4mm} 
\noindent\textbf{\boldmath Stage $(i,-1,\star)$:} 
	See \cref{subsec:alog} for details.\\
\hspace*{4mm}
\noindent\textbf{\boldmath Stage $(i,0,\star)$:} 
	$(Y^{(i,0,s+2)},L^{(i,0,s+2)})$ and other sets are defined.
	See \cref{subsec:alog} for details.\\
\hspace*{4mm}
\noindent\textbf{\boldmath Stage $(i,\geq 1,\star)$: Isolating the Fences of $T_{j,t}$}
	\begin{itemize}
				\item Let $W_3^{(i,-1)} := \bigcup_{j=1}^{\lvert \V \rvert-1}(X_{t} \cap I_j)$. 
				\item Let $W_4^{(i)} := \bigcup_{j=1}^{\lvert \V \rvert-1}(X_{V(T_{j,t})} \cap I_j)$. 
				\item For each $\ell \in [0,\lvert V(T) \rvert]$, define the following:
					\begin{itemize}
						\item Let $W_3^{(i,\ell)} := W_3^{(i,\ell-1)} \cup \bigcup_{j=1}^{\lvert \V \rvert-1}\bigcup_{q} (X_q \cap I_j)$, where the last union is over all nodes $q$ in $\partial T_{j,t}$ for which there exists a monochromatic path in $G[Y^{(i,\ell,s+2)} \cap I_j \cap W_4^{(i)}]$ from $W_3^{(i,\ell-1)}$ to 
						\begin{align*}
						X_q \cap & I_j-((D^{(i,\ell,0)}-X_t) \cup \{v \in D^{(i,\ell,0)} \cap X_t \cap X_{q'} \cap I_j: q' \in V(T_t)-\{t\}, \\
						& q'\text{ is a witness for }X_{q'} \cap I_j \subseteq W_3^{(i',\ell')}\text{ for some }i' \in [0,i-1]\text{ and }\ell' \in [0,\lvert V(T) \rvert]\})
						\end{align*}
						internally disjoint from $X_t \cup X_{\partial T_{j,t}}$.
						\item Let $(Y^{(i,\ell+1,0)}, L^{(i,\ell+1,0)})$ be a $(W_{3}^{(i,\ell)},0)$-progress of $(Y^{(i,\ell,s+2)},L^{(i,\ell,s+2)})$.
						\item For each $k \in [0,s+1]$, define the following:
										\begin{itemize}
											\item 
											\begin{minipage}[t]{100mm}
												\vspace*{-5ex}
\begin{align*}
\text{Let }&W_{0}^{(i,\ell+1,k)}  := \{v \in Y^{(i,\ell+1,k)} \cap W_{4}^{(i)}: L^{(i,\ell+1,k)}(v)=\{k+1\} \\
& \text{ and there exists a path } P \text{ in } G[Y^{(i,\ell+1,k)} \cap W_{4}^{(i)}] \text{ from }v \text{ to } W_{3}^{(i,\ell)} \\
& \text{ such that } L^{(i,\ell+1,k)}(q)=\{k+1\} \text{ for all }q \in V(P)\}.
												\end{align*}
											\end{minipage}
											\item Let $W_2^{(i,\ell+1,k)} := A_{L^{(i,\ell+1,k)}}(W_0^{(i,\ell+1,k)}) \cap W_4^{(i)}$. 
											\item Let $(Y^{(i,\ell+1,k+1)},L^{(i,\ell+1,k+1)})$ be the $(W_2^{(i,\ell+1,k)},k+1)$-progress of $(Y^{(i,\ell+1,k)},L^{(i,\ell+1,k)})$.
											\item Let $D^{(i,\ell,k+1)} := D^{(i,\ell,k)} \cup W_0^{(i,\ell+1,k)}$.
										\end{itemize}
						\item Let $D^{(i,\ell+1,0)}:=D^{(i,\ell,s+2)}$.
					\end{itemize}
	\end{itemize}
\hspace*{4mm}
\noindent\textbf{Stage: Adding Fake Edges:}
	See \cref{subsec:alog} for details.\\
\hspace*{4mm}
\noindent\textbf{Stage: Moving to the Next Node in the Tree}
	\begin{itemize}
				\item Let $(Y^{(i+1,-1,0)},L^{(i+1,-1,0)})$ be a $(X_{t'},0)$-progress of $(Y^{(i,\lvert V(T) \rvert+1,s+2)},L^{(i,\lvert V(T) \rvert+1,s+2)})$, where $t'$ is the node of $T$ with $\sigma_T(t')=i+1$. 
				\item Let $D^{(i+1,0,0)}:=D^{(i,\lvert V(T) \rvert,s+2)}$.
				\item For every node $t'$ with $t '\in V(T_t)-\{t\}$ and $j \in [\lvert V \rvert-1]$, 
					\begin{itemize}
						\item If $t' \in V(T_{j,t})-\partial T_{j,t}$, then define 
							\begin{itemize}
								\item $S^{(i+1,0)}_{j,t'} := S^{(i,0)}_{j,t'} \cup \bigcup_{\ell=0}^{\lvert V(T) \rvert}\bigcup_{k=0}^{s+1}W_0^{(i,\ell+1,k)}$,
								\item $S^{(i+1,1)}_{j,t'} := S^{(i,1)}_{j,t'} \cup \bigcup_{j=1}^{\lvert \V \rvert-1}\bigcup_q (X_q \cap I_j)$, where the last union is over all nodes $q$ of $T$ such that $q \in V(T_{t'})-\{t'\}$ and $q$ is a witness for $X_q \cap I_j \subseteq W_3^{(i,\ell')}$ for some $\ell' \in [0,\lvert V(T) \rvert]$, and 
								\item $S^{(i+1,2)}_{j,t'} := S^{(i,2)}_{j,t'} \cup \bigcup_{k=0}^{w_0-1}W_0^{(i,-1,k)} \cup \bigcup_{k=0}^{s+1}W_0^{(i,0,k)}$;
							\end{itemize}
						\item otherwise, define $S^{(i+1,0)}_{j,t'} := S^{(i,0)}_{j,t'}$, $S^{(i+1,1)}_{j,t'} := S^{(i,1)}_{j,t'}$ and $S^{(i+1,2)}_{j,t'} := S^{(i,2)}_{j,t'}$.
					\end{itemize}
	\end{itemize}
\hspace*{4mm}
\noindent\textbf{Stage: Building a New Fence:}
		See \cref{subsec:alog} for details.

\medskip

Now we prove related claims.

\begin{claim} \label{claim:W_3isolate}
Let $i \in \mathbb{N}_0$, and let $t \in V(T)$ with $i_t=i$. 
Let $j \in [\lvert \V \rvert-1]$ and $\ell \in [-1,\lvert V(T) \rvert]$.
If $k \in [0,s+1]$ and $P$ is a $c$-monochromatic path of color $k+1$ contained in $G[W_4^{(i)}]$ intersecting $W_3^{(i,\ell)}$, then $V(P) \subseteq Y^{(i,\ell+1,k)}$ and $A_{L^{(i,\ell+1,k+1)}}(V(P)) \cap W_4^{(i)}=\emptyset$.
\end{claim}

\begin{proof}
First suppose that $V(P) \not \subseteq Y^{(i,\ell+1,k)}$.
Since $W_3^{(i,\ell)} \subseteq Y^{(i,\ell+1,0)}$, $V(P) \cap W_3^{(i,\ell)}$ is a nonempty subset of $Y^{(i,\ell+1,k)}$.
So there exists a longest subpath $P'$ of $P$ contained in $G[Y^{(i,\ell+1,k)}]$ intersecting $W_3^{(i,\ell)}$.
Since $V(P') \subseteq V(P) \subseteq W_4^{(i)}$, $V(P') \subseteq W_0^{(i,\ell+1,k)}$.
In addition, $P \neq P'$, for otherwise $V(P)=V(P') \subseteq Y^{(i,\ell+1,k)}$, a contradiction.
So there exist $v \in V(P')$ and $u \in N_{P}(v)-V(P')$.
That is, $c(u)=k+1$ and $u \not \in Y^{(i,\ell+1,k)}$.
So $u \in A_{L^{(i,\ell+1,k)}}(\{v\}) \cap V(P)-W_0^{(i,\ell+1,k)} \subseteq A_{L^{(i,\ell+1,k)}}(W_0^{(i,\ell+1,k)}) \cap W_4^{(i)} \subseteq W_2^{(i,\ell+1,k)}$.
But $(Y^{(i,\ell+1,k+1)},L^{(i,\ell+1,k+1)})$ is a $(W_2^{(i,\ell+1,k)},k+1)$-progress, so $k+1 \not \in L^{(i,\ell+1,k+1)}(u)$.
Hence $c(u) \neq k+1$, a contradiction.

Now we suppose that $A_{L^{(i,\ell+1,k+1)}}(V(P)) \cap W_4^{(i)} \neq \emptyset$.
So there exists $z \in A_{L^{(i,\ell+1,k+1)}}(V(P)) \cap W_4^{(i)}$.
Note that we have shown that $V(P) \subseteq Y^{(i,\ell+1,k)}$, so $V(P) \subseteq W_0^{(i,\ell+1,k)}$ and $A_{L^{(i,\ell+1,k+1)}}(V(P)) \subseteq A_{L^{(i,\ell+1,k)}}(V(P))$.
In addition, $z \in A_{L^{(i,\ell+1,k+1)}}(V(P))$, so $z \not \in Y^{(i,\ell+1,k+1)} \supseteq Y^{(i,\ell+1,k)} \supseteq W_0^{(i,\ell+1,k)}$.
So $z \in A_{L^{(i,\ell+1,k+1)}}(V(P)) \cap W_4^{(i)}-W_0^{(i,\ell+1,k)} \subseteq A_{L^{(i,\ell+1,k)}}(V(P)) \cap W_4^{(i)}-W_0^{(i,\ell+1,k)} \subseteq W_2^{(i,\ell+1,k)}$.
Since $(Y^{(i,\ell+1,k+1)},L^{(i,\ell+1,k+1)})$ is a $(W_2^{(i,\ell+1,k)},k+1)$-progress of $(Y^{(i,\ell+1,k)},L^{(i,\ell+1,k)})$, we know $k+1 \not \in L^{(i,\ell+1,k+1)}(z)$, so $z \not \in A_{L^{(i,\ell+1,k+1)}}(V(P))$, a contradiction.
This proves the claim.
\end{proof}

\begin{claim} \label{claim:1jump1}
Let $i,i' \in \mathbb{N}_0$ with $i'<i$, and let $t \in V(T)$ with $i_t=i$. 
Let $j \in [\lvert V \rvert-1]$ and $\ell \in [0,\lvert V(T) \rvert]$.
If $Y^{(i',\ell,s+2)} \cap X_{V(T_{t})} \cap I_j-X_{t} \neq Y^{(i',\ell+1,s+2)}\cap X_{V(T_{t})} \cap I_j-X_{t}$, then $(W_3^{(i',\ell)}-W_3^{(i',\ell-1)}) \cap I_j \neq \emptyset$, and either 
	\begin{itemize}
		\item $\ell>0$, or 
		\item $\lvert X_t \cap I_j \cap D^{(i',\ell,s+2)} \rvert > \lvert X_t \cap I_j \cap D^{(i',\ell,0)} \rvert$, or 
		\item $\lvert X_t \cap I_j \cap S^{(i'+1,0)}_{j,t} \rvert > \lvert X_t \cap I_j \cap S^{(i',0)}_{j,t} \rvert$, or 
		\item $\lvert X_t \cap I_j \cap S^{(i'+1,1)}_{j,t} \rvert > \lvert X_t \cap I_j \cap S^{(i',1)}_{j,t} \rvert$.
	\end{itemize}
\end{claim}

\begin{proof}
Let $t'$ be the node of $T$ with $i_{t'}=i'$.
Since $Y^{(i',\ell,s+2)} \cap X_{V(T_{t})} \cap I_j-X_{t} \neq Y^{(i',\ell+1,s+2)}\cap X_{V(T_{t})} \cap I_j-X_{t}$, $t'$ is an ancestor of $t$.
	
Suppose that $(W_3^{(i',\ell)}-W_3^{(i',\ell-1)}) \cap I_j=\emptyset$.
Then for every $k \in [0,s+1]$ and every monochromatic path $P$ of color $k+1$ in $G[Y^{(i',\ell+1,k)} \cap W_4^{(i')}]$ intersecting $W_3^{(i',\ell)} \cap I_j$, $P$ is a $c$-monochromatic path  in $G[W_4^{(i')}]$ intersecting $W_3^{(i',\ell)} \cap I_j \subseteq W_3^{(i',\ell-1)} \cap I_j$, so \cref{claim:W_3isolate} implies that $V(P) \subseteq Y^{(i',\ell,k)} \cap W_4^{(i')}$, and hence $V(P) \subseteq W_0^{(i',\ell,k)}$.
This implies that $W_0^{(i',\ell+1,k)} \cap I_j \subseteq W_0^{(i',\ell,k)} \cap I_j$ for every $k \in [0,s+1]$.
Since no edge of $G$ is between $I_j$ and $I_{j'}$ for any $j' \neq j$, for every $k \in [0,s+1]$, 
\begin{align*}
 W_2^{(i',\ell+1,k)} \cap I_j 
= \;& A_{L^{(i',\ell+1,k)}}(W_0^{(i',\ell+1,k)}) \cap W_4^{(i')} \cap I_j \\
= \;& A_{L^{(i',\ell+1,k)}}(W_0^{(i',\ell+1,k)} \cap I_j) \cap W_4^{(i')} \cap I_j \\
\subseteq \;& A_{L^{(i',\ell+1,k)}}(W_0^{(i',\ell,k)} \cap I_j) \cap W_4^{(i')} \cap I_j \\\
\subseteq \;& A_{L^{(i',\ell+1,k)}}(W_0^{(i',\ell,k)}) \cap W_4^{(i')} \cap I_j \\
\subseteq \;& A_{L^{(i',\ell,k)}}(W_0^{(i',\ell,k)}) \cap W_4^{(i')} \cap I_j \\
= \;& W_2^{(i',\ell,k)} \cap I_j.
\end{align*} 
Hence $(Y^{(i',\ell+1,s+2)}-Y^{(i',\ell,s+2)}) \cap I_j \subseteq \bigcup_{k=0}^{s+1}(W_2^{(i',\ell+1,k)} \cap I_j) \subseteq \bigcup_{k=0}^{s+1}(W_2^{(i',\ell,k)} \cap I_j) \subseteq Y^{(i',\ell,s+2)} \cap I_j$.
So $Y^{(i',\ell+1,s+2)} \cap I_j \subseteq Y^{(i',\ell,s+2)} \cap I_j$, a contradiction.

Hence $(W_3^{(i',\ell)}-W_3^{(i',\ell-1)}) \cap I_j \neq \emptyset$.

Suppose to the contrary that $\ell = 0$, $\lvert X_t \cap I_j \cap D^{(i',\ell,s+2)} \rvert \leq \lvert X_t \cap I_j \cap D^{(i',\ell,0)} \rvert$, $\lvert X_t \cap I_j \cap S^{(i'+1,0)}_{j,t} \rvert \leq \lvert X_t \cap I_j \cap S^{(i',0)}_{j,t} \rvert$, and $\lvert X_t \cap I_j \cap S^{(i'+1,1)}_{j,t} \rvert \leq \lvert X_t \cap I_j \cap S^{(i',1)}_{j,t} \rvert$. 
Since $D^{(i',\ell,s+2)} \supseteq D^{(i',\ell,0)}$, $S^{(i'+1,0)}_{j,t} \supseteq S^{(i',0)}_{j,t}$ and $S^{(i'+1,1)}_{j,t} \supseteq S^{(i',1)}_{j,t}$, we know $X_t \cap I_j \cap D^{(i',\ell,s+2)} = X_t \cap I_j \cap D^{(i',\ell,0)}$, $X_t \cap I_j \cap S^{(i'+1,0)}_{j,t} = X_t \cap I_j \cap S^{(i',0)}_{j,t}$ and $X_t \cap I_j \cap S^{(i'+1,1)}_{j,t} = X_t \cap I_j \cap S^{(i',1)}_{j,t}$.

Since $(W_3^{(i',\ell)}-W_3^{(i',\ell-1)}) \cap I_j \neq \emptyset$, there exists $q \in \partial T_{j,t'}$ such that $q$ is a witness for $X_q \cap I_j \subseteq W_3^{(i',\ell)}$ and $X_q \cap I_j \not \subseteq W_3^{(i',\ell-1)}$.
So $X_q \cap I_j$ is a nonempty subset of $W_3^{(i',\ell)} \cap I_j$ with $X_q \cap I_j \not \subseteq W_3^{(i',\ell-1)}$, and there exists a $c$-monochromatic path $P$ in $G[Y^{(i',\ell,s+2)} \cap I_j \cap W_4^{(i')}]$  from $W_3^{(i',\ell-1)}$ to $X_q \cap I_j-((D^{(i',\ell,0)}-X_{t'}) \cup \{v \in D^{(i',\ell,0)} \cap X_{t'} \cap X_{q'} \cap I_j: q' \in V(T_{t'})-\{t'\}, q'$ is a witness for $X_{q'} \cap I_j \subseteq W_3^{(i'',\ell')}$ for some $i'' \in [0,i'-1]$ and $\ell' \in [0,\lvert V(T) \rvert]\})$ and internally disjoint from $X_{t'} \cup X_{\partial T_{j,t'}}$.
Furthermore, choose such a node $q$ such that $q \in V(T_t)-\{t\}$ if possible.
Note that there exists $k \in [0,s+1]$ such that the color of $P$ is $k+1$.

We first suppose that $q \in V(T_t)-\{t\}$.
Since $V(P) \subseteq Y^{(i',\ell,s+2)} \cap W_4^{(i')} \subseteq Y^{(i',\ell+1,k)} \cap W_4^{(i')}$, $V(P) \subseteq W_0^{(i',\ell+1,k)} \subseteq D^{(i',\ell,k+1)} \subseteq D^{(i',\ell,s+2)}$.
Since $X_t \cap I_j \cap D^{(i',\ell,s+2)}= X_t \cap I_j \cap D^{(i',\ell,0)}$, $V(P) \cap X_t \subseteq D^{(i',\ell,0)}$.
Let $z$ be the end of $P$ belonging to $X_q \cap I_j -((D^{(i',\ell,0)}-X_{t'})  \cup \{v \in D^{(i',\ell,0)} \cap X_{t'} \cap X_{q'} \cap I_j: q' \in V(T_{t'})-\{t'\}, q'$ is a witness for $X_{q'} \cap I_j \subseteq W_3^{(i'',\ell')}$ for some $i'' \in [0,i'-1]$ and $\ell' \in [0,\lvert V(T) \rvert]\})$, such that $z \not \in W_3^{(i',\ell-1)}$ if possible.

Suppose $z \not \in D^{(i',\ell,0)}$.
Since $X_t \cap V(P) \subseteq D^{(i',\ell,0)}$, $z \not \in X_t$.
Since $\ell=0$, $P$ has one end in $W_3^{(i',-1)}$.
Since $t'$ is an ancestor of $t$ and $q \in V(T_t)$, $V(P)$ intersects $X_t$, and there exists a subpath $P''$ of $P$ from $X_t \cap V(P) \subseteq D^{(i',\ell,0)}$ to $z \in X_{V(T_t)}-(X_t \cup D^{(i',\ell,0)})$ and internally disjoint from $X_t$.
So there exist $a \in V(P'') \cap D^{(i',\ell,0)}$ and $b \in N_{P''}(a)-D^{(i',\ell,0)}$.
Note that $b \in V(P'')-D^{(i',\ell,0)} \subseteq V(P)-D^{(i',\ell,0)}$.
Since $X_t \cap V(P) \subseteq D^{(i',\ell,0)}$, $b \in N_{P''}(a)-(D^{(i',\ell,0)} \cup X_t)$.
Let $I^*=\{i'' \in [0,i'-1]: a \in \bigcup_{\ell'=0}^{\lvert V(T) \rvert}W_0^{(i'',\ell'+1,k)}\}$.
Since $\ell=0$ and $a \in D^{(i',\ell,0)}$, $a \in D^{(i',0,0)}=D^{(i'-1,\lvert V(T) \rvert,s+2)}$.
Since $c(a)=k+1$, $a \in \bigcup_{i''=0}^{i'-1}\bigcup_{\ell'=0}^{\lvert V(T) \rvert}W_0^{(i'',\ell'+1,k)}$, so $I^* \neq \emptyset$.
Note that for every $i'' \in I^*$, there exists $\ell_{i''} \in [0,\lvert V(T) \rvert]$ such that either $\ell_{i''}=0$ and $a \in W_0^{(i'',\ell_{i''}+1,k)}$, or $\ell_{i''}>0$ and $a \in W_0^{(i'',\ell_{i''}+1,k)}-W_0^{(i'',\ell_{i''},k)}$, so there exists a $c$-monochromatic path $P_{i''}$  of color $k+1$ in $G[Y^{(i'',\ell_{i''}+1,k)} \cap W_4^{(i'')}]$ from $a$ to $W_3^{(i'',\ell_{i''})}$ and internally disjoint from $W_3^{(i'',\ell_{i''})}$.
Since $b \not \in D^{(i',\ell,0)}$, $b \not \in Y^{(i'',\ell_{i''}+1,k)} \cap W_4^{(i'')}$ for every $i'' \in I^*$.
For every $i'' \in I^*$, since $A_{L^{(i'',\ell_{i''}+1,k+1)}}(V(P_{i''})) \cap W_4^{(i'')}=\emptyset$ by \cref{claim:W_3isolate}, we have $b \not \in W_4^{(i'')}$ (since $b \in W_4^{(i'')}$ implies that $b \not \in Y^{(i'',\ell_{i''}+1,k)} \supseteq V(P_{i''})$ and hence $b \in A_{L^{(i'',\ell_{i''}+1,k+1)}}(V(P_{i''})) \cap W_4^{(i'')}$, a contradiction).
Since $P''$ is from $X_t$ to $X_q$ and internally disjoint from $X_t \cup X_{t'} \cup X_{\partial T_{j,t'}}$, and since $b \not \in W_4^{(i'')}$, we know for every $i'' \in I^*$, $q \not \in V(T_{j,t_{i''}})$, where $t_{i''}$ is the node of $T$ with $i_{t_{i''}}=i''$.
Since $q \in \partial T_{j,t'}$, for every $i'' \in I^*$, if some node in $\partial T_{j,t_{i''}}$ belongs to the path in $T$ from $t'$ to the parent of $q$, then this node must be $t'$.
This together with the fact $q \not \in V(T_{j,t_{i''}})$ for every $i'' \in I^*$, where $t_{i''}$ is the node of $T$ with $i_{t_{i''}}=i''$, we have $t \not \in V(T_{j,t_{i''}})$ for every $i'' \in I^*$.
If $a \in S^{(i''+1,0)}_{j,t}-S^{(i'',0)}_{j,t}$ for some $i'' \in [0,i'-1]$, then $a \in \bigcup_{\ell'=0}^{\lvert V(T) \rvert}W_0^{(i'',\ell'+1,k)}$, so $i'' \in I^*$, but since $a \in \bigcup_{\ell'=0}^{\lvert V(T) \rvert}W_0^{(i'',\ell'+1,k)} \subseteq W_4^{(i'')}$ implies that $t \in V(T_{j,t_{i''}})$, where $t_{i''}$ is the node of $T$ with $i_{t_{i''}}=i''$, a contradiction.
So $a \not \in S^{(i''+1,0)}_{j,t}-S^{(i'',0)}_{j,t}$ for every $i'' \in [0,i'-1]$.
Hence $a \not \in S^{(i',0)}_{j,t}$.
On the other hand, since $t \in V(T_{j,t'})-\partial T_{j,t'}$ and $a \in V(P) \subseteq W_0^{(i',\ell+1,k)}$, we know $a \in S^{(i'+1,0)}_{j,t}$.
Since $X_t \cap I_j \cap S^{(i'+1,0)}_{j,t} = X_t \cap I_j \cap S^{(i',0)}_{j,t}$, we know $a \not \in X_t$.
Recall that $b \not \in W_4^{(i'')}$ for every $i'' \in I^*$.
Since $ab \in E(G)$ and $V(P) \subseteq I_j$, we know for every $i'' \in I^*$, $a \in X_{\partial T_{j,t_{i''}}}$, where $t_{i''}$ is the node of $T$ with $i_{t_{i''}}=i''$.
So for every $i'' \in I^*$, there exists $q_{i''} \in \partial T_{j,t_{i''}}$ such that $a \in X_{q_{i''}} \cap I_j$, where $t_{i''}$ is a node of $T$ with $i_{t_{i''}}=i''$.
If there exists $i'' \in I^*$ with $q_{i''} \in V(T_{t'})-\{t'\}$, then $t' \in V(T_{t_{i''}})$, where $t_{i''}$ is the node of $T$ with $i_{t_{i''}}=i''$, and since $q \in \partial T_{j,t'}$ and $q_{i''} \in V(T_{t'})-\{t'\}$, no node in $\partial T_{j,t_{i''}}$ is contained in the path in $T$ from $t_{i''}$ to the parent of $q$ for every $i'' \in I^*$, so we have $b \in W_4^{(i'')}$ since $b \in W_4^{(i')}$, a contradiction.
Hence $q_{i''} \not \in V(T_{t'})-\{t'\}$ for every $i'' \in I^*$.
Since $I^* \neq \emptyset$, $q_{i''} \not \in V(T_{t'})-\{t'\}$ for some $i'' \in I^*$, so $a \in X_{t'} \cap X_t$, since $a \in X_{q_{i''}} \cap X_{V(T_t)}$.
This contradicts that $a \not \in X_t$.

Hence $z \in D^{(i',\ell,0)}$.
Since $z \not \in D^{(i',\ell,0)}-X_{t'}$, $z \in X_{t'}$.
So there does not exist $q' \in V(T_{t'})-\{t'\}$ such that $z \in X_{q'} \cap I_j$ and $q'$ is a witness for $X_{q'} \cap I_j \subseteq W_3^{(i'',\ell')}$ for some $i'' \in [0,i'-1]$ and $\ell' \in [0,\lvert V(T) \rvert]$.
Since $\ell=0$, $W_3^{(i',\ell-1)}=W_3^{(i',-1)} \subseteq X_{t'}$.
Since $t'$ is an ancestor of $t$ and $z \in X_q$, $z \in X_{t'} \cap X_t$.
Since $z \in X_q \cap I_j$ and $q \in V(T_t)-\{t\}$ and $q$ is a witness for $X_q \cap I_j \subseteq W_3^{(i',\ell)}$, and $t \in V(T_{j,t'})-\partial T_{j,t'}$, we know $z \in S_{j,t}^{(i'+1,1)}$.
Since $X_t \cap I_j \cap S_{j,t}^{(i'+1,1)}=X_t \cap I_j \cap S_{j,t}^{(i',1)}$, $z \in S_{j,t}^{(i',1)}$.
So there exist $i_z \in [0,i'-1]$, and a node $q_z \in V(T_{t})-\{t\}$ such that $z \in X_{q_z} \cap I_j$ and $q_z$ is a witness for $X_{q_z} \cap I_j \subseteq W_3^{(i_z,\ell_z)}$ for some $\ell_z \in [0,\lvert V(T) \rvert]$. 
Note that $q_z \in V(T_t) \subseteq V(T_{t'})-\{t'\}$.
So $q_z$ is a node $q' \in V(T_{t'})-\{t'\}$ such that $z \in X_{q'} \cap I_j$ and $q'$ is a witness for $X_{q'} \cap I_j \subseteq W_3^{(i'',\ell')}$ for some $i'' \in [0,i'-1]$ and $\ell' \in [0,\lvert V(T) \rvert]$, a contradiction.

Therefore $q \not \in V(T_t)-\{t\}$.
So $(W_3^{(i',\ell)}-W_3^{(i',\ell-1)}) \cap I_j \cap X_{V(T_t)} \subseteq (W_3^{(i',\ell)}-W_3^{(i',\ell-1)}) \cap I_j \cap X_t$ by the choice of $q$.
Hence $Y^{(i',\ell,s+2)} \cap X_{V(T_{t})} \cap I_j-X_{t} = Y^{(i',\ell+1,0)}\cap X_{V(T_{t})} \cap I_j-X_{t}$.
Since $Y^{(i',\ell,s+2)} \cap X_{V(T_{t})} \cap I_j-X_{t} \neq Y^{(i',\ell+1,s+2)}\cap X_{V(T_{t})} \cap I_j-X_{t}$, there exists $k' \in [0,s+1]$ such that $Y^{(i',\ell+1,k'+1)} \cap X_{V(T_{t})} \cap I_j-X_{t} \neq Y^{(i',\ell+1,k')}\cap X_{V(T_{t})} \cap I_j-X_{t}$.
So $W_2^{(i',\ell+1,k')} \cap X_{V(T_t)} \cap I_j-X_t \neq \emptyset$.
Hence $t \in V(T_{j,t'})-\partial T_{j,t'}$, and there exists $x \in W_0^{(i',\ell+1,k')} \cap X_t \cap I_j \neq \emptyset$ such that there exists a monochromatic path $Q_x$ of color $k'+1$ in $G[Y^{(i',\ell+1,k')} \cap W_4^{(i')}]$ from $x$ to $W_3^{(i',\ell)}$ and internally disjoint from $X_t \cup W_3^{(i',\ell)}$, and there exists a monochromatic path $Q'_x$ of color $k'+1$ with $Q_x \subseteq Q'_x \subseteq W_0^{(i',\ell+1,k')}$ in $G[Y^{(i',\ell+1,k')} \cap W_4^{(i')}]$ from $W_3^{(i',\ell)}$ to a vertex $x'$ and internally disjoint from $W_3^{(i',\ell)}$, and there exists $x'' \in N_G(x') \cap (Y^{(i',\ell+1,k'+1)}-Y^{(i',\ell+1,k')}) \cap X_{V(T_t)}-X_t \neq \emptyset$ with $x'' \in A_{L^{(i',\ell+1,k')}}(x')$ and $c(x'') \neq c(x')=c(x)=k'+1$. 
Note that $x \in W_0^{(i',\ell+1,k')} \subseteq D^{(i',\ell,k'+1)} \subseteq D^{(i',\ell,s+2)}$.
Recall $X_t \cap I_j \cap D^{(i',\ell,s+2)} = X_t \cap I_j \cap D^{(i',\ell,0)}$.
Since $\ell=0$, $X_t \cap I_j \cap D^{(i',\ell,s+2)} = X_t \cap I_j \cap D^{(i',\ell,0)} = X_t \cap I_j \cap D^{(i',0,0)} = X_t \cap I_j \cap D^{(i'-1,\lvert V(T) \rvert,s+2)}$.
So $x \in D^{(i'-1,\lvert V(T) \rvert,s+2)}$.

Let $I' = \{i'' \in [0,i'-1]: x \in \bigcup_{\ell'=0}^{\lvert V(T) \rvert}W_0^{(i'',\ell'+1,k')}\}$.
Since $x \in D^{(i'-1,\lvert V(T) \rvert,s+2)}$, $I' \neq \emptyset$.
So for every $i'' \in I'$, there exist $t_{i''}' \in V(T)$ with $\sigma_T(t_{t''}')=i''$, $\ell_{i''} \in [0,\lvert V(T) \rvert]$, a witness $q'_{i''} \in \{t'_{i''}\} \cup \partial T_{j,t'_{i''}}$ for $X_{q'_{i''}} \cap I_j \subseteq W_3^{(i'',\ell_{i''})}$, and a monochromatic path $P'_{i''}$ of color $k'+1$ in $G[Y^{(i'',\ell_{i''}+1,k')} \cap W_4^{(i'')}]$ from $X_{q'_{i''}} \cap I_j$ to $x$.
Note that if $q'_{i''}$ is in the path in $T$ from $t'$ to $t$, then $q'_{i''}=t'$, since $t \in V(T_{t',j})-\partial T_{t',j}$.

Suppose $V(Q_x') \cup \{x''\} \subseteq W_4^{(i'')}$ for some $i'' \in I'$.
If $V(Q_x') \subseteq Y^{(i'',\ell_{i''}+1,k')}$, then $P'_{i''} \cup Q_x'$ is a connected monochromatic subgraph of color $k'+1$ contained in $G[Y^{(i'',\ell_{i''}+1,k')} \cap W_4^{(i'')}]$ intersecting $X_{q'_{i''}} \cap I_j \subseteq W_3^{(i'',\ell_{i''})}$ and $x'$, so $x'' \in W_2^{(i'',\ell_{i''}+1,k')} \subseteq Y^{(i'',\ell_{i''}+1,k'+1)} \subseteq Y^{(i',0,0)}$, contradicting that $x'' \in Y^{(i',\ell+1,k'+1)}-Y^{(i',\ell+1,k')}$.
So $V(Q_x') \not \subseteq Y^{(i'',\ell_{i''}+1,k')}$.
But this contradicts \cref{claim:W_3isolate}, since $P'_{i''} \cup Q_x'$ is a connected  $c$-monochromatic subgraph  contained in $G[W_4^{(i'')}]$ intersecting $W_3^{(i'',\ell_{i''})}$.

Hence $V(Q_x') \cup \{x''\} \not \subseteq W_4^{(i'')}$ for every $i'' \in I'$.

Suppose that $x \in S^{(i',0)}_{j,t}$.
So there exists $i'' \in [0,i'-1]$ such that $x \in S^{(i''+1,0)}_{j,t}-S^{(i'',0)}_{j,t} \subseteq \bigcup_{\ell'=0}^{\lvert V(T) \rvert}W_0^{(i'',\ell'+1,k')}$ (since $c(x)=k'+1$) and $t \in V(T_{j,t''})-\partial T_{j,t''}$, where $t''$ is the node of $T$ with $\sigma_T(t'')=i''$.
Hence $i'' \in I'$ and no node in $\partial T_{j,t''}$ is in the path in $T$ from $t''$ to the parent of $t$.
Since $i''<i'<i$ and $t \in V(T_{j,t''})-\partial T_{j,t''}$, for every $q^* \in \partial T_{j,t'}$, we know that $\partial T_{j,t''}$ is disjoint from the path in $T$ from $t'$ to the parent of $q^*$, so $\partial T_{j,t''}$ is disjoint from the path in $T$ from $t''$ to the parent of $q^*$.
So $W_4^{(i')} \subseteq W_4^{(i'')}$.
Hence $V(Q_x') \cup \{x''\} \subseteq W_4^{(i')} \subseteq W_4^{(i'')}$, a contradiction.

Hence $x \not \in S^{(i',0)}_{j,t}$.
However, $x \in W_0^{(i',\ell+1,k')}$ and $t \in V(T_{j,t'})-\partial T_{j,t'}$, so $x \in S^{(i'+1,0)}_{j,t}$.
Therefore, $X_t \cap I_j \cap S^{(i'+1,0)}_{j,t} \neq X_t \cap I_j \cap S^{(i',0)}_{j,t}$, a contradiction.
This proves the claim.
\end{proof}

\begin{claim} \label{claim:1jump2simple}
Let $i,i' \in \mathbb{N}_0$ with $i'<i$.
Let $t,t'$ be nodes of $T$ with $\sigma_T(t)=i$ and $\sigma_T(t')=i'$.
Let $j \in [\lvert V \rvert-1]$.
Let $\ell \in [-1,\lvert V(T) \rvert]$ such that there exists a witness $q \in \partial T_{j,t'} \cap V(T_t)-\{t\}$ for $X_q \cap I_j \subseteq W_3^{(i',\ell+1)}$ and $X_q \cap I_j \not \subseteq W_3^{(i',\ell)}$.
Then either:
	\begin{itemize}
		\item $\lvert X_t \cap I_j \cap S_{j,t}^{(i'+1,0)} \rvert > \lvert X_t \cap I_j \cap S_{j,t}^{(i',0)} \rvert$, or
		\item $\lvert X_t \cap I_j \cap S_{j,t}^{(i'+1,1)} \rvert > \lvert X_t \cap I_j \cap S_{j,t}^{(i',1)} \rvert$, or
		\item for every  $c$-monochromatic path $P$ in $G[Y^{(i',\ell+1,s+2)} \cap I_j \cap W_4^{(i')}]$  from $W_3^{(i',\ell)}$ to 
		\begin{align*}
&		X_q \cap I_j-((D^{(i',\ell+1,0)}-X_{t'}) \cup \{v \in D^{(i',\ell+1,0)} \cap X_{t'} \cap X_{q'} \cap I_j: q' \in V(T_{t'})-\{t'\},\\
&		\quad\quad\quad q'\text{ is a witness for }X_{q'} \cap I_j \subseteq W_3^{(i'',\ell')}\text{ for some }i'' \in [0,i'-1]\text{ and }\ell' \in [0,\lvert V(T) \rvert]\})
		\end{align*}
		 and internally disjoint from $X_{t'} \cup X_{\partial T_{j,t'}}$, we have $V(P) \subseteq X_{V(T_t)}-X_t$. 
	\end{itemize}
\end{claim}

\begin{proof}
Note that $t'$ is an ancestor of $t$, since $q \in \partial T_{j,t'} \cap V(T_t)-\{t\}$.
We may assume that $\lvert X_t \cap I_j \cap S_{j,t}^{(i'+1,0)} \rvert \leq \lvert X_t \cap I_j \cap S_{j,t}^{(i',0)} \rvert$ and $\lvert X_t \cap I_j \cap S_{j,t}^{(i'+1,1)} \rvert \leq \lvert X_t \cap I_j \cap S_{j,t}^{(i',1)} \rvert$, for otherwise we are done.
So $X_t \cap I_j \cap S_{j,t}^{(i'+1,0)} = X_t \cap I_j \cap S_{j,t}^{(i',0)}$ and $X_t \cap I_j \cap S_{j,t}^{(i'+1,1)} = X_t \cap I_j \cap S_{j,t}^{(i',1)}$.

Since there exists a witness $q \in \partial T_{j,t'} \cap V(T_t)-\{t\}$ for $X_q \cap I_j \subseteq W_3^{(i',\ell+1)}$ and $X_q \cap I_j \not \subseteq W_3^{(i',\ell)}$, there exists a  $c$-monochromatic path $P$ in $G[Y^{(i',\ell+1,s+2)} \cap I_j \cap W_4^{(i')}]$  from $W_3^{(i',\ell)}$ to a vertex 
\begin{align*}
z \in X_q \cap I_j-
& ((D^{(i',\ell+1,0)}- X_{t'}) \cup \{ v \in D^{(i',\ell+1,0)} \cap X_{t'} \cap X_{q'} \cap I_j: q' \in V(T_{t'})-\{t'\}, \\
& q'
\text{ is a witness for }X_{q'} \cap I_j \subseteq W_3^{(i'',\ell')} \text{ for some }i'' \in [0,i'-1]\text{ and }\ell' \in [0,\lvert V(T) \rvert ] \} )
\end{align*} 
and internally disjoint from $X_{t'} \cup X_{\partial T_{j,t'}}$.
Furthermore, choose $P$ such that $V(P) \not \subseteq X_{V(T_t)}-X_t$ if possible.
We may assume that $V(P) \not \subseteq X_{V(T_t)}-X_t$, for otherwise we are done.

Let $k \in [0,s+1]$ be the number such that $P$ is of color $k+1$.
By \cref{claim:W_3isolate}, $V(P) \subseteq Y^{(i',\ell+1,k)}$.
Since $V(P) \not \subseteq X_{V(T_t)}-X_t$ and $q \in V(T_t)-\{t\}$, there exists a vertex $z'$ in $V(P) \cap X_t$ such that the subpath of $P$ between $z$ and $z'$ is contained in $G[X_{V(T_t)}]$ and internally disjoint from $X_t$.
If $\ell \geq 0$, then let $\ell_1=\ell$; if $\ell=-1$, then let $\ell_1=0$.
Since $\ell_1 \geq \ell$ and $V(P) \subseteq Y^{(i',\ell+1,k)}$, $V(P) \subseteq Y^{(i',\ell_1+1,k)}$.
Since $\ell_1 \geq 0$, $V(P) \subseteq W_0^{(i',\ell_1+1,k)}$.
So $z' \in S_{j,t}^{(i'+1,0)} \cap X_t \cap I_j$.
Since $X_t \cap I_j \cap S_{j,t}^{(i'+1,0)} = X_t \cap I_j \cap S_{j,t}^{(i',0)}$, $z' \in S_{j,t}^{(i',0)} \cap X_t \cap I_j$.
Hence there exists $i_1 \in [0,i'-1]$ such that $z' \in S_{j,t}^{(i_1+1,0)}-S_{j,t}^{(i_1,0)}$.
Note that such $i_1$ exists since $S_{j,t}^{(0,0)}=\emptyset$.
Let $t_1$ be the node of $T$ with $\sigma_T(t_1)=i_1$. 
Since $z' \in S_{j,t}^{(i_1+1,0)}-S_{j,t}^{(i_1,0)}$, $t \in V(T_{j,t_1})-\partial T_{j,t_1}$, and there exists $\ell_{z'} \in [0,\lvert V(T) \rvert]$ such that $z' \in W_0^{(i_1,\ell_{z'}+1,k)}$.
Hence there exists a  $c$-monochromatic path $P_{z'}$ in $G[Y^{(i_1,\ell_{z'}+1,k)} \cap W_4^{(i_1)}]$ from $z'$ to $W_3^{(i_1,\ell_{z'})}$.

Since $t \in V(T_{j,t_1}) - \partial T_{j,t_1}$, $\partial T_{j,t_1}$ is disjoint from the path in $T$ from $t_1$ to $t'$.
So $V(T_{j,t'}) \subseteq V(T_{j,t_1})$ as $i_1<i'$.
So $P \cup P_{z'}$ is a  $c$-monochromatic subgraph of color $k+1$ contained in $W_4^{(i_1)}$ containing $z$ and intersecting $W_3^{(i_1,\ell_{z'})}$.
By \cref{claim:W_3isolate}, the path $P_z$ contained in $P \cup P_{z'}$ from $W_3^{(i_1,\ell_{z'})}$ to $z$ satisfies $V(P_z) \subseteq Y^{(i_1,\ell_{z'}+1,k)}$.
Hence $V(P_z) \subseteq W_0^{(i_1,\ell_{z'}+1,k)}$.
So $z \in D^{(i_1,\ell_{z'},k+1)} \subseteq D^{(i',\ell+1,0)}$.
Since $z \in X_q \cap I_j-(D^{(i',\ell+1,0)}-X_{t'})$, $z \in X_{t'}$.
Since $t$ belongs to the path in $T$ between $t'$ and $q$, $z \in X_t$.
Since $q \in \partial T_{j,t'} \cap V(T_t)-\{t\}$, $t \in V(T_{j,t'})-\partial T_{j,t'}$.
Since $z \in X_q \cap I_j$, $z \in S_{j,t}^{(i'+1,1)}$.
Since $S_{j,t}^{(i'+1,1)} \cap X_t \cap I_j = S_{j,t}^{(i',1)} \cap X_t \cap I_j$, $z \in S_{j,t}^{(i',1)}$.
So there exists $i_1' \in [0,i'-1]$ such that $z \in S_{j,t}^{(i_1'+1,1)}-S_{j,t}^{(i_1',1)}$.
Note that $i_1'$ exists since $S_{j,t}^{(0,1)}=\emptyset$.

Let $t_1'$ be the node of $T$ with $\sigma_T(t_1')=i_1'$.
Since $z \in S_{j,t}^{(i_1'+1,1)}-S_{j,t}^{(i_1',1)}$, $t \in V(T_{j,t_1'})-\partial T_{j,t_1'}$ and there exists $q_z \in V(T_t)-\{t\}$ such that $z \in X_{q_z} \cap I_j$ and $q_z$ is a witness for $X_{q_z} \cap I_j \subseteq W_3^{(i_1',\ell_1')}$ for some $\ell_1' \in [0,\lvert V(T) \rvert]$.
Note that $q_z \in V(T_{t'})-\{t'\}$ since $q_z \in V(T_t)-\{t\}$.
However, the existence of $q_z$ contradicts that $z \in X_q \cap I_j-\{v \in D^{(i',\ell+1,0)} \cap X_{t'} \cap X_{q'} \cap I_j: q' \in V(T_{t'})-\{t'\}, q'$ is a witness for $X_{q'} \cap I_j \subseteq W_3^{(i'',\ell')}$ for some $i'' \in [0,i'-1]$ and $\ell' \in [0,\lvert V(T) \rvert]\}$.
This proves the claim.
\end{proof}

\begin{claim} \label{claim:1jump2simple2}
Let $i,i' \in \mathbb{N}_0$ with $i'<i$.
Let $t,t'$ be nodes of $T$ with $\sigma_T(t)=i$ and $\sigma_T(t')=i'$. 
Let $j \in [\lvert V \rvert-1]$.
Let $\ell \in [-1,\lvert V(T) \rvert-1]$ such that there exists no witness $q \in \partial T_{j,t'} \cap V(T_t)-\{t\}$ for $X_q \cap I_j \subseteq W_3^{(i',\ell+1)}$ and $X_q \cap I_j \not \subseteq W_3^{(i',\ell)}$.
If $Y^{(i',\ell+2,s+2)} \cap X_{V(T_t)} \cap I_j - X_t \neq Y^{(i',\ell+1,s+2)} \cap X_{V(T_t)} \cap I_j - X_t$, then either:
	\begin{itemize}
		\item $\lvert X_t \cap I_j \cap S_{j,t}^{(i'+1,0)} \rvert > \lvert X_t \cap I_j \cap S_{j,t}^{(i',0)} \rvert$, or
		\item $\lvert X_t \cap I_j \cap S_{j,t}^{(i'+1,1)} \rvert > \lvert X_t \cap I_j \cap S_{j,t}^{(i',1)} \rvert$. 
	\end{itemize}
\end{claim}

\begin{proof}
Since $Y^{(i',\ell+2,s+2)} \cap X_{V(T_t)} \cap I_j - X_t \neq Y^{(i',\ell+1,s+2)} \cap X_{V(T_t)} \cap I_j - X_t$, $t'$ is an ancestor of $t$.
Suppose to the contrary that $\lvert X_t \cap I_j \cap S_{j,t}^{(i'+1,0)} \rvert \leq \lvert X_t \cap I_j \cap S_{j,t}^{(i',0)} \rvert$ and $\lvert X_t \cap I_j \cap S_{j,t}^{(i'+1,1)} \rvert \leq \lvert X_t \cap I_j \cap S_{j,t}^{(i',1)} \rvert$.
So $X_t \cap I_j \cap S_{j,t}^{(i'+1,0)} = X_t \cap I_j \cap S_{j,t}^{(i',0)}$ and $X_t \cap I_j \cap S_{j,t}^{(i'+1,1)} = X_t \cap I_j \cap S_{j,t}^{(i',1)}$.

Since there exists no witness $q \in \partial T_{j,t'} \cap V(T_t)-\{t\}$ for $X_q \cap I_j \subseteq W_3^{(i',\ell+1)}$ and $X_q \cap I_j \not \subseteq W_3^{(i',\ell)}$, $W_3^{(i',\ell+1)} \cap X_{V(T_t)} \cap I_j - X_t = W_3^{(i',\ell)} \cap X_{V(T_t)} \cap I_j-X_t$.
This implies that $Y^{(i',\ell+2,0)} \cap X_{V(T_t)} \cap I_j-X_t = Y^{(i',\ell+1,s+2)} \cap X_{V(T_t)} \cap I_j -X_t$.
Since $Y^{(i',\ell+2,s+2)} \cap X_{V(T_t)} \cap I_j - X_t \neq Y^{(i',\ell+1,s+2)} \cap X_{V(T_t)} \cap I_j - X_t$, we have $Y^{(i',\ell+2,0)} \cap X_{V(T_t)} \cap I_j-X_t \neq Y^{(i',\ell+2,s+2)} \cap X_{V(T_t)} \cap I_j-X_t$.
Hence there exists $k \in [0,s+1]$ such that $Y^{(i',\ell+2,k+1)} \cap X_{V(T_t)} \cap I_j-X_t \neq Y^{(i',\ell+2,k)} \cap X_{V(T_t)} \cap I_j-X_t$.
So there exist $x' \in W_2^{(i',\ell+2,k)} \cap W_4^{(i')} \cap X_{V(T_t)} \cap I_j-(X_t \cup Y^{(i',\ell+2,k)})$, $x \in N_G(x') \cap W_0^{(i',\ell+2,k)}$ and a monochromatic path $Q$ in $G[Y^{(i',\ell+2,k)} \cap W_4^{(i')}]$ of color $k+1$ from $x$ to $W_3^{(i',\ell+1)}$, such that $x' \in A_{L^{(i',\ell+2,k)}}(x)$.
Note that $x \in X_{V(T_t)}$.
Let $q_Q \in \partial T_{j,t'}$ be a witness for $X_{q_Q} \cap I_j \subseteq W_3^{(i',\ell+1)}$ such that one end of $Q$ is in $X_{q_Q} \cap I_j$.
Since $x' \in X_{V(T_t)}-X_t$, $t \not \in \partial T_{j,t'}$.

Suppose that $q_Q \in V(T_t)$.
Since $q_Q \in \partial T_{j,t'}$, $q_Q \in V(T_t)-\{t\}$.
Since there exists no witness $q \in \partial T_{j,t'} \cap V(T_t)-\{t\}$ for $X_q \cap I_j \subseteq W_3^{(i',\ell+1)}$ and $X_q \cap I_j \not \subseteq W_3^{(i',\ell)}$, we know $X_{q_Q} \cap I_j \subseteq W_3^{(i',\ell)}$.
So by \cref{claim:W_3isolate}, $V(Q) \subseteq W_0^{(i',\ell+1,k)}$ and $x' \in W_2^{(i',\ell+1,k)} \subseteq Y^{(i',\ell+1,k)} \subseteq Y^{(i',\ell+2,k)}$, a contradiction.

So $q_Q \not \in V(T_t)$.
Hence there exists a vertex $x^* \in X_t \cap I_j$ such that the subpath of $Q$ between $x^*$ and $x$ is contained in $G[X_{V(T_t)}]$.
Since $x^* \in X_t \cap V(Q) \subseteq X_t \cap I_j \cap W_0^{(i',\ell+2,k)}$ and $t \in V(T_{j,t'})-\partial T_{j,t'}$, $x^* \in S_{j,t}^{(i'+1,0)} \cap X_t \cap I_j$.
Since $X_t \cap I_j \cap S_{j,t}^{(i'+1,0)} = X_t \cap I_j \cap S_{j,t}^{(i',0)}$, there exists $i_2 \in [0,i'-1]$ such that $x^* \in S_{j,t}^{(i_2+1,0)}-S_{j,t}^{(i_2,0)}$.
So the node $t_2$ of $T$ with $\sigma_T(t_2)=i_2$ satisfies that $t \in V(T_{j,t_2})-\partial T_{j,t_2}$, and there exists $\ell_{x^*} \in [0,\lvert V(T) \rvert]$ such that $x^* \in W_0^{(i_2,\ell_{x^*}+1,k)}$, since $c(x^*)=k+1$.
Hence there exists a monochromatic path $Q_{x^*}$ in $G[Y^{(i_2,\ell_{x^*}+1,k)} \cap W_4^{(i_2)}]$ from $x^*$ to $W_3^{(i_2,\ell_{x^*})}$.

Since $t \in V(T_{j,t_2})-\partial T_{j,t_2}$, $\partial T_{j,t_2}$ is disjoint from the path in $T$ from $t_2$ to $t'$.
Since $i_2<i'$, $V(T_{j,t'}) \subseteq V(T_{j,t_2})$.
So $Q \cup Q_{x^*}$ is a  $c$-monochromatic subgraph of color
$k+1$ contained in $W_4^{(i_2)}$ containing $x$ and intersects $W_3^{(i_2,\ell_{x^*})}$.
By \cref{claim:W_3isolate}, the path $Q_{x}$ contained in $Q \cup Q_{x^*}$ from $W_3^{(i_2,\ell_{x^*})}$ to $x$ satisfies $V(Q_{x}) \subseteq Y^{(i_2,\ell_{x^*}+1,k)}$ and $A_{L^{(i_2,\ell_{x^*}+1,k+1)}}(V(Q_{x})) \cap W_4^{(i_2)}=\emptyset$.
But $x' \in A_{L^{(i',\ell+2,k)}}(V(Q_x)) \cap W_4^{(i')} \subseteq A_{L^{(i_2,\ell_{x^*}+1,k+1)}}(V(Q_x)) \cap W_4^{(i_2)} = \emptyset$, a contradiction.
This proves the claim.
\end{proof}

\begin{claim} \label{claim:1jump2}
Let $i,i' \in \mathbb{N}_0$ with $i'<i$, and let $t$ be the node of $T$ with $\sigma_T(t)=i$. 
Let $j \in [\lvert V \rvert-1]$.
If $Y^{(i',0,s+2)} \cap X_{V(T_{t})} \cap I_j-X_{t} = Y^{(i',1,s+2)}\cap X_{V(T_{t})} \cap I_j-X_{t}$, then either:
	\begin{itemize}
		\item $Y^{(i',0,s+2)} \cap X_{V(T_t)} \cap I_j -X_t = Y^{(i',\lvert V(T) \rvert+1, s+2)} \cap X_{V(T_t)} \cap I_j-X_t$, or
		\item $\lvert X_t \cap I_j \cap S_{j,t}^{(i'+1,0)} \rvert > \lvert X_t \cap I_j \cap S_{j,t}^{(i',0)} \rvert$, or
		\item $\lvert X_t \cap I_j \cap S_{j,t}^{(i'+1,1)} \rvert > \lvert X_t \cap I_j \cap S_{j,t}^{(i',1)} \rvert$.
	\end{itemize}
\end{claim}

\begin{proof}
We may assume that $Y^{(i',0,s+2)} \cap X_{V(T_t)} \cap I_j -X_t \neq Y^{(i',\lvert V(T) \rvert+1, s+2)} \cap X_{V(T_t)} \cap I_j-X_t$, for otherwise we are done.
Since $Y^{(i',0,s+2)} \cap X_{V(T_{t})} \cap I_j-X_{t} = Y^{(i',1,s+2)}\cap X_{V(T_{t})} \cap I_j-X_{t}$, $Y^{(i',1,s+2)} \cap X_{V(T_t)} \cap I_j -X_t \neq Y^{(i',\lvert V(T) \rvert+1, s+2)} \cap X_{V(T_t)} \cap I_j-X_t$.
So there exists a minimum $\ell \in [0,\lvert V(T) \rvert-1]$ such that $Y^{(i',\ell+1,s+2)} \cap X_{V(T_t)} \cap I_j -X_t \neq Y^{(i',\ell+2, s+2)} \cap X_{V(T_t)} \cap I_j-X_t$.
By the minimality of $\ell$, $Y^{(i',\ell,s+2)} \cap X_{V(T_t)} \cap I_j -X_t = Y^{(i',\ell+1, s+2)} \cap X_{V(T_t)} \cap I_j-X_t$.

Let $t'$ be the node of $T$ with $\sigma_T(t')=i'$.
Since $Y^{(i',0,s+2)} \cap X_{V(T_t)} \cap I_j -X_t \neq Y^{(i',\lvert V(T) \rvert+1, s+2)} \cap X_{V(T_t)} \cap I_j-X_t$, $t \in V(T_{j,t'})-\partial T_{j,t'}$.

Suppose to the contrary that $\lvert X_t \cap I_j \cap S_{j,t}^{(i'+1,0)} \rvert \leq \lvert X_t \cap I_j \cap S_{j,t}^{(i',0)} \rvert$ and $\lvert X_t \cap I_j \cap S_{j,t}^{(i'+1,1)} \rvert \leq \lvert X_t \cap I_j \cap S_{j,t}^{(i',1)} \rvert$.
So $X_t \cap I_j \cap S_{j,t}^{(i'+1,0)} = X_t \cap I_j \cap S_{j,t}^{(i',0)}$ and $X_t \cap I_j \cap S_{j,t}^{(i'+1,1)} = X_t \cap I_j \cap S_{j,t}^{(i',1)}$.

We first suppose that there exist $\ell_0 \in [-1,\ell]$ and a witness $q \in \partial T_{j,t'} \cap V(T_t)$ for $X_q \cap I_j \subseteq W_3^{(i',\ell_0+1)}$ and $X_q \cap I_j \not \subseteq W_3^{(i',\ell_0)}$. 
Choose $\ell_0$ so that $\ell_0$ is as small as possible.
So there exists a  $c$-monochromatic path $P$ in $G[Y^{(i',\ell_0+1,s+2)} \cap I_j \cap W_4^{(i')}]$ from $W_3^{(i',\ell_0)}$ to a node $z \in X_q \cap I_j-((D^{(i',\ell_0+1,0)}-X_{t'}) \cup \{v \in D^{(i',\ell_0+1,0)} \cap X_{t'} \cap X_{q'} \cap I_j: q' \in V(T_{t'})-\{t'\}, q'$ is a witness for $X_{q'} \cap I_j \subseteq W_3^{(i'',\ell')}$ for some $i'' \in [0,i'-1]$ and $\ell' \in [0,\lvert V(T) \rvert]\})$ internally disjoint from $X_{t'} \cup X_{\partial T_{j,t'}}$.
Since $t \not \in \partial T_{j,t'}$, by \cref{claim:1jump2simple}, $V(P) \subseteq X_{V(T_t)}-X_t$.
In particular, $\ell_0 \geq 0$.

Let $k \in [0,s+1]$ be the number such that $P$ is of color $k+1$.
If $V(P)$ intersects $W_3^{(i',\ell_0-1)}$, then by \cref{claim:W_3isolate}, $V(P) \subseteq Y^{(i',\ell_0,k)}$, so $q$ is a witness for $X_q \cap I_j \subseteq W_3^{(i',\ell_0)}$, a contradiction.
So $V(P) \cap W_3^{(i',\ell_0-1)} = \emptyset$.
Then there exists a witness $q' \in V(T_t)-\{t\}$ for $X_{q'} \cap I_j \subseteq W_3^{(i',\ell_0)}$ with $X_{q'} \cap I_j \not \subseteq W_3^{(i',\ell_0-1)}$ such that $V(P)$ is from $X_{q'}$ to $X_q$, contradicting the minimality of $\ell_0$.

Therefore, there do not exist $\ell_0 \in [-1,\ell]$ and a witness $q \in \partial T_{j,t'} \cap V(T_t)$ for $X_q \cap I_j \not \subseteq W_3^{(i',\ell_0+1)}$ and $X_q \cap I_j \not \subseteq W_3^{(i',\ell_0)}$.
In particular, there exists no witness $q \in \partial T_{j,t'} \cap V(T_t)$ for $X_q \cap I_j \subseteq W_3^{(i',\ell+1)}$ and $X_q \cap I_j \not \subseteq W_3^{(i',\ell)}$.
By \cref{claim:1jump2simple2}, either $\lvert X_t \cap I_j \cap S_{j,t}^{(i'+1,0)} \rvert > \lvert X_t \cap I_j \cap S_{j,t}^{(i',0)} \rvert$, or $\lvert X_t \cap I_j \cap S_{j,t}^{(i'+1,1)} \rvert > \lvert X_t \cap I_j \cap S_{j,t}^{(i',1)} \rvert$, a contradiction.
This proves the claim.
\end{proof}

\begin{claim} \label{claim:1jumptimes}
Let $i \in \mathbb{N}_0$, and let $t$ be the node of $T$ with $\sigma_T(t)=i$.
Let $j \in [\lvert V \rvert-1]$. 
Let $S := \{i' \in [0, i-1]: Y^{(i',\lvert V(T) \rvert+1,s+2)} \cap X_{V(T_{t})} \cap I_j-X_{t} \neq Y^{(i',0,s+2)}\cap X_{V(T_{t})} \cap I_j-X_{t}\}$.
Then $\lvert S \rvert \leq 3w_0$.
\end{claim}

\begin{proof}
Let $S_1:=\{i' \in [0, i-1]: Y^{(i',1,s+2)} \cap X_{V(T_{t})} \cap I_j-X_{t} \neq Y^{(i',0,s+2)}\cap X_{V(T_{t})} \cap I_j-X_{t}\}$, and let $S_2:=\{i' \in [0,i-1]: \lvert X_t \cap I_j \cap S_{j,t}^{(i'+1,0)} \rvert+\lvert X_t \cap I_j \cap S_{j,t}^{(i'+1,1)} \rvert + \lvert X_t \cap I_j \cap D^{(i',0,s+2)} \rvert > \lvert X_t \cap I_j \cap S_{j,t}^{(i',0)} \rvert +\lvert X_t \cap I_j \cap S_{j,t}^{(i',1)} \rvert + \lvert X_t \cap I_j \cap D^{(i',0,0)} \rvert\}$.
By \cref{claim:1jump2}, $S \subseteq S_1 \cup S_2$.
By \cref{claim:1jump1}, $S_1 \subseteq S_2$. 
So $S \subseteq S_2$.
Since $\lvert X_t \cap I_j \rvert \leq w_0$, $\lvert S \rvert \leq \lvert S_2 \rvert \leq 3w_0$.
\end{proof}

\begin{claim} \label{claim:I_jsize}
Let $i,i' \in \mathbb{N}_0$ with $i'< i$, and let $t$ and $t'$ be the nodes of $T$ with $\sigma_T(t)=i$ and $\sigma_T(t')=i'$. 
Let $j \in [\lvert V \rvert-1]$.
Let $a,b$ be integers such that $I_j = \bigcup_{\alpha=a}^bV_\alpha$.
Then the following hold:
	\begin{itemize}
		\item For every $\ell \in [0,\lvert V(T) \rvert+1]$, 
		\begin{align*}
	& \lvert Y^{(i',\ell+1,s+2)} \cap (\bigcup_{\alpha=a-1}^{b+1}V_\alpha) \cap X_{V(T_t)} \cap X_{V(T_{j,t'})} \rvert \\
	\leq\; & g_2(s+2,\lvert Y^{(i',\ell,s+2)} \cap (\bigcup_{\alpha=a-1}^{b+1}V_\alpha) \cap X_{V(T_t)} \cap X_{V(T_{j,t'})} \rvert).
		\end{align*}
		\item Then $\lvert Y^{(i',\lvert V(T) \rvert+1,s+2)} \cap (\bigcup_{\alpha=a-1}^{b+1}V_\alpha) \cap X_{V(T_t)} \cap X_{V(T_{j,t'})} \rvert \leq g_3(4w_0)$. 
	\end{itemize}
\end{claim}

\begin{proof}
We may assume that $t \in V(T_{t'})$, for otherwise $X_{V(T_t)} \cap X_{V(T_{j,t'})}=\emptyset$ and we are done.

We first prove Statement~1 of this claim.
Note that for every $\ell \in [0,\lvert V(T) \rvert]$, 
\begin{align*}
& \lvert Y^{(i',\ell+1,0)} \cap (\bigcup_{\alpha=a-1}^{b+1}V_\alpha) \cap X_{V(T_{j,t'})} \cap X_{V(T_t)} \rvert \\
\leq \; & \lvert Y^{(i',\ell,s+2)} \cap (\bigcup_{\alpha=a-1}^{b+1}V_\alpha) \cap X_{V(T_{j,t'})} \cap X_{V(T_t)} \rvert + \lvert (W_3^{(i',\ell)}-X_{t'}) \cap (\bigcup_{\alpha=a-1}^{b+1}V_\alpha) \cap X_{V(T_{j,t'})} \cap X_{V(T_t)} \rvert \\
\leq\; & \lvert Y^{(i',\ell,s+2)} \cap (\bigcup_{\alpha=a-1}^{b+1}V_\alpha) \cap X_{V(T_{j,t'})} \cap X_{V(T_t)} \rvert + \lvert X_{\partial T_{j,t'}} \cap \overline{I_j} \cap X_{V(T_t)} \rvert \\
\leq\; & \lvert Y^{(i',\ell,s+2)} \cap (\bigcup_{\alpha=a-1}^{b+1}V_\alpha) \cap X_{V(T_{j,t'})} \cap X_{V(T_t)} \rvert + w_0 \\
=\; & g_2(0,\lvert Y^{(i',\ell,s+2)} \cap (\bigcup_{\alpha=a-1}^{b+1}V_\alpha) \cap X_{V(T_{j,t'})} \cap X_{V(T_t)} \rvert)
\end{align*} 
by \cref{claim:I_jbasic1}.
For every $\ell \in [0,\lvert V(T) \rvert]$ and $k \in [0,s+1]$, by \cref{claim:boundaryZ}, we know 
\begin{align*}
& W_2^{(i',\ell+1,k)} \cap (\bigcup_{\alpha=a-1}^{b+1}V_\alpha) \cap X_{V(T_t)} \\
=\; & A_{L^{(i',\ell+1,k)}}(W_0^{(i',\ell+1,k)}) \cap W_4^{(i')} \cap (\bigcup_{\alpha=a-1}^{b+1}V_\alpha) \cap X_{V(T_t)}\\ \subseteq \; & A_{L^{(i',\ell+1,k)}}(W_0^{(i',\ell+1,k)}) \cap X_{V(T_{j,t'})} \cap I_j \\
\subseteq \; & (A_{L^{(i',\ell+1,k)}}(W_0^{(i',\ell+1,k)} \cap X_{V(T_{j,t'})}) \cup X_{\partial T_{j,t'}} \cup X_{t'}) \cap X_{V(T_{j,t'})} \cap I_j \\
\subseteq \; & (A_{L^{(i',\ell+1,k)}}(W_0^{(i',\ell+1,k)} \cap I_j \cap X_{V(T_{j,t'})}) \cup X_{\partial T_{j,t'}} \cup X_{t'}) \cap X_{V(T_{j,t'})} \cap I_j \\
\subseteq \; & (N_G^{\geq s}(W_0^{(i',\ell+1,k)} \cap (\bigcup_{\alpha=a-1}^{b+1}V_\alpha) \cap X_{V(T_{j,t'})}) \cup X_{\partial T_{j,t'}} \cup X_{t'}) \cap X_{V(T_{j,t'})} \cap I_j.
\end{align*}
So for every $\ell \in [0,\lvert V(T) \rvert]$ and $k \in [0,s+1]$, 
\begin{align*}
& (Y^{(i',\ell+1,k+1)}-Y^{(i',\ell+1,k)}) \cap (\bigcup_{\alpha=a-1}^{b+1}V_\alpha) \cap X_{V(T_t)} \\
\subseteq \; & W_2^{(i',\ell+1,k)} \cap (\bigcup_{\alpha=a-1}^{b+1}V_\alpha) \cap X_{V(T_t)} \\
\subseteq \; & (N_G^{\geq s}(W_0^{(i',\ell+1,k)} \cap (\bigcup_{\alpha=a-1}^{b+1}V_\alpha) \cap X_{V(T_{j,t'})}) \cup X_{\partial T_{j,t'}} \cup X_{t'}) \cap X_{V(T_{j,t'})} \cap I_j \cap (\bigcup_{\alpha=a-1}^{b+1}V_\alpha) \cap X_{V(T_t)} \\\subseteq \; &  (N_G^{\geq s}(W_0^{(i',\ell+1,k)} \cap (\bigcup_{\alpha=a-1}^{b+1}V_\alpha) \cap X_{V(T_{j,t'})} \cap X_{V(T_t)}) \cup X_t \cup X_{\partial T_{j,t'}} \cup X_{t'}) \cap X_{V(T_{j,t'})} \cap I_j \cap X_{V(T_t)} \\
\subseteq \; & N_G^{\geq s}(Y^{(i',\ell+1,k)} \cap (\bigcup_{\alpha=a-1}^{b+1}V_\alpha) \cap X_{V(T_{j,t'})} \cap X_{V(T_t)}) \cup (X_t \cap I_j) \cup (X_{\partial T_{j,t'}} \cap I_j \cap X_{V(T_t)}) \cup (X_{t'} \cap I_j).
\end{align*}
Hence for every $\ell \in [0,\lvert V(T) \rvert]$ and $k \in [0,s+1]$, 
\begin{align*}
& \lvert (Y^{(i',\ell+1,k+1)}-Y^{(i',\ell+1,k)}) \cap (\bigcup_{\alpha=a-1}^{b+1}V_\alpha) \cap X_{V(T_t)} \rvert \\
\leq\; & \lvert N_G^{\geq s}(Y^{(i',\ell+1,k)} \cap (\bigcup_{\alpha=a-1}^{b+1}V_\alpha) \cap X_{V(T_{j,t'})} \cap X_{V(T_t)}) \rvert + \lvert X_t \cap I_j \rvert + \lvert X_{\partial T_{j,t'}} \cap I_j \cap X_{V(T_t)} \rvert + \lvert X_{t'} \cap I_j \rvert \\
\leq\; & f_1(Y^{(i',\ell+1,k)} \cap (\bigcup_{\alpha=a-1}^{b+1}V_\alpha) \cap X_{V(T_{j,t'})} \cap X_{V(T_t)}) + 3w_0
\end{align*} 
by \cref{claim:I_jbasic1}.
So for every $\ell \in [0,\lvert V(T) \rvert]$ and $k \in [0,s+1]$, 
\begin{align*}
 \lvert Y^{(i',\ell+1,k+1)} \cap (\bigcup_{\alpha=a-1}^{b+1}V_\alpha) \cap X_{V(T_{j,t'})} \cap X_{V(T_t)} \rvert
 \leq\; & f_2(Y^{(i',\ell+1,k)} \cap (\bigcup_{\alpha=a-1}^{b+1}V_\alpha) \cap X_{V(T_{j,t'})} \cap X_{V(T_t)})+3w_0.
\end{align*}
Then for every $\ell \in [0,\lvert V(T) \rvert]$, it is easy to verify 
by induction on $k \in [0,s+2]$ that 
\begin{align*}
\lvert Y^{(i',\ell+1,k)} \cap (\bigcup_{\alpha=a-1}^{b+1}V_\alpha) \cap X_{V(T_{j,t'})} \cap X_{V(T_t)} \rvert \leq g_2(k,\lvert Y^{(i',\ell,s+2)} \cap (\bigcup_{\alpha=a-1}^{b+1}V_\alpha) \cap X_{V(T_{j,t'})} \cap X_{V(T_t)} \rvert).
\end{align*}
Hence 
\begin{align*}
 \lvert Y^{(i',\ell+1,s+2)} \cap (\bigcup_{\alpha=a-1}^{b+1}V_\alpha) \cap X_{V(T_{j,t'})} \cap X_{V(T_t)} \rvert 
\leq\; & g_2(s+2,\lvert Y^{(i',\ell,s+2)} \cap (\bigcup_{\alpha=a-1}^{b+1}V_\alpha) \cap X_{V(T_{j,t'})} \cap X_{V(T_t)} \rvert).
\end{align*}

Now we prove Statement~2 of this claim.
Let 
\begin{align*}
S:=\{\ell \in [0,\lvert V(T) \rvert]: \lvert Y^{(i',\ell+1,s+2)} \cap (\bigcup_{\alpha=a-1}^{b+1}V_\alpha) \cap X_{V(T_t)} \rvert > \lvert Y^{(i',\ell,s+2)} \cap (\bigcup_{\alpha=a-1}^{b+1}V_\alpha) \cap X_{V(T_t)} \rvert\}.
\end{align*}
For every $\ell \in S$, either $(Y^{(i',\ell+1,s+2)}-Y^{(i',\ell,s+2)}) \cap X_t \cap (\bigcup_{\alpha=a-1}^{b+1}V_\alpha) \neq \emptyset$, or $(Y^{(i',\ell+1,s+2)}-Y^{(i',\ell,s+2)}) \cap (\bigcup_{\alpha=a-1}^{b+1}V_\alpha) \cap X_{V(T_t)}-X_t \neq \emptyset$.
Note that $(Y^{(i',\ell+1,s+2)}-Y^{(i',\ell,s+2)}) \cap \bigcup_{\alpha=a-1}^{b+1}V_\alpha \subseteq (Y^{(i',\ell+1,s+2)}-Y^{(i',\ell,s+2)}) \cap W_4^{(i')} \cap \bigcup_{\alpha=a-1}^{b+1}V_\alpha \subseteq (Y^{(i',\ell+1,s+2)}-Y^{(i',\ell,s+2)}) \cap I_j$.
So for every $\ell \in S$, either $(Y^{(i',\ell+1,s+2)}-Y^{(i',\ell,s+2)}) \cap X_t \cap (\bigcup_{\alpha=a-1}^{b+1}V_\alpha) \neq \emptyset$, or $(Y^{(i',\ell+1,s+2)}-Y^{(i',\ell,s+2)}) \cap I_j \cap X_{V(T_t)}-X_t \neq \emptyset$.

Let 
\begin{align*}
S_1 & := \{\ell \in [0,\lvert V(T) \rvert]: (Y^{(i',\ell+1,s+2)}-Y^{(i',\ell,s+2)}) \cap X_t \cap (\bigcup_{\alpha=a-1}^{b+1}V_\alpha) \neq \emptyset\} \text{ and}\\
S_2 & :=\{\ell \in [0,\lvert V(T) \rvert]:(Y^{(i',\ell+1,s+2)}-Y^{(i',\ell,s+2)}) \cap I_j \cap X_{V(T_t)}-X_t \neq \emptyset\}.
\end{align*}
So $S \subseteq S_1 \cup S_2$.
Note that $\lvert S_1 \rvert \leq \lvert X_t \cap (\bigcup_{\alpha=a-1}^{b+1}V_\alpha) \rvert \leq w_0$.
Let 
\begin{align*}
S_3 := \{\ell \in [0,\lvert V(T) \rvert]: (W_3^{(i',\ell)}-W_3^{(i',\ell-1)}) \cap I_j \cap X_{V(T_t)} \neq \emptyset\}.
\end{align*}
Since $(W_3^{(i',\ell)}-W_3^{(i',-1)}) \cap I_j \cap X_{V(T_t)} \subseteq X_{\partial T_{j,t'}} \cap I_j \cap X_{V(T_t)}$ for every $\ell \in [0,\lvert V(T) \rvert]$, we know $\lvert S_3 \rvert \leq \lvert X_{\partial T_{j,t'}} \cap I_j \cap X_{V(T_t)} \rvert \leq w_0$ by \cref{claim:I_jbasic1} since $i'<i$.
Let 
\begin{align*}
S_4 := 
& \{\ell \in [0,\lvert V(T) \rvert]: \lvert X_t \cap I_j \cap S_{j,t}^{(i'+1,0)} \rvert + \lvert X_t \cap I_j \cap S_{j,t}^{(i'+1,1)} \rvert > \lvert X_t \cap I_j \cap S_{j,t}^{(i',0)} \rvert + \lvert X_t \cap I_j \cap S_{j,t}^{(i',1)} \rvert\}.
\end{align*}
For every $\ell \in S_2-S_3$, we know $(W_3^{(i',\ell)}-W_3^{(i',\ell-1)}) \cap I_j \cap X_{V(T_t)} = \emptyset$, so there exists no witness $q \in \partial T_{j,t'} \cap V(T_t)-\{t\}$ for $X_q \cap I_j \subseteq W_3^{(i',\ell)}$ and $X_q \cap I_j \not \subseteq W_3^{(i',\ell-1)}$, and hence $\ell \in S_4$ by \cref{claim:1jump2simple2}.
So $S \subseteq S_1 \cup S_2 \subseteq S_1 \cup S_3 \cup S_4$.
Therefore, $\lvert S \rvert \leq \lvert S_1 \rvert + \lvert S_3 \rvert + \lvert S_4 \rvert \leq w_0 + w_0 + 2w_0 = 4w_0$.

By Statement 2 of \cref{claim:I_jsize_prep} and Statement 1 of this claim, it is easy to verify by induction on $k \in [\lvert S \rvert]$ that $\lvert Y^{(i',\ell_k,s+2)} \cap (\bigcup_{\alpha=a-1}^{b+1}V_\alpha) \cap X_{V(T_t)} \cap X_{V(T_{j,t'})} \rvert \leq g_3(k)$, where we denote the elements of $S$ by $\ell_1<\ell_2<\dots<\ell_{\lvert S \rvert}$.
Therefore, $\lvert Y^{(i',\lvert V(T) \rvert+1,s+2)} \cap (\bigcup_{\alpha=a-1}^{b+1}V_\alpha) \cap X_{V(T_t)} \cap X_{V(T_{j,t'})} \rvert \leq g_3(\lvert S \rvert) \leq g_3(4w_0)$.
\end{proof}

\begin{claim} \label{claim:isolation}
Let $j \in [\lvert \V \rvert-1]$, and let $M$ be a $c$-monochromatic component such that $S_M \cap I_j^\circ \neq \emptyset$, where $S_M$ is the $s$-segment containing $V(M)$ whose level equals the color of $M$.
Let $i \in \mathbb{N}_0$ and $t$ be a node of $T$ with $\sigma_T(t)=i$ such that $V(M) \cap X_{t^*} \neq \emptyset$ for some witness $t^* \in \partial T_{j,t} \cup \{t\}$ for $X_{t^*} \cap I_j \subseteq W_3^{(i,\ell)}$ for some $\ell \in [-1,\lvert V(T) \rvert]$.
Then $V(M) \cap X_{V(T_{j,t})} \subseteq Y^{(i,\lvert V(T) \rvert+1,s+2)}$ and $A_{L^{(i,\lvert V(T) \rvert+1,s+2)}}(V(M) \cap X_{V(T_{j,t})}) \cap X_{V(T_{j,t})} = \emptyset$.
\end{claim}

\begin{proof}
We may assume that there exists no $t' \in V(T)$ with $t \in V(T_{t'})-\{t'\}$ and $V(T_{j,t'}) \cap \partial T_{j,t} \neq \emptyset$ such that $V(M) \cap X_{t''} \neq \emptyset$ for some witness $t'' \in \partial T_{j,t'} \cup \{t'\}$ for $X_{t''} \cap I_j \subseteq W_3^{(i',\ell')}$ for some $\ell' \in [-1,\lvert V(T) \rvert]$, where $i'$ is the integer $\sigma_T(t')$, for otherwise we may replace $t$ by $t'$ due to the facts that $T_{j,t} \subseteq T_{j,t'}$ and $Y^{(i',\lvert V(T) \rvert+1,s+2)} \subseteq Y^{(i,\lvert V(T) \rvert+1,s+2)}$.

Suppose to the contrary that either $V(M) \cap X_{V(T_{j,t})} \not \subseteq Y^{(i,\lvert V(T) \rvert+1,s+2)}$, or $A_{L^{(i,\lvert V(T) \rvert+1,s+2)}}(V(M) \cap X_{V(T_{j,t})}) \cap X_{V(T_{j,t})} \neq \emptyset$.
Since $S_M \cap I_j^\circ \neq \emptyset$, $V(M) \cap X_{V(T_{j,t})} \subseteq W_4^{(i)}$.
By \cref{claim:W_3isolate}, some component $Q$ of $G[V(M) \cap X_{V(T_{j,t})}]$ is disjoint from $W_3^{(i,\lvert V(T) \rvert)}$.
Since $V(M)$ intersects $X_{t^*} \subseteq X_{\partial T_{j,t}} \cup X_t$, $V(Q)$ intersects $X_{\partial T_{j,t}} \cup X_t$.
Since $X_t \cap I_j \subseteq W_3^{(i,-1)}$, $V(Q) \cap X_t = \emptyset$.
So $V(Q)$ intersects $X_{\partial T_{j,t}}$.
Since $X_{t^*} \cap I_j \subseteq W_3^{(i,\lvert V(T) \rvert)}$, there exist a node $t' \in \partial T_{j,t}$ with $X_{t'} \cap V(Q) \neq \emptyset$ and $X_{t'} \cap I_j \not \subseteq W_3^{(i,\lvert V(T) \rvert)}$ and a path $P$ in $M$ internally disjoint from $W_3^{(i,\lvert V(T) \rvert)} \cup V(Q)$ passing through a vertex in $V(M) \cap W_3^{(i,\lvert V(T) \rvert)}$, a vertex in $X_{t'}-V(Q)$ and a vertex in $X_{t'} \cap V(Q)$ in the order listed.
Furthermore, choose $Q$ such that $P$ is as short as possible.

Let $P_0,P_1,\dots,P_m$ (some some $m \in \mathbb{N}_0)$ be the maximal subpaths of $P$ contained in $G[X_{V(T_{j,t})}]$ internally disjoint from $X_{\partial T_{j,t}} \cup X_t$, where $P_0$ intersects $W_3^{(i,\lvert V(T) \rvert)}$ and $P$ passes through\linebreak $P_0,P_1,\dots,P_m$ in the order listed.
So $P_m$ intersects $Q$.
Since $P$ is internally disjoint from $W_3^{(i,\lvert V(T) \rvert)} \supseteq X_{t}$, there exist (not necessarily distinct) $t_1,t_2,\dots,t_{m+1} \in \partial T_{j,t}$ such that for every $\ell \in [m]$, $P_\ell$ is from $X_{t_\ell}$ to $X_{t_{\ell+1}}$.
So $V(Q) \cap X_{t_{m+1}} \neq \emptyset$.

Since $V(Q) \cap W_3^{(i,\lvert V(T) \rvert)}=\emptyset$, there exists a minimum $\ell^* \in [m+1]$ such that $X_{t_{\ell^*}} \cap I_j \not\subseteq W_3^{(i,\lvert V(T) \rvert)}$.
So $X_{t_{\ell^*-1}} \cap I_j \subseteq W_3^{(i,\lvert V(T) \rvert)}$.
Let $t_0=t'$.
Note that if there exists $q \in [-1,\lvert V(T) \rvert]$ such that $I_j \cap W_3^{(i,q)} = I_j \cap W_3^{(i,q+1)}$, then $I_j \cap W_3^{(i,q)} = I_j \cap W_3^{(i,\lvert V(T) \rvert)}$.
This implies that if $q \in [-1,\lvert V(T) \rvert]$ is the minimum such that $I_j \cap W_3^{(i,q)} = I_j \cap W_3^{(i,\lvert V(T) \rvert)}$, then there exist at least $q$ nodes $t'' \in \partial T_{j,t}$ such that $X_{t''} \cap I_j \subseteq W_3^{(i,q)}$.
This together with $X_{t_{\ell^*}} \cap I_j \not \subseteq W_3^{(i,\lvert V(T) \rvert)}$ imply that $X_{t_{\ell^*-1}} \cap I_j \subseteq W_3^{(i,\lvert V(T) \rvert-1)}$.

Since $P_{\ell^*-1}$ is contained in $G[W_4^{(i)}]$ and intersects $X_{t_{\ell^*-1}}$, and $X_{t_{\ell^*-1}} \cap I_j \subseteq W_3^{(i,\lvert V(T) \rvert-1)}$, \cref{claim:W_3isolate} implies that $V(P_{\ell^*-1}) \subseteq Y^{(i,\lvert V(T) \rvert,k)}$, where $k+1$ is the color of $M$.
Let $v$ be the vertex in $V(P_{\ell^*-1}) \cap X_{t_{\ell^*}}$.
Since $X_{t_{\ell^*}} \cap I_j \not \subseteq W_3^{(i,\lvert V(T) \rvert)}$, $v \in (D^{(i,\lvert V(T) \rvert,0)}-X_t) \cup \{u \in D^{(i,\lvert V(T) \rvert,0)} \cap X_t \cap X_{q'} \cap I_j: q' \in V(T_t)-\{t\}, q'$ is a witness for $X_{q'} \cap I_j \subseteq W_3^{(i'',\ell')}$ for some $i'' \in [0,i-1]$ and $\ell' \in [0,\lvert V(T) \rvert]\}$.

Suppose there exists $q' \in V(T_t)-\{t\}$ such that $v \in D^{(i,\lvert V(T) \rvert,0)} \cap X_t \cap X_{q'} \cap I_j$ and $q'$ is a witness for $X_{q'} \cap I_j \subseteq W_3^{(i',\ell')}$ for some $i'' \in [0,i-1]$ and $\ell' \in [0,\lvert V(T) \rvert]$.
Then there exists a node $t''$ of $T$ with $\sigma_T(t'')=i'' \in [0,i-1]$ and with $t \in V(T_{t''})-\{t''\}$ such that $q' \in \partial T_{j,t''}$ is a witness for $X_{q'} \cap I_j \subseteq W_3^{(i'',\ell')}$ for some $\ell' \in [0,\lvert V(T) \rvert]$.
Since $q' \in (V(T_t)-\{t\}) \cap \partial T_{j,t''}$, we know $\partial T_{j,t''}$ is disjoint from the path in $T$ between $t''$ and $t$.
So $t_{\ell^*} \in \partial T_{j,t} \cap V(T_{j,t''})$.
Hence $t''$ is a node of $T$ with $t \in V(T_{t''})-\{t''\}$ and $V(T_{j,t''}) \cap \partial T_{j,t} \neq \emptyset$ such that $v \in V(M) \cap X_{q'} \neq \emptyset$ and $q'$ is a witness for $X_{q'} \cap I_j \subseteq W_3^{(i'',\ell')}$ for some $\ell' \in [-1,\lvert V(T) \rvert]$, a contradiction.

Therefore no such $q'$ exists, and hence $v \in D^{(i,\lvert V(T) \rvert,0)}-X_t$.
So $v \in W_0^{(i',\ell'+1,k)}$ for some $i' \in [0,i],\ell' \in [0,\lvert V(T) \rvert]$ with $(i',\ell')$ lexicographically at most $(i,\lvert V(T) \rvert-1)$.
We assume that $(i',\ell')$ is lexicographically minimum.
So there exists a monochromatic path $P'$ in $G[Y^{(i',\ell'+1,k)} \cap W_4^{(i')}]$ from $v$ to $W_3^{(i',\ell')}$ internally disjoint from $W_3^{(i',\ell')}$.
The minimality of $(i',\ell')$ implies that $v \not \in D^{(i',\ell',0)}$.
So if $i'=i$, then $t_{\ell^*}$ is a witness for $X_{t_{\ell^*}} \cap I_j \subseteq W_3^{(i,\ell'+1)} \subseteq W_3^{(i,\lvert V(T) \rvert)}$, a contradiction.
Hence $i'<i$.

Let $u$ be the end of $P'$ in $W_3^{(i',\ell')}$.
Since $P'$ is in $G[Y^{(i',\ell'+1,k)} \cap W_4^{(i')}]$ from $v$ to $W_3^{(i',\ell')}$ internally disjoint from $W_3^{(i',\ell')}$, there exists $q^* \in \partial T_{j,t''} \cup \{t''\}$, where $t''$ is the node with $\sigma_T(t'')=i'$, such that $u \in X_{q^*} \cap I_j$ and $q^*$ is a witness for $X_{q^*} \cap I_j \subseteq W_3^{(i',\ell')}$.
Since $i'<i$ and $v \in (X_{V(T_t)}-X_t) \cap W_4^{(i')}$, we know that $t \in V(T_{t''})-\{t''\}$ and $\partial T_{j,t''}$ is disjoint from the path in $T$ between $t''$ and $t$.
So $V(T_{j,t''}) \cap \partial T_{j,t} \neq \emptyset$.
Since $v \in V(M)$ and $P'$ is between $u$ and $v$, $u \in V(M)$.
So $u \in V(M) \cap X_{q^*}$ and $q^* \in \partial T_{j,t''} \cup \{t''\}$ is a witness for $X_{q^*} \cap I_j \subseteq W_3^{(i',\ell')}$ with $i'<i$, a contradiction.
This proves the claim.
\end{proof}

\subsection{The size of the first type of monochromatic component}

The goal of this subsection is to prove \cref{claim:centralcomponentsize}, which shows that one kind of monochromatic component has bounded size. In this subsection, we use some of the properties about fake edges. 
We first recall part of the algorithm that is related to the properties that we need at this point.

\medskip
\noindent\textbf{\boldmath Stage $(0,-1,0)$: Initialization:}
See \cref{subsec:alog} for details.

\medskip
For $i=0,1,2,\dots$, let $t$ be the node of $T$ with $\sigma_T(t)=i$ and perform the following steps:\\[1ex]
\hspace*{4mm} 
\textbf{\boldmath Stage: Building Fences:}
Let $E^{(0)}_{j,t} := \emptyset$ for every $j \in [\lvert \V \rvert-1]$ and $t \in V(T)$.
Other sets are defined.
See \cref{subsec:alog} for details.\\
\hspace*{4mm} 
\noindent\textbf{\boldmath Stage $(i,-1,\star)$:}
See \cref{subsec:alog} for details.\\
\hspace*{4mm}
\noindent\textbf{\boldmath Stage $(i,\geq 1,\star)$:}
$(Y^{(i,\lvert V(T) \rvert+1,s+2)},L^{(i,\lvert V(T) \rvert+1,s+2)})$, $W_3^{(i,\ell)}$ for $\ell \in [0,\lvert V(T) \rvert]$, and other sets are defined.
See \cref{subsec:alog} for details.\\
\hspace*{4mm}
\noindent\textbf{Stage: Adding Fake Edges}
	\begin{itemize}
				\item For each $j \in [\lvert \V \rvert-1]$ and $t' \in V(T)-(V(T_t)-\{t\})$, let $E^{(i+1)}_{j,t'}:=E^{(i)}_{j,t'}$.
				\item For each $j \in [\lvert \V \rvert-1]$, let $E^{(i,0)}_{j,t}:=E^{(i)}_{j,t}$.
				\item For each $j \in [\lvert \V \rvert-1]$ and $t' \in V(T_t)-\{t\}$, let $E^{(i+1)}_{j,t'}:=E^{(i,\lvert V(G) \rvert)}_{j,t}$, where every $\ell \in [0,\lvert V(G) \rvert-1]$, let $E^{(i,\ell+1)}_{j,t}$ be the union of $E^{(i,\ell)}_{j,t}$ and the set consisting of the 2-element sets $\{u,v\}$ satisfying the following.
				\begin{itemize}
					\item $\{u,v\} \subseteq Y^{(i,\lvert V(T) \rvert+1,s+2)}$.
					\item $L^{(i,\lvert V(T) \rvert+1,s+2)}(u) = L^{(i,\lvert V(T) \rvert+1,s+2)}(v)$.
					\item There exists an $s$-segment $S$ in $\Se_{j}^\circ$ whose level equals the color of $u$ and $v$ such that $\{u,v\} \subseteq S$.
					\item There exists $t'' \in \partial T_{j,t}$ such that:
					\begin{itemize}
						\item $\{u,v\} \subseteq X_{t''}$,
						\item $V(M) \cap X_{t''} \neq \emptyset$, where $M$ is the monochromatic $E^{(i,\ell)}_{j,t}$-pseudocomponent in \linebreak $G[Y^{(i,\lvert V(T) \rvert+1,s+2)}]$ such that $\sigma(M)$ is the $(\ell+1)$-th smallest among all monochromatic $E^{(i,\ell)}_{j,t}$-pseudocomponents in  $G[Y^{(i,\lvert V(T) \rvert+1,s+2)}]$,
						\item $M$ is contained in some $s$-segment in $\Se_{j}^\circ$ whose level equals the color of $M$,
						\item $t''$ is a witness for $X_{t''} \cap I_j \subseteq W_3^{(i,\ell')}$ for some $\ell' \in [0,\lvert V(T) \rvert]$,
						\item $A_{L^{(i,\lvert V(T) \rvert+1,s+2)}}(V(M)) \cap X_{V(T_{t''})}-X_{t''} \neq \emptyset$,
						\item $A_{L^{(i,\lvert V(T) \rvert+1,s+2)}}(V(M_u)) \cap X_{V(T_{t''})}-X_{t''} \neq \emptyset$,
						\item $A_{L^{(i,\lvert V(T) \rvert+1,s+2)}}(V(M_v)) \cap X_{V(T_{t''})}-X_{t''} \neq \emptyset$, and
						\item $\sigma(M) < \min\{\sigma(M_u),\sigma(M_v)\}$, 
						
					\end{itemize}
					where $M_u$ and $M_v$ are the monochromatic $E^{(i,\ell)}_{j,t}$-pseudocomponents in $G[Y^{(i,\lvert V(T) \rvert+1,s+2)}]$ containing $u$ and $v$, respectively.
				\item Some other properties.
					See \cref{subsec:alog} for details.
				\end{itemize}					
	\end{itemize}
\hspace*{4mm}
\noindent\textbf{Stage: Moving to the Next Node in the Tree:}
See \cref{subsec:alog} for details.\\
\hspace*{4mm}
\noindent\textbf{Stage: Building a New Fence:}
See \cref{subsec:alog} for details.

\medskip

Now we prove some claims.

\begin{claim} \label{claim:0jumptimesinnersimple}
Let $i \in \mathbb{N}_0$ and let $t$ be a node of $T$ with $\sigma_T(t)=i$.
Let $j \in [\lvert \V \rvert-1]$.
If $i'$ is an integer in $[0,i-1]$ such that either $\lvert Y^{(i',0,0)} \cap I_j^\circ \cap X_{V(T_t)}-X_t \rvert \neq \lvert Y^{(i',0,s+2)} \cap I_j^\circ \cap X_{V(T_t)}-X_t \rvert$ or $\lvert Y^{(i',-1,0)} \cap I_j^\circ \cap X_{V(T_t)}-X_t \rvert \neq \lvert Y^{(i',-1,w_0)} \cap I_j^\circ \cap X_{V(T_t)}-X_t \rvert$, then $\lvert S_{j,t}^{(i'+1,2)} \cap X_t \cap I_j \rvert > \lvert S_{j,t}^{(i',2)} \cap X_t \cap I_j \rvert$.
\end{claim}

\begin{proof}
Assume that $i'$ is an integer in $[0,i-1]$ such that either $\lvert Y^{(i',0,0)} \cap I_j^\circ \cap X_{V(T_t)}-X_t \rvert \neq \lvert Y^{(i',0,s+2)} \cap I_j^\circ \cap X_{V(T_t)}-X_t \rvert$ or $\lvert Y^{(i',-1,0)} \cap I_j^\circ \cap X_{V(T_t)}-X_t \rvert \neq \lvert Y^{(i',-1,w_0)} \cap I_j^\circ \cap X_{V(T_t)}-X_t \rvert$.
Let $t'$ be the node of $T$ with $\sigma_T(t')=i'$.
Then either $W_2^{(i',-1,k,\alpha)} \cap I_j^\circ \cap X_{V(T_t)}-(X_t \cup Y^{(i',-1,k,\alpha)}) \neq \emptyset$ for some $k \in [0,w_0-1]$ and $\alpha \in [0,s+1]$, or $W_2^{(i',0,\alpha)} \cap I_j^\circ \cap X_{V(T_t)}-(X_t \cup Y^{(i',0,\alpha)}) \neq \emptyset$ for some $\alpha \in [0,s+1]$.
For the former, define $\ell_0 := -1$, and let $\beta(\ell_0):=(-1,k,\alpha)$ and $\beta'(\ell_0):=(-1,k,\alpha+1)$; for the latter, define $\ell_0:=0$, and let $\beta(\ell_0):=(0,\alpha)$ and $\beta'(\ell_0):=(0,\alpha+1)$.
So there exists a  $c$-monochromatic path $P$ of color $\alpha+1$ contained in $G[W_0^{(i',\ell_0,q)}]+E_{j,t'}^{(i')}$ (for some $q$) intersecting $X_{t'}$ such that $A_{L^{(i',\beta(\ell_0))}}(V(P)) \cap (Y^{(i',\beta'(\ell_0))}-Y^{(i',\beta(\ell_0))}) \cap Z_{t'} \cap I_j^\circ \cap X_{V(T_t)}-X_t \neq \emptyset$. 

Note that $t \in V(T_{t'})$.
So $V(P) \cap X_t \neq \emptyset$, since every element of $E_{j,t'}^{(i')}$ is contained in some bag of $(T,\X)$.
Since $A_{L^{(i',\beta(\ell_0))}}(V(P)) \cap I_j^\circ \neq \emptyset$, $V(P) \subseteq I_j$.
So there exist $v \in X_{V(T_t)}$, $u \in A_{L^{(i',\beta(\ell_0))}}(\{v\}) \cap (Y^{(i',\beta'(\ell_0))}-Y^{(i',\beta(\ell_0))}) \cap Z_{t'} \cap I_j^\circ \cap X_{V(T_t)}-X_t \neq \emptyset$, and a subpath $P'$ of $P$ contained in $G[W_0^{(i',\ell_0,q)}]+E_{j,t'}^{(i')}$ from $X_t$ to $v$ internally disjoint from $X_t$.
Note that $V(P') \subseteq X_{V(T_t)}$.

Suppose there exist $i_0 \in [0,i'-1]$ with $T_{j,t'} \subseteq T_{j,t_0}$ and a $c$-monochromatic path $Q_0$ containing $v$ intersecting $X_{t''_0}$ for some witness $t''_0 \in \partial T_{j,t_0} \cup \{t_0\}$ for $X_{t''_0} \cap I_j \subseteq W_3^{(i_0,\ell_0)}$ for some $\ell_0 \in [-1,\lvert V(T) \rvert]$, where $t_0$ is the node of $T$ with $\sigma_T(t_0)=i_0$.
Since $u \in I_j^\circ \cap A_{L^{(i',\beta(\ell_0))}}(\{v\})$, $Q_0$ is contained in an $s$-segment in $\Se_j^\circ$ whose level equals the color of $Q_0$.
By \cref{claim:isolation}, $A_{L^{(i_0,\lvert V(T) \rvert+1,s+2)}}(V(Q_0)) \cap X_{V(T_{j,t_0})}=\emptyset$.
Since $u \in Z_{t'} \cap I_j^\circ$, $u \in X_{V(T_{j,t'})}$.
Since $i_0<i'$ and $T_{j,t'} \subseteq T_{j,t_0}$, $u \in A_{L^{(i',\beta(\ell_0))}}(\{v\}) \cap X_{V(T_{j,t'})} \subseteq A_{L^{(i_0,\lvert V(T) \rvert+1,s+2)}}(V(Q_0)) \cap X_{V(T_{j,t_0})}=\emptyset$, a contradiction.

Hence there do not exist $i_0 \in [0,i'-1]$ with $T_{j,t'} \subseteq T_{j,t_0}$ and a $c$-monochromatic path $Q_0$ containing $v$ intersecting $X_{t''_0}$ for some witness $t''_0 \in \partial T_{j,t_0} \cup \{t_0\}$ for $X_{t''_0} \cap I_j \subseteq W_3^{(i_0,\ell_0)}$ for some $\ell_0 \in [-1,\lvert V(T) \rvert]$, where $t_0$ is the node of $T$ with $\sigma_T(t_0)=i_0$.

Suppose that $P' \not \subseteq G[W_0^{(i',\ell_0,q)}]$.
So $E(P') \cap E_{j,t'}^{(i')} \neq \emptyset$.
Let $P_1$ be the maximal subpath of $P'-E_{j,t'}^{(i')}$ containing $v$.
So $P_1$ is a $c$-monochromatic path contained in $G[W_0^{(i',\ell_0,q)}]$, and one end of $P_1$ is contained in an element $e_1$ in $E_{j,t'}^{(i')}$.
So there exists the lexicographically minimal pair $(i_1,\ell_1)$ such that $e_1 \in E_{j,t_1}^{(i_1,\ell_1)}$, where $t_1$ is the node of $T$ with $\sigma_T(t_1)=i_1$.
Note that $i_1<i'$.
By the definition of $E_{j,t_1}^{(i_1,\ell_1)}$, there exists $t_1'' \in \partial T_{j,t_1}$ such that $e_1 \subseteq X_{t_1''}$ and $t_1''$ is a witness for $X_{t_1''} \cap I_j \subseteq W_3^{(i_1,\ell')}$ for some $\ell' \in [0,\lvert V(T) \rvert]$.
Since $V(P') \subseteq X_{V(T_t)}$ and $P'$ is internally disjoint from $X_t$ and $e_1 \in E(P')$, we know $t_1'' \in V(T_t)-\{t\} \subseteq V(T_{t'})-\{t'\}$.
Since $i_1<i'$ and $t_1'' \in \partial T_{j,t_1} \cap V(T_{t'})-\{t'\}$, $T_{j,t'} \subseteq T_{j,t_1}$.
But $P_1$ is a $c$-monochromatic path containing $v$ intersecting $X_{t''_1}$ for some witness $t''_1$ for $X_{t''_1} \cap I_j \subseteq W_3^{(i_1,\ell')}$ for some $\ell' \in [0,\lvert V(T) \rvert]$, a contradiction.

So $P' \subseteq G[W_0^{(i',\ell_0,q)}]$.
Let $x$ be an end of $P'$ in $X_t$.
Since $Z_{t'} \cap I_j^\circ \cap X_{V(T_t)}-X_t \neq \emptyset$, $t \in V(T_{j,t'})-\partial T_{j,t'}$.
Since $V(P') \subseteq W_0^{(i',\ell_0,q)}$, we know $x \in S_{j,t}^{(i'+1,2)}$. 

We may assume that $x \in S_{j,t}^{(i',2)}$, for otherwise we are done.
So there exists $i'' \in [0,i'-1]$ with $t \in V(T_{j,t_{i''}})-\partial T_{j,t_{i''}}$, where $t_{i''}$ is the node of $T$ with $\sigma_T(t_{i''})=i''$, such that $x \in S_{j,t}^{(i''+1,2)}-S_{j,t}^{(i'',2)}$.
Hence there exist $\ell'' \in \{0,-1\}$ and $k'' \in [0,w_0-1]$ such that $x \in W_0^{(i'',\ell'',k'')}$. 
So there exists a monochromatic path $Q$ in $G[Y^{(i'',\ell'',k'')}]+E_{j,t_{i''}}^{(i'')}$ from $X_{t_{i''}}$ to $x$ internally disjoint from $X_{t_{i''}}$.
Since $x \in V(P') \cap Y^{(i'',\ell'',k'')}$, there exists a maximal subpath $P^*$ of $P'$ contained in $G[Y^{(i'',\ell'',k'')}]+E_{j,t_{i''}}^{(i'')}$ containing $x$.

If $P^*=P'$, then let $a=v$ and $b=u$; otherwise there exist $a \in V(P^*)$ and $b \in N_{P'}(a) \cap V(P')-V(P^*)$.
Note that for the former, $b=u \in X_{V(T_t)}-X_t$; for the latter, $b \neq x$, so $b \in V(P')-X_t \subseteq X_{V(T_t)}-X_t$.

Since $P' \subseteq G[W_0^{(i',\ell_0,q)}]$, $E(P') \subseteq E(G)$.
So $b \not \in Y^{(i'',\ell'',k'')}$ by the fact $i''<i'$ and the maximality of $P^*$.
Hence $b \in Y^{(i',\beta'(\ell_0))}-Y^{(i'',\ell'',k'')}$.
For each $i''' \in [i'',i']$, let $t_{i'''}$ be the node of $T$ with $\sigma_T(t_{i'''})=i'''$.
Since $Q \cup P^* \subseteq G[Y^{(i'',\ell'',k'')}]+E_{j,t_{i''}}^{(i'')}$ intersects $X_{t_i''}$ and contains $a$, we know $a \in W_0^{(i'',\ell'',k'')}$.
So if $b \in Z_{t_{i''}}$, then $b \in W_2^{(i'',-1,k'',c(a)-1)} \cup W_2^{(i'',0,k'')}$, and hence $c(a) \neq c(b)$ since $a \in W_0^{(i'',\ell'',k'')}$, a contradiction.
So $b \not \in Z_{t_{i''}}$.
In particular, $b \in I_j^\circ - X_{V(T_{j,t''})}$.
Since $b \in X_{V(T_t)}-X_t$ and $t \in V(T_{j,t_{i''}})-\partial T_{j,t_{i''}}$, there exists $z \in \partial T_{j,t_{i''}} \cap V(T_t)-\{t\}$ such that $b \in X_{V(T_z)}-X_z$.
Since $V(T_t) \subseteq V(T_{t'})-\{t'\}$, $b \not \in X_{V(T_{j,t_{i'''}})}$ for every $i''' \in [i'',i']$.
However, since $b \in I_j^\circ$ and $b \in Y^{(i',\beta'(\ell_0))}$, there exists a minimum $i^* \in [i'',i']$ such that $b \in X_{V(T_{j,t_{i^*}})}$, a contradiction.
This proves the claim.
\end{proof}

\begin{claim} \label{claim:0jumptimesinner}
Let $i \in \mathbb{N}_0$ and let $t$ be the node of $T$ with $\sigma_T(t)=i$.
Let $j \in [\lvert \V \rvert-1]$.
Let $S := \{i' \in [0,i-1]: \lvert Y^{(i',-1,0)} \cap I_j^\circ \cap X_{V(T_t)}-X_t \rvert \neq \lvert Y^{(i',0,s+2)} \cap I_j^\circ \cap X_{V(T_t)}-X_t \rvert\}$.
Then $\lvert S \rvert \leq w_0$.
\end{claim}

\begin{proof}
Let $i' \in S$.
Since $\lvert Y^{(i',-1,0)} \cap I_j^\circ \cap X_{V(T_t)}-X_t \rvert \neq \lvert Y^{(i',0,s+2)} \cap I_j^\circ \cap X_{V(T_t)}-X_t \rvert$, we know either $\lvert Y^{(i',-1,0)} \cap I_j^\circ \cap X_{V(T_t)}-X_t \rvert \neq \lvert Y^{(i',-1,w_0)} \cap I_j^\circ \cap X_{V(T_t)}-X_t \rvert$ or $\lvert Y^{(i',0,0)} \cap I_j^\circ \cap X_{V(T_t)}-X_t \rvert \neq \lvert Y^{(i',0,s+2)} \cap I_j^\circ \cap X_{V(T_t)}-X_t \rvert$.
By \cref{claim:0jumptimesinnersimple}, $\lvert S_{j,t}^{(i'+1,2)} \cap X_t \cap I_j \rvert > \lvert S_{j,t}^{(i',2)} \cap X_t \cap I_j \rvert$.
Hence $\lvert S \rvert \leq \lvert X_t \cap I_j \rvert \leq w_0$.
\end{proof}

\begin{claim} \label{claim:sizeinner}
Let $i \in \mathbb{N}_0$ and let $t$ be the node of $T$ with $\sigma_T(t)=i$.
Let $j \in [\lvert \V \rvert-1]$.
Then $\lvert Y^{(i,-1,0)} \cap I_j^\circ \cap X_{V(T_t)} \rvert \leq \eta_1$.
\end{claim}

\begin{proof}
Let $S_0 := \{i' \in [0,i-1]: \lvert Y^{(i',0,s+2)} \cap I_j^\circ \cap X_{V(T_t)}-X_t \rvert > \lvert Y^{(i',-1,0)} \cap I_j^\circ \cap X_{V(T_t)}-X_t \rvert\}$.
By \cref{claim:0jumptimesinner}, $\lvert S_0 \rvert \leq w_0$.
Let $S_2 := \{i' \in [0,i-1]: \lvert Y^{(i',\lvert V(T) \rvert+1,s+2)} \cap I_j^\circ \cap X_{V(T_t)}-X_t \rvert > \lvert Y^{(i',0,s+2)} \cap I_j^\circ \cap X_{V(T_t)}-X_t \rvert\}$.
Since $I_j^\circ \subseteq I_j$, $\lvert S_2 \rvert \leq 3w_0$ by \cref{claim:1jumptimes}.

For every $i' \in [0,i]$, let $t_{i'}$ be the node of $T$ with $\sigma_T(t_{i'})=i'$.

For every $i' \in S_0$, $(Y^{(i',0,s+2)}-Y^{(i',-1,0)}) \cap I_j^\circ \cap X_{V(T_t)}-X_t \subseteq X_{V(T_{j,t_{i'}})} \cap I_j \cap X_{V(T_t)}-X_t$.
So for every $i' \in S_0$, $\lvert (Y^{(i',0,s+2)}-Y^{(i',-1,0)}) \cap I_j^\circ \cap X_{V(T_t)}-X_t \rvert \leq \lvert Y^{(i',0,s+2)} \cap I_j \cap X_{V(T_t)} \cap X_{V(T_{j,t_{i'}})} \rvert \leq g_1(s+2)$ by Statement~2 in \cref{claim:I_jsize_prep}.

For every $i' \in S_2$, $(Y^{(i',\lvert V(T) \rvert+1,s+2)}-Y^{(i',0,s+2)}) \cap I_j^\circ \cap X_{V(T_t)}-X_t \subseteq X_{V(T_{j,t_{i'}})} \cap I_j \cap X_{V(T_t)}-X_t$.
So for every $i' \in S_2$, $\lvert (Y^{(i',\lvert V(T) \rvert+1,s+2)}-Y^{(i',0,s+2)}) \cap I_j^\circ \cap X_{V(T_t)}-X_t \rvert \leq \lvert Y^{(i',\lvert V(T) \rvert+1,s+2)} \cap I_j \cap X_{V(T_t)} \cap X_{V(T_{j,t_{i'}})} \rvert \leq g_3(4w_0)$ by Statement~2 in \cref{claim:I_jsize}.

Note that for every $i' \in [0,i-1]$, $(Y^{(i'+1,-1,0)}-Y^{(i',\lvert V(T) \rvert+1,s+2)}) \cap X_{V(T_t)} \subseteq X_t$.
Hence $\lvert Y^{(i,-1,0)} \cap I_j^\circ \cap X_{V(T_t)}-X_t \rvert \leq \lvert Y^{(0,-1,0)} \cap I_j^\circ \cap X_{V(T_t)}-X_t \rvert + \lvert S_0 \rvert \cdot g_1(s+2) + \lvert S_2 \rvert \cdot g_3(4w_0) \leq 0+w_0g_1(s+2)+3w_0g_3(4w_0)$.
Therefore, $\lvert Y^{(i,-1,0)} \cap I_j^\circ \cap X_{V(T_t)} \rvert \leq w_0g_1(s+2)+3w_0g_3(4w_0) + \lvert X_t \cap I_j \rvert \leq w_0g_1(s+2)+3w_0g_3(4w_0)+w_0=\eta_1$.
\end{proof}

\begin{claim} \label{claim:0jumptimesmiddle}
Let $i \in \mathbb{N}_0$, and let $t$ be the node of $T$ with $\sigma_T(t)=i$.
Let $j \in [\lvert V \rvert-1]$.
Let $Z_j$ be the set obtained from the $j$-th belt by deleting $I_{j-1,1} \cup I_{j,0}$. 
Let $S:=\{i' \in [0,i-1]: \lvert Y^{(i',-1,0)} \cap X_{V(T_t)} \cap Z_j-X_t \rvert < \lvert Y^{(i',0,s+2)} \cap X_{V(T_t)} \cap Z_j-X_t \rvert\}$.
Then $\lvert S \rvert \leq 5w_0(2w_0+1)+2w_0$.
\end{claim}

\begin{proof}
Let $\overline{Z_j} := Z_j \cup I^\circ_{j-1} \cup I^\circ_j$.
For every $i' \in {\mathbb N}_0$, let $t_{i'}$ be the node of $T$ with $\sigma_T(t_{i'})=i'$.

For every $i' \in S$, since $\lvert Y^{(i',-1,0)} \cap X_{V(T_t)} \cap Z_j-X_t \rvert < \lvert Y^{(i',0,s+2)} \cap X_{V(T_t)} \cap Z_j-X_t \rvert$, there exist $\ell_{i'} \in \{-1,0\}$ and $k_{i'} \in [0,w_0-1]$ such that $W_0^{(i',\ell_{i'},k_{i'})} \neq \emptyset$ and $A_{L^{(i',\ell_{i'},k_{i'})}}(W_0^{(i',\ell_{i'},k_{i'})}) \cap Z_j \cap Y^{(i',0,s+2)} \cap X_{V(T_t)}-X_t \neq \emptyset$, so there exist $v_{i'} \in W_0^{(i',\ell_{i'},k_{i'})} \cap X_{V(T_t)}$ and $u_{i'} \in A_{L^{(i',\ell_{i'},k_{i'})}}(\{v_i\}) \cap Z_j \cap Y^{(i',0,s+2)} \cap X_{V(T_t)}-X_t \neq \emptyset$.
For every $i' \in S$, since $i'<i$, there exists a monochromatic path $P_{i'}$ in $G[Y^{(i',\ell_{i'},k_{i'})}]+R^{(i')}$ from $X_{t_{i'}}$ to $v_{i'} \in X_{V(T_t)}$ intersecting $N_G[Z_j]$ internally disjoint from $X_{t_{i'}}$, and $A_{L^{(i',\ell,k)}}(V(P_{i'})) \cap (Y^{(i',0,s+2)}-Y^{(i',-1,0)}) \cap Z_j \neq \emptyset$, where $R^{(i')}=E_{j,t_{i'}}^{(i')}$ if $\ell_{i'}=-1$, and $R^{(i')}=\emptyset$ if $\ell_{i'}=0$.
Since $P_{i'}$ is a monochromatic path, $V(P_{i'}) \subseteq \overline{Z_j}$ by the definition of $E_{j,t_{i'}}^{(i')}$.
For every $i' \in S$, let $P'_{i'}$ be the subpath of $P_{i'}$ from $v_{i'}$ to $X_t$ internally disjoint from $X_t$, and let $x_{i'} \in V(P_{i'}) \cap X_t$.
Note that $V(P'_{i'}) \subseteq X_{V(T_t)}$ and $x_{i'} \in X_t \cap \overline{Z_j}$.

Let $S' = \{i' \in [0,i-1]: \lvert Y^{(i',-1,0)} \cap (I_{j-1}^{\circ} \cup I_j^\circ) \cap X_{V(T_t)}-X_t \rvert \neq \lvert Y^{(i',0,s+2)} \cap (I_{j-1}^{\circ} \cup I_j^\circ) \cap X_{V(T_t)}-X_t \rvert\}$.
By \cref{claim:0jumptimesinner}, $\lvert S' \rvert \leq 2w_0$.

Suppose to the contrary that $\lvert S \rvert \geq 5w_0(2w_0+1)+2w_0+1$.
Since $\lvert X_t \cap \overline{Z_j} \rvert \leq w_0$, there exist $i_1,i_2,i_3 \in S$ with $i_1<i_2<i_3$, $\ell_{i_1}=\ell_{i_2}=\ell_{i_3}$ and $x_{i_1}=x_{i_2}=x_{i_3}$ such that $S' \cap [i_1,i_3]=\emptyset$.
Let $x=x_{i_1}$.
For each $\alpha \in \{2,3\}$, let $Q_\alpha$ be the maximal subpath of $P_{i_\alpha}' \cap G[Y^{(i_{\alpha-1},\ell_{i_{\alpha-1}},k_{i_{\alpha-1}})}]+R^{(i_{\alpha-1})}$ containing $x$, and let $a_\alpha$ be the end of $Q_\alpha$ different from $x$ if possible.
For each $\alpha \in [3]$, if $a_\alpha=v_{i_\alpha}$, then let $b_\alpha=u_{i_\alpha}$; otherwise, let $b_\alpha$ be the neighbor of $a_\alpha$ in $P'_{i_\alpha}$ not in $Q_\alpha$.

Suppose that there exists $\alpha_0 \in [2]$ such that $\{a_{\alpha_0+1},b_{\alpha_0+1}\} \in E(G) \cup R^{(i_{\alpha_0})}$.
Then $b_{\alpha_0+1} \not \in Y^{(i_{\alpha_0},\ell_{i_{\alpha_0}},k_{i_{\alpha_0}})}$ by the maximality of $Q_{\alpha_0+1}$.
If $\{a_{\alpha_0+1},b_{\alpha_0+1}\} \in R^{(i_{\alpha_0})}$, then $R^{(i_{\alpha_0})} = E_{j,t_{i_{\alpha_0}}}^{(i_{\alpha_0})}$ and $\{a_{\alpha_0+1},b_{\alpha_0+1}\} \subseteq Y^{(i_{\alpha_0}-1,\lvert V(T) \rvert+1,s+2)} \subseteq Y^{(i_{\alpha_0},\ell_{i_{\alpha_0}},k_{i_{\alpha_0}})}$, a contradiction.
So $\{a_{\alpha_0+1},b_{\alpha_0+1}\} \in E(G)$.
Note that $P_{i_{\alpha_0}} \cup Q_{\alpha_0+1}$ contains a monochromatic path in $G[Y^{(i_{\alpha_0},\ell_{i_{\alpha_0}},k_{i_{\alpha_0}})}]+R^{(i_{\alpha_0})}$ from $X_{t_{i_{\alpha_0}}}$ to $a_{\alpha_0+1}$.
So $a_{\alpha_0+1} \in W_0^{(i_{\alpha_0},\ell_{i_{\alpha_0}},k_{i_{\alpha_0}})}$ and $b_{\alpha_0+1} \in A_{L^{(i_{\alpha_0},\ell_{i_{\alpha_0}},k_{i_{\alpha_0}})}}(W_0^{(i_{\alpha_0},\ell_{i_{\alpha_0}},k_{i_{\alpha_0}})}) \cap X_{V(T_t)}-X_t$.
If $b_{\alpha_0+1} \in Z_{t_{\alpha_0}}$, then $c(a_{\alpha_0+1}) \neq c(b_{\alpha_0+1})$ and $b_{\alpha_0+1} \in Y^{(i_{\alpha_0}+1,-1,0)} \subseteq Y^{(i_{\alpha_0+1},\ell_{\alpha_0+1},k_{\alpha_0+1})}$, so the former implies that $b_{\alpha_0+1}=u_{i_{\alpha_0+1}}$ and the latter implies that $b_{\alpha_0+1} \neq u_{i_{\alpha_0+1}}$, a contradiction.
So $b_{\alpha_0+1} \not \in Z_{t_{\alpha_0}}$.
So $b_{\alpha_0+1} \in I_{j-1}^{\circ} \cup I_j^\circ$.
Note that $u_{i_{\alpha_0+1}} \in Z_j$, so $u_{i_{\alpha_0+1}} \neq b_{\alpha_0+1}$.
Hence $b_{\alpha_0+1} \in V(P_{i_{\alpha_0+1}}') \subseteq Y^{(i_{\alpha_0+1},\ell_{i_{\alpha_0+1}},k_{i_{\alpha_0+1}})}$.
So $b_{\alpha_0+1} \in (Y^{(i_{\alpha_0+1},\ell_{i_{\alpha_0+1}},k_{i_{\alpha_0+1}})}-Y^{(i_{\alpha_0},-1,0)}) \cap (I_{j-1}^{\circ} \cup I_j^\circ) \cap X_{V(T_t)}-X_t$.
Hence $[i_1,i_{\alpha_0+1}] \cap S' \neq \emptyset$, a contradiction.

Hence for each $\alpha \in [2]$, $\{a_{\alpha+1},b_{\alpha+1}\} \in R^{(i_{\alpha+1})} - R^{(i_\alpha)}$.
In particular, $\ell_{i_\alpha}=-1$ and $R^{(i_\alpha)}=E_{j,t_{i_\alpha}}^{(i_\alpha)}$ for every $\alpha \in [3]$.
For each $\alpha \in \{2,3\}$, since $\{a_\alpha,b_\alpha\} \in E_{j,t_{i_\alpha}}^{(i_\alpha)}-E_{j,t_{i_{\alpha-1}}}^{(i_{\alpha-1})}$, there exist $\beta_\alpha \in [i_{\alpha-1},i_\alpha-1]$ with $\{a_\alpha,b_\alpha\} \in E_{j,t_{i_\alpha}}^{(\beta_\alpha+1)}-E_{j,t_{i_\alpha}}^{(\beta_\alpha)}$ and a witness $q_\alpha \in \partial T_{j,t_{\beta_\alpha}}$ for $X_{q_\alpha} \cap I_j \subseteq W_3^{(\beta_\alpha,\ell')}$ for some $\ell' \in [0,\lvert V(T) \rvert]$ such that $\{a_\alpha,b_\alpha\} \subseteq X_{q_\alpha}$.
For each $\alpha \in \{2,3\}$, since $V(P'_{i_\alpha}) \subseteq X_{V(T_t)}$ and $P'_{i_\alpha}$ is internally disjoint from $X_{V(T_t)}$, $q_\alpha \in V(T_t)-\{t\}$.
So $T_{j,t_{i'}} \subseteq T_{j,t_{\beta_2}}$ for every $i' \in [\beta_2,i_3]$.

Since $P_{i_1}$ is a monochromatic path intersecting $X_{t_{i_1}}$ and $x \in X_{V(T_t)}$, $V(P_{i_1}) \cap X_{t_{\beta_3}} \neq \emptyset$.
So there exists a subpath $P''_{i_1}$ of $P_{i_1}$ from $X_{t_{\beta_3}}$ to $x \in X_{V(T_t)}$ internally disjoint from $X_{t_{\beta_3}}$.
Hence there exists $k^* \in [0,w_0-1]$ such that $V(P''_{i_1}) \cap X_{t_{\beta_3}} \subseteq W_0^{(\beta_3,-1,k^*)}$.
Let $Q^*$ be the maximal subpath of $P_{i_3}'$ contained in $G[Y^{(\beta_3,-1,k^*)}]+E_{j,t_{\beta_3}}^{(\beta_3)}$ containing $x$.
Let $a^*$ be the end of $Q^*$ other than $x$ if possible.
If $Q^*=P_{i_3}'$, then $a^*=v_{i_3}$, and we let $b^*=u_{i_3}$; otherwise, let $b^*$ be the vertex in $V(P_{i_3}')-V(Q^*)$ adjacent to $a^*$ in $P_{i_3}'$.

Suppose that $\{a^*,b^*\} \in E(G) \cup E_{j,t_{\beta_3}}^{(\beta_3)}$.
Then $b^* \not \in Y^{(\beta_3,-1,k^*)}$ by the maximality of $Q^*$, since $\beta_3<i_3$.
If $\{a^*,b^*\} \in E_{j,t_{\beta_3}}^{(\beta_3)}$, then $\{a^*,b^*\} \subseteq Y^{(\beta_3-1,\lvert V(T) \rvert+1,s+2)} \subseteq Y^{(\beta_3,-1,k^*)}$, a contradiction.
So $\{a^*,b^*\} \in E(G)$.
Note that $P_{i_1} \cup Q^*$ contains a monochromatic path in $G[Y^{(\beta_3,-1,k^*)}]+E_{j,t_{\beta_3}}^{(\beta_3)}$ from $V(P''_{i_1}) \cap X_{t_{\beta_3}}$ to $a^*$.
So $a^* \in W_0^{(\beta_3,-1,k^*)}$ and $b^* \in A_{L^{(\beta_3,-1,k^*)}}(W_0^{(\beta_3,-1,k^*)}) \cap X_{V(T_t)}-X_t$.
Since $\beta_3 <i_3$, if $b^* \in Z_{t_{\beta_3}}$, then $c(a^*) \neq c(b^*)$ and $b^* \in Y^{(\beta_3+1,-1,0)} \subseteq Y^{(i_3,\ell_3,k_3)}$, so the former implies that $b^*=u_{i_3}$ and the latter implies that $b^* \neq u_{i_3}$, a contradiction.
Hence $b^* \not \in Z_{t_{\beta_3}}$.
So $b^* \in I_{j-1}^{\circ} \cup I_j^\circ$.
Note that $u_{i_3} \in Z_j$, so $u_{i_3} \neq b^*$.
Hence $b^* \in V(P_{i_3}') \subseteq Y^{(i_3,-1,k_{i_3})}$.
So $b^* \in (Y^{(i_3,-1,k_{i_3})}-Y^{(\beta_3,-1,0)}) \cap (I_{j-1}^{\circ} \cup I_j^\circ) \cap X_{V(T_t)}-X_t$.
Hence $[i_1,i_3] \cap S' \supseteq [\beta_3,i_3] \cap S' \neq \emptyset$, a contradiction.

Hence $\{a^*,b^*\} \in E_{j,t_{i_3}}^{(i_3)}-E_{j,t_{\beta_3}}^{(\beta_3)}=E_{j,t_{i_3}}^{(i_3)}-E_{j,t_{i_3}}^{(\beta_3)}$.
So there exist $\gamma \in [\beta_3,i_3-1]$ with $\{a_\alpha,b_\alpha\} \in E_{j,t_{i_3}}^{(\gamma+1)}-E_{j,t_{i_3}}^{(\gamma)}$ and a witness $q^* \in \partial T_{j,t_{\gamma}}$ for $X_{q^*} \cap I_j \subseteq W_3^{(\gamma,\ell')}$ for some $\ell' \in [0,\lvert V(T) \rvert]$ such that $\{a^*,b^*\} \subseteq X_{q^*}$.
However, $T_{j,t_\gamma} \subseteq T_{j,t_{\beta_2}}$, $\beta_2<\gamma$, $q_2 \in \partial T_{j,t_{\beta_2}}$ is a witness for $X_{q_2} \cap I_j \subseteq W_3^{(\beta_2,\ell')}$ for some $\ell' \in [0,\lvert V(T) \rvert]$, and $Q_2 \cup Q^*$ is a monochromatic path in $G[Y^{(\beta_3,\lvert V(T) \rvert+1,s+2)}]+E_{j,t_{\beta_3}}^{(\beta_3)}$ intersects $X_{q_2}$ and $\{a^*,b^*\}$, so some $E_{j,t_{\beta_3}}^{(\beta_3)}$-pseudocomponent in $G[Y^{(\beta_3,\lvert V(T) \rvert+1,s+2)}]$ intersects $X_{q_2}$ and $\{a^*,b^*\}$, contradicting that $\{a^*,b^*\} \in E_{j,t_{i_3}}^{(\gamma+1)}$.
This proves the claim.
\end{proof}

\begin{claim} \label{claim:jumpsizemiddle}
Let $i,i' \in \mathbb{N}_0$ with $i'<i$, and let $t$ be a node of $T$ with $\sigma_T(t)=i$.
Let $j \in [\lvert V \rvert-1]$.
Let $Z_j$ be the set obtained from the $j$-th belt by deleting $I_{j-1,1} \cup I_{j,0}$. 
Then $\lvert Y^{(i',\lvert V(T) \rvert+1,s+2)} \cap Z_j \cap X_{V(T_t)} \rvert \leq g_5(\lvert Y^{(i',-1,0)} \cap Z_j \cap X_{V(T_t)} \rvert)$.
\end{claim}

\begin{proof}
Note that $N_G[Z_j] \subseteq Z_j \cup I_{j-1}^\circ \cup I_j^\circ$.
For every $k \in [0,w_0-1]$ and $q \in [0,s+1]$, 
\begin{align*}
       & \lvert (Y^{(i',-1,k,q+1)}-Y^{(i',-1,k,q)}) \cap Z_j \cap X_{V(T_t)}-X_t \rvert \\
\leq\; & \lvert A_{L^{(i',-1,k,q)}}(W_1^{(i',-1,k,q)}) \cap Z_j \cap X_{V(T_t)}-X_t \rvert \\
 \leq\; &  \lvert A_{L^{(i',-1,k,q)}}(W_1^{(i',-1,k,q)} \cap (Z_j \cup I_{j-1}^\circ \cup I_j^\circ) \cap X_{V(T_t)}) \cap Z_j \cap X_{V(T_t)}-X_t \rvert \\
 \leq\; & \lvert N_G^{\geq s}(W_1^{(i',-1,k,q)} \cap (Z_j \cup I_{j-1}^\circ \cup I_j^\circ) \cap X_{V(T_t)}) \rvert \\
 \leq\; & \lvert N_G^{\geq s}(Y^{(i',-1,k)} \cap (Z_j \cup I_{j-1}^\circ \cup I_j^\circ) \cap X_{V(T_t)}) \rvert \\
 \leq\; & f(\lvert Y^{(i',-1,k)} \cap (Z_j \cup I_{j-1}^\circ \cup I_j^\circ) \cap X_{V(T_t)} \rvert) \\
 \leq\; & f(\lvert Y^{(i',-1,k)} \cap Z_j \cap X_{V(T_t)}\rvert + \lvert Y^{(i',-1,k)} \cap (I_{j-1}^\circ \cup I_j^\circ) \cap X_{V(T_t)} \rvert) \\
 \leq\; & f(\lvert Y^{(i',-1,k)} \cap Z_j \cap X_{V(T_t)}\rvert + \lvert Y^{(i,-1,0)} \cap (I_{j-1}^\circ \cup I_j^\circ) \cap X_{V(T_t)} \rvert) \\
 \leq\; & f(\lvert Y^{(i',-1,k)} \cap Z_j \cap X_{V(T_t)}\rvert + 2\eta_1),
\end{align*} where the last inequality follows from \cref{claim:sizeinner}.
So for every $k \in [0,w_0-1]$, 
\begin{align*}
& \lvert Y^{(i',-1,k+1)} \cap Z_j \cap X_{V(T_t)} \rvert \\
\leq \; & \lvert X_t \cap Z_j \rvert + \lvert Y^{(i',-1,k,s+2)} \cap Z_j \cap X_{V(T_t)}-X_t \rvert \\
\leq\; & \lvert X_t \cap Z_j \rvert + \lvert Y^{(i',-1,k,0)} \cap Z_j \cap X_{V(T_t)}-X_t \rvert + (s+2)\cdot f(\lvert Y^{(i',-1,k)} \cap Z_j \cap X_{V(T_t)}\rvert + 2\eta_1) \\
\leq\; & w_0+\lvert Y^{(i',-1,k)} \cap Z_j \cap X_{V(T_t)}-X_t \rvert + (s+2)\cdot f(\lvert Y^{(i',-1,k)} \cap Z_j \cap X_{V(T_t)}\rvert + 2\eta_1) \\
\leq\; & (s+2)\cdot f_1(\lvert Y^{(i',-1,k)} \cap Z_j \cap X_{V(T_t)}\rvert + 2\eta_1).
\end{align*}

Hence it is easy to verify that for every $k \in [0,w_0]$, $\lvert Y^{(i',-1,k)} \cap Z_j \cap X_{V(T_t)} \rvert \leq g_4(k,\lvert Y^{(i',-1,0)} \cap Z_j \cap X_{V(T_t)} \rvert)$ by induction on $k$.
Therefore $\lvert Y^{(i',0,0)} \cap Z_j \cap X_{V(T_t)} \rvert = \lvert Y^{(i',-1,w_0)} \cap Z_j \cap X_{V(T_t)} \rvert \leq g_4(w_0,\lvert Y^{(i',-1,0)} \cap Z_j \cap X_{V(T_t)} \rvert)$.

For every $k \in [0,s+1]$, 
\begin{align*}
& \lvert (Y^{(i',0,k+1)}-Y^{(i',0,k)}) \cap Z_j \cap X_{V(T_t)}-X_t \rvert \\
\leq\; & \lvert A_{L^{(i',0,k)}}(W_0^{(i',0,k)}) \cap Z_j \cap X_{V(T_t)}-X_t \rvert\\
 \leq\; & \lvert A_{L^{(i',0,k)}}(W_0^{(i',0,k)} \cap (Z_j \cup I_{j-1}^\circ \cup I_j^\circ) \cap X_{V(T_t)}) \cap Z_j \cap X_{V(T_t)}-X_t \rvert \\
 \leq\; & \lvert N_G^{\geq s}(W_0^{(i',0,k)} \cap (Z_j \cup I_{j-1}^\circ \cup I_j^\circ) \cap X_{V(T_t)}) \rvert \\
 \leq\; & \lvert N_G^{\geq s}(Y^{(i',0,k)} \cap (Z_j \cup I_{j-1}^\circ \cup I_j^\circ) \cap X_{V(T_t)}) \rvert \\
 \leq\; & f(\lvert Y^{(i',0,k)} \cap (Z_j \cup I_{j-1}^\circ \cup I_j^\circ) \cap X_{V(T_t)} \rvert) \\
 \leq\; & f(\lvert Y^{(i',0,k)} \cap Z_j \cap X_{V(T_t)}\rvert + \lvert Y^{(i',0,k)} \cap (I_{j-1}^\circ \cup I_j^\circ) \cap X_{V(T_t)} \rvert) \\
 \leq\; & f(\lvert Y^{(i',0,k)} \cap Z_j \cap X_{V(T_t)}\rvert + \lvert Y^{(i,-1,0)} \cap (I_{j-1}^\circ \cup I_j^\circ) \cap X_{V(T_t)} \rvert) \\
 \leq\; & f(\lvert Y^{(i',0,k)} \cap Z_j \cap X_{V(T_t)}\rvert + 2\eta_1),
\end{align*}
 where the last inequality follows from \cref{claim:sizeinner}.
So for every $k \in [0,s+1]$, 
\begin{align*}
& \lvert Y^{(i',0,k+1)} \cap Z_j \cap X_{V(T_t)} \rvert \\
\leq\; & \lvert X_t \cap Z_j \rvert + \lvert Y^{(i',0,k+1)} \cap Z_j \cap X_{V(T_t)}-X_t \rvert \\
\leq\; & \lvert X_t \cap Z_j \rvert + \lvert Y^{(i',0,k)} \cap Z_j \cap X_{V(T_t)}-X_t \rvert + f(\lvert Y^{(i',0,k)} \cap Z_j \cap X_{V(T_t)}\rvert + 2\eta_1) \\
\leq\; & f_1(\lvert Y^{(i',0,k)} \cap Z_j \cap X_{V(T_t)}\rvert + 2\eta_1).
\end{align*}
Therefore, $\lvert Y^{(i',0,s+2)} \cap Z_j \cap X_{V(T_t)} \rvert \leq g_4(s+2,\lvert Y^{(i',0,0)} \cap Z_j \cap X_{V(T_t)}\rvert)$. 

Note that 
\begin{align*}
& (Y^{(i',\lvert V(T) \rvert+1,s+2)}-Y^{(i',0,s+2)}) \cap Z_j \cap X_{V(T_t)} \\
\subseteq \; & (Y^{(i',\lvert V(T) \rvert+1,s+2)}-Y^{(i',0,s+2)}) \cap ((I_{j-1} \cap X_{V(T_{j-1,t'})}) \cup (I_j \cap X_{V(T_{j,t'})})) \cap X_{V(T_t)} \\\subseteq \; & Y^{(i',\lvert V(T) \rvert+1,s+2)} \cap ((I_{j-1} \cap X_{V(T_{j-1,t'})}) \cup (I_j \cap X_{V(T_{j,t'})})) \cap X_{V(T_t)},
\end{align*} 
where $t'$ is the node of $T$ with $\sigma_T(t')=i'$.
So $\lvert (Y^{(i',\lvert V(T) \rvert+1,s+2)}-Y^{(i',0,s+2)}) \cap Z_j \cap X_{V(T_t)} \rvert \leq \lvert Y^{(i',\lvert V(T) \rvert+1,s+2)} \cap ((I_{j-1} \cap X_{V(T_{j-1,t'})}) \cup (I_j \cap X_{V(T_{j,t'})})) \cap X_{V(T_t)} \rvert \leq 2g_3(4w_0)$ by Statement~2 in \cref{claim:I_jsize}.

Therefore, 
\begin{align*}
& \lvert Y^{(i',\lvert V(T) \rvert+1,s+2)} \cap Z_j \cap X_{V(T_t)} \rvert \\
\leq\; & \lvert Y^{(i',0,s+2)} \cap Z_j \cap X_{V(T_t)} \rvert + 2g_3(4w_0) \\
\leq\; & g_4(s+2,\lvert Y^{(i',0,0)} \cap Z_j \cap X_{V(T_t)}\rvert) + 2g_3(4w_0) \\ 
\leq\; & g_4(s+2,g_4(w_0,\lvert Y^{(i',-1,0)} \cap Z_j \cap X_{V(T_t)} \rvert)) + 2g_3(4w_0) \\ 
=\; & g_5(\lvert Y^{(i',-1,0)} \cap Z_j \cap X_{V(T_t)}\rvert).\qedhere
\end{align*}
\end{proof}

\begin{claim} \label{claim:sizemiddle}
Let $i \in \mathbb{N}_0$, and let $t$ be a node of $T$ with $\sigma_T(t)=i$.
Let $j \in [\lvert V \rvert-1]$.
Let $Z_j$ be the set obtained from the $j$-th belt by deleting $I_{j-1,1} \cup I_{j,0}$. 
Then $\lvert Y^{(i,-1,0)} \cap Z_j \cap X_{V(T_t)} \rvert \leq \eta_2$.
\end{claim}

\begin{proof}
Let 
$$S_1:=\{i' \in [0,i-1]: \lvert Y^{(i',-1,0)} \cap X_{V(T_t)} \cap Z_j-X_t \rvert < \lvert Y^{(i',0,s+2)} \cap X_{V(T_t)} \cap Z_j-X_t \rvert\}.$$
By \cref{claim:0jumptimesmiddle}, $\lvert S_1 \rvert \leq 5w_0(2w_0+1)+2w_0$.
Let $$S_2:=\{i' \in [0,i-1]: \lvert Y^{(i',0,s+2)} \cap X_{V(T_t)} \cap Z_j-X_t \rvert < \lvert Y^{(i',\lvert V(T) \rvert+1,s+2)} \cap X_{V(T_t)} \cap Z_j-X_t \rvert\}.$$
Note that for every $i' \in S_2$, $(Y^{(i',\lvert V(T) \rvert+1,s+2)}-Y^{(i',0,s+2)}) \cap X_{V(T_t)} \cap Z_j-X_t \subseteq (Y^{(i',\lvert V(T) \rvert+1,s+2)}-Y^{(i',0,s+2)}) \cap X_{V(T_t)} \cap Z_j \cap (I_{j-1} \cup I_j)-X_t$, where $I_0=\emptyset$. So 
\begin{align*}
 \lvert S_2 \rvert \leq\; & \lvert \{i' \in [0,i-1]: \lvert Y^{(i',0,s+2)} \cap X_{V(T_t)} \cap I_{j-1}-X_t \rvert < \lvert Y^{(i',\lvert V(T) \rvert+1,s+2)} \cap X_{V(T_t)} \cap I_{j-1}-X_t \rvert\} \rvert \\
 &  + \lvert \{i' \in [0,i-1]: \lvert Y^{(i',0,s+2)} \cap X_{V(T_t)} \cap I_j-X_t \rvert < \lvert Y^{(i',\lvert V(T) \rvert+1,s+2)} \cap X_{V(T_t)} \cap I_j-X_t \rvert\} \rvert\\ 
 \leq\; & 6w_0
\end{align*} 
by \cref{claim:1jumptimes}.
Let $$S_3:=\{i' \in [0,i-1]: \lvert Y^{(i',-1,0)} \cap X_{V(T_t)} \cap Z_j-X_t \rvert < \lvert Y^{(i',\lvert V(T) \rvert+1,s+2)} \cap X_{V(T_t)} \cap Z_j-X_t \rvert\}.$$
So $\lvert S_3 \rvert \leq \lvert S_1 \rvert + \lvert S_2 \rvert \leq 5w_0(2w_0+1)+8w_0$.
Let $$S:=\{i' \in [0,i-1]: \lvert Y^{(i',-1,0)} \cap X_{V(T_t)} \cap Z_j-X_t \rvert < \lvert Y^{(i'+1,-1,0)} \cap X_{V(T_t)} \cap Z_j-X_t \rvert\}.$$
Since for every $i' \in [0,i-1]$, $Y^{(i'+1,-1,0)} \cap Z_j \cap X_{V(T_t)} \subseteq (Y^{(i',\lvert V(T) \rvert+1,s+2)} \cap Z_j \cap X_{V(T_t)}) \cup (X_t \cap Z_j)$, we know $\lvert S \rvert \leq \lvert S_3 \rvert + \lvert X_t \cap Z_j \rvert \leq 5w_0(2w_0+1)+9w_0$.

By \cref{claim:jumpsizemiddle}, for every $i' \in S$, $\lvert Y^{(i'+1,-1,0)} \cap Z_j \cap X_{V(T_t)} \rvert \leq \lvert Y^{(i',\lvert V(T) \rvert+1,s+2)} \cap Z_j \cap X_{V(T_t)} \rvert + \lvert X_t \cap Z_j \rvert \leq g_5(\lvert Y^{(i',-1,0)} \cap Z_j \cap X_{V(T_t)} \rvert) + \lvert X_t \cap Z_j \rvert \leq g_5(\lvert Y^{(i',-1,0)} \cap Z_j \cap X_{V(T_t)} \rvert) + w_0$.

Note that $\lvert Y^{(0,-1,0)} \cap Z_j \cap X_{V(T_t)} \rvert \leq \lvert X_t \cap Z_j \rvert \leq w_0 = g_6(0)$.
Then it is easy to verify that $\lvert Y^{(i,-1,0)} \cap Z_j \cap X_{V(T_t)} \rvert \leq g_6(\lvert S \rvert)$ by induction on the elements in $S$.
Since $\lvert S \rvert \leq 5w_0(2w_0+1)+9w_0$, $\lvert Y^{(i,-1,0)} \cap Z_j \cap X_{V(T_t)} \rvert \leq g_6(5w_0(2w_0+1)+9w_0)=\eta_2$.
\end{proof}

\begin{claim} \label{claim:boundarysize}
Let $t \in V(T)$.
Let $j \in [\lvert \V \rvert-1]$.
Then $\lvert X_{\partial T_{j,t}} \cap \overline{I_j} \rvert \leq \eta_3$.
\end{claim}

\begin{proof}
Let $p$ be the parent of $t$ if $t \neq r^*$; otherwise, let $F_{j,p}$ be the set mentioned in the algorithm when $t=r^*$.
Note that $\partial T_{j,t} \subseteq F_{j,p}$, so by \cref{claim:fencedownside}, for every $q \in \partial T_{j,t}$, $Y^{(i,-1,0)} \cap \overline{I_j} \cap X_{V(T_q)}-X_q \neq \emptyset$.
Since $X_{V(T_{q'})}-X_{q'}$ is disjoint from $X_{V(T_{q''})}-X_{q''}$ for any distinct $q',q''$ of $\partial T_{j,t}-\{t^*\}$. where $t^*$ is the node in $T_{j,t}$ with $\sigma_T(t^*)$ minimum, we have $\lvert \partial T_{j,t} \rvert \leq \lvert Y^{(i,-1,0)} \cap \overline{I_j} \rvert$.
Hence $\lvert \partial T_{j,t} \rvert \leq \lvert Y^{(i,-1,0)} \cap \overline{I_j} \cap X_{V(T_t)}-X_t \rvert+1 \leq \lvert Y^{(i,-1,0)} \cap \overline{I_j} \cap X_{V(T_t)} \rvert+1 \leq \lvert Y^{(i,-1,0)} \cap (Z_j \cup Z_{j+1} \cup I_j^\circ) \cap X_{V(T_t)} \rvert+1 \leq \eta_1+2\eta_2+1$ by \cref{claim:sizeinner,claim:sizemiddle}, where $Z_j$ and $Z_{j+1}$ are the sets obtained from the $j$-th belt by deleting $I_{j-1,1} \cup I_{j,0}$ and from the $(j+1)$-th belt by deleting $I_{j,1} \cup I_{j+1,0}$, respectively.
Therefore, $\lvert X_{\partial T_{j,t}} \cap \overline{I_j} \rvert \leq \lvert \partial T_{j,t} \rvert \cdot w_0 \leq (\eta_1+2\eta_2+1)w_0=\eta_3$.
\end{proof}

\begin{claim} \label{claim:sizeI_ji}
Let $i \in \mathbb{N}_0$ and let $t \in V(T)$ with $\sigma_T(t)=i$.
Let $j \in [\lvert \V \rvert-1]$.
Then $\lvert Y^{(i,0,0)} \cap I_j \cap X_{V(T_t)} \rvert \leq g_8(w_0)$. 
\end{claim}

\begin{proof}
Note that for every $k \in [0,w_0-1]$,  $W_0^{(i,-1,k)} \cap X_{V(T_t)} \subseteq \bigcup_{j'=1}^{\lvert \V \rvert-1}\bigcup_{S \in S_{j'}^\circ}S$.
So 
 $$(Y^{(i,-1,k+1)}-Y^{(i,-1,k)}) \cap X_{V(T_t)} \subseteq N_G[W_0^{(i,-1,k)}] \cap X_{V(T_t)} \subseteq N_G[\bigcup_{j'=1}^{\lvert \V \rvert-1}\bigcup_{S \in S_{j'}^\circ}S] \subseteq \bigcup_{j'=1}^{\lvert \V \rvert-1}\overline{I_{j'}^\circ} \subseteq \bigcup_{j'=1}^{\lvert \V \rvert-1}I_{j'}.$$
So for every $k \in [0,w_0-1]$ and $q \in [0,s+1]$, 
\begin{align*}
 (Y^{(i,-1,k,q+1)}-Y^{(i,-1,k,q)}) \cap I_j \cap X_{V(T_t)}-X_t 
& \subseteq A_{L^{(i,-1,k,1)}}(Y_1^{(i,-1,k,q)} \cap \overline{I_{j}^\circ}) \cap I_j \cap X_{V(T_t)}-X_t \\
& \subseteq N_G^{\geq s}(Y^{(i,-1,k,q)} \cap \overline{I_j^\circ}) \cap I_j \cap X_{V(T_t)}-X_t \\
& \subseteq N_G^{\geq s}(Y^{(i,-1,k,q)} \cap \overline{I_j^\circ} \cap X_{V(T_t)}).
\end{align*}
Hence for every $k \in [0,w_0-1]$ and $q \in [0,s+1]$, $\lvert (Y^{(i,-1,k,q+1)}-Y^{(i,-1,k,q)}) \cap I_j \cap X_{V(T_t)}-X_t \rvert \leq \lvert N_G^{\geq s}(Y^{(i,-1,k,q)} \cap \overline{I_j^\circ} \cap X_{V(T_t)}) \rvert \leq f(\lvert Y^{(i,-1,k,q)} \cap I_j \cap X_{V(T_t)} \rvert)$, so
\begin{align*}
 \lvert (Y^{(i,-1,k,q+1)}-Y^{(i,-1,k,q)}) \cap I_j \cap X_{V(T_t)} \rvert 
 \leq\; & \lvert (Y^{(i,-1,k,q+1)}-Y^{(i,-1,k,q)}) \cap I_j \cap X_{V(T_t)}-X_t \rvert + \lvert X_t \cap I_j \rvert \\
 \leq\; &  f(\lvert Y^{(i,-1,k,q)} \cap I_j \cap X_{V(T_t)} \rvert) + w_0.
\end{align*}
So 
\begin{align*}
 \lvert Y^{(i,-1,k,q+1)} \cap I_j \cap X_{V(T_t)} \rvert 
\leq\; & \lvert (Y^{(i,-1,k,q+1)}-Y^{(i,-1,k,q)}) \cap I_j \cap X_{V(T_t)} \rvert + \lvert Y^{(i,-1,k,q)} \cap I_j \cap X_{V(T_t)} \rvert \\
\leq\; & f(\lvert Y^{(i,-1,k,q)} \cap I_j \cap X_{V(T_t)} \rvert) + w_0 + \lvert Y^{(i,-1,k,q)} \cap I_j \cap X_{V(T_t)} \rvert \\
=\; & f_1(\lvert Y^{(i,-1,k,q)} \cap I_j \cap X_{V(T_t)} \rvert) + w_0.
\end{align*}

Note that for every $k \in [0,w_0-1]$, $\lvert Y^{(i,-1,k,0)} \cap I_j \cap X_{V(T_t)} \rvert \leq g_7(0,\lvert Y^{(i,-1,k,0)} \cap I_j \cap X_{V(T_t)} \rvert)$.
So it is easy to verify that for every $k \in [0,w_0-1]$, for every $q \in [0,s+2]$, $\lvert Y^{(i,-1,k,q)} \cap I_j \cap X_{V(T_t)} \rvert \leq g_7(q,\lvert Y^{(i,-1,k,0)} \cap I_j \cap X_{V(T_t)} \rvert)$ by induction on $q$.
That is, for every $k \in [0,w_0-1]$, $\lvert Y^{(i,-1,k+1)} \cap I_j \cap X_{V(T_t)} \rvert = \lvert Y^{(i,-1,k,s+2)} \cap I_j \cap X_{V(T_t)} \rvert \leq g_7(s+2,\lvert Y^{(i,-1,k,0)} \cap I_j \cap X_{V(T_t)} \rvert) = g_7(s+2,\lvert Y^{(i,-1,k)} \cap I_j \cap X_{V(T_t)} \rvert)$.

Note that $\lvert Y^{(i,-1,0)} \cap I_j \cap X_{V(T_t)} \rvert \leq \eta_1+2\eta_2=g_8(0)$ by \cref{claim:sizeinner,claim:sizemiddle}.
So it is easy to verify that for every $k \in [0,w_0]$, $\lvert Y^{(i,-1,k)} \cap I_j \cap X_{V(T_t)} \rvert \leq g_8(k)$ by induction on $k$.
Therefore, $\lvert Y^{(i,0,0)} \cap I_j \cap X_{V(T_t)} \rvert = \lvert Y^{(i,-1,w_0)} \cap I_j \cap X_{V(T_t)} \rvert \leq g_8(w_0)$.
\end{proof}

\begin{claim} \label{claim:sizebelt}
Let $i \in \mathbb{N}_0$ and let $t \in V(T)$ with $\sigma_T(t)=i$.
Let $j \in [\lvert \V \rvert-1]$.
Then $\lvert Y^{(i,0,s+2)} \cap B_j \cap X_{V(T_t)} \rvert \leq \eta_4$, where $B_j$ is the $j$-th belt.
\end{claim}

\begin{proof}
Let $a_j$ and $b_j$ be the integers such that $B_j = \bigcup_{\alpha=a_j}^{b_j}V_\alpha$. 
For every $k \in [0,s+2]$, let $R_k := \bigcup_{\alpha=a_j-(s+2)+k}^{b_j+(s+2)-k}V_\alpha$.
Let $Z_j := B_j-(I_{j-1,1} \cup I_{j,0})$.
Note that $B_j = R_{s+2} \subseteq R_{k+1} \subseteq I_{j-1}^\circ \cup Z_j \cup I_j^\circ=R_0$ for every $k \in [0,s+1]$.

Note that for every $k \in [0,s+1]$, 
\begin{align*}
 (Y^{(i,0,k+1)}-Y^{(i,0,k)}) \cap R_{k+1} \cap X_{V(T_t)}-X_t
 \subseteq \; & A_{L^{(i,0,k)}}(W_0^{(i,0,k)}) \cap R_{k+1} \cap X_{V(T_t)}-X_t \\
 \subseteq \; & A_{L^{(i,0,k)}}(W_0^{(i,0,k)} \cap R_k) \cap R_{k+1} \cap X_{V(T_t)}-X_t \\
 \subseteq \; & A_{L^{(i,0,k)}}(Y^{(i,0,k)} \cap R_k) \cap R_{k+1} \cap X_{V(T_t)}-X_t \\
 \subseteq \; & N_G^{\geq s}(Y^{(i,0,k)} \cap R_k \cap X_{V(T_t)}),
\end{align*}
so $\lvert (Y^{(i,0,k+1)}-Y^{(i,0,k)}) \cap R_{k+1} \cap X_{V(T_t)}-X_t \rvert \leq \lvert N_G^{\geq s}(Y^{(i,0,k)} \cap R_k \cap X_{V(T_t)}) \rvert \leq f(\lvert Y^{(i,0,k)} \cap R_k \cap X_{V(T_t)} \rvert)$, and hence
\begin{align*}
& \lvert Y^{(i,0,k+1)} \cap R_k \cap X_{V(T_t)} \rvert \\
\leq\; & \lvert Y^{(i,0,k)} \cap R_k \cap X_{V(T_t)} \rvert + \lvert (Y^{(i,0,k+1)}-Y^{(i,0,k)}) \cap R_{k+1} \cap X_{V(T_t)}-X_t \rvert + \lvert X_t \cap R_{k+1} \rvert \\
\leq\; & \lvert Y^{(i,0,k)} \cap R_k \cap X_{V(T_t)} \rvert + f(\lvert Y^{(i,0,k)} \cap R_k \cap X_{V(T_t)} \rvert) + w_0 = f_1(\lvert Y^{(i,0,k)} \cap R_k \cap X_{V(T_t)} \rvert)+w_0.
\end{align*}

Recall that $(Y^{(i,0,0)}-Y^{(i,-1,0)}) \cap B_j \cap X_{V(T_t)} \subseteq N_G[\bigcup_{S \in \Se_{j-1}^\circ \cup \Se_j^\circ}S] \cap X_{V(T_t)} \subseteq (I_{j-1} \cup I_j) \cap X_{V(T_t)}$.
So 
\begin{align*}
& \lvert Y^{(i,0,0)} \cap R_0 \cap X_{V(T_t)} \rvert \\
=\; & \lvert Y^{(i,0,0)} \cap (I_{j-1}^\circ \cup Z_j \cup I_j^\circ) \cap X_{V(T_t)} \rvert \\
\leq\; & \lvert Y^{(i,0,0)} \cap I_{j-1} \cap X_{V(T_t)} \rvert + \lvert Y^{(i,0,0)} \cap (Z_j-(I_{j-1} \cup I_j)) \cap X_{V(T_t)} \rvert + \lvert Y^{(i,0,0)} \cap I_j \cap X_{V(T_t)} \rvert \\
\leq\; & g_8(w_0) + \lvert Y^{(i,-1,0)} \cap Z_j \cap X_{V(T_t)} \rvert + g_8(w_0) \\
\leq\; & 2g_8(w_0)+\eta_2\\
=\; & g_9(0)
\end{align*} 
by \cref{claim:sizemiddle,claim:sizeI_ji}.
Hence it is easy to verify that for every $k \in [0,s+2]$, $\lvert Y^{(i,0,k)} \cap R_k \cap X_{V(T_t)} \rvert \leq g_9(k)$.
In particular, $\lvert Y^{(i,0,s+2)} \cap B_j \cap X_{V(T_t)} \rvert = \lvert Y^{(i,0,s+2)} \cap R_{s+2} \cap X_{V(T_t)} \rvert \leq g_9(s+2)=\eta_4$
\end{proof}

\begin{claim} \label{claim:centralcomponentsize}
Let $M$ be a  $c$-monochromatic component. 
If $V(M) \cap \bigcup_{j=1}^{\lvert \V \rvert-1}I_j^\circ=\emptyset$, then $\lvert V(M) \rvert \leq \eta_4$.
\end{claim}

\begin{proof}
Let $t$ be the node of $T$ with minimum $\sigma_T(t)$ such that $V(M) \cap X_{V(T_t)} \neq \emptyset$.
Let $i = \sigma_T(t)$.
By the minimality of $i$, $V(M) \subseteq X_{V(T_t)}$ and $V(M) \cap X_t \neq \emptyset$. 
We claim that $V(M) \subseteq Y^{(i,0,s+2)} \cap X_{V(T_t)}$.

Suppose to the contrary that $V(M) \not \subseteq Y^{(i,0,s+2)}$.
Let $k \in [0,s+1]$ such that $c(v)=k+1$ for every $v \in V(M)$.
Since $X_t \subseteq Y^{(i,-1,0)}$, $V(M) \cap Y^{(i,-1,0)} \cap X_t \neq \emptyset$. 
So $V(M) \cap Y^{(i,0,k)} \cap X_t \neq \emptyset$. 
Let $M'$ be the union of all components of $M[Y^{(i,0,k)}]$ intersecting $X_t$. 
Since $V(M) \subseteq X_{V(T_t)}$, $V(M') \subseteq W_0^{(i,0,k)}$.
Since $V(M) \not \subseteq Y^{(i,0,s+2)}$, $V(M) \not \subseteq Y^{(i,0,k)}$, so there exists $v \in V(M)-V(M')$ adjacent in $G$ to $V(M')$.
So $v \in A_{L^{(i,0,k)}}(V(M'))$.
Since $v \in V(M)$, and $V(M)$ is disjoint from $\bigcup_{j=1}^{\lvert \V \rvert-1}I_{j}^\circ$, $v \in Z_t$.
So $v \in W_2^{(i,0,k)}$.
Since $(Y^{(i,0,k+1)},L^{(i,0,k+1)})$ is a $(W_2^{(i,0,k)},k+1)$-progress of $(Y^{(i,0,k)},L^{(i,0,k)})$, $k+1 \not \in L^{(i,0,k+1)}(v)$.
But $c(v)=k+1$, a contradiction.

Therefore, $V(M) \subseteq Y^{(i,0,s+2)} \cap X_{V(T_t)}$.
Since $M$ is a monochromatic component disjoint from $\bigcup_{j=1}^{\lvert \V \rvert-1}I_{j}^\circ$, there exists $j^* \in [\lvert \V \rvert-1]$ such that $V(M) \subseteq Z_{j^*}$, where $Z_{j^*}$ is the set obtained from the $j^*$-th belt by deleting $I_{j^*-1,1} \cup I_{j^*,0}$.
So $\lvert V(M) \rvert \leq \lvert Y^{(i,0,s+2)} \cap X_{V(T_t)} \cap Z_{j^*} \rvert \leq \eta_4$ by \cref{claim:sizebelt}.
\end{proof}

\subsection{The size of precolored sets}
\label{subsec:PrecoloredSets}

\cref{claim:centralcomponentsize} shows that the size of a $c$-monochromatic component is bounded if it is disjoint from $\bigcup_{j=1}^{\lvert \V \rvert-1}I_j^\circ$.
To finish the proof of the lemma, it suffices to show that the size of any $c$-monochromatic component intersecting $\bigcup_{j=1}^{\lvert \V \rvert-1}I_j^\circ$ is bounded.
The remaining claims are dedicated to this task.

The goal of this subsection is to show \cref{claim:sizebeltfull} which ensures that the size of the precolored set in any ``interesting region'' is bounded.

\begin{claim} \label{claim:sizebeltfullweakweak}
Let $i \in \mathbb{N}_0$ and let $t \in V(T)$ with $\sigma_T(t)=i$.
Let $j \in [\lvert \V \rvert-1]$ and $\ell \in [0,\lvert V(T) \rvert]$.
Then $\lvert Y^{(i,\ell+1,s+2)} \cap I_{j} \cap X_{V(T_t)} \rvert \leq f_{s+2}(\lvert Y^{(i,\ell+1,0)} \cap I_{j} \cap X_{V(T_t)} \rvert + 2\eta_4)-2\eta_4$.
\end{claim}

\begin{proof}
Let $B_j$ be the $j$-th belt.
Note that $\lvert Y^{(i,\lvert V(T) \rvert+1,s+2)} \cap (B_{j}-(I_{j-1} \cup I_{j})) \cap X_{V(T_t)} \rvert = \lvert Y^{(i,0,s+2)} \cap (B_{j}-(I_{j-1} \cup I_{j})) \cap X_{V(T_t)} \rvert \leq \eta_4$ by \cref{claim:sizebelt}.
So for every $\ell \in [0,\lvert V(T) \rvert]$ and $k \in [0,s+1]$, $\lvert (Y^{(i,\ell+1,k+1)}-Y^{(i,\ell+1,k)}) \cap I_{j} \cap X_{V(T_t)} \rvert \leq \lvert N_G^{\geq s}(Y^{(i,\ell+1,k)} \cap (I_{j} \cup (B_{j}-(I_{j-1} \cup I_{j})) \cup (B_{j+1}-(I_{j} \cup I_{j+1}))) \cap X_{V(T_t)}) \rvert \leq f(\lvert Y^{(i,\ell+1,k)} \cap I_{j} \cap X_{V(T_t)} \rvert + 2\eta_4)$.
Hence for every $\ell \in [0,\lvert V(T) \rvert]$ and $k \in [0,s+1]$, $\lvert Y^{(i,\ell+1,k+1)} \cap I_{j} \cap X_{V(T_t)} \rvert \leq f_1(\lvert Y^{(i,\ell+1,k)} \cap I_{j} \cap X_{V(T_t)} \rvert + 2\eta_4)-2\eta_4$.
Then it is easy to show that for every $\ell \in [0,\lvert V(T) \rvert]$ and $k \in [0,s+1]$, $\lvert Y^{(i,\ell+1,k)} \cap I_{j} \cap X_{V(T_t)} \rvert \leq f_k(\lvert Y^{(i,\ell+1,0)} \cap I_{j} \cap X_{V(T_t)} \rvert + 2\eta_4)-2\eta_4$ by induction on $k$.
Therefore, $\lvert Y^{(i,\ell+1,s+2)} \cap I_{j} \cap X_{V(T_t)} \rvert \leq f_{s+2}(\lvert Y^{(i,\ell+1,0)} \cap I_{j} \cap X_{V(T_t)} \rvert + 2\eta_4)-2\eta_4$.
\end{proof}

\begin{claim} \label{claim:sizebeltfullweak}
Let $i \in \mathbb{N}_0$ and let $t \in V(T)$ with $\sigma_T(t)=i$.
Let $j \in [\lvert \V \rvert-1]$.
Then $\lvert Y^{(i,\lvert V(T) \rvert+1,s+2)} \cap ((B_j \cup B_{j+1})-(I_{j-1} \cup I_{j+1})) \cap X_{V(T_t)} \rvert \leq \eta_5/3$, where $B_{j'}$ is the $j'$-th belt for every $j' \in [\lvert \V \rvert]$.
\end{claim}

\begin{proof}
By \cref{claim:sizebelt}, $\lvert Y^{(i,0,s+2)} \cap (B_j \cup B_{j+1}) \cap X_{V(T_t)} \rvert \leq 2\eta_4 = g_{10}(0)$.

For every $\ell \in [0,\lvert V(T) \rvert]$, by \cref{claim:boundarysize}, 
\begin{align*}
 & \lvert Y^{(i,\ell+1,0)} \cap ((B_j \cup B_{j+1})-(I_{j-1} \cup I_{j+1})) \cap X_{V(T_t)} \rvert \\ 
\leq\; & \lvert Y^{(i,\ell,s+2)} \cap ((B_j \cup B_{j+1})-(I_{j-1} \cup I_{j+1})) \cap X_{V(T_t)} \rvert + \lvert X_{\partial T_{j,t}} \cap \overline{I_j} \rvert \\
\leq\; & \lvert Y^{(i,\ell,s+2)} \cap ((B_j \cup B_{j+1})-(I_{j-1} \cup I_{j+1}) \cap X_{V(T_t)} \rvert + \eta_3.
\end{align*}

For every $\ell \in [0,\lvert V(T) \rvert]$, $(Y^{(i,\ell+1,s+2)}-Y^{(i,\ell+1,0)}) \cap ((B_j \cup B_{j+1})-(I_{j-1} \cup I_{j+1})) \cap X_{V(T_t)} \subseteq (Y^{(i,\ell+1,s+2)}-Y^{(i,\ell+1,0)}) \cap I_j \cap X_{V(T_t)}$.
So for every $\ell \in [0,\lvert V(T) \rvert]$, by \cref{claim:sizebeltfullweakweak},
\begin{align*}
 & \lvert Y^{(i,\ell+1,s+2)} \cap ((B_j \cup B_{j+1})-(I_{j-1} \cup I_{j+1})) \cap X_{V(T_t)} \rvert \\
\leq\; & \lvert Y^{(i,\ell+1,0)} \cap ((B_j \cup B_{j+1})-(I_{j-1} \cup I_{j+1})) \cap X_{V(T_t)} \rvert + \lvert Y^{(i,\ell+1,s+2)} \cap I_{j} \cap X_{V(T_t)} \rvert \\
\leq\; & \lvert Y^{(i,\ell+1,0)} \cap ((B_j \cup B_{j+1})-(I_{j-1} \cup I_{j+1})) \cap X_{V(T_t)} \rvert + f_{s+2}(\lvert Y^{(i,\ell+1,0)} \cap I_j \cap X_{V(T_t)} \rvert + 2\eta_4)-2\eta_4 \\
\leq\; & f_{s+3}(\lvert Y^{(i,\ell+1,0)} \cap ((B_j \cup B_{j+1})-(I_{j-1} \cup I_{j+1})) \cap X_{V(T_t)} \rvert + 2\eta_4) - 2\eta_4 \\
\leq\; & f_{s+3}(\lvert Y^{(i,\ell,s+2)} \cap ((B_j \cup B_{j+1})-(I_{j-1} \cup I_{j+1})) \cap X_{V(T_t)} \rvert + \eta_3+2\eta_4).
\end{align*}

By \cref{claim:boundarysize}, $\lvert X_{\partial T_{j,t}} \cap \overline{I_j} \rvert \leq \eta_3$, so there are at most $\eta_3$ numbers $\ell \in [0,\lvert V(T) \rvert]$ such that $Y^{(i,\ell+1,0)} \cap ((B_j \cup B_{j+1})-(I_{j-1} \cup I_{j+1})) \cap X_{V(T_t)} \neq Y^{(i,\ell,0)} \cap ((B_j \cup B_{j+1})-(I_{j-1} \cup I_{j+1})) \cap X_{V(T_t)}$.
Hence it is straight forward to verify by induction on $\ell$ that $\lvert Y^{(i,\lvert V(G) \rvert+1,s+2)} \cap ((B_j \cup B_{j+1})-(I_{j-1} \cup I_{j+1})) \cap X_{V(T_t)} \rvert \leq g_{10}(\eta_3)=\eta_5/3$.
\end{proof}

\begin{claim} \label{claim:sizebeltfull}
Let $i \in \mathbb{N}_0$ and let $t \in V(T)$ with $\sigma_T(t)=i$.
Let $j \in [\lvert \V \rvert-1]$.
Then $\lvert Y^{(i,\lvert V(T) \rvert+1,s+2)} \cap (B_j \cup B_{j+1}) \cap X_{V(T_t)} \rvert \leq \eta_5$, where $B_{j'}$ is the $j'$-th belt for every $j' \in [\lvert \V \rvert]$.
\end{claim}

\begin{proof}
By \cref{claim:sizebeltfullweak}, $\lvert Y^{(i,\lvert V(T) \rvert+1,s+2)} \cap (B_j \cup B_{j+1}) \cap X_{V(T_t)} \rvert \leq \sum_{j'=j-1}^{j+1} \lvert Y^{(i,\lvert V(T) \rvert+1,s+2)} \cap ((B_{j'} \cup B_{j'+1})-(I_{j'-1} \cup I_{j'+1})) \cap X_{V(T_t)} \rvert \leq 3 \cdot \eta_5/3 = \eta_5$.
\end{proof}

\subsection{The sets $K$ and $K^*$}
\label{subsec:KKstar}

This subsection introduces two kinds of sets for a given monochromatic component. They are helpful for bounding the size of monochromatic components.

Given any $c$-monochromatic component $M$, there uniquely exists a node $r_M$ of $T$ such that $V(M) \cap X_{r_M} \neq \emptyset$ and $V(M) \subseteq X_{V(T_{r_M})}$, and we define $i_M:=\sigma_T(r_M)$.
Recall that every monochromatic component is contained in some $s$-segment.
For every $c$-monochromatic component $M$, let $S_M$ be the $s$-segment containing $V(M)$ whose level equals the color of $M$.
Recall that for every node $t$ of $T$, we denote $\sigma_T(t)$ by $i_t$.

Given any $c$-monochromatic component $M$ with $S_M \cap I_j^\circ \neq \emptyset$ for some $j \in [\lvert \V \rvert-1]$, define the following.
	\begin{itemize}
		\item Define $K(M)$ to be the subset of $V(T)$ constructed by repeatedly applying the following process until no more nodes can be added:
			\begin{itemize}
				\item $r_M \in K(M)$.
				\item For every $t \in K(M)$, if there exists $t' \in \partial T_{j,t}$ such that $V(M) \cap X_{V(T_{t'})}-X_{t'} \neq \emptyset$, then adding $t'$ into $K(M)$.
			\end{itemize}
		\item Define $K^*(M) := \{r_M\} \cup \{t' \in K(M)-\{r_M\}: t$ is the node in $K(M)$ such that $t' \in \partial T_{j,t}$, and $\lvert V(M) \cap Y^{(i_{t'},\lvert V(T) \rvert+1,s+2)} \cap X_{V(T_{t'})} \rvert > \lvert V(M) \cap Y^{(i_t, \lvert V(T) \rvert+1,s+2)} \cap X_{V(T_{t'})} \rvert\}$.
	\end{itemize}
Note that $\lvert V(M) \rvert = \lvert V(M) \cap X_{V(T_{j,r_M})} \rvert+ \sum_{t \in K(M)-\{r_M\}} \lvert V(M) \cap X_{V(T_{j,t})}-X_t \rvert$.

\begin{claim} \label{claim:bddbyK*}
Let $j \in [\lvert \V \rvert-1]$.
Let $M$ be a $c$-monochromatic component  with $S_M \cap I_j^\circ \neq \emptyset$.
Then $\lvert V(M) \rvert \leq \lvert K^*(M) \rvert \cdot \eta_5$.
\end{claim}

\begin{proof}
For every $t \in K(M)$, since $t$ is a witness for $X_t \cap I_j \subseteq W_3^{(i_t,-1)}$, by \cref{claim:isolation}, $\lvert V(M) \cap X_{V(T_{j,t})} \rvert = \lvert V(M) \cap Y^{(i_t,\lvert V(T) \rvert+1,s+2)} \cap X_{V(T_{j,t})} \rvert$.
So \begin{align*}
\lvert V(M) \rvert 
& = \lvert V(M) \cap X_{V(T_{j,r_M})} \rvert+ \sum_{t \in K(M)-\{r_M\}} \lvert V(M) \cap X_{V(T_{j,t})}-X_t \rvert \\
& = \lvert V(M) \cap Y^{(i_{r_M},\lvert V(T) \rvert+1,s+2)} \cap X_{V(T_{j,r_M})} \rvert+ 
\!\!\!\!\!\!\sum_{t \in K(M)-\{r_M\}}\!\!\!\!\!\!\!\!  \lvert V(M) \cap Y^{(i_t,\lvert V(T) \rvert+1,s+2)} \cap X_{V(T_{j,t})}-X_t \rvert.
\end{align*}
Note that for every $t \in K(M)-\{r_M\}$, there exists $k_t \in K(M)$ with $t \in V(T_{k_t})$ such that $\lvert V(M) \cap Y^{(i_{k_t},\lvert V(T) \rvert+1,s+2)} \cap X_{V(T_{j,t})}-X_t \rvert = \lvert V(M) \cap Y^{(i_t,\lvert V(T) \rvert+1,s+2)} \cap X_{V(T_{j,t})}-X_t \rvert$ (since $t$ is a candidate for $k_t$). 
Choose such a node $k_t$ such that $\sigma_T(k_t)$ is as small as possible. 
Thus $k_t \in K^*(M)$ for each $t \in K(M)-\{r_M\}$.
Let $K' = \{t' \in V(T): t'=k_t$ for some $t \in K(M)-\{r_M\}\}$.
Note that $K' \subseteq K^*(M)$, and for distinct $t_1,t_2 \in K(M)-\{r_M\}$, $X_{V(T_{j,t_1})}-X_{t_1}$ and $X_{V(T_{j,t_2})}-X_{t_2}$ are disjoint.
So 
\begin{align*}
\lvert V(M) \rvert 
& = \lvert V(M) \cap Y^{(i_{r_M},\lvert V(T) \rvert+1,s+2)} \cap X_{V(T_{j,r_M})} \rvert+ \!\!\!\sum_{t \in K(M)-\{r_M\}} \!\!\!\!\!\! \lvert V(M) \cap Y^{(i_t,\lvert V(T) \rvert+1,s+2)} \cap X_{V(T_{j,t})}-X_t \rvert \\ 
& = \lvert V(M) \cap Y^{(i_{r_M},\lvert V(T) \rvert+1,s+2)} \cap X_{V(T_{j,r_M})} \rvert + 
\!\!\!\sum_{t \in K(M)-\{r_M\}} \!\!\!\!\!\! \lvert V(M) \cap Y^{(i_{k_t},\lvert V(T) \rvert+1,s+2)} \cap X_{V(T_{j,t})}-X_t \rvert \\
& \leq \lvert V(M) \cap Y^{(i_{r_M},\lvert V(T) \rvert+1,s+2)} \cap X_{V(T_{r_M})} \rvert+\sum_{q \in K'}\lvert V(M) \cap Y^{(i_q,\lvert V(T) \rvert+1,s+2)} \cap X_{V(T_{q})} \rvert\\ 
& \leq \sum_{q \in K^*(M)}\lvert V(M) \cap Y^{(i_q,\lvert V(T) \rvert+1,s+2)} \cap X_{V(T_{q})} \rvert \\
& \leq \lvert K^*(M) \rvert \cdot \eta_5
\end{align*} by \cref{claim:sizebeltfull}.
\end{proof}

\begin{claim} \label{claim:downopenobvious}
Let $j \in [\lvert \V \rvert-1]$.
Let $M$ be a $c$-monochromatic component  with $S_M \cap I_j^\circ \neq \emptyset$.
Let $t \in V(T)$ with $V(M) \cap X_t \neq \emptyset$.
Then either $V(M) \cap X_{V(T_t)} \subseteq Y^{(i_t,-1,0)}$, or there exists a monochromatic component $M'$ in $G[Y^{(i_t,-1,0)}]$ with $M' \subseteq M$, $V(M') \cap X_t \neq \emptyset$ and $A_{L^{(i_t,-1,0)}}(V(M')) \cap X_{V(T_t)}-X_t \neq \emptyset$.
\end{claim}

\begin{proof}
We may assume $V(M) \cap X_{V(T_t)} \not \subseteq Y^{(i_t,-1,0)}$, for otherwise we are done.
Since $X_t \subseteq Y^{(i_t,-1,0)}$, there exists a path $P$ in $M$ from $X_t$ to a vertex $v \in V(M) \cap X_{V(T_t)}-Y^{(i_t,-1,0)}$ internally disjoint from $X_t$ such that $V(P-v) \subseteq Y^{(i_t,-1,0)}$.
Hence the monochromatic component $M'$ in $G[Y^{(i_t,-1,0)}]$ containing $P-v$ satisfies that $M' \subseteq M$, $V(M') \cap X_t \neq \emptyset$ and $v \in A_{L^{(i_t,-1,0)}}(V(M')) \cap X_{V(T_t)}-X_t \neq \emptyset$.
\end{proof}

\begin{claim} \label{claim:downopennotallzero}
Let $j \in [\lvert \V \rvert-1]$.
Let $M$ be a $c$-monochromatic component  with $S_M \cap I_j^\circ \neq \emptyset$.
Let $t \in K(M)$.
If $K^*(M) \cap V(T_t)-\{t\} \neq \emptyset$, then there exists a monochromatic component $M'$ in $G[Y^{(i_t,-1,0)}]$ with $M' \subseteq M$, $V(M') \cap X_t \neq \emptyset$ and $A_{L^{(i_t,-1,0)}}(V(M')) \cap X_{V(T_t)}-X_t \neq \emptyset$.
\end{claim}

\begin{proof}
Suppose to the contrary.
Since $t \in K(M)$, $V(M) \cap X_t \neq \emptyset$.
So by \cref{claim:downopenobvious}, $V(M) \cap X_{V(T_t)} \subseteq Y^{(i_t,-1,0)}$.
Since $K^*(M) \cap V(T_t)-\{t\} \neq \emptyset$, there exists $t_1 \in K^*(M) \cap V(T_t)-\{t\}$.
Since $t \in K(M)$ and $t_1 \in K^*(M) \cap V(T_t)-\{t\}$, $t \neq r_M$. 
So there exists $t_2 \in K(M)$ with $t_1 \in \partial T_{j,t_2}$ such that $V(M) \cap Y^{(i_{t_1},\lvert V(T) \rvert+1,s+2)} \cap X_{V(T_{t_1})} \neq V(M) \cap Y^{(i_{t_2},\lvert V(T) \rvert+1,s+2)} \cap X_{V(T_{t_1})}$.
Since $t,t_2 \in K(M)$ and $t_1 \in \partial T_{j,t_2} \cap X_{V(T_t)}-X_t$, $t_2 \in V(T_t)$.
So $i_{t_1}>i_{t_2} \geq i_t$ and $X_{V(T_{t_1})} \subseteq X_{V(T_t)}$.
Hence $V(M) \cap Y^{(i_{t_1},\lvert V(T) \rvert+1,s+2)} \cap X_{V(T_{t_1})} \subseteq Y^{(i_t,-1,0)} \subseteq Y^{(i_{t_2},-1,0)}$.
So $V(M) \cap Y^{(i_{t_1},\lvert V(T) \rvert+1,s+2)} \cap X_{V(T_{t_1})} = V(M) \cap Y^{(i_{t_2},\lvert V(T) \rvert+1,s+2)} \cap X_{V(T_{t_1})}$, a contradiction.
\end{proof}

\subsection{Signatures}

This subsection introduces signatures and pseudosignatures, which record the size of the gates of monochromatic components and pseudocomponents during the algorithm.
Note that a monochromatic component increases its size during the algorithm only when we try to color the gates of other monochromatic pseudocomponents.
So intuitively, signatures and pseudosignatures more or less determine the number of times that a monochromatic component can increase its size during the algorithm.
Proving this intuition is complicated and is the main motivation for later subsections.

Recall that $\sigma$ is a linear order of $V(G)$ such that for any distinct vertices $u,v$, if $\sigma_T(r_u)<\sigma_T(r_v)$, then $\sigma(u)<\sigma(v)$; and for every subgraph $H$ of $G$, $\sigma(H)$ is defined to be $\min\{\sigma(v): v \in V(H)\}$.

For any $i \in [0,\lvert V(T) \rvert-1]$, $\ell \in [-1,\lvert V(T) \rvert+1]$, $k \in [0,s+2]$ and $j \in [\lvert \V \rvert-1]$, we define the \mathdef{$(i,\ell,k,j)$}{signature} to be the sequence $(a_1,a_2,\dots,a_{\lvert V(G) \rvert})$ such that for every $\alpha \in [\lvert V(G) \rvert]$, 
\begin{itemize}
	\item if $\sigma(M)=\alpha$ for some monochromatic component $M$ in $G[Y^{(i,\ell,k)}]$ intersecting $X_t$ (where $t$ is the node of $T$ with $i_t=i$) with $A_{L^{(i,\ell,k)}}(V(M)) \cap X_{V(T_t)}-X_t \neq \emptyset$ and contained in some $s$-segment in $\Se_j^\circ$ whose level equals the color of $M$, then define $a_\alpha = \lvert A_{L^{(i,\ell,k)}}(V(M)) \cap X_{V(T_t)}-X_t \rvert$,
	\item otherwise, define $a_\alpha=0$.
\end{itemize}
In the case $a_\alpha>0$, we say that $M$ \defn{defines $a_\alpha$}.

\begin{claim} \label{claim:signaturenumbernonzero}
	Let $i \in [0,\lvert V(T) \rvert-1]$, $\ell \in [-1,\lvert V(T) \rvert+1]$, $k \in [0,s+2]$ and $j \in [\lvert \V \rvert-1]$.
	Then the $(i,\ell,k,j)$-signature has at most $w_0$ nonzero entries, and every entry is at most $f(\eta_5)$.
\end{claim}

\begin{proof}
	Let $t$ be the node of $T$ with $i_t=i$.
	Since $\bigcup_{S \in \Se_j^\circ}S \subseteq I_j$ and $\lvert X_t \cap I_j \rvert \leq w_0$, there are at most $w_0$ monochromatic components $M$ in $G[Y^{(i,\ell,k)}]$ intersecting $X_t$ with $A_{L^{(i,\ell,k)}}(V(M)) \cap X_{V(T_t)}-X_t \neq \emptyset$ and contained in some $s$-segment in $\Se_j^\circ$ whose level equals the color of $M$.
	So the $(i,\ell,k,j)$-signature has at most $w_0$ nonzero entries.
	
	Let $a_\alpha$ be a nonzero entry of the $(i,\ell,k,j)$-signature.
	Let $M$ be the monochromatic component defining $a_\alpha$.
	So $a_\alpha = \lvert A_{L^{(i,\ell,k)}}(V(M)) \cap X_{V(T_t)}-X_t \rvert \leq \lvert N_G^{\geq s}(Y^{(i,\ell,k)} \cap \overline{I_j} \cap X_{V(T_t)}) \cap X_{V(T_t)} \rvert \leq \lvert N_G^{\geq s}(Y^{(i,\lvert V(T) \rvert+1,s+2)} \cap \overline{I_j} \cap X_{V(T_t)}) \cap X_{V(T_t)} \rvert \leq f(\eta_5)$ by \cref{claim:sizebeltfull}.
\end{proof}

For any $i \in [0,\lvert V(T) \rvert-1]$, $\ell \in [-1,\lvert V(T) \rvert+1]$, $k \in [0,s+2]$ and $j \in [\lvert \V \rvert-1]$, we define the \mathdef{$(i,\ell,k,j)$}{pseudosignature} to be the sequence $(p_1,p_2,\dots,p_{\lvert V(G) \rvert})$ such that for every $\alpha \in [\lvert V(G) \rvert]$, $p_\alpha$ is a sequence of length $\lvert V(G) \rvert$ such that 
\begin{itemize}
	\item if $\sigma(M)=\alpha$ for some monochromatic $E^{(i)}_{j,t}$-pseudocomponent $M$ in $G[Y^{(i,\ell,k)}]$ intersecting $X_t$ (where $t$ is the node of $T$ with $i_t=i$) with $A_{L^{(i,\ell,k)}}(V(M)) \cap X_{V(T_t)}-X_t \neq \emptyset$ and contained in some $s$-segment in $\Se_j^\circ$ whose level equals the color of $M$, then for every $\beta \in [\lvert V(G) \rvert]$, 
		\begin{itemize}
			\item the $\beta$-th entry of $p_\alpha$ is the $\beta$-th entry of the $(i,\ell,k,j)$-signature if the $\beta$-th entry of the $(i,\ell,k,j)$-signature is positive and the monochromatic component defining the $\beta$-th entry of the $(i,\ell,k,j)$-signature is contained in $M$, 
			\item otherwise, the $\beta$-th entry of $p_\alpha$ is 0,
		\end{itemize}
	\item otherwise, $p_\alpha$ is a zero sequence.
\end{itemize}
In the case $p_\alpha$ is not a zero sequence, we say that $M$ \defn{defines $p_\alpha$}. Note that the $(i,\ell,k,j)$-pseudosignature has $\lvert V(G) \rvert^2$ entries.

\begin{claim} \label{claim:pseudosignaturenumbernonzero}
Let $i \in [0,\lvert V(T) \rvert-1]$, $\ell \in [-1,\lvert V(T) \rvert+1]$, $k \in [0,s+2]$ and $j \in [\lvert \V \rvert-1]$.
Let $(p_1,p_2,\dots,p_{\lvert V(G) \rvert})$ be the $(i,\ell,k,j)$-pseudosignature.
Then for every $\alpha \in [\lvert V(G) \rvert]$, there exist at most one index $\beta$ such that the $\alpha$-th entry of $p_\beta$ is nonzero.
Furthermore, the $(i,\ell,k,j)$-pseudosignature has at most $w_0$ nonzero entries, and every entry is at most $f(\eta_5)$.
\end{claim}

\begin{proof}
Every monochromatic component in $G[Y^{(i,\ell,k)}]$ is contained in at most one \linebreak $E_{j,t}^{(i)}$-pseudocomponent in $G[Y^{(i,\ell,k)}]$.
So for every $\alpha \in [\lvert V(G) \rvert]$, there exist at most one index $\beta$ such that the $\alpha$-th entry of $p_\beta$ is nonzero.
Hence the number of nonzero entries and the sum of the entries of the $(i,\ell,k,j)$-pseudosignature equal the number of nonzero entries and the sum of the entries of the $(i,\ell,k,j)$-signature, respectively.
Then this claim follows from \cref{claim:signaturenumbernonzero}.
\end{proof}

\subsection{The pseudocomponent with the smallest $\sigma$-value}

The goal of this subsection is to prove \cref{claim:1stdone}, which roughly says that during the algorithm, the pseudocomponent with the smallest $\sigma$-value, among the pseudocomponents with non-empty gates, will not grow in the future.
To prove \cref{claim:1stdone} and other later claims, we need to use another property for fake edges that we have not used in previous subsections.
We first recall the part of the algorithm relevant to this subsection.

\medskip
\noindent\textbf{\boldmath Stage $(0,-1,0)$: Initialization:}
See \cref{subsec:alog} for details.

\medskip
For $i=0,1,2,\dots$, let $t$ be the node of $T$ with $\sigma_T(t)=i$ and perform the following steps:\\[1ex]
\hspace*{4mm} 
\textbf{\boldmath Stage: Building Fences:}
Let $E^{(0)}_{j,t} := \emptyset$ for every $j \in [\lvert \V \rvert-1]$ and $t \in V(T)$.
Other sets are defined.
See \cref{subsec:alog} for details.\\
\hspace*{4mm} 
\noindent\textbf{\boldmath Stage $(i,-1,\star)$:}
See \cref{subsec:alog} for details.\\
\hspace*{4mm}
\noindent\textbf{\boldmath Stage $(i,\geq 1,\star)$:}
$(Y^{(i,\lvert V(T) \rvert+1,s+2)},L^{(i,\lvert V(T) \rvert+1,s+2)})$, $W_3^{(i,\ell)}$ for $\ell \in [0,\lvert V(T) \rvert]$, and other sets are defined.
See \cref{subsec:alog} for details.\\
\hspace*{4mm}
\noindent\textbf{Stage: Adding Fake Edges}
	\begin{itemize}
				\item For each $j \in [\lvert \V \rvert-1]$ and $t' \in V(T)-(V(T_t)-\{t\})$, let $E^{(i+1)}_{j,t'}:=E^{(i)}_{j,t'}$.
				\item For each $j \in [\lvert \V \rvert-1]$, let $E^{(i,0)}_{j,t}:=E^{(i)}_{j,t}$.
				\item For each $j \in [\lvert \V \rvert-1]$ and $t' \in V(T_t)-\{t\}$, let $E^{(i+1)}_{j,t'}:=E^{(i,\lvert V(G) \rvert)}_{j,t}$, where every $\ell \in [0,\lvert V(G) \rvert-1]$, let $E^{(i,\ell+1)}_{j,t}$ be the union of $E^{(i,\ell)}_{j,t}$ and the set consisting of the 2-element sets $\{u,v\}$ satisfying the following.
				\begin{itemize}
					\item $\{u,v\} \subseteq Y^{(i,\lvert V(T) \rvert+1,s+2)}$.
					\item $L^{(i,\lvert V(T) \rvert+1,s+2)}(u) = L^{(i,\lvert V(T) \rvert+1,s+2)}(v)$.
					\item There exists an $s$-segment $S$ in $\Se_{j}^\circ$ whose level equals the color of $u$ and $v$ such that $\{u,v\} \subseteq S$.
					\item There exists $t'' \in \partial T_{j,t}$ such that:
					\begin{itemize}
						\item $\{u,v\} \subseteq X_{t''}$,
						\item $V(M) \cap X_{t''} \neq \emptyset$, where $M$ is the monochromatic $E^{(i,\ell)}_{j,t}$-pseudocomponent in \linebreak $G[Y^{(i,\lvert V(T) \rvert+1,s+2)}]$ such that $\sigma(M)$ is the $(\ell+1)$-th smallest among all monochromatic $E^{(i,\ell)}_{j,t}$-pseudocomponents in  $G[Y^{(i,\lvert V(T) \rvert+1,s+2)}]$,
						\item $M$ is contained in some $s$-segment in $\Se_{j}^\circ$ whose level equals the color of $M$,
						\item $t''$ is a witness for $X_{t''} \cap I_j \subseteq W_3^{(i,\ell')}$ for some $\ell' \in [0,\lvert V(T) \rvert]$,
						\item $A_{L^{(i,\lvert V(T) \rvert+1,s+2)}}(V(M)) \cap X_{V(T_{t''})}-X_{t''} \neq \emptyset$,
						\item $A_{L^{(i,\lvert V(T) \rvert+1,s+2)}}(V(M_u)) \cap X_{V(T_{t''})}-X_{t''} \neq \emptyset$,
						\item $A_{L^{(i,\lvert V(T) \rvert+1,s+2)}}(V(M_v)) \cap X_{V(T_{t''})}-X_{t''} \neq \emptyset$, and
						\item $\sigma(M) < \min\{\sigma(M_u),\sigma(M_v)\}$, 
						
					\end{itemize}
					where $M_u$ and $M_v$ are the monochromatic $E^{(i,\ell)}_{j,t}$-pseudocomponents in $G[Y^{(i,\lvert V(T) \rvert+1,s+2)}]$ containing $u$ and $v$, respectively.
				\item There do not exist $q \in V(T)$ with $T_{j,t} \subseteq T_{j,q}$ and $i_q<i_t$, a witness $q' \in \partial T_{j,q}$ for $X_{q'} \cap I_j \subseteq W_3^{(i_{q},\ell')}$ for some $\ell' \in [0,\lvert V(T) \rvert]$, and a monochromatic $E_{j,q}^{(i_{q})}$-pseudocomponent in $G[Y^{(i_{q},\lvert V(T) \rvert+1,s+2)}]$ intersecting $X_{q'}$ and $\{u,v\}$.
				\item Some other properties.
					See \cref{subsec:alog} for details.
				\end{itemize}					
	\end{itemize}
\hspace*{4mm}
\noindent\textbf{Stage: Moving to the Next Node in the Tree:}
See \cref{subsec:alog} for details.\\
\hspace*{4mm}
\noindent\textbf{Stage: Building a New Fence:}
See \cref{subsec:alog} for details.

\medskip

Now we prove some claims.

For $i \in {\mathbb N}_0$, $j \in [\lvert \V \rvert-1]$, $\ell \in [-1,\lvert V(T) \rvert+1]$ and $k \in [0,s+2]$, we say a monochromatic subgraph $M$ in $G[Y^{(i,\ell,k)}]$ is \mathdef{$\Se_j^\circ$}{related} if $V(M)$ is contained in some $s$-segment in $\Se_j^\circ$ whose level equals the color of $M$.

\begin{claim} \label{claim:orderpreserving}
Let $i_1,i_2 \in {\mathbb N}_0$.
Let $j \in [\lvert \V \rvert-1]$.
Let $\ell_1,\ell_2 \in [-1,\lvert V(T) \rvert+1]$.
Let $k_1,k_2 \in [0,s+2]$.
Let $t_1,t_2$ be nodes of $T$ with $\sigma_T(t_1)=i_1$ and $\sigma_T(t_2)=i_2$.
Assume that $t_2 \in V(T_{t_1})$ and $(i_1,\ell_1,k_1)$ is lexicographically smaller than $(i_2,\ell_2,k_2)$.
Let $M_2$ be an $\Se_j^\circ$-related monochromatic $E^{(i_2)}_{j,t_2}$-pseudocomponent in $G[Y^{(i_2,\ell_2,k_2)}]$ intersecting $X_{t_1}$.
Then there exist a monochromatic $E^{(i_1)}_{j,t_1}$-pseudocomponent $M_1$ in $G[Y^{(i_1,\ell_1,k_1)}]$ such that $M_1 \subseteq M_2$, $\sigma(M_1)=\sigma(M_2)$ and $V(M_1) \cap X_{t_1} \neq \emptyset$.
Furthermore, if $A_{L^{(i_2,\ell_2,k_2)}}(V(M_2)) \cap X_{V(T_{t_2})}-X_{t_2} \neq \emptyset$, then $A_{L^{(i_1,\ell_1,k_1)}}(V(M_1)) \cap X_{V(T_{t_1})}-X_{t_1} \neq \emptyset$.
\end{claim}

\begin{proof}
Let $v_M$ be the vertex of $M_2$ such that $\sigma(v_M)=\sigma(M_2)$.
Since $V(M_2) \cap X_{t_1} \neq \emptyset$, $t_1 \in V(T_{r_{v_M}})$.
So there exists a monochromatic component $M'$ in $G[Y^{(i_1,\ell_1,k_1)}]$ containing $v_M$.
Hence there exists a monochromatic $E^{(i_1)}_{j,t_1}$-pseudocomponent $M_1$ in $G[Y^{(i_1,\ell_1,k_1)}]$ containing $M'$.
So $\sigma(M_1) \leq \sigma(M') \leq \sigma(v_M)=\sigma(M_2)$.
Since $t_2 \in V(T_{t_1})$, $E_{j,t_1}^{(i_1)} \subseteq E_{j,t_2}^{(i_1)} \subseteq E_{j,t_2}^{(i_2)}$.
Since $(i_1,\ell_1,k_1)$ is lexicographically smaller than $(i_2,\ell_2,k_2)$, $M_1 \subseteq M_2$.
Hence $\sigma(M_1)=\sigma(M_2)$.

Suppose $V(M_1) \cap X_{t_1} = \emptyset$.
Since $t_1 \in V(T_{r_{v_M}})$ and $v_M \in V(M_1)$, $V(M_1) \cap X_{V(T_{t_1})}=\emptyset$.
Since $V(M_2) \cap X_{t_1} \neq \emptyset$, there exists a path $P$ in $M_2$ from $X_{t_1}$ to $V(M_1)$ internally disjoint from $V(M_1) \cup X_{t_1}$.
Let $u$ be the vertex in $V(P) \cap V(M_1)$.
Since $V(P) \subseteq Y^{(i_{t_2},\ell_2,k_2)}$ and $t_2 \in V(T_{t_1})$ and $P$ is internally disjoint from $X_{V(T_{t_1})}$, the neighbor of $u$ in $P$ is in $Y^{(i_1,-1,0)} \subseteq Y^{(i_1,\ell_1,k_1)}$.
Since $M_1$ is a monochromatic $E^{(i_1)}_{j,t_1}$-pseudocomponent, the edge of $P$ incident with $u$ belongs to $E_{j,t_2}^{(i_2)}-E_{j,t_1}^{(i_1)}$.
But $u \in V(M_1)$, so $u \not \in X_{V(T_{t_1})}$.
Hence there exists no element in $E_{j,t_2}^{(i_2)}-E_{j,t_1}^{(i_1)}$ containing $u$, a contradiction.
Therefore $V(M_1) \cap X_{t_1} \neq \emptyset$.

Now we assume that $A_{L^{(i_2,\ell_2,k_2)}}(V(M_2)) \cap X_{V(T_{t_2})}-X_{t_2} \neq \emptyset$.
Suppose that $A_{L^{(i_1,\ell_1,k_1)}}(V(M_1)) \cap X_{V(T_{t_1})}-X_{t_1} = \emptyset$.
Since $(i_1,\ell_1,k_1)$ is lexicographically smaller than $(i_2,\ell_2,k_2)$ and $X_{V(T_{t_2})}-X_{t_2} \subseteq X_{V(T_{t_1})}-X_{t_1}$, $V(M_2) \supset V(M_1)$.
Since $A_{L^{(i_1,\ell_1,k_1)}}(V(M_1)) \cap X_{V(T_{t_1})}-X_{t_1} = \emptyset$, there exists $e \in E_{j,t_2}^{(i_2)}-E_{j,t_1}^{(i_1)}$ such that $e \cap V(M_1) \neq \emptyset \neq e-V(M_1)$.
Since $t_2 \in V(T_{t_1})$, $E_{j,t_2}^{(i_1)}=E_{j,t_1}^{(i_1)}$, so $E_{j,t_2}^{(i_2)}-E_{j,t_1}^{(i_1)} \subseteq E_{j,t_2}^{(i_2)}-E_{j,t_2}^{(i_1)}$.
So there exists $i_3 \in [i_1,i_2-1]$ and $t_3 \in V(T)$ with $\sigma_T(t_3)=i_3$ and $t_2 \in V(T_{t_3})$ such that $e \in E_{j,t_2}^{(i_3+1)}-E_{j,t_2}^{(i_3)}$.
We may assume $i_3$ is as small as possible.
Hence there exist $t_4 \in \partial T_{j,t_3}$ and $\alpha \in [0,\lvert V(G) \rvert]$ such that $t_4$ is the witness for $e \in E_{j,t_2}^{(i_3,\alpha+1)}-E_{j,t_2}^{(i_3,\alpha)}$. 

Since $A_{L^{(i_1,\ell_1,k_1)}}(V(M_1)) \cap X_{V(T_{t_1})}-X_{t_1} = \emptyset$, by the minimality of $i_3$, $M_1$ is a monochromatic $E_{j,t_2}^{(i_{t_3},\alpha)}$-pseudocomponent in $G[Y^{(i_{t_3},\lvert V(T) \rvert+1,s+2)}]$ and $A_{L^{(i_3,\lvert V(T) \rvert+1,s+2)}}(V(M_1)) \cap X_{V(T_{t_4})}-X_{t_4} \neq \emptyset$. 
Since $i_3 \geq i_1$, $t_4 \in V(T_{t_1})$.
So $A_{L^{(i_1,\ell_1,k_1)}}(V(M_1)) \cap X_{V(T_{t_1})}-X_{t_1} \supseteq A_{L^{(i_3,\lvert V(T) \rvert+1,s+2)}}(V(M_1)) \cap X_{V(T_{t_4})}-X_{t_4} \neq \emptyset$, a contradiction.
This proves the claim.
\end{proof}

\begin{claim} \label{claim:zerosignature}
Let $t_1,t_2,t_3$ be nodes of $T$ such that $T_{t_3} \subseteq T_{t_2} \subset T_{t_1}$.
For $\alpha \in [3]$, let $(a^{(\alpha)}_1,a^{(\alpha)}_2,\dots,a^{(\alpha)}_{\lvert V(G) \rvert})$ be the $(i_{t_\alpha},-1,0,j)$-pseudosignature.
Let $\alpha^*$ be the largest index such that $a^{(1)}_{\alpha^*}$ is a nonzero sequence.
Then the following hold.
	\begin{itemize}
		\item For every $\beta \in [\alpha^*-1]$, if $a^{(1)}_\beta$ is a zero sequence, then $a^{(2)}_\beta$ is a zero sequence.
		\item For every $\beta \in [\alpha^*]$, if $a^{(2)}_{\beta}$ is a zero sequence, then $a^{(3)}_{\beta}$ is a zero sequence.
	\end{itemize}
\end{claim}

\begin{proof}
We first prove the first statement.
Suppose there exists $\beta \in [\alpha^*-1]$ such that $a^{(2)}_\beta$ is a nonzero sequence.
Let $M_2$ be the monochromatic $E_{j,t_2}^{(i_{t_2})}$-pseudocomponent in $G[Y^{(i_{t_2},-1,0)}]$ defining $a^{(2)}_\beta$.
Since $a^{(1)}_{\alpha^*}$ is a nonzero sequence, there exists a monochromatic $E_{j,t_1}^{(i_{t_1})}$-pseudocomponent $M_1'$ in $G[Y^{(i_{t_1},-1,0)}]$ defining $a^{(1)}_{\alpha^*}$.
Since $\sigma(M_2) = \beta < \alpha^* =\sigma(M_1')$ and $V(M_1') \cap X_{t_1} \neq \emptyset \neq V(M_2) \cap X_{t_2}$, $V(M_2) \cap X_{t_1} \neq \emptyset$.
By \cref{claim:orderpreserving}, there exists a monochromatic $E_{j,t_1}^{(i_{t_1},-1,0)}$-pseudocomponent $M_1$ such that $M_1 \subseteq M_2$, $\sigma(M_1)=\sigma(M_2)=\beta$, $V(M_1) \cap X_{t_1} \neq \emptyset$, and $A_{L^{(i_{t_1},-1,0)}}(V(M_1)) \cap X_{V(T_{t_1})}-X_{t_1} \neq \emptyset$.
So $a^{(1)}_\beta$ is a nonzero sequence, a contradiction.

Now we prove the second statement.
Suppose to the contrary that $a^{(2)}_{\beta}$ is a zero sequence and $a^{(3)}_{\beta}$ is a nonzero sequence.
If $\beta=\alpha^*$, then $a^{(1)}_\beta=a^{(1)}_{\alpha^*}$ is a nonzero sequence; if $\beta \in [\alpha^*-1]$, then $a^{(1)}_\beta$ is a nonzero sequence by the first statement of this claim.
Since $a^{(3)}_{\beta}$ is a nonzero sequence, there exists a monochromatic $E_{j,t_3}^{(i_{t_3})}$-pseudocomponent $Q_3$ in $G[Y^{(i_{t_3},-1,0)}]$ defining $a^{(3)}_{\beta}$.
Since $a^{(1)}_{\beta}$ is a nonzero sequence, there exists a monochromatic $E_{j,t_1}^{(i_{t_1})}$-pseudocomponent $Q_1$ in $G[Y^{(i_{t_1},-1,0)}]$ defining $a^{(1)}_{\beta}$.
Hence $\sigma(Q_1)=\beta=\sigma(Q_3)$.
So $Q_1 \subseteq Q_3$.
Hence $V(Q_3) \cap X_{t_1} \neq \emptyset \neq V(Q_3) \cap X_{t_3}$.
So $V(Q_3) \cap X_{t_2} \neq \emptyset$.
By \cref{claim:orderpreserving}, there exists a monochromatic $E_{j,t_2}^{(i_{t_2},-1,0)}$-pseudocomponent $Q_2$ such that $Q_2 \subseteq Q_3$, $\sigma(Q_2)=\sigma(Q_3)=\beta$, $V(Q_2) \cap X_{t_2} \neq \emptyset$ and $A_{L^{(i_{t_2},-1,0)}}(V(Q_2)) \cap X_{V(T_{t_2})}-X_{t_2} \neq \emptyset$.
So $a^{(2)}_{\beta}$ is a nonzero sequence, a contradiction.
\end{proof}

\begin{claim} \label{claim:pseudocomptouchingW3}
Let $j \in [\lvert \V \rvert-1]$.
Let $t \in V(T)$.
Let $M$ be an $\Se_j^\circ$-related monochromatic $E_{j,t}^{(i_t)}$-pseudocomponent in $G[Y^{(i_t,-1,0)}]$ intersecting $X_t$.
Let $t^* \in V(T)$ be the node with $t \in V(T_{j,t^*})$ such that some monochromatic $E_{j,t^*}^{(i_{t^*})}$-pseudocomponent in $G[Y^{(i_{t^*},\lvert V(T) \rvert+1,s+2)}]$ intersecting $V(M)$ intersects $X_{t'}$ for some witness $t' \in (\partial T_{j,t^*}) \cup \{t^*\}$ for $X_{t'} \cap I_j \subseteq W_3^{(i_{t^*},\ell)}$ for some $\ell \in [-1,\lvert V(T) \rvert]$, and subject to this, $i_{t^*}$ is minimum.
	Then for every monochromatic $E_{j,t^*}^{(i_{t^*})}$-pseudocomponent $M_1$ in $G[Y^{(i_{t^*},\lvert V(T) \rvert+1,s+2)}]$ intersecting $V(M)$,
	\begin{itemize}
		\item $M_1$ intersects $X_{t''}$ for some witness $t'' \in (\partial T_{j,t^*}) \cup \{t^*\}$ for $X_{t''} \cap I_j \subseteq W_3^{(i_{t^*},\ell)}$ for some $\ell \in [-1,\lvert V(T) \rvert]$, and 
		\item for every node $q \in (\partial T_{j,t^*}) \cup \{t^*\}$ with $V(M_1) \cap X_q \neq \emptyset$, $q$ is a witness for $X_q \cap I_j \subseteq W_3^{(i_{t^*},\ell)}$ for some $\ell \in [-1,\lvert V(T) \rvert]$.
	\end{itemize}
\end{claim}

\begin{proof}
Note that $t^*$ exists since $t$ is a candidate.
So $i_{t^*} \leq i_t$.

If there exist distinct nodes $q_1,q_2 \in (\partial T_{j,t^*}) \cup \{t^*\}$ where $q_1$ is a witness for $X_{q_1} \cap I_j \subseteq W_3^{(i_{t^*},\ell_1)}$ for some $\ell_1 \in [-1,\lvert V(T) \rvert]$ such that there exists a path $P$ in $M[V(M) \cap X_{V(T_{j,t^*})}]$ from $X_{q_1}$ to $X_{q_2}$ internally disjoint from $X_{\partial T_{j,t^*}} \cup X_{t^*}$,
then by the minimality of $i_{t^*}$ and the construction of $E_{j,t}^{(i_{t})}$, $E(P) \cap E_{j,t}^{(i_t)} = \emptyset$, so $P$ is a path in $G$, and hence by \cref{claim:isolation}, $V(P) \subseteq Y^{(i_{t^*},\lvert V(T) \rvert+1,s+2)}$.
Hence, since $\lvert \partial T_{j,t^*} \rvert \leq \lvert V(T) \rvert-1$ and some monochromatic $E_{j,t^*}^{(i_{t^*})}$-pseudocomponent in $G[Y^{(i_{t^*},\lvert V(T) \rvert+1,s+2)}]$ intersecting $V(M)$ intersects $X_{t''}$ for some witness $t'' \in (\partial T_{j,t^*}) \cup \{t^*\}$ for $X_{t''} \cap I_j \subseteq W_3^{(i_{t^*},\ell)}$ for some $\ell \in [-1,\lvert V(T) \rvert]$, to prove this claim, it suffices to prove that if $M_1$ is a component of $M[V(M) \cap X_{V(T_{j,t^*})}]-\{e \in E(M): e \subseteq X_{t'}$ for some $t' \in \partial T_{j,t^*} \cup \{t^*\}\}$ such that $V(M_1) \cap X_{t''} \neq \emptyset$ for some witness $t'' \in (\partial T_{j,t^*}) \cup \{t^*\}$ for $X_{t''} \cap I_j \subseteq W_3^{(i_{t^*},\ell)}$ for some $\ell \in [-1,\lvert V(T) \rvert]$, then for every node $q \in (\partial T_{j,t^*}) \cup \{t^*\}$ with $V(M_1) \cap X_q \neq \emptyset$, $q$ is a witness for $X_q \cap I_j \subseteq W_3^{(i_{t^*},\ell)}$ for some $\ell \in [-1,\lvert V(T) \rvert]$.

Let $M_1$ be a component of $M[V(M) \cap X_{V(T_{j,t^*})}]-\{e \in E(M): e \subseteq X_{t'}$ for some $t' \in \partial T_{j,t^*} \cup \{t^*\}\}$ such that $V(M_1) \cap X_{t''} \neq \emptyset$ for some witness $t'' \in (\partial T_{j,t^*}) \cup \{t^*\}$ for $X_{t''} \cap I_j \subseteq W_3^{(i_{t^*},\ell)}$ for some $\ell \in [-1,\lvert V(T) \rvert]$.
Since $M$ is $\Se_j^\circ$-related, $V(M_1) \subseteq W_4^{(i_{t^*})}$.
By the minimality of $i_{t^*}$, $E(M_1) \cap E_{j,t}^{(i_t)} = \emptyset$.

Suppose to the contrary that there exists $q \in (\partial T_{j,t^*}) \cup \{t^*\}$ such that $V(M_1) \cap X_q \neq \emptyset$, and $q$ is not a witness for $X_q \cap I_j \subseteq W_3^{(i_{t^*},\alpha)}$ for any $\alpha \in [-1,\lvert V(T) \rvert]$.
So $\ell \leq \lvert \partial T_{j,t^*} \rvert-1 \leq \lvert V(T) \rvert-1$.
We may choose $q$ and $t''$ such that there exists $v \in V(M_1) \cap X_q$ such that there exists a monochromatic path $P^*$ in $M_1$ from $X_{t''}$ to $X_q$ internally disjoint from $X_{t^*} \cup X_{\partial T_{j,t^*}}$. 
Recall that $P^*$ is a monochromatic path in $G[Y^{(i_{t^*},\lvert V(T) \rvert+1,s+2)}]$ by the minimality of $i_{t^*}$ and \cref{claim:isolation}.

Let $k+1$ be the color of $M_1$.
By \cref{claim:W_3isolate}, $V(P^*) \subseteq Y^{(i_{t^*},\ell+1,k)}$. 
Since $q$ is not a witness for $X_q \cap I_j \subseteq W_3^{(i_{t^*},\ell+1)}$, either $v \in D^{(i_{t^*},\ell+1,0)}-X_{t^*}$, or $v \in D^{(i_{t^*},\ell+1,0)} \cap X_{t^*} \cap X_{q'} \cap I_j$ for some witness $q' \in V(T_{t^*})-\{t^*\}$ for $X_{q'} \cap I_j \subseteq W_3^{(i',\ell')}$ for some $i' \in [0,i_{t^*}-1]$ and $\ell' \in [0,\lvert V(T) \rvert]$.
The latter cannot hold by the minimality of $i_{t^*}$.

Hence $v \in D^{(i_{t^*},\ell+1,0)}-X_{t^*}$.
By the minimality of $i_{t^*}$ and \cref{claim:isolation}, $v \not \in D^{(i_{t^*},0,0)}$.
So there exists $\ell^* \in [0,\lvert V(T) \rvert-1]$ such that $v \in D^{(i_{t^*},\ell^*+1,0)}-D^{(i_{t^*},\ell^*,0)}$.
So $v \in W_0^{(i_{t^*},\ell^*+1,k)}$.
Hence there exists a path in $G[Y^{(i_{t^*},\ell^*+1,k)} \cap W_4^{(i_{t^*})}]$ with color $k+1$ from $v$ to $W_3^{(i_{t^*},\ell^*)}$.
So $q$ is a witness for $X_q \cap I_j \subseteq W_3^{(i_{t^*},\ell')}$ for some $\ell' \in [-1,\ell^*+1]$, a contradiction.
This proves the claim.
\end{proof}

\begin{claim} \label{claim:1stdone}
Let $i \in {\mathbb N}_0$ and $t \in V(T)$ with $\sigma_T(t)=i$.
Let $j \in [\lvert \V \rvert-1]$.
Let $(p_1,p_2,\dots,p_{\lvert V(G) \rvert})$ be the $(i,-1,0,j)$-pseudosignature.
Let $\alpha \in [\lvert V(G) \rvert]$ be the smallest index such that $p_\alpha$ is not a zero sequence.
Let $M$ be the monochromatic $E_{j,t}^{(i)}$-pseudocomponent defining $p_\alpha$.
Let $i' \in {\mathbb N}_0$ and $t' \in V(T_t)-\{t\}$ with $\sigma_T(t')=i'$.
Let $M'$ be the monochromatic $E_{j,t'}^{(i')}$-pseudocomponent in $G[Y^{(i',-1,0)}]$ containing $M$.
Then $V(M') = V(M) \subseteq Y^{(i,-1,0)}$.
\end{claim}

\begin{proof}
Since $V(M) \subseteq Y^{(i,-1,0)}$, it suffices to show that $V(M')=V(M)$.
Suppose to the contrary that $V(M') \neq V(M)$. 
We choose $i,i'$ such that $i'-i$ is minimum among all counterexamples.

Since $V(M') \neq V(M)$, there exists the minimum $i'' \in [i,i']$ such that $V(M'') \neq V(M)$, where $M''$ is the monochromatic $E_{j,t''}^{(i'')}$-pseudocomponent in $G[Y^{(i'',-1,0)}]$ containing $M$ and $t''$ is the node with $\sigma_T(t'')=i''$.
Note that $i''>i$.
Let $p$ be the parent of $t''$.
So $p \in V(T_t)$.

For every node $z$ with $z \in V(T_t)$, let $M_z$ be the monochromatic $E_{j,z}^{(i_z)}$-pseudocomponent in $G[Y^{(i_z,-1,0)}]$ containing $M$.

Suppose there exist $z \in V(T)$ with $T_p \subseteq T_z \subseteq T_t$ and an $\Se_j^\circ$-related monochromatic $E_{j,z}^{(i_z)}$-pseudocomponent $M_z'$ in $G[Y^{(i_z,-1,0)}]$ with $A_{L^{(i_z,-1,0)}}(V(M_z')) \cap X_{V(T_z)}-X_z \neq \emptyset$ intersecting $X_z$ such that $\sigma(M_z')<\sigma(M_z)$.
By the minimality of $i''$, $V(M_z)=V(M)$.
Since $M_z$ contains $M$, $V(M_z) \cap X_t \neq \emptyset$.
Since $\sigma(M_z')<\sigma(M_z)$ and $V(M_z') \cap X_z \neq \emptyset$, $V(M_z') \cap X_t \neq \emptyset$.
By \cref{claim:orderpreserving}, there exists $\alpha' \in [\alpha-1]$ such that $p_{\alpha'}$ is not a zero sequence, a contradiction.

So for every $z \in V(T)$ with $T_p \subseteq T_z \subseteq T_t$, there exists no $\Se_j^\circ$-related monochromatic $E_{j,z}^{(i_z)}$-pseudocomponent $M_z'$ in $G[Y^{(i_z,-1,0)}]$ with $A_{L^{(i_z,-1,0)}}(V(M_z')) \cap X_{V(T_z)}-X_z \neq \emptyset$ intersecting $X_z$ such that $\sigma(M_z')<\sigma(M_z)$.

Hence $V(Q)=V(M_p)=V(M)$, where $Q$ is the monochromatic $E_{j,p}^{(i_p)}$-pseudocomponent in $G[Y^{(i_p,\lvert V(T) \rvert+1,s+2)}]$ containing $M_p$.
Since $V(M'') \neq V(M)$ and $t'' \in V(T_{j,p})$, by the minimality of $i''$ and \cref{claim:isolation}, there exists $e \in E(M'') \cap E_{j,t''}^{(i'')}-E_{j,p}^{(i_p)}$ such that $e \cap V(Q) \neq \emptyset \neq e-V(Q)$.

Since $E_{j,t''}^{(i_p)}=E_{j,p}^{(i_p)}$, $V(Q)$ intersects an element in $E_{j,t''}^{(i'')}-E_{j,t''}^{(i_p)}$. 
Then since $t''$ is a child of $p$, there exist $q \in \partial T_{j,p}$ and an $\Se_j^\circ$-related monochromatic $E_{j,t''}^{(i_p)}$-pseudocomponent $Q'$ in $G[Y^{(i_p,\lvert V(T) \rvert+1,s+2)}]$ with $A_{L^{(i_p,\lvert V(T) \rvert+1,s+2)}}(V(Q')) \cap X_{V(T_p)}-X_p \neq \emptyset$ intersecting $X_q$ such that $\sigma(Q')<\sigma(Q)$.
Since $V(Q)=V(M)$, $\sigma(Q')<\sigma(Q)=\sigma(M)$.
Since $V(Q) \cap X_t \neq \emptyset$ and $V(Q') \cap X_q \neq \emptyset$, $V(Q') \cap X_t \neq \emptyset$.
Let $Q''$ be the $E_{j,t''}^{(i'')}$-pseudocomponent in $G[Y^{(i'',-1,0)}]$ containing $Q'$.
Then $\sigma(Q'') \leq \sigma(Q')<\sigma(M)$.
By \cref{claim:orderpreserving}, there exists $\alpha' \in [\alpha-1]$ such that $p_{\alpha'}$ is not a zero sequence and is defined by a subgraph of $Q''$, a contradiction.
This proves the claim.
\end{proof}

\subsection{Nice pairs and outer-safe pairs}

This subsection introduces the notions of nice pairs and outer-safe pairs. Roughly speaking, they describe the property that the fake edges correctly predict the existence of monochromatic paths constructed later in the algorithm.

For any $j \in [\lvert \V \rvert-1]$, $t \in V(T)$ and $\Se_j^\circ$-related monochromatic $E_{j,t}^{(i_t)}$-pseudocomponent $M$ in $G[Y^{(i_t,-1,0)}]$ intersecting $X_t$, we say that $(t,M)$ is a \defn{nice pair} if the following hold:
	\begin{itemize}
		\item If $t^* \in V(T)$ is the node of $T$ such that $t \in V(T_{j,t^*})$ and some $E_{j,t^*}^{(i_{t^*})}$-pseudocomponent in $G[Y^{(i_{t^*},\lvert V(T) \rvert+1,s+2)}]$ intersects $V(M)$ and $X_q$ for some witness $q \in \partial T_{j,t^*} \cup \{t^*\}$ for $X_q \cap I_j \subseteq W_3^{(i_{t^*},\ell)}$ for some $\ell \in [-1,\lvert V(T) \rvert]$, and subject to this, $i_{t^*}$ is minimum, then either:
			\begin{itemize}
				\item $t = t^*$, or 
				\item $t \neq t^*$, and for every $c$-monochromatic path $P$ in $G$ intersecting $V(M)$ with $V(P) \subseteq X_{V(T_{t^*})}$ internally disjoint from $X_{V(T_t)}$ and for every vertex $u_P \in V(P) \cap X_{V(T_{j,t^*})}$, there exists a monochromatic $E_{j,t^*}^{(i_{t^*},\lvert V(G) \rvert)}$-pseudocomponent $M'$ in $G[Y^{(i_{t^*},\lvert V(T) \rvert+1,s+2)}]$ such that:
					\begin{itemize}
						\item $V(M') \cap V(M) \neq \emptyset \neq V(M') \cap X_{V(T_t)}$, 
						\item $M'$ contains $u_P$, and 
						\item if $A_{L^{(i_t,-1,0)}}(V(M)) \cap X_{V(T_t)}-X_t \neq \emptyset$, then $A_{L^{(i_{t^*},\lvert V(T) \rvert+1,s+2)}}(V(M')) \cap X_{V(T_t)}-X_t \neq \emptyset$.
					\end{itemize}
			\end{itemize}
	\end{itemize}

\begin{claim} \label{claim:underassumptouch}
Let $j \in [\lvert \V \rvert-1]$.
Let $t \in V(T)$ and let $M$ be an $\Se_j^\circ$-related monochromatic $E_{j,t}^{(i_t)}$-pseudocomponent in $G[Y^{(i_t,-1,0)}]$ intersecting $X_t$.
Let $t^* \in V(T)$ be the node of $T$ such that $t \in V(T_{j,t^*})$ and some $E_{j,t^*}^{(i_{t^*})}$-pseudocomponent in $G[Y^{(i_{t^*},\lvert V(T) \rvert+1,s+2)}]$ intersects $V(M)$ and $X_q$ for some witness $q \in \partial T_{j,t^*} \cup \{t^*\}$ for $X_q \cap I_j \subseteq W_3^{(i_{t^*},\ell)}$ for some $\ell \in [-1,\lvert V(T) \rvert]$, and subject to this, $i_{t^*}$ is minimum.
If $t^* \neq t$, $A_{L^{(i_t,-1,0)}}(V(M)) \cap X_{V(T_t)}-X_t \neq \emptyset$ and $(t,M)$ is a nice pair, then every monochromatic $E_{j,t^*}^{(i_{t^*},\lvert V(G) \rvert)}$-pseudocomponent $M'$ in $G[Y^{(i_{t^*},\lvert V(T) \rvert+1,s+2)}]$ intersecting $V(M)$ intersects $X_{V(T_t)}$ and satisfies $A_{L^{(i_{t^*},\lvert V(T) \rvert+1,s+2)}}(V(M')) \cap X_{V(T_t)}-X_t \neq \emptyset$.
\end{claim}

\begin{proof}
Let $S_1 = \{Q: Q$ is a monochromatic $E_{j,t^*}^{(i_{t^*},\lvert V(G) \rvert)}$-pseudocomponent in $G[Y^{(i_{t^*},\lvert V(T) \rvert+1,s+2)}]$ such that $V(Q)$ intersects $V(M)$ and $X_{V(T_t)}$, and $A_{L^{(i_{t^*},\lvert V(T) \rvert+1,s+2)}}(V(Q)) \cap X_{V(T_t)}-X_t \neq \emptyset\}$.
Let $S_2 = \{Q \not \in S_1: Q$ is a monochromatic $E_{j,t^*}^{(i_{t^*},\lvert V(G) \rvert)}$-pseudocomponent in $G[Y^{(i_{t^*},\lvert V(T) \rvert+1,s+2)}]$ intersecting $V(M)\}$.
By \cref{claim:isolation,claim:pseudocomptouchingW3}, since $(t,M)$ is a nice pair, $S_1 \neq \emptyset$. 

Suppose that the claim does not hold.
That is, $S_2 \neq \emptyset$.
By the construction of $E_{j,t}^{(i_t)}$ and \cref{claim:isolation,claim:pseudocomptouchingW3}, there exist monochromatic $E_{j,t^*}^{(i_{t^*},\lvert V(G) \rvert)}$-pseudocomponents $M_1$ and $M_2$ in $G[Y^{(i_{t^*},\lvert V(T) \rvert+1,s+2)}]$ intersecting $V(M)$ such that $M_1 \in S_1$, $M_2 \in S_2$ and there exist $q \in \partial T_{j,t^*}$ with $V(M_1) \cap X_q \neq \emptyset \neq V(M_2) \cap X_q$ and a path $P$ in $M[V(M) \cap X_{V(T_q)}]$ from $V(M_1) \cap X_q$ to $V(M_2) \cap X_q$ internally disjoint from $X_{q} \cup V(M_1) \cup M_2$.
By the minimality of $i_{t^*}$, $P$ is a $c$-monochromatic path in $G$.
So $V(P) \cap A_{L^{(i_{t^*},\lvert V(T) \rvert+1,s+2)}}(V(M_1)) \cap X_{V(T_q)}-X_q \neq \emptyset \neq V(P) \cap A_{L^{(i_{t^*},\lvert V(T) \rvert+1,s+2)}}(V(M_2)) \cap X_{V(T_q)}-X_q$.
Since either $V(M_2) \cap X_{V(T_t)} = \emptyset$ or $A_{L^{(i_{t^*},\lvert V(T) \rvert+1,s+2)}}(V(M_2)) \cap X_{V(T_t)}-X_t = \emptyset$, $q \not \in V(T_t)$ and $P$ is internally disjoint from $X_{V(T_t)}$.
Let $u$ be the vertex in $V(P) \cap V(M_2) \cap X_q$.
Since $(t,M)$ is nice, there exists $M_3 \in S_1$ such that $M_3$ contains $u$. 
Since $u \in V(M_2) \cap V(M_3)$, $M_3=M_2$.
Hence $M_2=M_3 \in S_1$, a contradiction. 
\end{proof}

\begin{claim} \label{claim:underassumptouch2}
Let $j \in [\lvert \V \rvert-1]$.
Let $t \in V(T)$ and let $M$ be an $\Se_j^\circ$-related monochromatic $E_{j,t}^{(i_t)}$-pseudocomponent in $G[Y^{(i_t,-1,0)}]$ intersecting $X_t$.
Let $t^* \in V(T)$ be the node of $T$ such that $t \in V(T_{j,t^*})$ and some $E_{j,t^*}^{(i_{t^*})}$-pseudocomponent in $G[Y^{(i_{t^*},\lvert V(T) \rvert+1,s+2)}]$ intersects $V(M)$ and $X_q$ for some witness $q \in \partial T_{j,t^*} \cup \{t^*\}$ for $X_q \cap I_j \subseteq W_3^{(i_{t^*},\ell)}$ for some $\ell \in [-1,\lvert V(T) \rvert]$, and subject to this, $i_{t^*}$ is minimum.
If $t \neq t^*$, $A_{L^{(i_t,-1,0)}}(V(M)) \cap X_{V(T_t)}-X_t \neq \emptyset$ and $(t,M)$ is a nice pair, then there exists a monochromatic $E_{j,t^*}^{(i_{t^*},\lvert V(G) \rvert)}$-pseudocomponent $M^*$ in $G[Y^{(i_{t^*},\lvert V(T) \rvert+1,s+2)}]$ intersecting $V(M)$ such that $\sigma(M^*)=\sigma(M)$, $V(M^*) \cap X_{t} \neq \emptyset$ and satisfies $A_{L^{(i_{t^*},\lvert V(T) \rvert+1,s+2)}}(V(M^*)) \cap X_{V(T_t)}-X_t \neq \emptyset$.
\end{claim}

\begin{proof}
Let $v_M$ be the vertex of $M$ such that $\sigma(v_M)=\sigma(M)$.
If $v_M \in V(T_{t^*})-\{t^*\}$, then by \cref{claim:isolation,claim:pseudocomptouchingW3}, there exists a monochromatic $E_{j,t^*}^{(i_{t^*},\lvert V(G) \rvert)}$-pseudocomponent $M^*$ in $G[Y^{(i_{t^*},\lvert V(T) \rvert+1,s+2)}]$ containing $v_M$.
If $v_M \not \in V(T_{t^*})-\{t^*\}$, then by \cref{claim:orderpreserving}, there exists a monochromatic $E_{j,t^*}^{(i_{t^*},\lvert V(G) \rvert)}$-pseudocomponent $M^*$ in $G[Y^{(i_{t^*},\lvert V(T) \rvert+1,s+2)}]$ containing $v_M$.
Since $t \neq t^*$, $M^* \subseteq M$.
Hence $\sigma(M)=\sigma(M^*)$ and $V(M^*) \cap V(M) \neq \emptyset$.
So by \cref{claim:underassumptouch}, $V(M^*) \cap X_{V(T_t)} \neq \emptyset$ and $A_{L^{(i_{t^*},\lvert V(T) \rvert+1,s+2)}}(V(M^*)) \cap X_{V(T_t)}-X_t \neq \emptyset$.
Since $V(M) \cap X_t \neq \emptyset$, $t \in V(T_{r_M})$.
Since $v_M \in V(M^*)$, $V(M^*) \cap X_{r_M} \neq \emptyset$.
Since $V(M^*) \cap X_{V(T_t)} \neq \emptyset$, $V(M^*) \cap X_t \neq \emptyset$.
\end{proof}

For any $j \in [\lvert \V \rvert-1]$, $z \in V(T)$ and $\Se_j^\circ$-related monochromatic $E_{j,z}^{(i_z)}$-pseudocomponent $M$ in $G[Y^{(i_z,-1,0)}]$ intersecting $X_z$, we say that $(z,M)$ is an \defn{outer-safe pair} if the following hold:
	\begin{itemize}
		\item If $z^* \in V(T)$ is the node of $T$ such that $z \in V(T_{j,z^*})$ and some $E_{j,z^*}^{(i_{z^*})}$-pseudocomponent in $G[Y^{(i_{z^*},\lvert V(T) \rvert+1,s+2)}]$ intersects $V(M)$ and $X_q$ for some witness $q \in \partial T_{j,z^*} \cup \{z^*\}$ for $X_q \cap I_j \subseteq W_3^{(i_{z^*},\ell)}$ for some $\ell \in [-1,\lvert V(T) \rvert]$, and subject to this, $i_{z^*}$ is minimum, then there exist no $t' \in \partial T_{j,z^*} - V(T_z)$, $u,v \in X_{t'}$, a monochromatic path in $G[Y^{(i_z,-1,0)} \cap X_{V(T_{t'})}]$ but not in $G[Y^{(i_{z^*},\lvert V(T) \rvert+1,s+2)}]$ from $u$ to $v$ internally disjoint from $X_{t'}$ such that there exists a monochromatic $E_{j,z^*}^{(i_{z^*},\lvert V(G) \rvert)}$-pseudocomponent $M^*$ in $G[Y^{(i_{z^*},\lvert V(T) \rvert+1,s+2)}]$ with $\sigma(M^*)=\sigma(M)$ containing $u$ but not $v$.
	\end{itemize}
(Note that we do not require $A_{L^{(i_z,-1,0)}}(V(M)) \cap X_{V(T_z)}-X_z \neq \emptyset$.)

\begin{claim} \label{claim:underassumpnounder}
Let $j \in [\lvert \V \rvert-1]$.
Let $t \in V(T)$ and let $M$ be an $\Se_j^\circ$-related monochromatic $E_{j,t}^{(i_t)}$-pseudocomponent in $G[Y^{(i_t,-1,0)}]$ intersecting $X_t$. 
Let $t^* \in V(T)$ be the node of $T$ such that $t \in V(T_{j,t^*})$ and some $E_{j,t^*}^{(i_{t^*})}$-pseudocomponent in $G[Y^{(i_{t^*},\lvert V(T) \rvert+1,s+2)}]$ intersects $V(M)$ and $X_q$ for some witness $q \in \partial T_{j,t^*} \cup \{t^*\}$ for $X_q \cap I_j \subseteq W_3^{(i_{t^*},\ell)}$ for some $\ell \in [-1,\lvert V(T) \rvert]$, and subject to this, $i_{t^*}$ is minimum.
Assume that $t \neq t^*$.
Assume that $(z,M')$ is a nice pair and an outer-safe pair for every $z \in V(T)$ and every $\Se_j^\circ$-related monochromatic $E_{j,z}^{(i_z)}$-pseudocomponent $M'$ in $G[Y^{(i_z,-1,0)}]$ intersecting $X_z$ with $\sigma(M')<\sigma(M)$. 

Then there exist no $t' \in \partial T_{j,t^*} \cap V(T_t)$, $u,v \in X_{t'}$ and a monochromatic path in $G[Y^{(i_t,-1,0)} \cap X_{V(T_{t'})}]$ but not in $G[Y^{(i_{t^*},\lvert V(T) \rvert+1,s+2)}]$ from $u$ to $v$ internally disjoint from $X_{t'}$ such that $t'$ is a witness for $X_{t'} \cap I_j \subseteq W_3^{(i_{t^*},\ell)}$ for some $\ell \in [0,\lvert V(T) \rvert]$, and there exists a monochromatic $E_{j,t^*}^{(i_{t^*},\lvert V(G) \rvert)}$-pseudocomponent $M^*$ in $G[Y^{(i_{t^*},\lvert V(T) \rvert+1,s+2)}]$ with $\sigma(M^*)=\sigma(M)$ containing $u$ but not $v$.
\end{claim}

\begin{proof}
Suppose to the contrary that there exist $t' \in \partial T_{j,t^*} \cap V(T_t)$, $u',v' \in X_{t'}$, a monochromatic path $P'$ in $G[Y^{(i_t,-1,0)} \cap X_{V(T_{t'})}]$ from $u'$ to $v'$ internally disjoint from $X_{t'}$, and a monochromatic $E_{j,t^*}^{(i_{t^*},\lvert V(G) \rvert)}$-pseudocomponent $M^*$ in $G[Y^{(i_{t^*},\lvert V(T) \rvert+1,s+2)}]$ with $\sigma(M^*)=\sigma(M)$ such that $t'$ is a witness for $X_{t'} \cap I_j \subseteq W_3^{(i_{t^*},\ell)}$ for some $\ell \in [0,\lvert V(T) \rvert]$, $V(P') \not \subseteq Y^{(i_{t^*},\lvert V(T) \rvert+1,s+2)}$, and $M^*$ contains $u'$ but not $v'$.
We further assume that $\sigma(M)$ is minimum among all counterexamples.

Let $t_0$ be the node of $T$ such that $i_{t_0} \leq i_{t'}$ and $V(P') \not \subseteq Y^{(i_{t_0},-1,0)}$, and subject to these, $i_{t_0}$ is maximum.
Since $i_{t'} \geq i_t>i_{t^*}$ and $V(P') \not \subseteq Y^{(i_{t^*},\lvert V(T) \rvert+1,s+2)}$, we know $V(P') \not \subseteq Y^{(i_{t^*}+1,-1,0)}$, so $i_{t^*}<i_{t_0}$.
By the maximality of $i_{t_0}$, $t' \in V(T_{t_0}) \subseteq V(T_{t^*})$.
Since $t' \in \partial T_{j,t^*}$, $t' \not \in V(T_{j,t_0})-\partial T_{j,t_0}$.
Let $P_0$ be the maximal subpath of $P'[V(P') \cap Y^{(i_{t_0},-1,0)}]$ containing $u'$.
Let $R_0$ be the monochromatic $E_{j,t_0}^{(i_{t_0})}$-pseudocomponent in $G[Y^{(i_{t_0},-1,0)}]$ containing $P_0$.
So $M^* \subseteq R_0$.
Let $R_0'$ be the monochromatic $E_{j,t_0}^{(i_{t_0})}$-pseudocomponent in $G[Y^{(i_{t_0},-1,0)}]$ containing $v'$.

Since $V(P') \subseteq Y^{(i_{t},-1,0)}$ and $i_t \leq i_{t'}$, we know $i_{t_0}<i_t$ and $t_0 \neq t'$.
Since $t_0 \neq t'$ and $t' \not \in V(T_{j,t_0})-\partial T_{j,t_0}$, by the maximality of $i_{t_0}$, there exists an $\Se_j^\circ$-related monochromatic $E_{j,t_0}^{(i_{t_0})}$-pseudocomponent $M_0$ in $G[Y^{(i_{t_0},-1,0)}]$ intersecting $X_{t_0}$ such that there exist $\ell' \in [-1,0]$ and $k' \in [0,s+2]$ such that $\sigma(M_0) = \sigma(Z_0)<\min\{\sigma(R_0),\sigma(R_0')\}$ and $A_{L^{(i_{t_0},\ell',k')}}(V(Z_0)) \cap V(P') \cap N_G(V(P_0)) \cap Z_{t_0}-Y^{(i_{t_0},-1,0)} \neq \emptyset$, where $Z_0$ is the monochromatic $E_{j,t_0}^{(i_{t_0})}$-pseudocomponent in $G[Y^{(i_{t_0},\ell',k')}]$ containing $M_0$. 
So $V(Z_0) \cap X_{t'} \neq \emptyset$.

Since $M^* \subseteq R_0$, $\sigma(R_0) \leq \sigma(M^*)$.
Hence $\sigma(M_0) < \sigma(M^*)=\sigma(M)$.
Let $z^*$ be the node of $T$ such that $t_0 \in V(T_{j,z^*})$ and some $E_{j,z^*}^{(i_{z^*})}$-pseudocomponent in $G[Y^{(i_{z^*},\lvert V(T) \rvert+1,s+2)}]$ intersects $V(M_0)$ and $X_q$ for some witness $q \in \partial T_{j,z^*} \cup \{z^*\}$ for $X_q \cap I_j \subseteq W_3^{(i_{z^*},\ell)}$ for some $\ell \in [-1,\lvert V(T) \rvert]$, and subject to this, $i_{z^*}$ is minimum.
Since $t^*$ is a candidate for $z^*$, $i_{z^*} \leq i_{t^*}$.
Since $\sigma(M_0) < \sigma(M)$, $(t_0,M_0)$ is a nice pair by assumption.
By \cref{claim:underassumptouch2}, there exists a monochromatic $E_{j,z^*}^{(i_{z^*},\lvert V(G) \rvert)}$-pseudocomponent $M_0'$ in $G[Y^{(i_{z^*},\lvert V(T) \rvert+1,s+2)}]$ intersecting $V(M_0)$ such that $\sigma(M_0')=\sigma(M_0)$, $V(M_0') \cap X_{t_0} \neq \emptyset$ and $A_{L^{(i_{z^*},\lvert V(T) \rvert+1,s+2)}}(V(M_0')) \cap X_{V(T_{t_0})}-X_{t_0} \neq \emptyset$.

Since $V(M_0) \cap X_{t'} \neq \emptyset$ and $A_{L^{(i_{t_0},-1,0)}}(V(M_0)) \cap X_{V(T_{t'})}-X_{t'} \neq \emptyset$, by \cref{claim:isolation,claim:pseudocomptouchingW3} and the existence of $M_0'$, there exists a monochromatic $E_{j,z^*}^{(i_{z^*},\lvert V(G) \rvert)}$-pseudocomponent $M_1$ in \linebreak $G[Y^{(i_{z^*},\lvert V(T) \rvert+1,s+2)}]$ intersecting $M_0$ such that $V(M_1) \cap X_{t'} \neq \emptyset$ and $A_{L^{(i_{z^*},\lvert V(T) \rvert+1,s+2)}}(V(M_1)) \cap X_{V(T_{t'})}-X_{t'} \neq \emptyset$.
Suppose $M_1 \not \subseteq M_0'$.
Since $M_1 \cup M_0' \subseteq M_0$, there exists a path $Q$ in $M_0$ from $V(M_0')$ to $V(M_1)$ internally disjoint from $V(M_0')$.
By the construction of $E_{j,t_0}^{(i_{t_0})}$ and \cref{claim:isolation,claim:pseudocomptouchingW3}, $Q$ is in $G[Y^{(i_{t_0},-1,0)} \cap X_{V(T_{z^*})}]$.
Hence there exist $t'' \in \partial T_{j,z^*}$, $u'',v'' \in X_{t''}$, a monochromatic path $Q'$ in $G[Y^{(i_{t_0},-1,0)} \cap X_{V(T_{t''})}]$ from $u''$ to $v''$ internally disjoint from $X_{t''}$ but not in $G[Y^{(i_{z^*},\lvert V(T) \rvert+1,s+2)}]$ such that $M_0'$ contains $u''$ but not $v''$.
Since $\sigma(M_0)<\sigma(M)$, $(t_0,M_0)$ is an outer-safe pair, so $t'' \in V(T_{t_0})$.
Hence $M_0$ is a counterexample of this claim, contradicting that $\sigma(M)$ is minimum among all counterexamples. 

So $M_0'$ contains $M_1'$ for every monochromatic $E_{j,z^*}^{(i_{z^*},\lvert V(G) \rvert)}$-pseudocomponent $M'_1$ in \linebreak $G[Y^{(i_{z^*},\lvert V(T) \rvert+1,s+2)}]$ intersecting $M_0$ such that $V(M'_1) \cap X_{t'} \neq \emptyset$ and $A_{L^{(i_{z^*},\lvert V(T) \rvert+1,s+2)}}(V(M'_1)) \cap X_{V(T_{t'})}-X_{t'} \neq \emptyset$.
Hence there exists a monochromatic $E_{j,t^*}^{(i_{t^*},\lvert V(G) \rvert)}$-pseudocomponent $M_0^*$ in $G[Y^{(i_{t^*},\lvert V(T) \rvert+1,s+2)}]$ containing $M_0'$ such that $V(M_0^*) \cap X_{t'} \neq \emptyset$ and $A_{L^{(i_{t^*},\lvert V(T) \rvert+1,s+2)}}(V(M_0^*)) \cap X_{V(T_{t'})}-X_{t'} \neq \emptyset$.

Since $M_0^*$ is an $E_{j,t^*}^{(i_{t^*},\lvert V(G) \rvert)}$-pseudocomponent in $G[Y^{(i_{t^*},\lvert V(T) \rvert+1,s+2)}]$, there exists a nonnegative integer $\beta_0$ such that $M^*_0$ is the $(\beta_0+1)$-smallest among all monochromatic $E_{j,t^*}^{(i_{t^*},\beta_0)}$-pseudocomponents in $G[Y^{(i_{t^*},\lvert V(T) \rvert+1,s+2)}]$.
Let $R_u$ and $R_v$ be the monochromatic $E_{j,t^*}^{(i_{t^*},\beta_0)}$-pseudocomponents in $G[Y^{(i_{t^*},\lvert V(T) \rvert+1,s+2)}]$ containing $u'$ and $v'$, respectively.
By the existence of $P'$, $R_u \cup R_v \subseteq M$, so $\sigma(R_u)=\sigma(M^*)=\sigma(M) \leq \sigma(R_v)$.

Note that $\sigma(M_0^*) \leq \sigma(M_0')=\sigma(M_0)<\sigma(M) \leq \sigma(R_u)$.
Since $\{u',v'\} \not \in E_{j,t^*}^{(i_{t^*},\lvert V(G) \rvert)}$, $\{u',v'\} \not \in E_{j,t^*}^{(i_{t^*},\beta_0+1)}$.
Since $\sigma(M_0^*) < \sigma(R_u)$, by the construction of $E_{j,t^*}^{(i_{t^*},\beta_0+1)}$ and the minimality of $i_{t^*}$ we know $(V(R_u) \cup V(R_v)) \cap X_{t^*} \neq \emptyset$ and for every $\Se_j^\circ$-related monochromatic $E_{j,t^*}^{(i_{t^*},\beta_0)}$-pseudocomponent $M'$ in $G[Y^{(i_{t^*},\lvert V(T) \rvert+1,s+2)}]$ with $\sigma(M') \leq \sigma(M_0^*)$, $A_{L^{(i_{t^*},\lvert V(T) \rvert+1,s+2)}}(V(M')) \cap X_{V(T_{t^*})}-X_{t^*} \subseteq X_{V(T_{t'})}-(X_{t'} \cup Z_{t^*})$.

Since $(V(R_u) \cup V(R_v)) \cap X_{t^*} \neq \emptyset$, we know $V(M') \cap X_{t^*} \neq \emptyset$ for every $\Se_j^\circ$-related monochromatic $E_{j,t^*}^{(i_{t^*},\beta_0)}$-pseudocomponent $M'$ in $G[Y^{(i_{t^*},\lvert V(T) \rvert+1,s+2)}]$ with $\sigma(M') \leq \sigma(M_0^*)$.
Since $t' \in \partial T_{j,t^*}$, and $A_{L^{(i_{t^*},\lvert V(T) \rvert+1,s+2)}}(V(M')) \cap X_{V(T_{t^*})}-X_{t^*} \subseteq X_{V(T_{t'})}-(X_{t'} \cup Z_{t^*})$ for every $\Se_j^\circ$-related monochromatic $E_{j,t^*}^{(i_{t^*},\beta_0)}$-pseudocomponent $M'$ in $G[Y^{(i_{t^*},\lvert V(T) \rvert+1,s+2)}]$ with $\sigma(M') \leq \sigma(M_0')$, we know $V(M_0)=V(M_0^*)$.
Since $\sigma(M_0^*)=\sigma(M_0)=\sigma(Z_0)$ and $Z_{t_0} \cap X_{V(T_{t'})} \subseteq Z_{t^*}$, we know $V(Z_0)=V(M_0)=V(M_0^*)$ and 
\begin{align*}
\emptyset & \neq A_{L^{(i_{t_0},\ell',k')}}(V(Z_0)) \cap V(P') \cap N_G(V(P_0)) \cap Z_{t_0}-Y^{(i_{t_0},-1,0)} \\
& \subseteq A_{L^{(i_{t_0},\ell',k')}}(V(M_0^*)) \cap Z_{t_0} \cap X_{V(T_{t'})} -X_{t'} \\
& \subseteq A_{L^{(i_{t^*},\lvert V(T) \rvert+1,s+2)}}(V(M_0^*)) \cap Z_{t^*} \cap X_{V(T_{t'})} -X_{t'} \\
& \subseteq A_{L^{(i_{t^*},\lvert V(T) \rvert+1,s+2)}}(V(M_0^*)) \cap (X_{V(T_{t^*})}-X_{t^*}) \cap Z_{t^*} \cap X_{V(T_{t'})} -X_{t'} \\
& \subseteq (X_{V(T_{t'})}-(X_{t'} \cup Z_{t^*})) \cap Z_{t^*} \cap X_{V(T_{t'})} -X_{t'} = \emptyset,
\end{align*}
a contradiction.
\end{proof}

\begin{claim} \label{claim:underassumpoutersafeunique}
	Let $j \in [\lvert \V \rvert-1]$.
	Let $t \in V(T)$.
	Let $M$ be an $\Se_j$-related monochromatic $E_{j,t}^{(i_t)}$-pseudocomponent in $G[Y^{(i_t,-1,0)}]$ intersecting $X_t$. 
	Let $t^*$ be the node of $T$ with $t \in V(T_{j,t^*})$ such that some $E_{j,t^*}^{(i_{t^*})}$-pseudocomponent in $G[Y^{(i_{t^*},\lvert V(T) \rvert+1,s+2)}]$ intersecting $V(M)$ intersects $X_{t'}$ for some witness $t' \in \partial T_{j,t^*} \cup \{t^*\}$ for $X_{t'} \cap I_j \subseteq W_3^{(i_{t^*},\ell)}$ for some $\ell \in [-1,\lvert V(T) \rvert]$, and subject to this, $i_{t^*}$ is minimum.
	Assume that $(z,M_z)$ is a nice pair and an outer-safe pair for every $z \in V(T)$ and $\Se_j^\circ$-related monochromatic $E_{j,z}^{(i_z)}$-pseudocomponent $M_z$ in $G[Y^{(i_z,-1,0)}]$ intersecting $X_z$ with $\sigma(M_z)<\sigma(M)$. 
	If $(t,M)$ is an outer-safe pair, then the following hold.
	\begin{itemize}
		\item There uniquely exists a monochromatic $E_{j,t^*}^{(i_{t^*},\lvert V(G) \rvert)}$-pseudocomponent $M^*$ in $G[Y^{(i_{t^*},\lvert V(T) \rvert+1,s+2)}]$ intersecting $V(M)$.
		\item $V(M^*) \cap X_t \neq \emptyset$ and $\sigma(M^*)=\sigma(M)$.
		\item If $t^* \neq t$ and $A_{L^{(i_t,-1,0)}}(V(M)) \cap X_{V(T_t)}-X_t \neq \emptyset$, then $A_{L^{(i_{t^*},\lvert V(T) \rvert+1,s+2)}}(V(M^*)) \cap X_{V(T_t)}-X_t \neq \emptyset$.
	\end{itemize}
\end{claim}

\begin{proof}
By \cref{claim:isolation,claim:pseudocomptouchingW3,claim:orderpreserving}, there exists a monochromatic $E_{j,t^*}^{(i_{t^*},\lvert V(G) \rvert)}$-pseudocomponent $M^*$ in $G[Y^{(i_{t^*},\lvert V(T) \rvert+1,s+2)}]$ with $\sigma(M^*)=\sigma(M)$ such that $V(M^*)$ intersects $X_q$ for some witness $q \in \partial T_{j,t^*} \cup \{t^*\}$ for $X_q \cap I_j \subseteq W_3^{(i_{t^*},\ell)}$ for some $\ell \in [-1,\lvert V(T) \rvert]$.
	
Suppose there exists a monochromatic $E_{j,t^*}^{(i_{t^*},\lvert V(G) \rvert)}$-pseudocomponent $M'$ in $G[Y^{(i_{t^*},\lvert V(T) \rvert+1,s+2)}]$ intersecting $V(M)$ such that $M' \neq M^*$.
Then $t \neq t^*$, so $M' \cup M^* \subseteq M$, and hence there exists a monochromatic path $P$ in $G[Y^{(i_t,-1,0)}]$ from $V(M^*)$ to $V(M')$ internally disjoint from $V(M^*)$.
By tracing $P$ from $V(M^*)$, there exist $t' \in \partial T_{j,t^*}$, $u,v \in X_{t'}$, a monochromatic path $Q$ in $G[Y^{(i_t,-1,0)} \cap X_{V(T_{t'})}]$ from $u$ to $v$ such that $M^*$ contains $u$ but not $v$, and $V(Q) \cap A_{L^{(i_{t^*},\lvert V(T) \rvert+1,s+2)}}(V(M^*)) \cap X_{V(T_{t'})}-X_{t'} \neq \emptyset$.
Note that $V(Q) \not \subseteq Y^{(i_{t^*},\lvert V(T) \rvert + 1,s+2)}$, since $M^*$ contains $u$ but not $v$.
By \cref{claim:pseudocomptouchingW3}, since $V(M^*) \cap X_{t'} \neq \emptyset \neq V(M^*) \cap V(M)$, $t'$ is a witness for $X_{t'} \cap I_j \subseteq W_3^{(i_{t^*},\ell)}$ for some $\ell \in [0,\lvert V(T) \rvert]$.
Since $(t,M)$ is an outer-safe pair, $t' \in V(T_t)$.
However, by \cref{claim:underassumpnounder}, $t' \not \in V(T_t)$, a contradiction.
	
Hence $M^*$ is the unique monochromatic $E_{j,t^*}^{(i_{t^*},\lvert V(G) \rvert)}$-pseudocomponent in $G[Y^{(i_{t^*},\lvert V(T) \rvert+1,s+2)}]$ intersecting $V(M)$.
Recall that $\sigma(M^*)=\sigma(M)$.
Since $V(M) \cap X_t \neq \emptyset$ and $t \in V(T_{j,t^*})$, by \cref{claim:isolation,claim:pseudocomptouchingW3}, $V(M^*) \cap X_t \neq \emptyset$.
	
If $t^* \neq t$ and $A_{L^{(i_{t},-1,0)}}(V(M)) \cap X_{V(T_t)}-X_t \neq \emptyset$, then since $V(M) \cap X_t \subseteq V(M^*)$ by \cref{claim:isolation}, and $M^*$ is the unique monochromatic $E_{j,t^*}^{(i_{t^*},\lvert V(G) \rvert)}$-pseudocomponent in $G[Y^{(i_{t^*},\lvert V(T) \rvert+1,s+2)}]$ intersecting $V(M)$, $A_{L^{(i_{t^*},\lvert V(T) \rvert+1,s+2)}}(V(M^*)) \cap X_{V(T_t)}-X_t \neq \emptyset$.
\end{proof}

\begin{claim} \label{claim:underassumpunder1}
Let $j \in [\lvert \V \rvert-1]$.
Let $t \in V(T)$ and let $M$ be an $\Se_j^\circ$-related monochromatic $E_{j,t}^{(i_t)}$-pseudocomponent in $G[Y^{(i_t,-1,0)}]$ intersecting $X_t$. 
Let $t^* \in V(T)$ be the node of $T$ such that $t \in V(T_{j,t^*})$ and some $E_{j,t^*}^{(i_{t^*})}$-pseudocomponent in $G[Y^{(i_{t^*},\lvert V(T) \rvert+1,s+2)}]$ intersects $V(M)$ and $X_q$ for some witness $q \in \partial T_{j,t^*} \cup \{t^*\}$ for $X_q \cap I_j \subseteq W_3^{(i_{t^*},\ell)}$ for some $\ell \in [-1,\lvert V(T) \rvert]$, and subject to this, $i_{t^*}$ is minimum.
Assume that $t \neq t^*$. 
Assume that $(z,M')$ is a nice pair and an outer-safe pair for every $z \in V(T)$ and every $\Se_j^\circ$-related monochromatic $E_{j,z}^{(i_z)}$-pseudocomponent $M'$ in $G[Y^{(i_z,-1,0)}]$ intersecting $X_z$ with $\sigma(M')<\sigma(M)$. 

If $(t,M)$ is an outer-safe pair and there exists a $c$-monochromatic path $P$ in $G$ from $V(M)$ to a monochromatic $E_{j,t}^{(i_t)}$-pseudocomponent $M_0$ in $G[Y^{(i_t,-1,0)}]$ other than $M$ internally disjoint from every monochromatic $E_{j,t}^{(i_t)}$-pseudocomponent in $G[Y^{(i_t,-1,0)}]$ such that $V(P) \cap A_{L^{(i_t,-1,0)}}(V(M)) \cap X_{V(T_t)}-X_t \neq \emptyset$,  then there exist $t' \in \partial T_{j,t^*} \cap V(T_t)$, $u,v \in X_{t'}$ and a $c$-monochromatic path in $G[X_{V(T_{t'})}]$ but not in $G[Y^{(i_{t},-1,0)}]$ from $u$ to $v$ internally disjoint from $X_{t'}$ such that $t'$ is a witness for $X_{t'} \cap I_j \subseteq W_3^{(i_{t^*},\ell)}$ for some $\ell \in [0,\lvert V(T) \rvert]$, and there exists a monochromatic $E_{j,t^*}^{(i_{t^*},\lvert V(G) \rvert)}$-pseudocomponent $M^*$ in $G[Y^{(i_{t^*},\lvert V(T) \rvert+1,s+2)}]$ with $\sigma(M^*)=\sigma(M)$ containing $u$ but not $v$.
\end{claim}

\begin{proof}
Since $P$ is internally disjoint from every monochromatic $E_{j,t}^{(i_t)}$-pseudocomponent in $G[Y^{(i_t,-1,0)}]$ and $V(P) \cap A_{L^{(i_t,-1,0)}}(V(M)) \cap X_{V(T_t)}-X_t \neq \emptyset$, $V(P) \subseteq X_{V(T_t)}$.

Since $M \neq M_0$, there exist $t' \in \partial T_{j,t^*} \cap V(T_t)$, $u,v \in X_{t'}$, a subpath $P'$ of $P \cup M \cup M_0$ from $u$ to $v$ contained in $G[X_{V(T_{t'})}]$ internally disjoint from $X_{t'}$ but not contained in $G[Y^{(i_t,-1,0)}]$, and a monochromatic $E_{j,t^*}^{(i_{t^*},\lvert V(G) \rvert)}$-pseudocomponent $M^*$ in $G[Y^{(i_{t^*},\lvert V(T) \rvert+1,s+2)}]$ intersecting $V(M) \cap V(P)$ containing $u$ but not containing $v$.
Note that $t'$ is a witness for $X_{t'} \cap I_j \subseteq W_3^{(i_{t^*},\ell)}$ for some $\ell \in [0,\lvert V(T) \rvert]$ by \cref{claim:isolation,claim:pseudocomptouchingW3}.
To prove this claim, it suffices to prove that $\sigma(M^*)=\sigma(M)$.

By \cref{claim:isolation,claim:pseudocomptouchingW3,claim:orderpreserving}, there exists a monochromatic $E_{j,t^*}^{(i_{t^*},\lvert V(G) \rvert)}$-pseudocomponent $Q^*$ in $G[Y^{(i_{t^*},\lvert V(T) \rvert+1,s+2)}]$ with $\sigma(Q^*)=\sigma(M)$.
Suppose $\sigma(M^*) \neq \sigma(M)$.
So $M^* \neq Q^*$.
Since $M^* \cup Q^* \subseteq M$, by the construction of $E_{j,t}^{(i_t)}$ and \cref{claim:isolation,claim:pseudocomptouchingW3}, there exist $t'' \in \partial T_{j,t^*}$, a monochromatic path $Q'$ in $G[Y^{(i_t,-1,0)} \cap X_{V(T_{t''})}]$ from $V(Q^*)$ to a monochromatic $E_{j,t^*}^{(i_{t^*},\lvert V(G) \rvert)}$-pseudocomponent in $G[Y^{(i_{t^*},\lvert V(T) \rvert+1,s+2)}]$ intersecting $V(M)$ other than $Q^*$ internally disjoint from $X_{t''}$ such that $V(Q') \cap A_{L^{(i_{t^*},\lvert V(T) \rvert+1,s+2)}}(V(Q^*)) \cap X_{V(T_{t''})}-X_{t''} \neq \emptyset$, and $t''$ is a witness for $X_{t''} \cap I_j \subseteq W_3^{(i_{t^*},\ell)}$ for some $\ell \in [0,\lvert V(T) \rvert]$.
By \cref{claim:underassumpnounder}, $t'' \not \in V(T_t)$.
So $(t,M)$ is not an outer-safe pair, a contradiction.
Hence $\sigma(M^*)=\sigma(M)$.
This proves the claim.
\end{proof}

\subsection{Blockers}

From now on, we use all the properties of fake edges.
We first recall the relevant parts of the algorithm.

\medskip
\noindent\textbf{\boldmath Stage $(0,-1,0)$: Initialization:}
See \cref{subsec:alog} for details.

\medskip
For $i=0,1,2,\dots$, let $t$ be the node of $T$ with $\sigma_T(t)=i$ and perform the following steps:\\[1ex]
\hspace*{4mm} 
\textbf{\boldmath Stage: Building Fences:}
Let $E^{(0)}_{j,t} := \emptyset$ for every $j \in [\lvert \V \rvert-1]$ and $t \in V(T)$.
Other sets are defined.
See \cref{subsec:alog} for details.\\
\hspace*{4mm} 
\noindent\textbf{\boldmath Stage $(i,-1,\star)$:}
See \cref{subsec:alog} for details.\\
\hspace*{4mm}
\noindent\textbf{\boldmath Stage $(i,\geq 1,\star)$:}
$(Y^{(i,\lvert V(T) \rvert+1,s+2)},L^{(i,\lvert V(T) \rvert+1,s+2)})$, $W_3^{(i,\ell)}$ for $\ell \in [0,\lvert V(T) \rvert]$, and other sets are defined.
See \cref{subsec:alog} for details.\\
\hspace*{4mm}
\noindent\textbf{Stage: Adding Fake Edges}
	\begin{itemize}
				\item For each $j \in [\lvert \V \rvert-1]$ and $t' \in V(T)-(V(T_t)-\{t\})$, let $E^{(i+1)}_{j,t'}:=E^{(i)}_{j,t'}$.
				\item For each $j \in [\lvert \V \rvert-1]$, let $E^{(i,0)}_{j,t}:=E^{(i)}_{j,t}$.
				\item For each $j \in [\lvert \V \rvert-1]$ and $t' \in V(T_t)-\{t\}$, let $E^{(i+1)}_{j,t'}:=E^{(i,\lvert V(G) \rvert)}_{j,t}$, where every $\ell \in [0,\lvert V(G) \rvert-1]$, let $E^{(i,\ell+1)}_{j,t}$ be the union of $E^{(i,\ell)}_{j,t}$ and the set consisting of the 2-element sets $\{u,v\}$ satisfying the following.
				\begin{itemize}
					\item $\{u,v\} \subseteq Y^{(i,\lvert V(T) \rvert+1,s+2)}$.
					\item $L^{(i,\lvert V(T) \rvert+1,s+2)}(u) = L^{(i,\lvert V(T) \rvert+1,s+2)}(v)$.
					\item There exists an $s$-segment $S$ in $\Se_{j}^\circ$ whose level equals the color of $u$ and $v$ such that $\{u,v\} \subseteq S$.
					\item There exists $t'' \in \partial T_{j,t}$ such that:
					\begin{itemize}
						\item $\{u,v\} \subseteq X_{t''}$,
						\item $V(M) \cap X_{t''} \neq \emptyset$, where $M$ is the monochromatic $E^{(i,\ell)}_{j,t}$-pseudocomponent in \linebreak $G[Y^{(i,\lvert V(T) \rvert+1,s+2)}]$ such that $\sigma(M)$ is the $(\ell+1)$-th smallest among all monochromatic $E^{(i,\ell)}_{j,t}$-pseudocomponents in  $G[Y^{(i,\lvert V(T) \rvert+1,s+2)}]$,
						\item $M$ is contained in some $s$-segment in $\Se_{j}^\circ$ whose level equals the color of $M$,
						\item $t''$ is a witness for $X_{t''} \cap I_j \subseteq W_3^{(i,\ell')}$ for some $\ell' \in [0,\lvert V(T) \rvert]$,
						\item $A_{L^{(i,\lvert V(T) \rvert+1,s+2)}}(V(M)) \cap X_{V(T_{t''})}-X_{t''} \neq \emptyset$,
						\item $A_{L^{(i,\lvert V(T) \rvert+1,s+2)}}(V(M_u)) \cap X_{V(T_{t''})}-X_{t''} \neq \emptyset$,
						\item $A_{L^{(i,\lvert V(T) \rvert+1,s+2)}}(V(M_v)) \cap X_{V(T_{t''})}-X_{t''} \neq \emptyset$, and
						\item $\sigma(M) < \min\{\sigma(M_u),\sigma(M_v)\}$, 
						
					\end{itemize}
					where $M_u$ and $M_v$ are the monochromatic $E^{(i,\ell)}_{j,t}$-pseudocomponents in $G[Y^{(i,\lvert V(T) \rvert+1,s+2)}]$ containing $u$ and $v$, respectively.
				\item Let $M$, $M_u$ and $M_v$ be the monochromatic $E_{j,t}^{(i,\ell)}$-pseudocomponents as in the previous bullet.
						If $(V(M_u) \cup V(M_v)) \cap X_t \neq \emptyset$, then at least one of the following does not hold,
						\begin{itemize}
							\item for every monochromatic $E_{j,t}^{(i,\ell)}$-pseudocomponent $M'$ in $G[Y^{(i,\lvert V(T) \rvert+1,s+2)}]$ with $\sigma(M') \leq \sigma(M)$ such that $M'$ is contained in some $s$-segment in $\Se_{j}^\circ$ whose level equals the color of $M'$, $M'$ satisfies that $A_{L^{(i,\lvert V(T) \rvert+1,s+2)}}(V(M')) \cap X_{V(T_{t})}-X_{t} \subseteq X_{V(T_{t''})}-(X_{t''} \cup Z_t)$,
							\item there exists a parade $(t_{-\beta_{u,v,\ell}},t_{\beta_{u,v,\ell}+1},\dots,t_{-1},t_1,t_2,\dots,t_{\alpha_{u,v,\ell}})$ for some $\alpha_{u,v,\ell} \in {\mathbb N}$ and $\beta_{u,v,\ell} \in {\mathbb N}$ such that 
								\begin{itemize}
									\item $t_{-\beta_{u,v,\ell}} \in V(T_{t''})-\{t''\}$, 
									\item $\alpha_{u,v,\ell} = \psi_2(\alpha_{u,\ell},\alpha_{v,\ell})$ and $\beta_{u,v,\ell} = \psi_3(\alpha_{u,\ell},\alpha_{v,\ell})$, \\
									where $\alpha_{u,\ell}$ and $\alpha_{v,\ell}$ are the integers such that $\sigma(M_{u,\ell})$ and $\sigma(M_{v,\ell})$ are the $\alpha_{u,\ell}$-th and the $\alpha_{v,\ell}$-th smallest among all monochromatic $E_{j,t}^{(i,\ell)}$-pseudocomponents $M'$ in $G[Y^{(i,\lvert V(T) \rvert+1,s+2)}]$ contained in some $s$-segment in $\Se_j^\circ$ whose color equals the color of $M_u$ and $M_v$ with $A_{L^{(i,\lvert V(T) \rvert+1,s+2)}}(V(M')) \cap X_{V(T_{t''})}-X_{t''} \neq \emptyset$, respectively, 
									\item for any distinct $\ell_1,\ell_2 \in [-\beta_{u,v,\ell}, \alpha_{u,v,\ell}]-\{0\}$, $X_{t_{\ell_1}} \cap I_j - X_{t''}$ and $X_{t_{\ell_2}} \cap I_j - X_{t''}$ are disjoint non-empty sets,
									\item $\bigcup_{M''} (A_{L^{(i,\lvert V(T) \rvert+1,s+2)}}(V(M'')) \cap X_{V(T_{t})}-X_{t}) \subseteq (X_{V(T_{t_{\alpha_{u,v,\ell}}})}-X_{t_{\alpha_{u,v,\ell}}}) \cap I_j^\circ$, where the union is over all monochromatic $E_{j,t}^{(i,\ell)}$-pseudocomponents $M''$ in \linebreak $G[Y^{(i,\lvert V(T) \rvert+1,s+2)}]$ such that $V(M'')$ is contained in some $s$-segment in $\Se_j^\circ$ whose level equals the color of $M''$, $\sigma(M'') \leq \sigma(M)$, $V(M'') \cap X_{t''} \neq \emptyset$, and \linebreak $A_{L^{(i,\lvert V(T) \rvert+1,s+2)}}(V(M'')) \cap X_{V(T_{t''})}-X_{t''} \neq \emptyset$,
									\item there exists $x_\ell \in \{u,v\}$ such that $A_{L^{(i,\lvert V(T) \rvert+1,s+2)}}(V(M_{x_\ell})) \cap (X_{V(T_{t})}-X_{t}) \cap X_{V(T_{t_1})}-X_{t_1} = \emptyset$, and $A_{L^{(i,\lvert V(T) \rvert+1,s+2)}}(V(M_{x_\ell})) \cap (X_{V(T_{t})}-X_{t}) \cap X_{V(T_{t_{-\beta_{u,v,\ell}}})}-(X_{t_{-\beta_{u,v,\ell}}} \cup I_j^\circ) = \emptyset$, 
								\end{itemize}
							\item there exists a monochromatic $E_{j,t}^{(i)}$-pseudocomponent $M^*$ in $G[Y^{(i,\lvert V(T) \rvert+1,s+2)}]$ with $\sigma(M^*) \leq \sigma(M)$ such that $M^*$ is contained in some $s$-segment in $\Se_{j}^\circ$ whose level equals the color of $M^*$ satisfying that $A_{L^{(i,\lvert V(T) \rvert+1,s+2)}}(V(M^*)) \cap X_{V(T_{t})}-X_{t} \subseteq X_{V(T_{t''})}-(X_{t''} \cup Z_t)$.
						\end{itemize}
					\item There do not exist $q \in V(T)$ with $T_{j,t} \subseteq T_{j,q}$ and $i_q<i_t$, a witness $q' \in \partial T_{j,q}$ for $X_{q'} \cap I_j \subseteq W_3^{(i_{q},\ell')}$ for some $\ell' \in [0,\lvert V(T) \rvert]$, and a monochromatic $E_{j,q}^{(i_{q})}$-pseudocomponent in $G[Y^{(i_{q},\lvert V(T) \rvert+1,s+2)}]$ intersecting $X_{q'}$ and $\{u,v\}$.
				\end{itemize}
	\end{itemize}
\hspace*{4mm}
\noindent\textbf{Stage: Moving to the Next Node in the Tree:}
See \cref{subsec:alog} for details.\\
\hspace*{4mm}
\noindent\textbf{Stage: Building a New Fence:}
See \cref{subsec:alog} for details.

\bigskip

We now define terminology that is related to some conditions mentioned in the algorithm. For any $t \in V(T)$, $t' \in V(T_t)$, $j \in [\lvert \V \rvert-1]$, $\alpha_0 \in [0,w_0]$, $\alpha_1,\alpha_2 \in [w_0]$ with $\alpha_0<\alpha_1 \leq \alpha_2$, $k \in {\mathbb N}$, $k' \in {\mathbb N}_0$, $\ell \in [-1,\lvert V(T) \rvert+1]$, $\xi \in [0,s+2]$ and $\xi' \in [0,\lvert V(G) \rvert]$, we define an \mathdef{$(\alpha_0,\alpha_1,\alpha_2,k,k')$}{blocker} \emph{for $(j,t,\ell,\xi,\xi',t')$} to be a parade $(t_{-k'},t_{-k'+1},\dots,t_{-1},t_1,t_2,\dots,t_{k})$ such that the following hold:
	\begin{itemize}
		\item For every $\beta \in [w_0]$, let $M_\beta$ be the $\Se_j^\circ$-related monochromatic $E_{j,t}^{(i_t,\xi')}$-pseudocomponent in $G[Y^{(i_t,\ell,\xi)}]$ such that $\sigma(M_\beta)$ is the $\beta$-th smallest among all $\Se_j^\circ$-related monochromatic $E_{j,t}^{(i_t,\xi')}$-pseudocomponents $M$ in $G[Y^{(i_t,\ell,\xi)}]$ intersecting $X_{t'}$ with $A_{L^{(i_t,\ell,\xi)}}(V(M)) \cap X_{V(T_{t'})}-X_{t'} \neq \emptyset$ (if there are less than $\beta$ such $M$, then let $M_\beta=\emptyset$).
		\item If $k' \neq 0$, then $t_{-k'} \in V(T_{t'})-\{t'\}$.
		\item $t_1 \in V(T_{t'})-\{t'\}$.
		\item For every distinct $\ell_1,\ell_2 \in [-k',k]-\{0\}$, $X_{t_{\ell_1}} \cap I_j - X_{t'}$ and $X_{t_{\ell_2}} \cap I_j - X_{t'}$ are disjoint non-empty sets.
		\item $\bigcup_{\beta=1}^{\alpha_0}A_{L^{(i_t,\ell,\xi)}}(V(M_\beta)) \cap X_{V(T_t)}-X_t \subseteq (X_{V(T_{t_k})}-X_{t_k}) \cap I_j^\circ$.
		\item There exists $\alpha^* \in \{\alpha_1,\alpha_2\}$ such that $A_{L^{(i_t,\ell,\xi)}}(V(M_{\alpha^*})) \cap (X_{V(T_{t})}-X_{t}) \cap X_{V(T_{t_1})}-X_{t_1} = \emptyset$, and if $k' \neq 0$, then $A_{L^{(i_t,\ell,\xi)}}(V(M_{\alpha^*})) \cap (X_{V(T_{t})}-X_{t}) \cap X_{V(T_{t_{-k'}})}-(X_{t_{-k'}} \cup I_j^\circ) = \emptyset$.
	\end{itemize}

\begin{claim} \label{claim:blocker2-0}
Let $j \in [\lvert \V \rvert-1]$.
Let $t \in V(T)$ and let $t' \in \partial T_{j,t}$ such that $t'$ is a witness for $X_{t'} \cap I_j \subseteq W_3^{(i_t,\ell)}$ for some $\ell \in [0,\lvert V(T) \rvert]$.
Let $M'_1,M'_2,\dots,M'_{k'}$ (for some $k' \in {\mathbb N}$) be the $\Se_j^\circ$-related monochromatic $E_{j,t'}^{(i_{t'})}$-pseudocomponents in $G[Y^{(i_{t'},-1,0)}]$ such that for every $\alpha \in [k']$, $V(M'_\alpha) \cap X_{t'} \neq \emptyset$ and $A_{L^{(i_{t'},-1,0)}}(V(M'_\alpha)) \cap X_{V(T_{t'})}-X_{t'} \neq \emptyset$.
Assume that $\sigma(M'_1)<\sigma(M'_2)<\dots<\sigma(M'_{k'})$.
Let $V(M'_\gamma)=\emptyset$ for every $\gamma \geq k'+1$.
Let $\alpha_1' \in [w_0]$ and $\alpha_2' \in [\alpha_1'+1,k']$.

Let $M^{(0)}_1,M^{(0)}_2,\dots,M^{(0)}_{k^{(0)}}$ (for some $k^{(0)} \in {\mathbb N}$) be the $\Se_j^\circ$-related monochromatic \linebreak  $E_{j,t}^{(i_t)}$-pseudocomponents in $G[Y^{(i_t,\lvert V(T) \rvert+1,s+2)}]$ such that for every $\alpha \in [k^{(0)}]$, we have \linebreak $A_{L^{(i_t,\lvert V(T) \rvert+1,s+2)}}(V(M^{(0)}_\alpha)) \cap X_{V(T_{t})}-X_{t} \neq \emptyset$.
Assume that $\sigma(M^{(0)}_1)<\sigma(M^{(0)}_2)<\dots<\sigma(M^{(0)}_{k^{(0)}})$.
Let $\beta_0$ be the minimum such that $M^{(0)}_{\beta_0} \subseteq M'_{\alpha_1'}$.
Let $\beta_1 \in [\lvert V(G) \rvert]$ be the integer such that the monochromatic $E_{j,t}^{(i_t,\beta_1-1)}$-pseudocomponent in $G[Y^{(i_t,\lvert V(T) \rvert+1,s+2)}]$ containing $M^{(0)}_{\beta_0}$ has the $\beta_1$-th smallest $\sigma$-value among all monochromatic $E_{j,t}^{(i_t,\beta_1-1)}$-pseudocomponents in $G[Y^{(i_t,\lvert V(T) \rvert+1,s+2)}]$.
Let $\beta_2$ be the largest element in $[0,\beta_1-1]$ such that the monochromatic $E_{j,t}^{(i_t,\beta_2-1)}$-pseudocomponent $M$ in $G[Y^{(i_t,\lvert V(T) \rvert+1,s+2)}]$ with the $\beta_2$-th smallest $\sigma$-value among all monochromatic $E_{j,t}^{(i_t,\beta_2-1)}$-pseudocomponents in $G[Y^{(i_t,\lvert V(T) \rvert+1,s+2)}]$ satisfies that $M$ is $\Se_j^\circ$-related, $V(M) \cap X_{t'} \neq \emptyset$ and $A_{L^{(i_t,\lvert V(T) \rvert+1,s+2)}}(V(M)) \cap X_{V(T_{t'})}-X_{t'} \neq \emptyset$.
Let $M_1,M_2,\dots,M_k$ (for some $k \in {\mathbb N}$) be the $\Se_j^\circ$-related monochromatic $E_{j,t}^{(i_t,\beta_2-1)}$-pseudocomponents in $G[Y^{(i_t,\lvert V(T) \rvert+1,s+2)}]$ such that for every $\alpha \in [k]$, $V(M_\alpha) \cap X_{t'} \neq \emptyset$ and $A_{L^{(i_t,\lvert V(T) \rvert+1,s+2)}}(V(M_\alpha)) \cap X_{V(T_{t'})}-X_{t'} \neq \emptyset$.
Assume that $\sigma(M_1)<\sigma(M_2)<\dots<\sigma(M_k)$.
Let $V(M_\gamma)=\emptyset$ for every $\gamma \geq k+1$.

For $\gamma \in \{1,2\}$, let $\Q_\gamma = \{M_\ell: M_\ell \subseteq M'_{\alpha_\gamma'}\}$.
For every $Q \in \Q_1 \cup \Q_2$, let $\alpha_Q$ be the index such that $Q=M_{\alpha_Q}$.

Assume $V(M'_{\alpha_1'}) \cap X_t \neq \emptyset$, and for every $Q_1 \in \Q_1$ and $Q_2 \in \Q_2$, there exists an $(\beta_2,\alpha_{Q_1},\alpha_{Q_2}, \allowbreak \psi_{2}(\alpha_{Q_1},\alpha_{Q_2}),\psi_{3}(\alpha_{Q_1},\alpha_{Q_2}))$-blocker for $(j,t,\lvert V(T) \rvert+1,s+2,\beta_2-1,t')$.

If either 
	\begin{itemize}
		\item[(i)] $\alpha_1' \leq \alpha_Q$ for every $Q \in \Q_1 \cup \Q_2$, or
		\item[(ii)] for every $\beta \in [\alpha_1'-1]$, $(t',M'_\beta)$ is a nice pair, 
	\end{itemize}
then there exists an $(\alpha'_1-1,\alpha'_1,\alpha_2',\psi_2(\alpha'_1,\alpha'_2),\psi_3(\alpha'_1,\alpha'_2)-\psi_3(0,0))$-blocker for $(j,t',-1,0,0,t')$.
\end{claim}

\begin{proof}
We first prove that condition (ii) implies condition (i).
	
Suppose that condition (ii) holds but condition (i) does not hold.
So $\alpha_1' \geq 2$.
Let $\beta \in [\alpha_1'-1]$.
Let $t^* \in V(T)$ be the node of $T$ such that $t' \in V(T_{j,t^*})$ and some $E_{j,t^*}^{(i_{t^*})}$-pseudocomponent in $G[Y^{(i_{t^*},\lvert V(T) \rvert+1,s+2)}]$ intersects $V(M'_\beta)$ and $X_q$ for some witness $q \in \partial T_{j,t^*} \cup \{t^*\}$ for $X_q \cap I_j \subseteq W_3^{(i_{t^*},\ell)}$ for some $\ell \in [-1,\lvert V(T) \rvert]$, and subject to this, $i_{t^*}$ is minimum.
Note that $t$ is a candidate for $t^*$, so $t' \neq t^*$.
Since $V(M'_{\alpha_1'}) \cap X_t \neq \emptyset$, $V(M'_\beta) \cap X_t \neq \emptyset$.
By \cref{claim:orderpreserving}, there exists $\alpha^{(0)}_\beta \in [\beta_0-1]$ such that $M^{(0)}_{\alpha^{(0)}_\beta} \subseteq M'_\beta$ and $\sigma(M^{(0)}_{\alpha^{(0)}_\beta})=\sigma(M'_\beta)$.
By \cref{claim:isolation,claim:pseudocomptouchingW3}, there exists a monochromatic $E_{j,t^*}^{(i_{t^*},\lvert V(G) \rvert)}$-pseudocomponent $R_\beta$ in $G[Y^{(i_{t^*},\lvert V(T) \rvert+1,s+2)}]$ intersecting $M^{(0)}_{\alpha^{(0)}_\beta}$ with $\sigma(R_\beta)=\sigma(M^{(0)}_{\alpha^{(0)}_\beta})$. 
Since $R_\beta$ intersects $V(M'_\beta)$, by \cref{claim:underassumptouch}, $R_\beta$ intersects $X_{V(T_{t'})}$ and satisfies $A_{L^{(i_{t^*},\lvert V(T) \rvert+1,s+2)}}(V(R_\beta)) \cap X_{V(T_{t'})}-X_{t'} \neq \emptyset$.
Since $R_\beta$ intersects $X_{V(T_{t'})}$ and $\sigma(R_\beta)=\sigma(M^{(0)}_{\alpha^{(0)}_\beta})$ and $V(M^{(0)}_{\alpha^{(0)}_\beta}) \cap X_t \neq \emptyset$, we have $R_\beta$ intersects $X_{t'}$.
Since $\alpha^{(0)}_\beta<\beta_0$, $R_\beta$ is contained in a monochromatic $E_{j,t}^{(i_{t},\beta_1-2)}$-pseudocomponent $R'_\beta$ in $G[Y^{(i_{t},\lvert V(T) \rvert+1,s+2)}]$.
So $R_\beta' \supseteq M^{(0)}_{\alpha^{(0)}_\beta}$.
Since $A_{L^{(i_{t^*},\lvert V(T) \rvert+1,s+2)}}(V(R_\beta)) \cap X_{V(T_{t'})}-X_{t'} \neq \emptyset$ and $A_{L^{(i_{t^*},\lvert V(T) \rvert+1,s+2)}}(V(R_\beta)) \cap (X_{V(T_{t'})}-X_{t'}) \cap Z_{t^*} = \emptyset$ and $t' \in V(T_{j,t^*})$, $A_{L^{(i_{t},\lvert V(T) \rvert+1,s+2)}}(V(R'_\beta)) \cap X_{V(T_{t'})}-X_{t'} \neq \emptyset$.
Since $V(R'_\beta) \cap X_{t'} \supseteq V(R_\beta) \cap X_{t'} \neq \emptyset$ and $A_{L^{(i_{t},\lvert V(T) \rvert+1,s+2)}}(V(R'_\beta)) \cap X_{V(T_{t'})}-X_{t'} \neq \emptyset$, by the maximality of $\beta_2$, there exists $\alpha_\beta \in [\beta_2-1]$ such that $R'_\beta = M_{\alpha_\beta}$.
Since $\alpha^{(0)}_\beta<\beta_0$, $\alpha_\beta<\alpha_Q$ for every $Q \in \Q_1 \cup \Q_2$.
Hence there exists a mapping $\iota$ that maps each element $\beta \in [\alpha_1'-1]$ to $\alpha_\beta$.
Note that $\iota$ is an injection.
So there are at least $\alpha_1'-1$ different numbers strictly smaller than $\min_{Q \in \Q_1 \cup \Q_2}\alpha_Q$.
Hence $\alpha_1' \leq \min_{Q \in \Q_1 \cup Q_2}\alpha_Q$ and condition (i) holds, a contradiction.

Therefore, to prove this claim, it suffices to assume the case that condition (i) holds.
	
For every $Q_1 \in \Q_1$ and $Q_2 \in \Q_2$, let $\kappa_{Q_1,Q_2} = \psi_{2}(\alpha_{Q_1},\alpha_{Q_2})$ and $\xi_{Q_1,Q_2} = \psi_{3}(\alpha_{Q_1},\alpha_{Q_2})$, and let $t^{Q_1,Q_2} = (t^{Q_1,Q_2}_{-\xi_{Q_1,Q_2}},\dots,t^{Q_1,Q_2}_{-1},t^{Q_1,Q_2}_1,\dots,t^{Q_1,Q_2}_{\kappa_{Q_1,Q_2}})$ be a $(\beta_2,\alpha_{Q_1},\alpha_{Q_2},\kappa_{Q_1,Q_2},\xi_{Q_1,Q_2})$-blocker for $(j,t,\lvert V(T) \rvert+1,s+2,\beta_2-1,t')$ such that $\sum_xi_x$ is maximum, where the sum is over all entries of $t^{Q_1,Q_2}$. 
We also denote $\kappa_{Q_1,Q_2}$ by $\kappa_{\alpha_{Q_1},\alpha_{Q_2}}$, and denote $\xi_{Q_1,Q_2}$ by $\xi_{\alpha_{Q_1},\alpha_{Q_2}}$.
For every $Q_1 \in \Q_1$ and $Q_2 \in \Q_2$, let $\alpha^*_{Q_1,Q_2} \in \{\alpha_{Q_1},\alpha_{Q_2}\}$ such that $A_{L^{(i_t,\lvert V(T) \rvert+1,s+2)}}(V(M_{\alpha^*_{Q_1,Q_2}})) \cap (X_{V(T_{t})}-X_{t}) \cap X_{V(T_{t_{-\xi_{Q_1,Q_2}}})}-(X_{t_{-\xi_{Q_1,Q_2}}} \cup I_j^\circ) = \emptyset$, and $A_{L^{(i_t,\lvert V(T) \rvert+1,s+2)}}(V(M_{\alpha^*_{Q_1,Q_2}})) \cap (X_{V(T_{t})}-X_{t}) \cap X_{V(T_{t_1})}-X_{t_1} = \emptyset$, and subject to this, $\alpha^*_{Q_1,Q_2}=\alpha_{Q_2}$ if possible.

Let $M_1'',M_2'',\dots,M_k''$  (for some $k'' \in {\mathbb N}$) be the $\Se_j^\circ$-related monochromatic \linebreak $E_{j,t}^{(i_{t},\lvert V(G) \rvert)}$-pseudocomponents in $G[Y^{(i_{t},\lvert V(T) \rvert+1,s+2)}]$ such that for every $\alpha \in [k'']$, $V(M''_\alpha) \cap X_{t'} \neq \emptyset$ and $A_{L^{(i_{t'},-1,0)}}(V(M''_\alpha)) \cap X_{V(T_{t'})}-X_{t'} \neq \emptyset$.
Assume that $\sigma(M''_1)<\sigma(M''_2)<\dots<\sigma(M'_{k''})$.
Let $V(M''_\gamma)=\emptyset$ for every $\gamma \geq k''+1$.
Let $b''$ be the sequence 
$$(\lvert A_{L^{(i_t,\lvert V(T) \rvert+1,s+2)}}(V(M''_1)) \cap X_{V(T_{t'})}-(X_{t'} \cup I_j^\circ) \rvert, \dots, \lvert A_{L^{(i_t,\lvert V(T) \rvert+1,s+2)}}(V(M''_{w_0})) \cap X_{V(T_{t'})}-(X_{t'} \cup I_j^\circ) \rvert).$$

Let $m_1 := \min_{Q \in \Q_1} \alpha_Q$.
Let $m_2 := \min_{Q \in \Q_2} \alpha_Q$.
Since $\alpha_1' \leq \alpha_Q$ for every $Q \in \Q_1 \cup \Q_2$, $\alpha'_1 \leq \min\{m_1,m_2\}$.
So $\psi_2(\alpha_1',\alpha_2') \leq \psi_2(m_1,m_2)$.

We first assume that for every $Q_2 \in \Q_2$, $\alpha^*_{M_{m_1},Q_2} = \alpha_{Q_2}$.

Note that for all $Q_2 \in \Q_2$, $\kappa_{m_1,m_2}+\xi_{m_1,m_2} \leq \kappa_{m_1,\alpha_{Q_2}} + \xi_{m_1,\alpha_{Q_2}}$, and by the choices of $t^{M_{m_1},M_{m_2}}$ and $t^{M_{m_1},Q_2}$, $(t^{M_{m_1},M_{m_2}}_{\xi_{m_1,m_2}},\dots,t^{M_{m_1},M_{m_2}}_{-1}, t^{M_{m_1},M_{m_2}}_1,\dots,t^{M_{m_1},M_{m_2}}_{\kappa_{m_1,m_2}})$ is a suffix of $(t^{M_{m_1},Q_2}_{-\xi_{m_1,\alpha_{Q_2}}}, \dots, t^{M_{m_1},Q_2}_{-1}, \allowbreak t^{M_{m_1},Q_2}_1,\allowbreak \dots,t^{M_{m_1},Q_2}_{\kappa_{m_1,\alpha_{Q_2}}})$.
Since for every $Q_2 \in \Q_2$, $\alpha^*_{M_{m_1},Q_2} = \alpha_{Q_2}$, we know 
\begin{align*}
\bigcup_{Q_2 \in \Q_2}A_{L^{(i_{t},\lvert V(T) \rvert+1,s+2)}}(V(Q_2)) \cap (X_{V(T_{t'})} - X_{t'}) \cap X_{V(T_{t^{M_{m_1},M_{m_2}}_{1}})} - X_{t^{M_{m_1},M_{m_2}}_{1}} & = \emptyset \text{ and }\\
\bigcup_{Q_2 \in \Q_2}A_{L^{(i_{t},\lvert V(T) \rvert+1,s+2)}}(V(Q_2)) \cap (X_{V(T_{t'})} - X_{t'}) \cap X_{V(T_{t^{M_{m_1},M_{m_2}}_{-\xi_{m_1,m_2}}})} - (X_{t^{M_{m_1},M_{m_2}}_{-\xi_{m_1,m_2}}} \cup I_j^\circ) & = \emptyset.
\end{align*}
Since for each vertex in $(Y^{(i_{t'},-1,0)}-Y^{(i_t,\lvert V(T) \rvert+1,s+2)}) \cap I_j \cap X_{V(T_{t'})}$, it is in $X_{V(T_{t'})} \cap I_j - (X_{t'} \cup I_j^\circ)$, and when it gets colored, it decreases the lexicographic order of $b''$.
So by \cref{claim:sizebeltfull}, we loss at most $\eta_5 \cdot (f(\eta_5))^{w_0-1} \leq (f(\eta_5))^{w_0} \leq \psi_3(0,0)$ nodes in the blocker. 
That is, 
\begin{align*}
A_{L^{(i_{t'},-1,0)}}(V(M'_{\alpha_2'})) \cap (X_{V(T_{t'})} - X_{t'}) \cap (X_{V(T_{t^{M_{m_1},M_{m_2}}_{1}})} - X_{t^{M_{m_1},M_{m_2}}_{1}}) & = \emptyset\text{ and }\\
A_{L^{(i_{t'},-1,0)}}(V(M'_{\alpha_2'})) \cap (X_{V(T_{t'})} - X_{t'}) \cap X_{V(T_{t^{M_{m_1},M_{m_2}}_{-\xi_{m_1,m_2}+\psi_3(0,0)}})} - (X_{t^{M_{m_1},M_{m_2}}_{-\xi_{m_1,m_2}+\psi_3(0,0)}} \cup I_j^\circ) & = \emptyset.
\end{align*}
So $(t^{M_{m_1},M_{m_2}}_{-\xi_{m_1,m_2}+\psi_3(0,0)},\dots,t^{M_{m_1},M_{m_2}}_{-1}, t^{M_{m_1},M_{m_2}}_{1}, \dots,t^{M_{m_1},M_{m_2}}_{\kappa_{m_1,m_1}})$ contains a subsequence that is an $(\alpha'_1-1,\alpha'_1,\alpha_2',\psi_2(\alpha'_1,\alpha'_2),\psi_3(\alpha'_1,\alpha'_2)-\psi_3(0,0))$-blocker for $(j,t',-1,0,0,t')$.

Hence we may assume that there exists $Q_2^* \in \Q_2$ such that $m_1 = \alpha^*_{M_{m_1},Q_2^*} \neq \alpha_{Q_2^*}$.
Since for every $Q_1 \in \Q_1-\{M_{m_1}\}$, $\psi_{2}(\alpha_{Q_1},\alpha_{Q_2^*}) > \psi_2(m_1,\alpha_{Q_2^*}) + \psi_3(m_1,\alpha_{Q_2^*})$, we know $\alpha^*_{Q_1,Q_2^*}=\alpha_{Q_1}$, for otherwise $\alpha^*_{M_{m_1},Q_2^*}=\alpha_{Q_2^*}$ by the choice of $\alpha^*_{M_{m_1},Q_2^*}$.
So $(t^{M_{m_1},Q^*_2}_{-\xi_{m_1,\alpha_{Q_2^*}}},\dots,t^{M_{m_1},Q^*_2}_{-1},t^{M_{m_1},Q_2^*}_1,\dots,t^{M_{m_1},Q^*_2}_{\kappa_{m_1,\alpha_{Q_2^*}}})$ is a suffix of $(t^{Q_1,Q_2^*}_{-\xi_{\alpha_{Q_1},\alpha_{Q_2^*}}},\dots,t^{Q_1,Q_2^*}_{-1},t^{Q_1,Q_2^*}_1,\dots,t^{Q_1,Q_2^*}_{\kappa_{\alpha_{Q_1},\alpha_{Q_2^*}}})$ for every $Q_1 \in \Q_1$.
Note that for each vertex in $(Y^{(i_{t'},-1,0)}-Y^{(i_t,\lvert V(T) \rvert+1,s+2)}) \cap I_j \cap X_{V(T_{t'})}$, it is in $X_{V(T_{t'})} - (X_{t'} \cup I_j^\circ)$, and when it gets colored, it decreases the lexicographic order of $b''$.
So by \cref{claim:sizebeltfull}, we lose at most $\eta_5 \cdot (f(\eta_5))^{w_0-1} \leq (f(\eta_5))^{w_0} \leq \psi_3(0,0)$ nodes in the blocker. 
Hence $(t^{M_{m_1},Q_2^*}_{-\xi_{m_1,\alpha_{Q_2^*}}+\psi_3(0,0)},\dots,t^{M_{m_1},Q^*_2}_{\kappa_{m_1,\alpha_{Q_2^*}}})$ contains a subsequence that is an $(\alpha'_1-1,\alpha'_1,\alpha_2',\psi_2(\alpha'_1,\alpha'_2),\psi_3(\alpha'_1,\alpha'_2)-\psi_3(0,0))$-blocker for $(j,t',-1,0,0,t')$.
This proves the claim.
\end{proof}

\begin{claim} \label{claim:blocker2}
Let $j \in [\lvert \V \rvert-1]$.
Let $t \in V(T)$.
Let $M_1,M_2,\dots,M_k$ (for some $k \in {\mathbb N}$) be the $\Se_j^\circ$-related monochromatic $E_{j,t}^{(i_t)}$-pseudocomponents in $G[Y^{(i_t,-1,0)}]$ such that for every $\alpha \in [k]$, $V(M_\alpha) \cap X_t \neq \emptyset$ and $A_{L^{(i_t,-1,0)}}(V(M_\alpha)) \cap X_{V(T_t)}-X_t \neq \emptyset$.
Assume that $\sigma(M_1)<\sigma(M_2)<\dots<\sigma(M_k)$.
Let $V(M_\gamma)=\emptyset$ for every $\gamma \geq k+1$.
Let $\alpha_1,\alpha_2 \in [2,k]$ with $\alpha_1<\alpha_2$.

Assume that the following hold.
	\begin{itemize}
		\item for every $\alpha \in [2,\alpha_1]$ and $\alpha' \in [\alpha+1,k]$, if:
			\begin{itemize}
				\item either $\alpha \leq \alpha_1-1$ or $(\alpha,\alpha')=(\alpha_1,\alpha_2)$,
				\item $M_\alpha$ and $M_{\alpha'}$ have the same color, and 
				\item the $s$-segment in $\Se_j^\circ$ containing $V(M_\alpha)$ whose level equals the color of $M_\alpha$ contains $V(M_{\alpha'})$,
			\end{itemize}
			then either:
			\begin{itemize}
				\item there exists an $(\alpha-1,\alpha,\alpha',\psi_2(\alpha,\alpha'), \psi_3(\alpha,\alpha')-\psi_3(0,0))$-blocker for $(j,t,-1,0,0,t)$, or  
				\item there exist $t^* \in V(T)$ with $i_{t^*}<i_t$ and $t \in V(T_{j,t^*})$, a witness $q' \in \partial T_{j,t^*}$ for $X_{q'} \cap I_j \subseteq W_3^{(i_{t^*},\ell)}$ for some $\ell \in [0,\lvert V(T) \rvert]$, and a monochromatic $E_{j,t^*}^{(i_{t^*})}$-pseudocomponent in $G[Y^{(i_{t^*},\lvert V(T) \rvert+1,s+2)}]$ intersecting $X_{q'}$ and $V(M_\alpha) \cup V(M_{\alpha'})$. 
			\end{itemize}
		\item $(z,M_z)$ is a nice pair and an outer-safe pair for every $z \in V(T)$ and monochromatic $E_{j,z}^{(i_z)}$-pseudocomponent $M_z$ in $G[Y^{(i_z,-1,0)}]$ with $V(M_z) \cap X_z \neq \emptyset$ and $\sigma(M_z)<\sigma(M_{\alpha_1})$.
	\end{itemize}
Then either:
	\begin{itemize}
		\item there exists no $c$-monochromatic path $P$ in $G$ from $V(M_{\alpha_1}) \cap V(P)$ to $V(M_{\alpha_2}) \cap V(P)$ internally disjoint from $\bigcup_{\gamma=1}^{w_0}V(M_{\gamma})$ such that $V(P) \cap A_{L^{(i_t,-1,0)}}(V(M_{\alpha_1})) \cap X_{V(T_t)}-X_t \neq \emptyset$, or 
		\item $(t,M_{\alpha_1})$ is not an outer-safe pair. 
	\end{itemize}
\end{claim}

\begin{proof}
Suppose that this claim does not hold.
Among all counterexamples, we choose $(t,M_{\alpha_1})$ such that $\sigma(M_{\alpha_1})$ is as small as possible, and subject to this, $i_t$ is as large as possible.

Hence $(t,M_{\alpha_1})$ is an outer-safe pair, and there exists a $c$-monochromatic path $P$ in $G$ from $V(M_{\alpha_1}) \cap V(P)$ to $V(M_{\alpha_2}) \cap V(P)$ internally disjoint from $\bigcup_{\gamma=1}^{w_0}V(M_{\gamma})$ such that \linebreak $V(P) \cap A_{L^{(i_t,-1,0)}}(V(M_{\alpha_1})) \cap X_{V(T_t)}-X_t \neq \emptyset$. 

Since for every $\alpha \in [2,\alpha_1-1]$, $(t,M_\alpha)$ is an outer-safe pair, applying this claim to $M_\alpha$ implies that for every $\alpha \in [2,\alpha_1-1]$ and $\alpha' \in [\alpha+1,k]$, there exists no $c$-monochromatic path $P'$ in $G$ from $V(M_{\alpha}) \cap V(P')$ to $V(M_{\alpha'}) \cap V(P')$ internally disjoint from $\bigcup_{\gamma=1}^{w_0}V(M_{\gamma})$ such that $V(P') \cap A_{L^{(i_t,-1,0)}}(V(M_{\alpha})) \cap X_{V(T_t)}-X_t \neq \emptyset$. 
So we have the following, where the case $\alpha=1$ follows from \cref{claim:1stdone}.
	\begin{itemize}
		\item[(a)] For every $\alpha \in [\alpha_1-1]$ and $\alpha' \in [\alpha+1,k]$, there exists no $c$-monochromatic path $P'$ in $G$ from $V(M_{\alpha}) \cap V(P')$ to $V(M_{\alpha'}) \cap V(P')$ internally disjoint from $\bigcup_{\gamma=1}^{w_0}V(M_{\gamma})$ such that $V(P') \cap A_{L^{(i_t,-1,0)}}(V(M_{\alpha})) \cap X_{V(T_t)}-X_t \neq \emptyset$. 
	\end{itemize}

Suppose there exist $\alpha \in [2,\alpha_1]$ and $\alpha' \in [\alpha+1,k]$ with either $\alpha \leq \alpha_1-1$ or $(\alpha,\alpha')=(\alpha_1,\alpha_2)$, such that $M_\alpha$ and $M_{\alpha'}$ have the same color, and the $s$-segment in $\Se_j^\circ$ containing $V(M_\alpha)$ whose level equals the color of $M_\alpha$ contains $V(M_{\alpha'})$, and there exists no $(\alpha-1,\alpha,\alpha',\psi_2(\alpha,\alpha'), \psi_3(\alpha,\alpha')-\psi_3(0,0))$-blocker for $(j,t,-1,0,0,t)$, and for every $t^* \in V(T)$ with $i_{t^*}<i_t$ and $t \in V(T_{j,t^*})$, if there exist a witness $q' \in \partial T_{j,t^*}$ for $X_{q'} \cap I_j \subseteq W_3^{(i_{t^*},\ell)}$ for some $\ell \in [0,\lvert V(T) \rvert]$ and a monochromatic $E_{j,t^*}^{(i_{t^*})}$-pseudocomponent in $G[Y^{(i_{t^*},\lvert V(T) \rvert+1,s+2)}]$ intersecting $X_{q'}$ and $V(M_\alpha) \cup V(M_{\alpha'})$, then $t \in \partial T_{j,t^*}$.
By our assumption, such $t^*$ exists, and we choose $t^*$ such that $i_{t^*}$ is as small as possible.
So $t \in \partial T_{j,t^*}$.
By \cref{claim:isolation,claim:pseudocomptouchingW3}, $t$ is a witess for $X_t \cap I_j \subseteq W_3^{(i_{t^*},\ell)}$ for some $\ell \in [0,\lvert V(T) \rvert]$.
Let $t_1^*$ be the node of $T$ with $t \in V(T_{j,t_1^*})$ such that some monochromatic $E_{j,t_1^*}^{(i_{t_1^*})}$-pseudocomponent in $G[Y^{(i_{t_1^*},\lvert V(T) \rvert+1,s+2)}]$ intersecting $X_{q'}$ and $V(M_{1})$ for some witness $q' \in \partial T_{j,t_1^*}$ for $X_{q'} \cap I_j \subseteq W_3^{(i_{t_1^*},\ell)}$ for some $\ell \in [0,\lvert V(T) \rvert]$, and subject to this, $i_{t_1^*}$ is minimum.
Since $V(M_1) \cap X_t \neq \emptyset$, $t^*$ is a candidate of $t_1^*$.
So $i_{t_1^*} \leq i_{t^*}$ and hence $t_1^* \neq t$.
Since $\sigma(M_1)<\sigma(M_{\alpha_1})$, $(t,M_1)$ is a nice pair, so by \cref{claim:underassumptouch2}, there exists a monochromatic $E_{j,t_1^*}^{(i_{t_1^*},\lvert V(G) \rvert)}$-pseudocomponent $M^*$ in $G[Y^{(i_{t_1^*},\lvert V(T) \rvert+1,s+2)}]$ intersecting $V(M_1)$ such that $\sigma(M^*)=\sigma(M_1)$, $V(M^*) \cap X_{t} \neq \emptyset$ and satisfies $A_{L^{(i_{t_1^*},\lvert V(T) \rvert+1,s+2)}}(V(M^*)) \cap X_{V(T_t)}-X_t \neq \emptyset$.
Since $i_{t^*}<i_t$ and $\alpha \neq \alpha'$, there exists no element in $E_{j,t^*}^{(i_{t^*},\lvert V(G) \rvert)}$ intersecting both $M_\alpha$ and $M_{\alpha'}$.
Since $M_\alpha \cap X_t \neq \emptyset \neq M_{\alpha'} \cap X_{t}$ and $t \in \partial T_{j,t^*}$ is a witness for $X_t \cap I_j \subseteq W_3^{(i_{t^*},\ell)}$ for some $\ell \in [0,\lvert V(T) \rvert]$, by the construction of $E_{j,t^*}^{(i_{t^*},\lvert V(G) \rvert)}$ and the existence of $M_1$ and the minimality of $t^*$, we know $V(M_{\alpha} \cup M_{\alpha'}) \cap X_{t^*} \neq \emptyset$ (so $V(M_{\alpha}) \cap X_{t^*} \neq \emptyset$ since $\alpha<\alpha'$) and there exist blockers for $(j,t^*,\lvert V(T) \rvert+1,s+2,\beta,t)$ (for some $\beta \in [0,\lvert V(G) \rvert]$) as stated in the assumption of \cref{claim:blocker2-0} (where the $M'_{\alpha'_1}$ and $M'_{\alpha'_2}$ in \cref{claim:blocker2-0} are replaced by $M_\alpha$ and $M_{\alpha'}$, respectively).
Since $\alpha \leq \alpha_1$, the assumption of this claim implies that condition (ii) in \cref{claim:blocker2-0} holds. 
So there is an $(\alpha-1,\alpha,\alpha',\psi_2(\alpha,\alpha'),\psi_3(\alpha,\alpha')-\psi_3(0,0))$-blocker for $(j,t,-1,0,0,t)$ by \cref{claim:blocker2-0}, a contradiction.

Therefore, we have 
	\begin{itemize}
		\item[(b)] for every $\alpha \in [2,\alpha_1]$ and $\alpha' \in [\alpha+1,k]$, if:
		\begin{itemize}
			\item either $\alpha \leq \alpha_1-1$ or $(\alpha,\alpha')=(\alpha_1,\alpha_2)$,
			\item $M_\alpha$ and $M_{\alpha'}$ have the same color, and 
			\item the $s$-segment in $\Se_j^\circ$ containing $V(M_\alpha)$ whose level equals the color of $M_\alpha$ contains $V(M_{\alpha'})$,
		\end{itemize}
		then either:
		\begin{itemize}
			\item there exists an $(\alpha-1,\alpha,\alpha',\psi_2(\alpha,\alpha'), \psi_3(\alpha,\alpha')-\psi_3(0,0))$-blocker for $(j,t,-1,0,0,t)$, or 
			\item there exist $t^* \in V(T)$ with $i_{t^*}<i_t$ and $t \in V(T_{j,t^*})-\partial T_{j,t^*}$, a witness $q' \in \partial T_{j,t^*}$ for $X_{q'} \cap I_j \subseteq W_3^{(i_{t^*},\ell)}$ for some $\ell \in [0,\lvert V(T) \rvert]$, and a monochromatic $E_{j,t^*}^{(i_{t^*})}$-pseudocomponent in $G[Y^{(i_{t^*},\lvert V(T) \rvert+1,s+2)}]$ intersecting $X_{q'}$ and $V(M_\alpha) \cup V(M_{\alpha'})$.
		\end{itemize}
	\end{itemize}

Now we prove that:
	\begin{itemize}
		\item[(c)] there exist no $t^* \in V(T)$ with $i_{t^*}<i_t$ and $t \in V(T_{j,t^*})-\partial T_{j,t^*}$, a witness $q' \in \partial T_{j,t^*}$ for $X_{q'} \cap I_j \subseteq W_3^{(i_{t^*},\ell)}$ for some $\ell \in [0,\lvert V(T) \rvert]$, and a monochromatic $E_{j,t^*}^{(i_{t^*})}$-pseudocomponent in $G[Y^{(i_{t^*},\lvert V(T) \rvert+1,s+2)}]$ intersecting $X_{q'}$ and $V(M_{\alpha_1}) \cup V(M_{\alpha_2})$.
	\end{itemize}

Suppose to the contrary that (c) does not hold.
So there exist $t^* \in V(T)$ with $i_{t^*}<i_t$ and $t \in V(T_{j,t^*})-\partial T_{j,t^*}$, a witness $q' \in \partial T_{j,t^*}$ for $X_{q'} \cap I_j \subseteq W_3^{(i_{t^*},\ell)}$ for some $\ell \in [0,\lvert V(T) \rvert]$, and a monochromatic $E_{j,t^*}^{(i_{t^*})}$-pseudocomponent in $G[Y^{(i_{t^*},\lvert V(T) \rvert+1,s+2)}]$ intersecting $X_{q'}$ and $V(M_{\alpha_1}) \cup V(M_{\alpha_2})$.
We choose $t^*$ such that $i_{t^*}$ is as small as possible.
For each $\gamma \in [2]$, let $t^*_{\alpha_\gamma}$ be the node of $T$ with $t \in V(T_{j,t_{\alpha_\gamma}^*})$ such that some monochromatic $E_{j,t_{\alpha_\gamma}^*}^{(i_{t_{\alpha_\gamma}^*})}$-pseudocomponent in $G[Y^{(i_{t_{\alpha_\gamma}^*},\lvert V(T) \rvert+1,s+2)}]$ intersecting $X_{q'}$ and $V(M_{\alpha_\gamma})$ for some witness $q' \in \partial T_{j,t_{\alpha_\gamma}^*}$ for $X_{q'} \cap I_j \subseteq W_3^{(i_{t_{\alpha_\gamma}^*},\ell)}$ for some $\ell \in [0,\lvert V(T) \rvert]$, and subject to this, $i_{t_{\alpha_\gamma}^*}$ is minimum.
Note that $t^*$ is a candidate for $t^*_{\alpha_\gamma}$ for some $\gamma \in [2]$.
So $i_{t^*} \geq \min\{i_{t^*_{\alpha_1}},i_{t^*_{\alpha_2}}\}$.
Let $t_0 \in \{t^*_{\alpha_1},t^*_{\alpha_2}\}$ with $i_{t_0} = \min\{i_{t^*_{\alpha_1}},i_{t^*_{\alpha_2}}\}$.
If $i_{t_0}<i_{t^*}$, then since $t \in V(T_{j,t^*})-\partial T_{j,t^*}$, $t \in V(T_{j,t_0})-\partial T_{j,t_0}$, so $t_0$ is a candidate for $t^*$.
Hence $t^*=t^*_{\alpha_\gamma}$ for some $\gamma \in [2]$.
By \cref{claim:isolation,claim:pseudocomptouchingW3} and by tracing along $P$, we have $t^*=t^*_{\alpha_1}=t^*_{\alpha_2}$.
Since $\alpha_1 \neq \alpha_2$ and $(t,M_{\alpha_1})$ is outer-safe, by (b) and \cref{claim:underassumpunder1}, there exist $t' \in \partial T_{j,t^*_{\alpha_1}} \cap V(T_t)$, $u,v \in X_{t'}$, a $c$-monochromatic path $P'$ in $G[X_{V(T_{t'})}]$ but not in $G[Y^{(i_t,-1,0)}]$ from $u$ to $v$ internally disjoint from $X_{t'}$ such that $t'$ is a witness for $X_{t'} \cap I_j \subseteq W_3^{(i_{t^*},\ell)}$ for some $\ell \in [0,\lvert V(T) \rvert]$, and there exists a monochromatic $E_{j,t^*}^{(i_{t^*},\lvert V(G) \rvert)}$-pseudocomponent $M^*$ in $G[Y^{(i_{t^*},\lvert V(T) \rvert+1,s+2)}]$ with $\sigma(M^*)=\sigma(M_{\alpha_1})$ containing $u$ but not containing $v$.
Since $t \in V(T_{j,t^*})-\partial T_{j,t^*}$, $t' \in V(T_t)-\{t\}$.
Let $M_0$ be the monochromatic $E_{j,t'}^{(i_{t'})}$-pseudocomponent in $G[Y^{(i_{t'},-1,0)}]$ containing $M^*$.
Since $t' \in \partial T_{j,t^*}$, by \cref{claim:isolation,claim:pseudocomptouchingW3}, $t^*$ is the node of $T$ with $t' \in V(T_{j,t^*})$ such that there exist a witness $q'' \in \partial T_{j,t^*}$ for $X_{q''} \cap I_j \subseteq W_3^{(i_{t^*},\ell)}$ for some $\ell \in [0,\lvert V(T) \rvert]$, and a monochromatic $E_{j,t^*}^{(i_{t^*})}$-pseudocomponent in $G[Y^{(i_{t^*},\lvert V(T) \rvert+1,s+2)}]$ intersecting $X_{q''}$ and $V(M_0)$, and subject to this, $i_{t^*}$ is minimum.

Note that there exists a tuple $(q,q^*,Q,u_q,v_q,P_q,Q^*)$ satisfying the following statement, since $(t',t^*,M_0,u,v,P',M^*)$ is a candidate for $(q,q^*,Q,u_q,v_q,P_{q},Q^*)$:
$q \in \partial T_{j,q^*} \cap V(T_t)-\{t\}$, $Q$ is a monochromatic $E_{j,q}^{(i_q)}$-pseudocomponent in $G[Y^{(i_q,-1,0)}]$ intersecting $X_q$, $u_q,v_q \in X_{q}$, $P_q$ is a $c$-monochromatic path in $G[X_{V(T_{q})}]$ but not in $G[Y^{(i_{q^*},\lvert V(T) \rvert+1,s+2)}]$ from $u_q$ to $v_q$ internally disjoint from $X_{q}$, and $Q^*$ is a monochromatic $E_{j,q^*}^{(i_{q^*},\lvert V(G) \rvert)}$-pseudocomponent in $G[Y^{(i_{q^*},\lvert V(T) \rvert+1,s+2)}]$ with $\sigma(Q^*) \leq \sigma(M_{\alpha_1})$ containing $u_q$ but not containing $v_q$ such that $Q \supseteq Q^*$, $T_{j,t^*} \subseteq T_{j,q^*}$ and $q$ is a witenss for $X_q \cap I_j \subseteq W_3^{(i_{q^*},\ell')}$ for some $\ell' \in [0,\lvert V(T) \rvert]$, and $q^*$ is the node with $i_{q^*}<i_{q}$ and $q \in V(T_{j,q^*})$ such that there exist a witness $q'' \in \partial T_{j,q^*}$ for $X_{q''} \cap I_j \subseteq W_3^{(i_{q^*},\ell)}$ for some $\ell \in [0,\lvert V(T) \rvert]$ and a monochromatic $E_{j,q^*}^{(i_{q^*})}$-pseudocomponent in $G[Y^{(i_{q^*},\lvert V(T) \rvert+1,s+2)}]$ intersecting $X_{q''}$ and $V(Q)$, and subject to this, $i_{q^*}$ is minimum.
We further choose them such that $\sigma(Q^*)$ is as small as possible.
Note that $\sigma(Q) \leq \sigma(Q^*) \leq \sigma(M_{\alpha_1})$.

Suppose $V(P_q) \subseteq Y^{(i_q,-1,0)}$.
Since $\sigma(Q) \leq \sigma(M_{\alpha_1})$, we can apply \cref{claim:underassumpnounder} by taking $(t,M)$ to be $(q,Q)$.
By applying \cref{claim:underassumpnounder} by taking $(t,M)$ to be $(q,Q)$, the non-existence of the node $t'$ mentioned in the conclusion of \cref{claim:underassumpnounder} and the existence of $q$ here imply that $\sigma(Q^*)>\sigma(Q)$.
So by \cref{claim:isolation,claim:pseudocomptouchingW3,claim:orderpreserving}, there exists a monochromatic $E_{j,q^*}^{(i_{q^*})}$-pseudocomponent $Q''$ in $G[Y^{(i_{q^*},\lvert V(T) \rvert+1,s+2)}]$ with $\sigma(Q'')=\sigma(Q)<\sigma(Q^*)$.
Since $Q''$ and $Q^*$ are disjoint subgraphs of $Q$, by \cref{claim:isolation,claim:pseudocomptouchingW3}, there exist $q'' \in \partial T_{j,q^*}$, $u_{q''},v_{q''} \in X_{q''}$, a monochromatic path $P_{q''}$ in $G[Y^{(i_{q},-1,0)} \cap X_{V(T_{q''})}]$ but not in $G[Y^{(i_{q^*},\lvert V(T) \rvert+1,s+2)}]$ from $u_{q''}$ to $v_{q''}$ internally disjoint from $X_{q''}$ such that $Q''$ contains $u_{q''}$ but not $v_{q''}$.
Since $\sigma(Q)<\sigma(Q^*) \leq \sigma(M_{\alpha_1})$, $(q,Q)$ is outer-safe, so $q'' \in V(T_q)$.
Since $q \in \partial T_{j,q^*}$, $q''=q$.
Since $\sigma(Q'')<\sigma(Q^*)$, $Q''$ contradicts the choice of $Q^*$.

So $V(P_q) \not \subseteq Y^{(i_q,-1,0)}$.
By the assumption of this claim, $(q,Q)$ is a nice pair.
Hence, by \cref{claim:underassumptouch2} (taking $(t,M)=(q,Q)$) and \cref{claim:1stdone}, since $V(P_q) \not \subseteq Y^{(i_q,-1,0)}$, there exists a monochromatic $E_{j,q^*}^{(i_{q^*},\lvert V(G) \rvert)}$-pseudocomponent $Q_1^*$ in $G[Y^{(i_{q^*},\lvert V(T) \rvert +1,s+2)}]$ such that $\sigma(Q_1^*)<\sigma(Q^*)$, $V(Q_1^*) \cap X_{q} \neq \emptyset$ and $A_{L^{(i_{q^*},\lvert V(T) \rvert+1,s+2)}}(V(Q_1^*)) \cap X_{V(T_{q})}-X_{q} \neq \emptyset$.
Let $R^*$ be the monochromatic $E_{j,q}^{(i_{q})}$-pseudocomponent in $G[Y^{(i_{q},-1,0)}]$ containing $Q^*$.
Let $R^*_1,\dots,R^*_\beta$ (for some $\beta \in {\mathbb N}$) be the monochromatic $E_{j,q}^{(i_{q})}$-pseudocomponents $M'$ in $G[Y^{(i_{q},-1,0)}]$ intersecting $X_{q}$ with $A_{L^{(i_{q},-1,0)}}(V(M')) \cap X_{V(T_{q})}-X_{q} \neq \emptyset$ and $\sigma(M')<\sigma(R^*)$.
Assume $\sigma(R^*_1)<\sigma(R^*_2)<\dots<\sigma(R^*_\beta)$.
Let $R^*_{\beta+1}=R^*$.
Since $u_q \in V(Q^*)$ and $v_q \not \in V(Q^*)$, $\{u_q,v_q\} \not \in E_{j,q^*}^{(i_{q^*},\lvert V(G) \rvert)}$.
By the existence of $Q_1^*$ and the construction of $E_{j,q^*}^{(i_{q^*},\lvert V(G) \rvert)}$, $V(Q^*) \cap X_{q^*} \neq \emptyset$.
Since $\sigma(Q^*) \leq \sigma(M_{\alpha_1})$, for every $\alpha \in [\beta]$, $R^*_\alpha$ contains $M_\alpha$ for some $\alpha \in [\alpha_1-1]$ by \cref{claim:orderpreserving}.
So by the assumption of this claim and \cref{claim:blocker2-0}, the assumptions of this claim hold when $t$ is replaced by $q$ and $M_{\alpha_1}$ is replaced by $R^*_{\beta+1}=R^*$. 
Since $\sigma(R^*) \leq \sigma(M_{\alpha_1})$ and $i_{q}>i_t$, this claim is applicable to $q$ and $R^*$, and by applying it, the existence of $P_q$ implies that $(q,R^*)$ is not an outer-safe pair.
So $\sigma(R^*)=\sigma(M_{\alpha_1}) \geq \sigma(Q^*)$.
So $\sigma(R^*)=\sigma(Q^*)$ and $R^*=Q \supseteq M_{\alpha_1}$.
Hence $t^*=q^*$ by \cref{claim:isolation,claim:pseudocomptouchingW3}.
Since $(q,R^*)$ is not an outer-safe pair, there exist $t'' \in \partial T_{j,q^*} -V(T_{q})$, $u'',v'' \in X_{t''}$, a monochromatic path $P''$ in $G[Y^{(i_{q},-1,0)} \cap X_{V(T_{t''})}]$ but not in $G[Y^{(i_{q^*},\lvert V(T) \rvert+1,s+2)}]$ from $u''$ to $v''$ internally disjoint from $X_{t''}$, and a monochromatic $E_{j,q^*}^{(q^*,\lvert V(G) \rvert)}$-pseudocomponent $M''$ in $G[Y^{(i_{q^*},\lvert V(T) \rvert+1,s+2)}]$ with $\sigma(M'')=\sigma(R^*) = \sigma(M_{\alpha_1})$ containing $u''$ but not $v''$.
Since $q \in V(T_t)$ and $V(P'') \subseteq Y^{(i_q,-1,0)} \cap X_{V(T_{t''})}$, if $t'' \not \in V(T_t)-\{t\}$, then $V(P'') \subseteq Y^{(i_t,-1,0)}$, so it contradicts that $(t,M_{\alpha_1})$ is an outer-safe pair. 
So $t'' \in \partial T_{j,q^*} \cap (V(T_t)-\{t\})-V(T_{q})$.
In particular, $t'' \neq q$.

If $V(P'') \not \subseteq Y^{(i_{t''},-1,0)}$, then by \cref{claim:1stdone,claim:underassumptouch2}, the fact that $\{u'',v''\} \not \in E_{j,q^*}^{(i_{q^*},\lvert V(G) \rvert)}$, the existence of $P_q$ and $P_{t''}$, and the construction of $E_{j,q^*}^{(i_{q^*},\lvert V(G) \rvert)}$, we know $A_{L^{(i_{q^*},\lvert V(T) \rvert+1,s+2)}}(V(Q_1^*)) \cap X_{V(T_{q^*})}-X_{q^*} \subseteq (X_{V(T_{q})}-X_{q}) \cap (X_{V(T_{t''})}-X_{t''})$, so $t''=q$, a contradiction.
Hence $V(P'') \subseteq Y^{(i_{t''},-1,0)}$.
Since $\sigma(M'')=\sigma(R^*)=\sigma(Q^*)$, $M''=Q^*$.
Let $M'''$ be the monochromatic $E_{j,t''}^{(i_{t''})}$-pseudocomponent in $G[Y^{(i_{t''},-1,0)}]$ containing $M''$.
Since $\sigma(M'')=\sigma(Q^*)=\sigma(M_{\alpha_1})$, $M'''$ contains $M_{\alpha_1}$, so $t^*=q^*$ is the node in $T$ with $t'' \in \partial T_{j,t^*}$ such that there exist a witness $q' \in \partial T_{j,t^*}$ for $X_{q'} \cap I_j \subseteq W_3^{(i_{t^*},\ell)}$ for some $\ell \in [0,\lvert V(T) \rvert]$ and a monochromatic $E_{j,t^*}^{(i_{t^*})}$-pseudocomponent in $G[Y^{(i_{t^*},\lvert V(T) \rvert+1,s+2)}]$ intersecting $X_{q'}$ and $V(M''')$.

Since $\sigma(M'')=\sigma(Q^*)=\sigma(M_{\alpha_1})$, we can apply \cref{claim:underassumpnounder} by taking $(t,M)$ to be $(t'',M''')$.
By applying \cref{claim:underassumpnounder} by taking $(t,M)$ to be $(t'',M''')$, the non-existence of the node $t'$ mentioned in \cref{claim:underassumpnounder} and the existence of $t''$ here imply that $\sigma(Q^*)=\sigma(M'')>\sigma(M''')$.
So by \cref{claim:isolation,claim:pseudocomptouchingW3,claim:orderpreserving}, there exists a monochromatic $E_{j,q^*}^{(i_{q^*},\lvert V(G) \rvert)}$-pseudocomponent $Q'''$ in $G[Y^{(i_{q^*},\lvert V(T) \rvert+1,s+2)}]$ with $\sigma(Q''')=\sigma(M''')<\sigma(M'')=\sigma(Q^*)$.
Hence there exist $q''' \in \partial T_{j,q^*}$, $u_{q'''},v_{q'''} \in X_{q'''}$, a monochromatic path $P_{q'''}$ in $G[Y^{(i_{t''},-1,0)} \cap X_{V(T_{q'''})}]$ but not in $G[Y^{(i_{q^*},\lvert V(T) \rvert+1,s+2)}]$ from $u_{q'''}$ to $v_{q'''}$ internally disjoint from $X_{q'''}$ such that $Q'''$ contains $u_{q'''}$ but not $v_{q'''}$.
Since $\sigma(Q''')<\sigma(Q^*) \leq \sigma(M_{\alpha_1})$, $(t'',M''')$ is outer-safe, so $q''' \in V(T_{t''})$.
Hence $q'''=t''$.
Since $\sigma(Q''')<\sigma(Q^*)$, $Q'''$ contradicts the choice of $Q^*$.

This proves (c).

By the existence of $P$, $M_{\alpha_1}$ and $M_{\alpha_2}$ have the same color, and the $s$-segment in $\Se_j^\circ$ containing $V(M_{\alpha_1})$ whose level equals the color of $M_{\alpha_1}$ contains $V(M_{\alpha_2})$.
By (b) and (c), there exists an $(\alpha_1-1,\alpha_1,\alpha_2,\kappa,\xi)$-blocker $(t_{-\xi},t_{-\xi+1}, \dots, t_{-1}, t_1,t_2,\dots,t_\kappa)$ for $(j,t,-1,0,0,t)$, where $\kappa = \psi_2(\alpha_1,\alpha_2)$ and $\xi=\psi_3(\alpha_1,\alpha_2)-\psi_3(0,0)$.
So $\bigcup_{\alpha=1}^{\alpha_1-1}A_{L^{(i_t,-1,0)}}(V(M_\alpha)) \cap X_{V(T_t)}-X_t \subseteq (X_{V(T_{t_\kappa})}-X_{t_\kappa}) \cap I_j^\circ$.
Let $\alpha^*$ be the element in $\{\alpha_1,\alpha_2\}$ such that $A_{L^{(i_t,-1,0)}}(V(M_{\alpha^*})) \cap (X_{V(T_t)}-(X_t \cup I_j^\circ)) \cap X_{V(T_{t_{-\xi}})}-X_{t_{-\xi}} = \emptyset$ (if $\xi \neq 0$), and $A_{L^{(i_t,-1,0)}}(V(M_{\alpha^*})) \cap (X_{V(T_t)}-X_t) \cap X_{V(T_{t_1})}-X_{t_1} = \emptyset$.

Now we prove
	\begin{itemize}
		\item[(d)] $t_{\eta_5^3} \not \in V(T_{j,t})$.
	\end{itemize}

Suppose to the contrary that $t_{\eta_5^3} \in V(T_{j,t})$.

Suppose $(Y^{(i_t,-1,\alpha_1-1)}-Y^{(i_t,-1,0)}) \not \subseteq X_{V(T_{t_{\kappa-\alpha_1 \eta_5^2}})}$.
By \cref{claim:sizebeltfull}, since $\bigcup_{\alpha=1}^{\alpha_1-1}A_{L^{(i_t,-1,0)}}(V(M_\alpha)) \cap X_{V(T_t)}-X_t \subseteq (X_{V(T_{t_\kappa})}-X_{t_\kappa}) \cap I_j^\circ$ and $\{(X_{t_\gamma}-X_{t}) \cap I_j: \gamma \in [-\xi,\kappa]-\{0\}\}$ consists of pariwise disjoint members, there exist $\alpha \in [\alpha_1-1]$, $\alpha' \in [\alpha+1,w_0]$ and a monochromatic path $P'$ in $G[Y^{(i_t,-1,\alpha_1-1)}]$ from $V(M_\alpha)$ to $V(M_{\alpha'})$ internally disjoint from $\bigcup_{\gamma=1}^{w_0}V(M_\gamma)$ such that all internal vertices of $P'$ are in $Y^{(i_t,-1,0)} \cup (I_j^\circ \cap X_{V(T_{t_{\kappa-\alpha_1 \eta_5^2}})}-X_{t_{\kappa-\alpha_1 \eta_5^2}})$ and $V(P') \cap A_{L^{(i_t,-1,0)}}(V(M_\alpha)) \cap X_{V(T_{t})}-X_t \neq \emptyset$, contradicting (a).

Hence $(Y^{(i_t,-1,\alpha_1-1)}-Y^{(i_t,-1,0)}) \subseteq X_{V(T_{t_{\kappa-\alpha_1 \eta_5^2}})}$.
Since $A_{L^{(i_t,-1,0)}}(V(M_{\alpha^*})) \cap (X_{V(T_t)}-X_t) \cap (X_{V(T_{t_1})}-X_{t_1}) = \emptyset$ and $\kappa-\alpha_1\eta_5^2 \geq 2$, $M_{\alpha^*}$ is a monochromatic $E_{j,t}^{(i_t)}$-pseudocomponent in $G[Y^{(i_t,-1,\alpha_1-1)}]$.
So $V(P) \not \subseteq Y^{(i_t,-1,\alpha_1-1)}$.
Let $M'$ be the monochromatic $E_{j,t}^{(i_t)}$-pseudocomponent in $G[Y^{(i_t,-1,\alpha_1-1)}]$ containing $M_{\alpha_1}$.
Since $V(P) \not \subseteq Y^{(i_t,-1,\alpha_1-1)}$, there exists \linebreak $u \in V(P) \cap A_{L^{(i_{t^*},-1,\alpha_1-1)}}(V(M')) \cap X_{V(T_t)}-X_t$.
Then either $u$ is colored in $Y^{(i_t,-1,\alpha_1)}$ by a color different from $c(P)$, or $u \not \in Y^{(i_t,\lvert V(T) \rvert+1,s+2)}$.
The former cannot hold, so the latter holds.
So $u \not \in Z_t$.
Hence there exists $q_u \in \partial T_{j,t}$ with $u \in I_j^\circ \cap X_{V(T_{q_u})}-X_{q_u}$.

Let $M''$ be the monochromatic $E_{j,t}^{(i_t)}$-pseudocomponent in $G[Y^{(i_t,\lvert V(T) \rvert+1,s+2)}]$ containing $M_{\alpha^*}$.
Since $u \not \in Y^{(i_t,\lvert V(T) \rvert+1,s+2)}$, there exists $v \in V(P) \cap A_{L^{(i_t,\lvert V(T) \rvert+1,s+2)}}(V(M'')) \cap X_{V(T_t)}-X_t$.
We choose $v$ such that $\sigma(v)$ is as small as possible.
Since $P$ is internally disjoint from $X_t$, and $A_{L^{(i_t,-1,0)}}(M_{\alpha^*}) \cap X_{V(T_t)}-X_t \subseteq V(G)-(X_{V(T_{t_1})}-X_{t_1})$, by \cref{claim:sizebeltfull}, $v \in V(G)-X_{V(T_{t_{\eta_5^2}})}$.
Note that $v \not \in Z_t$.
Hence there exists $q_v \in \partial T_{j,t}$ with $v \in X_{V(T_{q_v})}-X_{q_v}$.

By \cref{claim:sizebeltfull}, $\lvert Y^{(i_t,\lvert V(T) \rvert+1,s+2)} \cap I_j \rvert \leq \eta_5$. 
Since $t_{\eta_5^3} \in V(T_{j,t})$, there exist $\eta_5^2+1 \leq \beta_1 < \beta_2 \leq \eta_5^3-\alpha_1\eta_5^2$ with $\beta_2-\beta_1 \geq \frac{\eta_5^3-(\alpha_1+1)\eta_5^2}{\eta_5+1} > \eta_5$ such that $Y^{(i_t,\lvert V(T) \rvert+1,s+2)} \cap X_{V(T_{t_{\beta_1}})}-X_{V(T_{t_{\beta_2}})} = \emptyset$.
Since $v \in V(G)-X_{V(T_{t_{\eta_5^2}})}$, $q_v \in V(T)-V(T_{t_{\beta_1}})$.

Suppose that $q_u \in V(T_{t_{\beta_1}})$.
So $\alpha^*=\alpha_2$.
Since $Y^{(i_t,\lvert V(T) \rvert+1,s+2)} \cap X_{V(T_{t_{\beta_1}})}-X_{V(T_{t_{\beta_2}})} = \emptyset$ and $X_{q_u} \cap I_j \subseteq Y^{(i_t,\lvert V(T) \rvert+1,s+2)}$, $q_u \in V(T_{t_{\beta_2}})$.
By the existence of $P$ and \cref{claim:isolation,claim:pseudocomptouchingW3}, there exist subpaths of $P$ contained in $G[Y^{(i_t,\lvert V(T) \rvert+1,s+2)}]$ such that for each $\beta \in [\beta_1,\beta_2]$, some of those subpaths intersects $X_{t_\beta}-X_t$.
But it contradicts \cref{claim:sizebeltfull} since $\beta_2-\beta_1>\eta_5$.

Hence $q_u \in V(T)-V(T_{t_{\beta_1}})$.
Let $M^*$ be the monochromatic $E_{j,q_u}^{(i_{q_u})}$-pseudocomponent in $G[Y^{(i_{q_u},-1,0)}]$ containing $M_{\alpha_1}$.
Note that $u \in V(P) \cap A_{L^{(i_{q_u},-1,0)}}(V(M^*)) \cap X_{V(T_{q_u})}-X_{q_u}$ since $u \in I_j^\circ \cap X_{V(T_{q_u})}-X_{q_u}$.
By \cref{claim:1stdone}, $M^*$ does not have the smallest $\sigma$-value among all $\Se_j^\circ$-related monochromatic $E_{j,q_u}^{(i_{q_u})}$-pseudocomponents $M'''$ in $G[Y^{(i_{q_u},-1,0)}]$ intersecting $X_{q_u}$ with $A_{L^{(i_{q_u},-1,0)}}(V(M''')) \cap X_{V(T_{q_u})}-X_{q_u} \neq \emptyset$.
So there exists an $\Se_j^\circ$-related monochromatic $E_{j,q_u}^{(i_{q_u})}$-pseudocomponents $Q^*$ in $G[Y^{(i_{q_u},-1,0)}]$ intersecting $X_{q_u}$ with $A_{L^{(i_{q_u},-1,0)}}(V(Q^*)) \cap X_{V(T_{q_u})}-X_{q_u} \neq \emptyset$ such that $\sigma(Q^*)<\sigma(M^*)$.
By \cref{claim:orderpreserving}, there exists $\alpha \in [\alpha_1-1]$ such that $M_\alpha \subseteq Q^*$.
By \cref{claim:isolation,claim:pseudocomptouchingW3}, there exists a monochromatic $E_{j,t}^{(i_t,\lvert V(G) \rvert)}$-pseudocomponent $Q$ in $G[Y^{(i_t,\lvert V(T) \rvert+1,s+2)}]$ containing $M_\alpha$ and a vertex $u_0 \in X_{q_u}$ such that $A_{L^{(i_{t},\lvert V(T) \rvert+1,s+2)}}(\{u_0\}) \cap X_{V(T_{q_u})}-X_{q_u} \neq \emptyset$.
Since $A_{L^{(i_t,-1,0)}}(V(M_\alpha)) \cap X_{V(T_t)}-X_t \subseteq (X_{V(T_{t_\kappa})}-X_{t_\kappa}) \cap I_j^\circ$, $u_0 \not \in V(M_\alpha)$.
Since $Q$ contains $M_\alpha$ and $u_0$, and $Y^{(i_t,-1,\alpha_1-1)}-Y^{(i_t,-1,0)} \subseteq X_{V(T_{t_{\kappa-\alpha_1 \eta_5^2}})}$, $V(Q) \cap X_{t_\beta} \neq \emptyset$ for every $\beta \in [\beta_1,\kappa-\alpha_1 \eta_5^2]$.
But $\kappa-\alpha_1 \eta_5^2-\beta_1>\eta_5$, contradicting \cref{claim:sizebeltfull}.

This proves (d) (That is, $t_{\eta_5^3} \not \in V(T_{j,t})$).

So there exists $t' \in \partial T_{j,t}$ such that $t_{\eta_5^3} \in V(T_{t'})-\{t'\}$.
Since $$\bigcup_{\alpha=1}^{\alpha_1-1}A_{L^{(i_t,-1,0)}}(V(M_\alpha)) \cap X_{V(T_t)}-X_t \subseteq (X_{V(T_{t_\kappa})}-X_{t_\kappa}) \cap I_j^\circ,$$ $M_{\alpha_1}$ is a monochromatic $E_{j,t}^{(i_t)}$-pseudocomponent in $G[Y^{(i_t,\lvert V(T) \rvert+1,s+2)}]$ and \linebreak $V(P) \cap A_{L^{(i_t,\lvert V(T) \rvert+1,s+2)}}(V(M_{\alpha_1})) \cap X_{V(T_t)}-X_t$ is a nonempty subset of $I_j^\circ$. 

Since the process for adding fake edges in one interface is irrelevant with the process for adding fake edges in other interfaces, to simplify the notation, we may without loss of generality assume that for every $\ell \in [0,\lvert V(G) \rvert]$, the monochromatic $E_{j,t}^{(i_t,\ell)}$-pseudocomponent $M'$ in $G[Y^{(i_t,\lvert V(T) \rvert+1,s+2)}]$ in which $\sigma(M')$ is the $(\ell+1)$-th smallest among all monochromatic $E_{j,t}^{(i_t,\ell)}$-pseudocomponents in $G[Y^{(i_t,\lvert V(T) \rvert+1,s+2)}]$ is $\Se_j^\circ$-related, unless there are less than $\ell+1$ $\Se_j^\circ$-related monochromatic $E_{j,t}^{(i_t,\ell)}$-pseudocomponents in $G[Y^{(i_t,\lvert V(T) \rvert+1,s+2)}]$.

For every $\gamma \in [w_0]$ and $\gamma' \in [0,\lvert V(G) \rvert]$, let $M^{(\gamma')}_\gamma$ be the $\Se_j^\circ$-related monochromatic $E_{j,t}^{(i_t,\gamma')}$-pseudocomponent in $G[Y^{(i_t,\lvert V(T) \rvert+1,s+2)}]$ with the $\gamma$-th smallest $\sigma$-value among all $\Se_j^\circ$-related monochromatic $E_{j,t}^{(i_t,\gamma')}$-pseudocomponents $M'$ in $G[Y^{(i_t,\lvert V(T) \rvert+1,s+2)}]$ with $V(M') \cap X_{t'} \neq \emptyset$ and \linebreak $A_{L^{(i_t,\lvert V(T) \rvert+1,s+2)}}(V(M')) \cap X_{V(T_{t'})}-X_{t'} \neq \emptyset$ (if there are at most $\gamma-1$ such $M'$, then let $M^{(\gamma')}_\gamma = \emptyset$). 
By \cref{claim:orderpreserving}, each $M^{(\gamma')}_\gamma$ either contains $M_{\gamma''}$ for some $\gamma''$, or is disjoint from $X_t$.

We shall prove the following statements (i) and (ii) by induction on $\alpha_0$: for every $\alpha_0 \in [0,\alpha_1-2]$ and $\alpha \in [\alpha_0+1,\alpha_1-1]$,
	\begin{itemize}
		\item[(i)] there exists no $e \in E_{j,t}^{(i_t,\alpha_0)}-E_{j,t}^{(i_t)}$ such that $e \cap V(M_\alpha^{(\alpha_0)}) \neq \emptyset$, and 
		\item[(ii)] for every $\alpha' \in [\alpha+1,w_0]$, if $\alpha_0 \geq 1$, and $M^{(\alpha_0-1)}_\alpha$ and $M^{(\alpha_0-1)}_{\alpha'}$ have the same color, and the $s$-segment in $\Se_j^\circ$ containing $V(M^{(\alpha_0-1)}_\alpha)$ whose level equals the color of $M^{(\alpha_0-1)}_\alpha$ contains $V(M^{(\alpha_0-1)}_{\alpha'})$, then either:
			\begin{itemize}
				\item there exists an $(\alpha_0,\alpha,\alpha',\psi_{2}(\alpha,\alpha'), \psi_{3}(\alpha,\alpha'))$-blocker for $(j,t,\lvert V(T) \rvert+1,s+2,\alpha_0-1,t')$, or 
				\item there exist $t^* \in V(T)$ with $i_{t^*}<i_{t'}$ and $t' \in V(T_{j,t^*})$, a witness $q' \in \partial T_{j,t^*}$ for $X_{q'} \cap I_j \subseteq W_3^{(i_{t^*},\ell)}$ for some $\ell \in [0,\lvert V(T) \rvert]$, and a monochromatic $E_{j,t^*}^{(i_{t^*})}$-pseudocomponent in $G[Y^{(i_{t^*},\lvert V(T) \rvert+1,s+2)}]$ intersecting $X_{q'}$ and $V(M_\alpha^{(\alpha_0-1)}) \cup V(M_{\alpha'}^{(\alpha_0-1)})$.
			\end{itemize}
	\end{itemize}
Note that (i) and (ii) obviously hold for $\alpha_0=0$.
And for every $\alpha_0 \geq 1$, if (i) holds for all smaller $\alpha_0$, then (ii) for $\alpha_0$ implies (i) for $\alpha_0$ by the construction for $E_{j,t}^{(i_t,\alpha_0)}$ and \cref{claim:isolation,claim:pseudocomptouchingW3}.
So it suffices to show that if (i) holds for every smaller $\alpha_0$, then (ii) holds for $\alpha_0$.

Assume (i) holds for every smaller $\alpha_0$.
That is, there exist no $e \in E_{j,t}^{(i_t,\alpha_0-1)}-E_{j,t}^{(i_t)}$ and $\alpha'' \in [0,\alpha_1-1]$ such that $e \cap V(M_{\alpha''}^{(\alpha_0-1)}) \neq \emptyset$.
So $M^{(\alpha_0-1)}_\gamma = M_\gamma$ for every $\gamma \in [\alpha_1-1]$.
Let $\alpha'$ be a fixed element in $[\alpha+1,w_0]$.
Since $\alpha \leq \alpha_1-1$, $\psi_{2}(\alpha,\alpha')+\psi_{3}(\alpha,\alpha') \leq \psi_2(\alpha_1,\alpha_2)-4f_1(\eta_5)^{w_0+3}$. 
If $V(M^{(\alpha_0-1)}_{\alpha'}) \cap X_t = \emptyset$, then since $t_{(f_1(\eta_5))^{w_0+3}} \in V(T_{t'})$ and $(t_1,t_2,\dots,t_\kappa)$ is an $(\alpha_1-1,\alpha_1,\alpha_2,\kappa,0)$-blocker for $(j,t,-1,0,0,t)$, we know $A_{L^{(i_t,\lvert V(T) \rvert+1,s+2)}}(V(M^{(\alpha_0-1)}_{\alpha'})) \cap X_{V(T_{t_{(f_1(\eta_5))^{w_0+3}+2\eta_5}})} = \emptyset$ by \cref{claim:sizebeltfull}, so $(t_{(f_1(\eta_5))^{w_0+3}+3\eta_5+1},\dots,t_\kappa)$ contains a subsequence that is an $(\alpha_0,\alpha,\alpha',\psi_2(\alpha,\alpha'), \psi_{3}(\alpha,\alpha'))$-blocker for $(j,t,\lvert V(T) \rvert+1,s+2,\alpha_0-1,t')$. 
So we may assume that $V(M^{(\alpha_0-1)}_{\alpha'}) \cap X_t \neq \emptyset$.
Hence $M^{(\alpha_0-1)}_{\alpha'}$ contains $M_{\alpha''}$ for some $\alpha'' \in [\alpha',k]$ by \cref{claim:orderpreserving}.
Let $S = \{\beta \in [w_0]: M_\beta \subseteq M_{\alpha'}^{(\alpha_0-1)}\}$.
Note that for every $\beta \in S$, $\beta \geq \alpha' \geq \alpha+1$. 

If there exist $\beta \in S$ and $t^* \in V(T)$ with $i_{t^*}<i_t$ and $t \in V(T_{j,t^*})-\partial T_{j,t^*}$, a witness $q' \in \partial T_{j,t^*}$ for $X_{q'} \cap I_j \subseteq W_3^{(i_{t^*},\ell)}$ for some $\ell \in [0,\lvert V(T) \rvert]$, and a monochromatic $E_{j,t^*}^{(i_{t^*})}$-pseudocomponent in $G[Y^{(i_{t^*},\lvert V(T) \rvert+1,s+2)}]$ intersecting $X_{q'}$ and $V(M_\alpha) \cup V(M_{\beta})$, then $i_{t^*}<i_{t'}$ and $t' \in V(T_{j,t^*})$, so the second case for (ii) holds and we are done.
So by (b), we may assume that fore every $\beta \in S$, there exists an $(\alpha_0,\alpha,\beta,\kappa_\beta,\xi_\beta)$-blocker $q^\beta = (q_{-\xi_\beta}^\beta, q_{-\xi_\beta+1}^\beta,\dots,q^\beta_{-1},q^\beta_1,q^\beta_2,\dots,q^\beta_{\kappa_\beta})$ for $(j,t,-1,0,0,t)$, where $\kappa_\beta = \psi_{2}(\alpha,\beta)$ and $\xi_\beta= \psi_3(\alpha,\beta)-\psi_3(0,0)$. 
We further choose each $q^\beta$ such that the sequence $(i_{q_{-\xi_\beta}^\beta}, i_{q_{-\xi_\beta+1}^\beta},\dots,i_{q^\beta_{-1}},i_{q^\beta_1},i_{q^\beta_2},\dots,i_{q^\beta_{\kappa_\beta}})$ is lexicographically maximal.

Since $\beta \geq \alpha' \geq \alpha+1$ for every $\beta \in S$, we know $\kappa_\beta$ is identical for all $\beta \in S$, and $\xi_\beta$ is identical for all $\beta \in S$.
Since $q^\beta$ are chosen such that the sequence of the $\sigma_T$-value of the nodes in $q^\beta$ is lexicographically maximal, $q^\beta$ is identical for all $\beta \in S$.

Since $\alpha \leq \alpha_1-1$, for every $\beta \in S$, 
\begin{align*}
\kappa_\beta+\xi_\beta & \leq \psi_2(\alpha_1,\alpha_2)-(2f_1(\eta_5)^{w_0+3}+3\eta_5+\psi_1(20,20))\\
& = \kappa-(2f_1(\eta_5)^{w_0+3}+3\eta_5+\psi_1(20,20)),
\end{align*} 
so if $t_{2f_1(\eta_5)^{w_0+3}+3\eta_5+\psi_1(20,20)} \in V(T_{q^\beta_{1}})$, then $(q^\beta_{-\xi_\beta},\dots,q^\beta_{-1},t_{2f_1(\eta_5)^{w_0+3}+4\eta_5+\psi_1(20,20)+1}, \allowbreak \dots,t_\kappa)$
 contains a subsequence that is an $(\alpha_0,\alpha,\beta,\kappa_\beta,\xi_\beta)$-blocker for $(j,t,\allowbreak -1,0,0,t)$, contradicting the maximality of the $\sigma_T$-value of the nodes. 
Hence for every $\beta \in S$, we have $q^\beta_{1} \in V(T_{t_{2f_1(\eta_5)^{w_0+3}+3\eta_5+\psi_1(20,20)}})-\{t_{2f_1(\eta_5)^{w_0+3}+3\eta_5+\psi_1(20,20)}\}$.
Note that for every $\beta \in S$, $\psi_3(\alpha,\beta) \leq \psi_1(3,3)$.
So if there exists $\beta \in S$ such that $t_{2f_1(\eta_5)^{w_0+3}+3\eta_5+\psi_1(10,10)} \in V(T_{q^\beta_{-\xi_\beta}})$, then $$(t_{2f_1(\eta_5)^{w_0+3}+3\eta_5+\psi_1(10,10)+1}, \allowbreak \dots,t_{2f_1(\eta_5)^{w_0+3}+3\eta_5+\psi_1(20,20)},q^\beta_1,\dots,q^\beta_{\kappa_\beta})$$ 
contains a subsequence that is an $(\alpha_0,\alpha,\beta,\kappa_\beta,\xi_\beta)$-blocker for $(j,t,-1,0,0,t)$, contradicting the lexicographical maximality.
Hence for every $\beta \in S$, 
$$q^\beta_{-\xi_\beta} \in V(T_{t_{2f_1(\eta_5)^{w_0+3}+3\eta_5+\psi_1(10,10)}})-\{t_{2f_1(\eta_5)^{w_0+3}+3\eta_5+\psi_1(10,10)}\}.$$

Note that for each $v \in \bigcup_{\gamma=0}^{w_0}V(M^{(\alpha_0-1)}_\gamma)$, if there exists a monochromatic path in \linebreak  $G[\bigcup_{\gamma=0}^{w_0}V(M^{(\alpha_0-1)}_\gamma)]+E_{j,t}^{(i_t)}$ from $v$ to $X_t$, then $A_{L^{(i_{t},\lvert V(T) \rvert+1,s+2)}}(\{v\}) \cap X_{V(T_{t'})}-(X_{t'} \cup I_j^\circ) = \emptyset$; if $A_{L^{(i_{t},\lvert V(T) \rvert+1,s+2)}}(\{v\}) \cap X_{V(T_{t'})}-(X_{t'} \cup I_j^\circ) \neq \emptyset$ and there exists no monochromatic path in $G[\bigcup_{\gamma=0}^{w_0}V(M^{(\alpha_0-1)}_\gamma)]+E_{j,t}^{(i_t)}$ from $v$ to $X_t$, then since $t_{\eta_5^3} \in V(T_{t'})$ and $(t_1,t_2,\dots,t_\kappa)$ is a blocker for $(j,t,-1,0,0,t)$, we know $v \in V(G)-(X_{V(T_{t_{\eta_5^3+\eta_5+1}})} \cup X_t)$ and $A_{L^{(i_{t},\lvert V(T) \rvert+1,s+2)}}(\{v\}) \cap X_{V(T_{t'})}-(X_{t'} \cup I_j^\circ) \subseteq V(G)-X_{V(T_{t_{\eta_5^3+\eta_5 \cdot f_1(\eta_5)^{w_0}}})}$ by \cref{claim:sizebeltfull}.
Hence \begin{align*}
A_{L^{(i_t,\lvert V(T) \rvert+1,s+2)}}(\bigcup_{\gamma=0}^{w_0}V(M^{(\alpha_0-1)}_\gamma)) \cap X_{V(T_{t'})}-(X_{t'} \cup I_j^\circ) & \subseteq V(G)-X_{V(T_{t_{\eta_5^3+\eta_5 \cdot f_1(\eta_5)^{w_0}}})} \\
& \subseteq V(G)-X_{V(T_{t_{2f_1(\eta_5)^{w_0+3}+3\eta_5+\psi_1(10,10)}})}.
\end{align*}

For every $\beta \in S$, let $\alpha^*_\beta \in \{\alpha,\beta\}$ such that $A_{L^{(i_t,-1,0)}}(M_{\alpha^*_\beta}) \cap (X_{V(T_t)}-X_t) \cap X_{V(T_{q^\beta_1})}-X_{q^\beta_1} = \emptyset$ and $A_{L^{(i_t,-1,0)}}(M_{\alpha^*_\beta}) \cap (X_{V(T_t)}-X_t) \cap X_{V(T_{q^\beta_{-\xi_\beta}})}-(X_{q^\beta_{-\xi_\beta}} \cup I_j^\circ) = \emptyset$.
So for every $\beta \in S$, $(t_{2f_1(\eta_5)^{w_0+3}+3\eta_5+\psi_1(5,5)+1}, \dots, t_{2f_1(\eta_5)^{w_0+3}+3\eta_5+\psi_1(10,10)}, q^\beta_{-\xi_\beta},\dots,q^{\beta}_{\kappa_\beta})$ is an $(\alpha_0,\alpha,\beta,\psi_2(\alpha,\beta),\psi_3(\alpha,\beta)-\psi_3(0,0)+\psi_1(5,5))$-blocker for $(j,t,\lvert V(T) \rvert+1,s+2,t')$.
Note that for every $\beta \in S$, $\beta \geq \alpha'$ and $\psi_3(\alpha,\beta)-\psi_3(0,0)+\psi_1(5,5) \geq \psi_3(\alpha,\alpha')$.
Hence, if $\alpha^*_\beta = \alpha$ for some $\beta \in S$, then $(t_{2f_1(\eta_5)^{w_0+3}+3\eta_5+\psi_1(5,5)+1}, \allowbreak \dots, t_{2f_1(\eta_5)^{w_0+3}+3\eta_5+\psi_1(10,10)}, q^\beta_{-\xi_\beta},\dots,q^{\beta}_{\kappa_\beta})$ contains a subsequence that is an $(\alpha_0,\alpha,\alpha',\psi_2(\alpha,\alpha'), \allowbreak \psi_3(\alpha,\alpha'))$-blocker for $(j,t,\lvert V(T) \rvert+1,s+2,t')$, so we are done.
So we may assume that $\alpha^*_\beta =\beta$ for every $\beta \in S$.
Then $(t_{2f_1(\eta_5)^{w_0+3}+3\eta_5+\psi_1(5,5)+1}, \dots, t_{2f_1(\eta_5)^{w_0+3}+3\eta_5+\psi_1(10,10)}, q^\beta_{-\xi_\beta}, \allowbreak \dots,q^{\beta}_{\kappa_\beta})$ contains a subsequence that is an $(\alpha_0,\alpha,\alpha',\psi_2(\alpha,\alpha'),\psi_3(\alpha,\alpha'))$-blocker for $(j,t,\lvert V(T) \rvert+1,s+2,t')$, so we are done.

Therefore, (i) and (ii) hold for all $\alpha_0 \in [0,\alpha_1-2]$.
In particular, there exists no $e \in E_{j,t}^{(i_t,\alpha_1-2)}-E_{j,t}^{(i_t)}$ and $\alpha'' \in [\alpha_1-1]$ such that $e \cap V(M_{\alpha''}^{(\alpha_1-2)}) \neq \emptyset$.
Hence there exists no $e \in E_{j,t}^{(i_t,\lvert V(G) \rvert)}-E_{j,t}^{(i_t)}$ such that $e \cap \bigcup_{\gamma=1}^{\alpha_1-1}V(M_\gamma) \neq \emptyset$.
So $\bigcup_{M'}A_{L^{(i_t,\lvert V(T) \rvert+1,s+2)}}(V(M')) \cap X_{V(T_t)}-X_t \subseteq \bigcup_{\alpha=1}^{\alpha_1-1}A_{L^{(i_t,-1,0)}}(V(M_\alpha)) \cap X_{V(T_t)}-X_t \subseteq X_{V(T_{t_\kappa})} \cap I_j^\circ$, where the first union is over all $\Se_j^\circ$-related monochromatic $E_{j,t}^{(i_t,\lvert V(G) \rvert)}$-pseudocomponents $M'$ in $G[Y^{(i_t,\lvert V(T) \rvert+1,s+2)}]$ with $\sigma(M')<\sigma(M_{\alpha_1})$.

For each $\gamma \in [w_0]$, let $M''_\gamma$ be the $\Se_j^\circ$-related monochromatic $E_{j,t'}^{(i_{t'})}$-pseudocomponent in $G[Y^{(i_{t'},-1,0)}]$ with the $\gamma$-th smallest $\sigma$-value among all $\Se_j^\circ$-related monochromatic $E_{j,t'}^{(i_{t'})}$-pseudocomponents $M'$ in $G[Y^{(i_{t'},-1,0)}]$ with $V(M') \cap X_{t'} \neq \emptyset$ and $A_{L^{(i_{t'},-1,0)}}(V(M')) \cap X_{V(T_{t'})}-X_{t'} \neq \emptyset$ (if there are less than $\gamma$ such $M'$, then let $V(M''_\gamma)=\emptyset$).

Since (i) and (ii) hold for $\alpha_0 \in [0,\alpha_1-2]$, by \cref{claim:blocker2-0}, for every $\alpha \in [2,\alpha_1-1]$ and $\alpha' \in [\alpha+1,w_0]$, if $M_\alpha''$ and $M_{\alpha'}''$ have the same color, and the $s$-segment in $\Se_j^\circ$ containing $V(M_\alpha'')$ whose level equals the color of $M_\alpha''$ contains $V(M_{\alpha'}'')$, then either:
	\begin{itemize}
		\item[(iii)] there exists an $(\alpha-1,\alpha,\alpha',\psi_2(\alpha,\alpha'), \psi_3(\alpha,\alpha')-\psi_3(0,0))$-blocker for $(j,t',-1,0,0,t')$, or 
		\item[(iv)] there exist $t^* \in V(T)$ with $i_{t^*}<i_{t'}$ and $t' \in V(T_{j,t^*})$, a witness $q' \in \partial T_{j,t^*}$ for $X_{q'} \cap I_j \subseteq W_3^{(i_{t^*},\ell)}$ for some $\ell \in [0,\lvert V(T) \rvert]$, and a monochromatic $E_{j,t^*}^{(i_{t^*})}$-pseudocomponent in $G[Y^{(i_{t^*},\lvert V(T) \rvert+1,s+2)}]$ intersecting $X_{q'}$ and $V(M_\alpha'') \cup V(M_{\alpha'}'')$.
	\end{itemize}

Since $\bigcup_{\gamma=1}^{\alpha_1-1}A_{L^{(i_t,-1,0)}}(V(M_\gamma^{(\alpha_1-1)}) \cap X_{V(T_t)}-X_t \subseteq (X_{V(T_{t'})}-X_{t'}) \cap I_j^\circ$, $M''_\gamma = M_\gamma$ for every $\gamma \in [\alpha_1-1]$.

For every $v_0 \in V(P) \cap A_{L^{(i_t,-1,0)}}(V(M_{\alpha_1}^{(\lvert V(G) \rvert)})) - Y^{(i_t,-1,0)}$, since $c(v_0) = c(P)$ and \linebreak  $\bigcup_{\alpha=1}^{\alpha_1-1}A_{L^{(i_t,-1,0)}}(V(M_\alpha)) \cap X_{V(T_t)}-X_t \subseteq I_j^\circ$ and $t_\kappa \in V(T_{t'})-\{t'\}$, we know $v_0 \not \in Y^{(i_t,\lvert V(T) \rvert+1,s+2)}$, so $v_0 \in I_j^\circ$ and there exists $q_{v_0}^* \in \partial T_{j,t}$ such that $v \in X_{V(T_{q_{v_0}^*})}-X_{q_{v_0}^*}$ and $v \not \in Y^{(i_{q_{v_0}^*},-1,0)}$.
Let $v$ be a vertex in $V(P) \cap A_{L^{(i_t,-1,0)}}(V(M^{(\lvert V(G) \rvert)}_{\alpha_1})) - Y^{(i_t,-1,0)}$ such that $q^*_v \neq t'$ if possible.

Now we prove 
	\begin{itemize}
		\item[(e)] For every $v_0 \in V(P) \cap A_{L^{(i_t,-1,0)}}(V(M^{(\lvert V(G) \rvert)}_{\alpha_1})) - Y^{(i_t,-1,0)}$, $q^*_{v_0} = t'$.
	\end{itemize}

Suppose (e) does not hold.
Then $q^*_v \neq t'$ by the choice of $v$.
By \cref{claim:1stdone}, the monochromatic $E_{j,q^*_v}^{(i_{q^*_v})}$-pseudocomponent in $G[Y^{(i_{q^*_v},-1,0)}]$ containing $M_{\alpha_1}$ does not have the smallest $\sigma$-value among all $\Se_j^\circ$-related monochromatic $E_{j,q^*_v}^{(i_{q^*_v})}$-pseudocomponents $M'$ in $G[Y^{(i_{q^*_v},-1,0)}]$ with $A_{L^{(i_{q^*_v},-1,0)}}(V(M')) \cap X_{V(T_{q^*_v})}-X_{q^*_v} \neq \emptyset$.
By \cref{claim:orderpreserving}, since there exists no $e \in E_{j,t}^{(i_t,\lvert V(G) \rvert)}-E_{j,t}^{(i_t)}$ such that $e \cap \bigcup_{\alpha=1}^{\alpha_1-1}V(M_{\alpha}) \neq \emptyset$, there exists a $c$-monochromatic path $R$ in $G+E_{j,q_v^*}^{(i_t)}$ from $\bigcup_{\alpha=1}^{\alpha_1-1}V(M_{\alpha})$ to $X_{q^*_v}$ internally disjoint from $\bigcup_{\alpha=1}^{\alpha_1-1}V(M_{\alpha})$.
Since $A_{L^{(i_t,-1,0)}}(\bigcup_{\alpha=1}^{\alpha_1-1}V(M_\alpha)) \cap X_{V(T_t)}-X_t \subseteq X_{V(T_{t_\kappa})} \cap I_j^\circ$, there exists a subpath $R'$ from $V(M''_{\alpha_1'}) \cap V(R')$ to $V(M''_{\alpha'_2}) \cap V(R')$ internally disjoint from $\bigcup_{\alpha=1}^{w_0}V(M''_{\alpha})$ such that $V(R') \cap A_{L^{(i_{t'},-1,0)}}(V(M''_{\alpha_1'})) \cap X_{V(T_{t'})}-X_{t'} \neq \emptyset$ for some $\alpha_1' \in [\alpha_1-1]$ and $\alpha'_2 \in [\alpha_1'+1,w_0]$.
By \cref{claim:1stdone}, $\alpha_1' \in [2,\alpha_1-1]$.
Let $\alpha_1''$ be the minimum $\gamma \in [2,\alpha_1-1]$ such that there exists $\alpha_2'' \in [\gamma+1,w_0]$ such that there exists a subpath $R''$ from $V(M''_{\gamma}) \cap V(R'')$ to $V(M''_{\alpha''_2}) \cap V(R'')$ internally disjoint from $\bigcup_{\alpha=1}^{w_0}V(M''_{\alpha})$ such that $V(R'') \cap A_{L^{(i_{t'},-1,0)}}(V(M''_{\gamma})) \cap X_{V(T_{t'})}-X_{t'} \neq \emptyset$.
Note that $\alpha_1'' \leq \alpha_1'<\alpha_1$, so (iii) and (iv) hold for every $\alpha \in [2,\alpha_1'']$ and $\alpha' \in [\alpha+1,w_0]$. 
Since $\sigma(M''_{\alpha_1''}) \leq \sigma(M''_{\alpha_1'}) < \sigma(M_{\alpha_1})$, the minimality of $\gamma$ implies that this claim is applicable when $t$ is replaced by $t'$ and $M_{\alpha_1}$ is replaced by $M''_{\alpha_1''}$, so $(t',M''_{\alpha_1''})$ is not an outer-safe pair by the existence of $R''$.
But $\sigma(M''_{\alpha_1''})<\sigma(M_{\alpha_1})$, so $(t',M''_{\alpha_1''})$ is an outer-safe pair by the assumption of this claim, a contradiction.

Hence (e) holds. 
So $M_{\alpha_1}'' \supseteq M_{\alpha_1}^{(\lvert V(G) \rvert)} \supseteq M_{\alpha_1}$ and $V(P) \cap A_{L^{(i_{t'},-1,0)}}(V(M''_{\alpha_1})) \cap X_{V(T_{t'})}-X_{t'} \neq \emptyset$.
Note that by an argument similar with the one for showing that $q^*_v =t'$, $M''_{\alpha_1}=M^{(\lvert V(G) \rvert)}_{\alpha_1}$.

Since $v \not \in Y^{(i_{t'},-1,0)}$, $V(P) \not \subseteq Y^{(i_{t'},-1,0)}$.

Suppose there exist $\beta_2^{(0)} \in [w_0]$ with $M_{\beta_2}^{(0)} \not \subseteq M''_{\alpha_1}$ and a subpath $P'$ of $P$ from $V(M''_{\alpha_1})$ to $V(M_{\beta_2}^{(0)})$ internally disjoint from $\bigcup_{\gamma=1}^{w_0}V(M_\gamma^{(0)})$ such that $V(P') \cap A_{L^{(i_{t'},-1,0)}}(V(M''_{\alpha_1})) \cap X_{V(T_{t'})}-X_{t'} \neq \emptyset$. 
Since $X_{t'} \cap I_j \subseteq Y^{(i_t,\lvert V(T) \rvert+1,s+2)}$, $V(P') \subseteq X_{V(T_{t'})}$ and $P'$ is internally disjoint from $X_{t'}$.
So there exists $\alpha''_2 \in [w_0]$ such that $M''_{\alpha''_2}$ contains $M^{(0)}_{\beta_2}$. 
Since $P'$ is a subpath of $P$, $\alpha''_2 \in [\alpha_1,w_0]$.
Since $M_{\beta_2^{(0)}} \not \subseteq M''_{\alpha_1}$, $\alpha''_2 \in [\alpha_1+1,w_0]$.
Let $\alpha_1''=\alpha_1$.
For each $\beta \in [2]$, let $\Q_\beta = \{M^{(\alpha_1-1)}_\gamma: \gamma \in [w_0], M^{(\alpha_1-1)}_\gamma \subseteq M_{\alpha_\beta''}''\}$.
Since $\alpha_2'' \neq \alpha_1$, there exists no $e \in E_{j,t}^{(i_t,\alpha_1)}-E_{j,t}^{(i_t,\alpha_1-1)}$ such that $e \cap \bigcup_{Q \in \Q_1}V(Q) \neq \emptyset \neq e \cap \bigcup_{Q \in \Q_2}V(Q)$.
By the construction of $E_{j,t}^{(i_{t},\alpha_1)}$, for every $Q_1 \in \Q_1$ and $Q_2 \in \Q_2$, either there exists $t^* \in V(T)$ with $i_{t^*}<i_{t}$ and $t' \in V(T_{j,t^*})$, a witness $q' \in \partial T_{j,t^*}$ for $X_{q'} \cap I_j \subseteq W_3^{(i_{t^*},\ell)}$ for some $\ell \in [0,\lvert V(T) \rvert]$, and a monochromatic $E_{j,t^*}^{(i_{t^*})}$-pseudocomponent in $G[Y^{(i_{t^*},\lvert V(T) \rvert+1,s+2)}]$ intersecting $X_{q'}$ and $V(M''_{\alpha_1'}) \cup V(M''_{\alpha_2'})$, or there exists a blocker for $Q_1$ and $Q_2$.
If the former holds, then by \cref{claim:isolation,claim:pseudocomptouchingW3} and the existence of $P'$, there exists a monochromatic $E_{j,t^*}^{(i_{t^*})}$-pseudocomponent in $G[Y^{(i_{t^*},\lvert V(T) \rvert+1,s+2)}]$ intersecting $X_{q'}$ and $V(M_{\alpha_1})$, contradicting (c).
So desired blockers for $Q_1 \in \Q_1$ and $Q_2 \in \Q_2$ exist.
But (iii) and (iv) hold for every $\alpha \in [2,\alpha_1-1]$, so \cref{claim:blocker2-0} implies that the conditions of this claim hold, when $t$ is replaced by $t'$ and $M_{\alpha_1}$ replaced by $M''_{\alpha_1}$. 
Since $\sigma(M''_{\alpha_1}) \leq \sigma(M_{\alpha_1})$ and $i_t'>i_t$, the choice of $(t,M_{\alpha_1})$ for this claim and the existence of $P'$ implies that $(t',M''_{\alpha_1})$ is not outer-safe.
But $V(M''_{\alpha_1})=V(M^{(\lvert V(G) \rvert)}_{\alpha_1}) \subseteq Y^{(i_{t},\lvert V(T) \rvert+1,s+2)}$, a contradiction.

This together with (e) and the fact that $M_{\alpha_1}''=M_{\alpha_1}^{(\lvert V(G) \rvert)}$ imply that $M_{\alpha_2} \subseteq M''_{\alpha_1} = M_{\alpha_1}^{(\lvert V(G) \rvert)}$.
Since $M''_\gamma=M_\gamma$ for every $\gamma \in [\alpha_1-1]$, $V(P)-Y^{(i_{t'},-1,0)} \neq \emptyset$, so $V(P) \not \subseteq M_{\alpha_1}''=M_{\alpha_1}^{(\lvert V(G) \rvert)}$.
Recall that there exists $\alpha^* \in \{\alpha_1,\alpha_2\}$ such that $A_{L^{(i_t,-1,0)}}(V(M_{\alpha^*})) \cap (X_{V(T_t)}-X_t) \cap X_{V(T_{t_1})}-X_{t_1}=\emptyset$.
Since $V(P)$ is internally disjoint from $X_t$, by (d) and \cref{claim:sizebeltfull}, there exists a vertex $o$ in $V(P) \cap A_{L^{(i_t,\lvert V(T) \rvert+1,s+2)}}(V(M^{(\lvert V(G) \rvert)}_{\alpha_1})) \cap X_{V(T_{t'})}-(X_{t'} \cup X_{V(T_{t_{2\eta_5^3}})}))$.
So $o \in V(P) \cap A_{L^{(i_{t'},-1,0)}}(V(M''_{\alpha_1})) \cap X_{V(T_{t'})}-(X_{t'} \cup X_{V(T_{t_{2\eta_5^3}})}))$.

Let $z$ be the node of $T$ with $i_z \geq i_{t'}$ such that either $o \in X_{V(T_{j,z})}$ or there exists $z' \in \partial T_{j,z}$ such that $o \in X_{V(T_{z'})}-X_{z'}$ and $t_\kappa \not \in V(T_{z'})$, and subjec to this, $i_z$ is minimum.
Since $o \in A_{L^{(i_{t'},-1,0)}}(V(M''_{\alpha_1}))-X_{V(T_{t_{2\eta_5^3}})}$, an augument similar to the one for showing (d) shows that $t_{3\eta_5^3} \not \in V(T_{j,z})$.
If $o \in X_{V(T_{j,z})}$, then since $t_{3\eta_5^3} \not \in V(T_{j,z})$ and $o \in A_{L^{(i_{t'},-1,0)}}(V(M''_{\alpha_1}))-X_{V(T_{t_{2\eta_5^3}})}$, $o \in Y^{(i_z,0,\alpha_1)}$ with $c(o) \neq c(P)$, a contradiction.
So there exists $z' \in \partial T_{j,z}$ such that $o \in X_{V(T_{z'})}-X_{z'}$ and $t_\kappa \not \in V(T_{z'})$.
Then an argument similar to the one for showing (e) leads to a contradiction.
This proves the claim.
\end{proof}

\subsection{Pairs are nice and outer-safe}

The goal of this subsection is to show \cref{claim:nicesafe} and other claims which state that all ``interesting'' pairs are nice and outer-safe, and monochromatic pseudocomponents more or less correctly predict the future monochromatic components.

\begin{claim} \label{claim:underassumpsafeprep}
Let $j \in [\lvert \V \rvert-1]$.
Let $t \in V(T)$.
Let $M$ be an $\Se_j$-related monochromatic $E_{j,t}^{(i_t)}$-pseudocomponent in $G[Y^{(i_t,-1,0)}]$ intersecting $X_t$. 
Let $t^* \in V(T)$ be the node of $T$ such that $t \in V(T_{j,t^*})$ and some $E_{j,t^*}^{(i_{t^*})}$-pseudocomponent in $G[Y^{(i_{t^*},\lvert V(T) \rvert+1,s+2)}]$ intersects $V(M)$ and $X_q$ for some witness $q \in \partial T_{j,t^*} \cup \{t^*\}$ for $X_q \cap I_j \subseteq W_3^{(i_{t^*},\ell)}$ for some $\ell \in [-1,\lvert V(T) \rvert]$, and subject to this, $i_{t^*}$ is minimum.
Assume there exist $t' \in \partial T_{j,t^*} - V(T_t)$, $u,v \in X_{t'}$, a $c$-monochromatic path $P_{t'}$ in $G[X_{V(T_{t'})}]$ but not in $G[Y^{(i_{t^*},\lvert V(T) \rvert+1,s+2)}]$ from $u$ to $v$ internally disjoint from $X_{t'}$ such that there exists a monochromatic $E_{j,t^*}^{(i_{t^*},\lvert V(G) \rvert)}$-pseudocomponent $M^*$ in $G[Y^{(i_{t^*},\lvert V(T) \rvert+1,s+2)}]$ with $\sigma(M^*)=\sigma(M)$ containing $u$ but not $v$.
Let $M'$ be the monochromatic $E_{j,t'}^{(i_{t'})}$-pseudocomponent in $G[Y^{(i_{t'},-1,0)}]$ containing $M^*$.

Assume that $(z,M_z)$ is a nice pair and an outer-safe pair for every $z \in V(T)$ and $\Se_j^\circ$-related monochromatic $E_{j,z}^{(i_z)}$-pseudocomponent $M_z$ in $G[Y^{(i_z,-1,0)}]$ intersecting $X_z$ with $\sigma(M_z)<\sigma(M)$. 

If $(t',M')$ is an outer-safe pair, then $V(P_{t'}) \subseteq Y^{(i_{t'},-1,0)}$. 
\end{claim}

\begin{proof}
Suppose $V(P_{t'}) \not \subseteq Y^{(i_{t'},-1,0)}$.

Since $t' \in \partial T_{j,t^*} - V(T_t)$, $t \neq t^* \neq t'$.
Since $M'$ contains $M^*$, $\sigma(M') \leq \sigma(M^*)=\sigma(M)$.
Since $P_{t'}$ is a $c$-monochromatic path and $V(P_{t'}) \not \subseteq Y^{(i_{t'},-1,0)}$, by \cref{claim:1stdone}, there exists an $\Se_j^\circ$-related monochromatic $E_{j,t'}^{(i_{t'})}$-pseudocomponent $Q'$ in $G[Y^{(i_{t'},-1,0)}]$ intersecting $X_{t'}$ with $A_{L^{(i_{t'},-1,0)}}(V(Q')) \cap X_{V(T_{t'})}-X_{t'} \neq \emptyset$ and $\sigma(Q')<\sigma(M')$.
Since $\sigma(Q')<\sigma(M') \leq \sigma(M^*)=\sigma(M)$, $(t',Q')$ is an outer-safe pair and a nice pair.
By \cref{claim:pseudocomptouchingW3}, $t'$ is a witness for $X_{t'} \cap I_j \subseteq W_3^{(i_{t^*},\ell)}$ for some $\ell \in [0,\lvert V(T) \rvert]$.
Let $q^* \in V(T)$ be the node of $T$ such that $t' \in V(T_{j,q^*})$ and some $E_{j,q^*}^{(i_{q^*})}$-pseudocomponent in $G[Y^{(i_{q^*},\lvert V(T) \rvert+1,s+2)}]$ intersects $V(Q')$ and $X_q$ for some witness $q \in \partial T_{j,q^*} \cup \{q^*\}$ for $X_q \cap I_j \subseteq W_3^{(i_{q^*},\ell)}$ for some $\ell \in [-1,\lvert V(T) \rvert]$, and subject to this, $i_{q^*}$ is minimum.
Since $t^*$ is a candidate for $q^*$, $i_{q^*} \leq i_{t^*}$.
So by \cref{claim:underassumptouch2}, there exists an $\Se_j^\circ$-related monochromatic $E_{j,t^*}^{(i_{t^*},\lvert V(G) \rvert)}$-pseudocomponent $Q$ in $G[Y^{(i_{t^*},\lvert V(T) \rvert+1,s+2)}]$ intersecting $X_{t'}$ with $A_{L^{(i_{t^*},\lvert V(T) \rvert+1,s+2)}}(V(Q)) \cap X_{V(T_{t'})}-X_{t'} \neq \emptyset$ and $\sigma(Q)=\sigma(Q')<\sigma(M^*)$.

For every $\ell \in [0,\lvert V(G) \rvert-1]$ and $x \in \{u,v\}$, let $Q_x^{(\ell)}$ be the monochromatic $E_{j,t^*}^{(i_{t^*},\ell)}$-pseudocomponent in $G[Y^{(i_{t^*},\lvert V(T) \rvert+1,s+2)}]$ containing $x$.
For every $\ell \in [0,\lvert V(G) \rvert-1]$, let $Q^{(\ell)}$ be the monochromatic $E_{j,t^*}^{(i_{t^*},\ell)}$-pseudocomponent in $G[Y^{(i_{t^*},\lvert V(T) \rvert+1,s+2)}]$ such that $\sigma(Q^{(\ell)})$ is the $(\ell+1)$-th smallest among all monochromatic $E_{j,t^*}^{(i_{t^*},\ell)}$-pseudocomponents in $G[Y^{(i_{t^*},\lvert V(T) \rvert+1,s+2)}]$.

Let $\L = \{\ell \in [0,\lvert V(G) \rvert-1]: Q^{(\ell)}$ is $\Se_j^\circ$-related$, V(Q^{(\ell)}) \cap X_{t'} \neq \emptyset, A_{L^{(i_{t^*},\lvert V(T) \rvert+1,s+2)}}(V(Q^{(\ell)})) \cap X_{V(T_{t'})}-X_{t'} \neq \emptyset,\sigma(Q^{(\ell)})<\min\{Q_u^{(\ell)},Q_v^{(\ell)}\}\}$. 
By the existence of $Q$, $\L \neq \emptyset$. 

For each $\ell \in \L$, let $M^{(\ell)}_1,M^{(\ell)}_2,\dots,M^{(\ell)}_{\beta}$ (for some $\beta \in {\mathbb N}$) be the $\Se_j^\circ$-related monochromatic $E_{j,t^*}^{(i_{t^*},\ell)}$-pseudocomponents $M''$ intersecting $X_{t'}$ with $A_{L^{(i_{t^*},\lvert V(T) \rvert+1,s+2)}}(V(M'')) \cap X_{V(T_{t'})}-X_{t'} \neq \emptyset$ such that $\sigma(M^{(\ell)}_1)<\sigma(M^{(\ell)}_2)<\dots<\sigma(M^{(\ell)}_{\beta})$ and $V(M^{(\ell)}_\gamma) = \emptyset$ for every $\gamma \geq \beta+1$.
Let $m_\L = \max\L$.

Let $M'_1,M'_2,\dots,M'_{\beta}$ (for some $\beta' \in {\mathbb N})$ be the $\Se_j^\circ$-related monochromatic $E_{j,t'}^{(i_{t'})}$-pseudocomponents $M''$ intersecting $X_{t'}$ with $A_{L^{(i_{t'},-1,0)}}(V(M'')) \cap X_{V(T_{t'})}-X_{t'} \neq \emptyset$ such that $\sigma(M'_1)<\sigma(M'_2)<\dots<\sigma(M'_{\beta'})$.
Let $V(M'_\gamma) = \emptyset$ for every $\gamma \geq \beta'+1$.

Let $\alpha_1 \in [w_0]$ be the minimum index such that $V(M'_{\alpha_1}) \cap \{u,v\} \neq \emptyset$.
Since $M^*$ contains $u$ but not $v$, there exists no monochromatic $E_{j,t^*}^{(i_{t^*},\lvert V(G) \rvert)}$-pseudocomponent containing both $u$ and $v$.
So by the construction of $E_{j,t^*}^{(i_{t^*},\lvert V(G) \rvert)}$, $(V(Q_u^{(m_\L)}) \cup V(Q_v^{(m_\L)})) \cap X_{t^*} \neq \emptyset$ and $\bigcup_{M'}A_{L^{(i_{t^*},\lvert V(T) \rvert+1,s+2)}}(V(M')) \cap X_{V(T_{t^*})}-X_{t^*} \subseteq (X_{V(T_{t'})}-X_{t'}) \cap I_j^\circ$, where the union is over all $\Se_j^\circ$-related $E_{j,t^*}^{(i_{t^*},m_\L)}$-pseudocomponents $M'$ in $G[Y^{(i_{t^*},\lvert V(T) \rvert+1,s+2)}]$ with $V(M') \cap X_{t'} \neq \emptyset$, $A_{L^{(i_{t^*},\lvert V(T) \rvert+1,s+2)}}(V(M')) \cap X_{V(T_{t^*})} - X_{t^*} \neq \emptyset$ and $\sigma(M') \leq \sigma(Q^{(m_\L)})$.
So by \cref{claim:orderpreserving}, we know for every $\alpha \in [\alpha_1-1]$, $M'_\alpha = M_\alpha^{(\lvert V(G) \rvert)}$ by induction on $\alpha$.
This together with \cref{claim:orderpreserving} imply that $M'_{\alpha_1} = M^{(\lvert V(G) \rvert)}_{\alpha_1}$.
So $\lvert M'_{\alpha_1} \cap \{u,v\} \rvert = 1$.
Hence there exists $\alpha_2 \in [\alpha_1+1,w_0]$ such that $M'_{\alpha_2} \cap \{u,v\} \neq \emptyset$.
Note that $M^*$ is contained in $M'_{\alpha_1}$ or $M'_{\alpha_2}$.
So $\sigma(M'_{\alpha_1}) \leq \sigma(M^*)$ and $M' \in \{M'_{\alpha_1},M'_{\alpha_2}\}$.

For $\ell \in [2]$, let $\Q_\alpha = \{M_\gamma^{(m_\L)}: \gamma \in [w_0], M_\gamma^{(m_\L)} \subseteq M'_{\alpha_\ell}\}$.
By the existence of $P_{t'}$, the minimality of $i_{t^*}$ and the construction of $E_{j,t^*}^{(i_{t^*},\lvert V(G) \rvert)}$, for every $M_{\gamma_1}^{(m_\L)} \in \Q_1$ and $M_{\gamma_2}^{(m_\L)} \in \Q_2$, we have $(V(M_{\gamma_1}^{(m_\L)}) \cup V(M_{\gamma_2}^{(m_\L)})) \cap X_{t^*} \neq \emptyset$ and there exists an $(m_\L+1, \gamma_1,\gamma_2,\psi_2(\gamma_1,\gamma_2),\psi_{3}(\gamma_1,\gamma_2))$-blocker for $(j,t^*,\lvert V(T) \rvert+1,s+2,m_\L,t')$. 
Hence, by \cref{claim:orderpreserving}, $V(M'_{\alpha_1}) \cap X_{t^*} \neq \emptyset$.
Since $\sigma(M'_{\alpha_1}) \leq \sigma(M^*)= \sigma(M)$, for every $\beta \in [\alpha_1-1]$, $(t',M'_\beta)$ is a nice pair.
Hence by \cref{claim:blocker2-0}, there exists an $(\alpha_1-1,\alpha_1,\alpha_2,\psi_2(\alpha_1,\alpha_2),\psi_{3}(\alpha_1,\alpha_2)-\psi_3(0,0))$-blocker for $(j,t',-1,0,0,t')$.

Similarly, by the construction of $E_{j,t^*}^{(i_{t^*},m_\L)}$ and \cref{claim:blocker2-0}, for every $\alpha \in [2,\alpha_1-1]$ and $\alpha' \in [\alpha+1,w_0]$, if $M'_{\alpha}$ and $M'_{\alpha'}$ have the same color, and the $s$-segment in $\Se_j^\circ$ containing $V(M'_\alpha)$ whose level equals the color of $M'_\alpha$ contains $V(M'_{\alpha'})$, then either there exists an $(\alpha-1,\alpha,\alpha',\psi_2(\alpha,\alpha'),\psi_3(\alpha,\alpha')-\psi_3(0,0))$-blocker for $(j,t',-1,0,0,t')$, or there exist $t''' \in V(T)$ with $i_{t'''}<i_{t^*}$ and $T_{j,t^*} \subseteq T_{j,t'''}$, a witness $q' \in \partial T_{j,t'''}$ for $X_{q'} \cap I_j \subseteq W_3^{(i_{t'''},\ell)}$ for some $\ell \in [0,\lvert V(T) \rvert]$, and a monochromatic $E_{j,t'''}^{(i_{t'''})}$-pseudocomponent in $G[Y^{(i_{t'''},\lvert V(T) \rvert+1,s+2)}]$ intersecting $V(M'_{\alpha}) \cup V(M'_{\alpha'})$ and $X_{q'}$.

Since $V(P_{t'}) \not \subseteq Y^{(i_{t'},-1,0)}$, by \cref{claim:1stdone}, $\alpha_1 \neq 1$.
If $M'_{\alpha_1}=M'$, then $(t',M'_{\alpha_1})=(t',M')$ is an outer-safe pair; if $M'_{\alpha_1} \neq M'$, then $M'=M'_{\alpha_2}$, so $\sigma(M'_{\alpha_1}) < \sigma(M'_{\alpha_2}) = \sigma(M') \leq \sigma(M^*)=\sigma(M)$, and hence $(t',M'_{\alpha_1})$ is an outer-safe pair.
By \cref{claim:blocker2}, there exists no $c$-monochromatic path $O^*$ in $G$ from $V(M'_{\alpha_1}) \cap V(O^*)$ to $V(M_{\alpha_2}') \cap V(O^*)$ internally disjoint from $\bigcup_{\gamma=1}^{w_0}V(M_{\gamma}')$ such that $V(O^*) \cap A_{L^{(i_{t'},-1,0)}}(V(M_{\alpha_1}')) \cap X_{V(T_{t'})}-X_{t'} \neq \emptyset$, contradicting the existence of $P_{t'}$.
This proves the claim.
\end{proof}

\begin{claim} \label{claim:underassumpsafe}
Let $j \in [\lvert \V \rvert-1]$.
Let $t \in V(T)$.
Let $M$ be an $\Se_j$-related monochromatic $E_{j,t}^{(i_t)}$-pseudocomponent in $G[Y^{(i_t,-1,0)}]$ intersecting $X_t$. 
Assume that $(z,M_z)$ is a nice pair and an outer-safe pair for every $z \in V(T)$ and $\Se_j^\circ$-related monochromatic $E_{j,z}^{(i_z)}$-pseudocomponent $M_z$ in $G[Y^{(i_z,-1,0)}]$ intersecting $X_z$ with $\sigma(M_z)<\sigma(M)$. 
Then $(t,M)$ is an outer-safe pair.
\end{claim}

\begin{proof}
Suppose to the contrary that $(t,M)$ is not an outer-safe pair, and subject to this, $i_t$ is as small as possible.

Let $t^* \in V(T)$ be the node of $T$ such that $t \in V(T_{j,t^*})$ and some $E_{j,t^*}^{(i_{t^*})}$-pseudocomponent in $G[Y^{(i_{t^*},\lvert V(T) \rvert+1,s+2)}]$ intersects $V(M)$ and $X_q$ for some witness $q \in \partial T_{j,t^*} \cup \{t^*\}$ for $X_q \cap I_j \subseteq W_3^{(i_{t^*},\ell)}$ for some $\ell \in [-1,\lvert V(T) \rvert]$, and subject to this, $i_{t^*}$ is minimum.
Since $(t,M)$ is not an outer-safe pair, there exist $t' \in \partial T_{j,t^*} - V(T_t)$, $u,v \in X_{t'}$, a monochromatic path $P_{t'}$ in $G[Y^{(i_t,-1,0)} \cap X_{V(T_{t'})}]$ but not in $G[Y^{(i_{t^*},\lvert V(T) \rvert+1,s+2)}]$ from $u$ to $v$ internally disjoint from $X_{t'}$ such that there exists a monochromatic $E_{j,t^*}^{(i_{t^*},\lvert V(G) \rvert)}$-pseudocomponent $M^*$ in $G[Y^{(i_{t^*},\lvert V(T) \rvert+1,s+2)}]$ with $\sigma(M^*)=\sigma(M)$ containing $u$ but not $v$.
In particular, $t \neq t^*$.
By \cref{claim:pseudocomptouchingW3}, $t'$ is a witness for $X_{t'} \cap I_j \subseteq W_3^{(i_{t^*},\ell)}$ for some $\ell \in [0,\lvert V(T) \rvert]$.

Let $M'$ be the monochromatic $E_{j,t'}^{(i_{t'})}$-pseudocomponent in $G[Y^{(i_{t'},-1,0)}]$ containing $M^*$.
By \cref{claim:isolation,claim:pseudocomptouchingW3}, $t^* \in V(T)$ is the node of $T$ such that $t' \in V(T_{j,t^*})$ and some $E_{j,t^*}^{(i_{t^*})}$-pseudocomponent in $G[Y^{(i_{t^*},\lvert V(T) \rvert+1,s+2)}]$ intersects $V(M')$ and $X_q$ for some witness $q \in \partial T_{j,t^*} \cup \{t^*\}$ for $X_q \cap I_j \subseteq W_3^{(i_{t^*},\ell)}$ for some $\ell \in [-1,\lvert V(T) \rvert]$, and subject to this, $i_{t^*}$ is minimum.
Note that $t' \neq t^*$ and $\sigma(M') \leq \sigma(M^*)$.

Suppose $V(P_{t'}) \subseteq Y^{(i_{t'},-1,0)}$.
If $\sigma(M')=\sigma(M^*)$, then since $t' \in \partial T_{j,t^*} \cap V(T_{t'})$, it is a contradiction by \cref{claim:underassumpnounder}.
So $\sigma(M')<\sigma(M^*)=\sigma(M)$.
Hence $(t',M')$ is an outer-safe pair by assumption.
By \cref{claim:underassumpoutersafeunique}, there uniquely exists a monochromatic $E_{j,t^*}^{(i_{t^*},\lvert V(G) \rvert)}$-pseudocomponent $Q^*$ in $G[Y^{(i_{t^*},\lvert V(T) \rvert+1,s+2)}]$ intersecting $V(M')$.
Since $M^* \subseteq M'$, $Q^*=M^*$ by the uniqueness of $Q^*$.
By \cref{claim:underassumpoutersafeunique}, $\sigma(Q^*)=\sigma(M')$.
So $\sigma(M')=\sigma(M^*)$, a contradiction.

Hence $V(P_{t'}) \not \subseteq Y^{(i_{t'},-1,0)}$.
By \cref{claim:underassumpsafeprep}, $(t',M')$ is not an outer-safe pair.
By the assumption of this claim, $\sigma(M') \geq \sigma(M)$.
Since $M' \supseteq M^*$, $\sigma(M') \leq \sigma(M^*)=\sigma(M)$.
So $\sigma(M')=\sigma(M)$.
Since $(t',M')$ is not outer-safe and $\sigma(M')=\sigma(M)$, by the maximality of $i_t$, $i_{t'} \geq i_t$.
But $V(P_{t'}) \subseteq Y^{(i_t,-1,0)}$ and $V(P_{t'}) \not \subseteq Y^{(i_{t'},-1,0)}$.
So $i_t>i_{t'}$, a contradiction.
This proves the claim.
\end{proof}

\begin{claim} \label{claim:underassumpnice}
Let $j \in [\lvert \V \rvert-1]$.
Let $t \in V(T)$.
Let $M$ be an $\Se_j$-related monochromatic $E_{j,t}^{(i_t)}$-pseudocomponent in $G[Y^{(i_t,-1,0)}]$ intersecting $X_t$. 
Assume that $(z,M_z)$ is a nice pair and an outer-safe pair for every $z \in V(T)$ and $\Se_j^\circ$-related monochromatic $E_{j,z}^{(i_z)}$-pseudocomponent $M_z$ in $G[Y^{(i_z,-1,0)}]$ intersecting $X_z$ with $\sigma(M_z)<\sigma(M)$. 
Then $(t,M)$ is a nice pair.
\end{claim}

\begin{proof}
By \cref{claim:underassumpsafe}, $(t,M)$ is an outer-safe pair.
Let $t^*$ be the node of $T$ with $t \in V(T_{j,t^*})$ such that some $E_{j,t^*}^{(i_{t^*})}$-pseudocomponent in $G[Y^{(i_{t^*},\lvert V(T) \rvert+1,s+2)}]$ intersecting $V(M)$ intersects $X_{t'}$ for some witness $t' \in \partial T_{j,t^*} \cup \{t^*\}$ for $X_{t'} \cap I_j \subseteq W_3^{(i_{t^*},\ell)}$ for some $\ell \in [-1,\lvert V(T) \rvert]$, and subject to this, $i_{t^*}$ is minimum.
By \cref{claim:underassumpoutersafeunique}, there uniquely exists a monochromatic $E_{j,t^*}^{(i_{t^*},\lvert V(G) \rvert)}$-pseudocomponent $M^*$ in $G[Y^{(i_{t^*},\lvert V(T) \rvert+1,s+2)}]$ intersecting $V(M)$; $V(M^*) \cap X_t \neq \emptyset$ and $\sigma(M^*)=\sigma(M)$; if $t \neq t^*$ and $A_{L^{(i_t,-1,0)}}(V(M)) \cap X_{V(T_t)}-X_t \neq \emptyset$, then $A_{L^{(i_{t^*},\lvert V(T) \rvert+1,s+2)}}(V(M^*)) \cap X_{V(T_t)}-X_t \neq \emptyset$.

Suppose to the contrary that $(t,M)$ is not a nice pair.
So $t \neq t^*$, and there exist a $c$-monochromatic path $P$ in $G$ intersecting $V(M)$ with $V(P) \subseteq X_{V(T_{t^*})}$ internally disjoint from $X_{V(T_t)}$ and a vertex $u_P \in V(P) \cap X_{V(T_{j,t^*})}$ such that there exists no monochromatic $E_{j,t^*}^{(i_{t^*},\lvert V(G) \rvert)}$-pseudocomponent $M'$ in $G[Y^{(i_{t^*},\lvert V(T) \rvert+1,s+2)}]$ such that $V(M') \cap V(M) \neq \emptyset \neq V(M') \cap X_{V(T_t)}$, $M'$ contains $u_P$, and if $A_{L^{(i_t,-1,0)}}(V(M)) \cap X_{V(T_t)}-X_t \neq \emptyset$, then $A_{L^{(i_{t^*},\lvert V(T) \rvert+1,s+2)}}(V(M')) \cap X_{V(T_t)}-X_t \neq \emptyset$.
So $M^*$ does not contain $u_P$.
By \cref{claim:isolation,claim:pseudocomptouchingW3}, some monochromatic $E_{j,t^*}^{(i_{t^*},\lvert V(G) \rvert)}$-pseudocomponent $Q$ in $G[Y^{(i_{t^*},\lvert V(T) \rvert+1,s+2)}]$ contains $u_P$.
Hence $Q \neq M^*$.

Since $V(P) \cap V(M) \neq \emptyset$ and $V(P) \subseteq X_{V(T_{t^*})}$, $V(P) \cap V(M^*) \neq \emptyset$ by the uniqueness of $M^*$.
Since $P$ is internally disjoint from $X_{V(T_t)}$, by tracing along $P$ from $M^*$, there exists $t' \in \partial T_{j,t^*} - V(T_t)$, $u,v \in X_{t'}$, a $c$-monochromatic path $P_{t'}$ in $G[X_{V(T_{t'})}]$ but not in $G[Y^{(i_{t^*},\lvert V(T) \rvert+1,s+2)}]$ from $u$ to $v$ internally disjoint from $X_{t'}$ such that $M^*$ contains $u$ but not $v$.
Since $M^*$ does not contain $v$, $M$ does not contain $v$ by \cref{claim:isolation,claim:pseudocomptouchingW3} and the uniqueness of $M^*$.
So $V(P_{t'}) \not \subseteq Y^{(i_t,-1,0)}$.
Let $M'$ be the monochromatic $E_{j,t'}^{(i_{t'})}$-pseudocomponent in $G[Y^{(i_{t'},-1,0)}]$ containing $M^*$.
So $\sigma(M') \leq \sigma(M^*)=\sigma(M)$.
Hence $(t',M')$ is an outer-safe pair by \cref{claim:underassumpsafe}.
By \cref{claim:underassumpsafeprep}, $V(P_{t'}) \subseteq Y^{(i_{t'},-1,0)}$.

Since $(t,M)$ is outer-safe, $V(P_{t'}) \not \subseteq Y^{(i_{t},-1,0)}$.
So $i_{t'}>i_t$.
Since $t' \not \in V(T_t)$ and $V(M^*) \cap X_t \neq \emptyset$ and $u \in V(M^*)$, $V(M^*)$ intersects the bag at the common ancestor of $t$ and $t'$.
So $V(P_{t'}) \subseteq Y^{(i_{t'},-1,0)}$ implies $V(P_{t'}) \subseteq Y^{(i_{t},-1,0)}$.
This proves the claim.
\end{proof}

\begin{claim} \label{claim:nicesafe}
Let $j \in [\lvert \V \rvert-1]$.
Let $t \in V(T)$.
Let $M$ be an $\Se_j^\circ$-related monochromatic $E_{j,t}^{(i_t)}$-pseudocomponent in $G[Y^{(i_t,-1,0)}]$ intersecting $X_t$. 
Let $t^*$ be the node of $T$ with $t \in V(T_{j,t^*})$ such that some $E_{j,t^*}^{(i_{t^*})}$-pseudocomponent in $G[Y^{(i_{t^*},\lvert V(T) \rvert+1,s+2)}]$ intersecting $V(M)$ intersects $X_{t'}$ for some witness $t' \in \partial T_{j,t^*} \cup \{t^*\}$ for $X_{t'} \cap I_j \subseteq W_3^{(i_{t^*},\ell)}$ for some $\ell \in [-1,\lvert V(T) \rvert]$, and subject to this, $i_{t^*}$ is minimum.
Then 
	\begin{itemize}
		\item $(t,M)$ is a nice pair and an outer-safe pair, and
		\item if $P$ is a $c$-monochromatic path in $G[X_{V(T_{t^*})}]$ intersecting $V(M) \cap X_t$ internally disjoint from $X_{V(T_t)}$ and $u_P$ is a vertex in $V(P) \cap X_{V(T_{j,t^*})}$, then there exists a monochromatic $E_{j,t^*}^{(i_{t^*},\lvert V(G) \rvert)}$-pseudocomponent in $G[Y^{(i_{t^*},\lvert V(T) \rvert+1,s+2)}]$ containing $V(M) \cap V(P) \cap X_t$ and containing $u_P$.
	\end{itemize}
\end{claim}

\begin{proof}
Suppose that $(t,M)$ is either not a nice pair or not an outer-safe pair, and subject to this, $\sigma(M)$ is as small as possible.
By \cref{claim:underassumpsafe}, $(t,M)$ is an outer-safe pair.
By \cref{claim:underassumpnice}, $(t,M)$ is a nice pair.
Hence the first statement of this claim holds.

Now we prove the second statement of this claim.
By \cref{claim:underassumpoutersafeunique}, there exists uniquely a monochromatic $E_{j,t^*}^{(i_{t^*},\lvert V(G) \rvert)}$-pseudocomponent $M^*$ in $G[Y^{(i_{t^*},\lvert V(T) \rvert+1,s+2)}]$ intersecting $V(M)$.
If $t=t^*$, then since $V(P) \subseteq X_{V(T_{t^*})}$ and $P$ is internally disjoint from $X_{V(T_t)}$, $P$ is a $c$-monochromatic path in $G[X_t]$, so $M^*$ containing $P$ by \cref{claim:isolation,claim:pseudocomptouchingW3}.
Hence we may assume that $t \neq t^*$.
Since $(t,M)$ is nice, the uniqueness of $M^*$ implies that $V(M^*) \cap V(M) \neq \emptyset \neq V(M^*) \cap X_{V(T_t)}$ and $M^*$ contains $u_P$.
By \cref{claim:isolation,claim:pseudocomptouchingW3,claim:underassumpoutersafeunique}, $V(M^*)$ contains $V(M) \cap X_t$, so $V(M^*)$ contains $V(M) \cap V(P) \cap X_t$.
This proves the claim.
\end{proof}

\begin{claim} \label{claim:turelycontain}
Let $j \in [\lvert \V \rvert-1]$.
Let $t \in V(T)$.
Let $M$ be an $\Se_j$-related monochromatic $E_{j,t}^{(i_t)}$-pseudocomponent in $G[Y^{(i_t,-1,0)}]$ intersecting $X_t$.
Let $t^*$ be the node of $T$ with $t \in V(T_{j,t^*})$ such that some $E_{j,t^*}^{(i_{t^*})}$-pseudocomponent in $G[Y^{(i_{t^*},\lvert V(T) \rvert+1,s+2)}]$ intersecting $V(M)$ intersects $X_{t'}$ for some witness $t' \in \partial T_{j,t^*} \cup \{t^*\}$ for $X_{t'} \cap I_j \subseteq W_3^{(i_{t^*},\ell)}$ for some $\ell \in [-1,\lvert V(T) \rvert]$, and subject to this, $i_{t^*}$ is minimum.
Let $P$ be a $c$-monochromatic path in $G$ intersecting $V(M) \cap X_t$ internally disjoint from $X_{V(T_t)}$.
Let $u_P$ be a vertex of $P$.
If either $u_P \in X_{V(T_{j,t^*})}$ or $\sigma(u_P)=\sigma(P)$, then $M$ contains $u_P$.
\end{claim}

\begin{proof}
Suppose this claim does not hold.
Among all counterexamples, we choose $(t,M,P,u_P)$ such that $i_t$ is as small as possible.

Suppose $t=t^*$.
If $V(P) \subseteq X_{V(T_t)}$, then since $P$ is internally disjoint from $X_{V(T_t)}$, $P$ is a $c$-monochromatic path in $G[X_t]$, so $P \subseteq M$, a contradiction.
So $t$ is not the root of $T$.
Let $p$ be the parent of $t$.
So $V(P) \cap X_p \neq \emptyset$.
Hence $p$ is a candidate for $t^*$, so $t \neq t^*$, a contradiction.

So $t \neq t^*$.
We may assume that $u_P$ is an end of $P$.
Let $v_0$ be a vertex in $V(P) \cap V(M)$.
We may assume that $v_0 \neq u_P$, for otherwise we are done.

By \cref{claim:nicesafe,claim:underassumpoutersafeunique}, there uniquely exists a monochromatic $E_{j,t^*}^{(i_{t^*},\lvert V(G) \rvert)}$-pseudocomponent $M^*$ in $G[Y^{(Y^{(i_{t^*},\lvert V(T) \rvert+1,s+2)})}]$ intersecting $V(M)$.
By \cref{claim:isolation,claim:pseudocomptouchingW3}, $M^*$ contains $v_0$.

Let $\C$ be the collection of the maximal subpaths of $P$ contained in $G[X_{V(T_{j,t^*})}]$.
Denote $\C$ by $\{P_\alpha: \alpha \in [\lvert \C \rvert]\}$.
So for each $\alpha \in [\lvert \C \rvert]$, there exist distinct vertices $v_{\alpha-1},u_\alpha \in V(P)$ such that $P_\alpha$ is from $v_{\alpha-1}$ to $u_\alpha$.
By renaming, we may assume that $P$ passes through $v_0,u_1,v_1,u_2,v_2,\dots,u_{\lvert \C \rvert},u_P$ in the order listed.
Note that for every $\alpha \in [\lvert \C \rvert-1]$, there exist $t_\alpha \in \partial T_{j,t^*} \cup \{t^*\}$ and a subpath $Q_\alpha$ of $P$ from $u_{\alpha}$ to $v_\alpha$ internally disjoint from $X_{t_\alpha}$ such that $V(Q_\alpha)-X_{V(T_{j,t^*})} \neq \emptyset$. 
Since $P$ is internally disjoint from $X_{V(T_t)}$, $t_\alpha \not \in V(T_t)$.

We shall prove that for every $\alpha \in [\lvert \C \rvert-1]$, $\{u_\alpha,v_\alpha,u_{\lvert \C \rvert}\} \subseteq V(M^*)$ by induction on $\alpha$.
For each $\alpha \in [\lvert \C \rvert]$, since $Q_\alpha$ is in $G[X_{V(T_{j,t^*})}]$, we know if $M^*$ contains $v_{\alpha-1}$, then by \cref{claim:isolation,claim:pseudocomptouchingW3}, $M^*$ contains $u_\alpha$.
In particular, $u_1 \in V(M^*)$.
So it suffices to show that for every $\alpha \in [\lvert \C \rvert-1]$, if $M^*$ contains $u_\alpha$, then $M^*$ contains $v_\alpha$.

Now we fix $\alpha$ to be an element in $[\lvert \C \rvert-1]$, and assume that $M^*$ contains $u_\alpha$.
We first assume $t_\alpha=t^*$.
Let $Z_u$ and $Z_v$ be the monochromatic $E_{j,t^*}^{(i_{t^*})}$-pseudocomponents in $G[Y^{(i_{t^*},-1,0)}]$ containing $u_\alpha$ and $v_\alpha$, respectively.
Let $v_{Q_\alpha}$ be the vertex of $Q_\alpha$ with $\sigma(v_{Q_\alpha})=\sigma(Q_\alpha)$.
For each $x \in \{u_\alpha,v_\alpha\}$, let $Q_{x}$ be the subpath of $Q_\alpha$ between $x$ and $v_{Q_\alpha}$.
Note that each $Q_x$ is internally disjoint from $X_{V(T_{t^*})}$.
Since $i_{t^*}<i_t$ and we choose $i_t$ to be minimal among all couterexamples of this claim, we know that both $Z_u$ and $Z_v$ contain $v_{Q_\alpha}$, so $Z_u=Z_v$.
Since $M^*$ contains $u_\alpha$, $M^*$ contains $Z_u=Z_v$, so $M^*$ contains $v_\alpha$.

So we may assume $t_\alpha \in \partial T_{j,t^*}-V(T_t)$.
Hence $Q_\alpha$ is a $c$-monochromatic path in $G[X_{V(T_{t^*})}]$ internally disjoint from $X_{V(T_t)}$, and $v_\alpha$ is a vertex of $P$ in $X_{V(T_{j,t^*})}$.
And $Q_\alpha$ contains $u_\alpha \in V(M^*) \subseteq V(M)$.
Since $(t,M)$ is a nice pair, there exists a monochromatic $E_{j,t^*}^{(i_{t^*},\lvert V(G) \rvert)}$-pseudocomponent $M'$ in $G[Y^{(i_{t^*},\lvert V(T) \rvert+1,s+2)}]$ such that $V(M') \cap V(M) \neq \emptyset$ and $M'$ contains $v_\alpha$.
By the uniqueness of $M^*$, $M'=M^*$.
So $M^*$ contains $v_\alpha$.

This shows that for every $\alpha \in [\lvert \C \rvert-1]$, $\{u_\alpha,v_\alpha,u_{\lvert \C \rvert}\} \subseteq V(M^*)$.
Note that if $u_P \in X_{V(T_{j,t^*})}$, then $u_P = u_{\lvert \C \rvert} \in V(M^*) \subseteq V(M)$, a contradiction.
So $u_P \not \in X_{V(T_{j,t^*})}$.
Hence $\sigma(u_P)=\sigma(P)$, and there exists a subpath $P^*$ of $P$ from $u_{\lvert \C \rvert}$ to $u_P$ internally disjoint from $X_{V(T_{t^*})}$.
Let $M_0$ be the monochromatic $E_{j,t^*}^{(i_{t^*})}$-pseudocomponent in $G[Y^{(i_{t^*},-1,0)}]$ containing $u_{\lvert \C \rvert}$.
So $V(P^*) \cap V(M_0) \cap X_{t^*} \neq \emptyset$ and $\sigma(u_P)=\sigma(P^*)$.
Since $i_{t^*}<i_t$ and we choose $i_t$ to be minimal among all couterexamples of this claim, $M_0$ contains $u_P$.
Since $M$ contains $u_{\lvert \C \rvert}$, $M \supseteq M_0$.
So $M$ contains $u_P$.
This proves the claim.
\end{proof}

\begin{claim} \label{claim:truelysame}
Let $j \in [\lvert \V \rvert-1]$.
Let $t \in V(T)$.
Let $M_1$ and $M_2$ be $\Se_j$-related monochromatic $E^{(i_t)}_{j,t}$-pseudocomponents in $G[Y^{(i_t,-1,0)}]$ intersecting $X_t$. 
Let $P$ be a $c$-monochromatic path from $V(M_1)$ to $V(M_2)$ internally disjoint from $X_{V(T_t)}$.
Then $M_1=M_2$.
\end{claim}

\begin{proof}
Let $v_P$ be the vertex of $P$ such that $\sigma(v_P)=\sigma(P)$.
By \cref{claim:turelycontain}, both $M_1$ and $M_2$ contain $v_P$.
So $M_1=M_2$.
\end{proof}

\begin{claim} \label{claim:turelycontain2}
Let $j \in [\lvert \V \rvert-1]$.
Let $t_1,t_2 \in V(T)$ with $t_2 \in V(T_{t_1})$.
For each $\alpha \in [2]$, let $M_\alpha$ be an $\Se_j$-related monochromatic $E_{j,t_\alpha}^{(i_{t_\alpha})}$-pseudocomponent in $G[Y^{(i_{t_\alpha},-1,0)}]$ intersecting $X_{t_\alpha}$.
Let $P$ be a $c$-monochromatic path in $G$ from $V(M_2) \cap X_{t_2}$ to $V(M_1)$ internally disjoint from $X_{V(T_{t_2})}$.
Then $M_2 \supseteq M_1$.
\end{claim}

\begin{proof}
Suppose to the contrary that $M_2 \not \supseteq M_1$.
Among all counterexamples, we choose $(t_1,t_2,M_1, \allowbreak M_2, \allowbreak P)$ such that $i_{t_2}$ is as small as possible.
By \cref{claim:truelysame}, $t_2 \in V(T_{t_1})-\{t_1\}$.

So by taking a subpath, we may assume that $P$ is internally disjoint from $V(M_1)$.
Let $v_1$ be the end of $P$ in $M_1$, and let $v_2$ be the end of $P$ in $V(M_2) \cap X_{t_2}$.
Note that $v_1 \not \in V(M_2)$, for otherwise $M_2 \supseteq M_1$.
By \cref{claim:truelysame}, $v_1 \in X_{V(T_{t_1})}$.

Let $t_2^*$ be the node of $T$ with $t_2 \in V(T_{j,t_2^*})$ such that some monochromatic $E_{j,t_2^*}^{(i_{t_2^*})}$-pseudocomponent in $G[Y^{(i_{t_2^*},\lvert V(T) \rvert+1,s+2)}]$ intersecting $V(M_2)$ intersects $X_{t'}$ for some witness $t' \in \partial T_{j,t_2^*} \cup \{t_2^*\}$ for $X_{t'} \cap I_j \subseteq W_3^{(i_{t_2^*},\ell)}$ for some $\ell \in [-1,\lvert V(T) \rvert]$, and subject to this, $i_{t_2^*}$ is minimum.

Suppose $v_1 \in X_{V(T_{t_2^*})}$. 
Since $v_1 \not \in V(M_2)$, by \cref{claim:turelycontain}, $v_1 \not \in X_{V(T_{j,t_2^*})}$.
So there exists $q \in \partial T_{j,t_2^*}-V(T_{t_2})$ such that $v_1 \in X_{V(T_q)}-X_q$, and there exists a path $Q$ in $M_1$ from $v_1$ to $X_{q}$ internally disjoint from $X_q$.
If $Q$ is a path in $G$, then by applying \cref{claim:turelycontain} to the path $P \cup Q$, we know $M_2$ contains $V(Q) \cap X_q$, so $M_2 \supseteq M_1$, a contradiction.
So $Q$ is not a path in $G$.
Hence there exist $q^* \in V(T)$ with $T_{j,t_2^*} \subset T_{j,q^*}$, $q' \in \partial T_{j,q^*} \cap X_{V(T_q)}-X_q$ and $e \in E(Q) \cap E_{j,q^*}^{(i_{q^*},\lvert V(G) \rvert)} \neq \emptyset$ such that $e \subseteq X_{q'}$ and $q'$ is a witness for $X_{q'} \cap I_j \subseteq W_3^{(i_{q^*},\ell)}$ for some $\ell \in [0,\lvert V(T) \rvert]$.
We may assume that there exists a subpath $Q'$ of $Q$ from $V(Q) \cap X_q$ to an element in $e$ with $E(Q') \subseteq E(G)$.
So $i_{q^*}<i_{t_2^*}$.
Since $v_1 \in X_{V(T_q)}-X_q$, there exists a subpath $P_1$ of $P \cup Q$ from a vertex $u_1$ in $X_{q}$ to an element of $e$ internally disjoint from $X_{V(T_{q})}$ with $E(P_1) \subseteq E(G)$.
So $V(P_1) \cap X_{q'} \neq \emptyset$.
Since $u_1 \in X_{V(T_{j,t_2^*})}$, $u_1 \in V(M_2)$ by \cref{claim:turelycontain}.
By the existence of $P_1$ and \cref{claim:isolation,claim:pseudocomptouchingW3}, $V(M_2) \cap X_{q'} \neq \emptyset$, so $q^*$ is a candidate for $t_2^*$.
But $i_{q^*}<i_{t_2^*}$, a contradiction.

Hence $v_1 \not \in X_{V(T_{t_2^*})}$. 
By the existence of $P$, the parent of $t_2$ is a candidate for $t_2^*$, so $t_2^* \neq t_2$.
Let $P'$ be the maximal subpath of $P$ from $v_2$ to $X_{t_2^*}$.
Let $u_2$ be the end of $P'$ other than $v_2$ if possible.
Since $u_2 \in X_{t_2^*} \subseteq X_{V(T_{j,t_2^*})}$, by \cref{claim:turelycontain}, $M_2$ contains $u_2$.
Let $M_u$ be the monochromatic $E_{j,t_2^*}^{(i_{t_2^*})}$-pseudocomponent in $G[Y^{(i_{t_2^*},-1,0)}]$ containing $u_2$.
So there exists a subpath $P''$ of $P$ from $u_2$ to $v_1$ internally disjoint from $X_{V(T_{t^*_2})}$.
Since $v_1 \in X_{V(T_{t_1})} - X_{V(T_{t_2^*})}$, $t_2^{*} \in V(T_{t_1})$.
Since $i_{t_2^*}<i_{t_2}$, the minimality of $i_{t_2}$ among all counterexamples implies that $M_u \supseteq M_1$.
Since $M_2$ contains $u_2 \in V(M_u)$, $M_2 \supseteq M_u \supseteq M_1$, a contradiction.
This proves the claim.
\end{proof}

\subsection{The size of the second type of monochromatic component}
\label{subsec:SecondType}

This subsection finishes the proof of \cref{no apex 0} by showing that the size of every $c$-monochromatic component intersecting $\bigcup_{j=1}^{\lvert \V \rvert-1}I_j^\circ$ is bounded, complementing \cref{claim:centralcomponentsize}.

A parade $(t_1,t_2,\dots,t_k)$ is \defn{tamed} if for every $\alpha \in [k-1]$, $t_{\alpha+1} \in \partial T_{j,t_\alpha}$.

\begin{claim} \label{claim:changingdesceasingsimple}
Let $j \in [\lvert \V \rvert-1]$.
Let $\kappa = \kappa_1$.
Let $(t_1,t_2,\dots,t_{\kappa})$ be a parade that is a subsequence of a tamed parade. 
For every $\beta \in [\kappa]$, let $p^{(\beta)}=(p^{(\beta)}_1,p^{(\beta)}_2,\dots,p^{(\beta)}_{\lvert V(G) \rvert})$ be the $(i_{t_\beta},-1,0,j)$-pseudosignature. 
Let $\alpha \in [\lvert V(G) \rvert]$ such that $p^{(1)}_\alpha$ is not a zero sequence. 
Let $M_1$ be the monochromatic $E^{(i_{t_1})}_{j,t_1}$-pseudocomponent in $G[Y^{(i_{t_1},-1,0)}]$ defining $p^{(1)}_\alpha$.
For every $\beta \in [2,\kappa]$, let $M_\beta$ be the monochromatic $E^{(i_{t_\beta})}_{j,t_\beta}$-pseudocomponent in $G[Y^{(i_{t_\beta},-1,0)}]$ containing $M_1$.
For every $\alpha' \in [\alpha-1]$ in which $p^{(1)}_{\alpha'}$ is a nonzero sequence, let $Q^{(1)}_{\alpha'}$ be the monochromatic $E^{(i_{t_1})}_{j,t_1}$-pseudocomponent in $G[Y^{(i_{t_1},-1,0)}]$ defining $p^{(1)}_{\alpha'}$, and for every $\beta \in [2,\kappa]$, let $Q^{(\beta)}_{\alpha'}$ be the monochromatic $E^{(i_{t_\beta})}_{j,t_\beta}$-pseudocomponent in $G[Y^{(i_{t_\beta},-1,0)}]$ containing $Q^{(1)}_{\alpha'}$.
Assume that for every $\alpha' \in [\alpha-1]$ in which $p^{(1)}_{\alpha'}$ is a nonzero sequence, $V(Q^{(1)}_{\alpha'})=V(Q^{(\kappa)}_{\alpha'})$.
If $V(M_{\beta+1}) \neq V(M_{\beta})$ for every $\beta \in [\kappa-1]$, then there exists $\alpha^* \in [\alpha-1]$ such that $p^{(\kappa)}_{\alpha^*}$ is lexicographically smaller than $p^{(1)}_{\alpha^*}$, and $p^{(\kappa)}_{\beta}=p^{(1)}_\beta$ for every $\beta \in [\alpha^*-1]$
\end{claim}

\begin{proof}
Suppose to the contrary that this claim does not hold.
We may assume that among all counterexamples, $\alpha$ is minimal. 

Since for every $\alpha' \in [\alpha-1]$ in which $p^{(1)}_{\alpha'}$ is a nonzero sequence, $V(Q^{(1)}_{\alpha'})=V(Q^{(\kappa)}_{\alpha'})$, we know for every $\beta \in [\kappa]$, $V(Q^{(\beta)}_{\alpha'})=V(Q^{(1)}_{\alpha'})$.
So, together with \cref{claim:zerosignature}, for every $\alpha' \in [\alpha-1]$, $\beta \in [\kappa-1]$ and $\gamma \in [\lvert V(G) \rvert]$, the $\gamma$-th entry of $p^{(\beta+1)}_{\alpha'}$ is at most the $\gamma$-th entry of $p^{(\beta)}_{\alpha}$.
Hence if there exist $\beta \in [2,\kappa]$ and $\alpha' \in [\alpha-1]$ such that $p^{(\beta)}_{\alpha'} \neq p^{(1)}_{\alpha'}$, then $p^{(\kappa)}_{\alpha'}$ is lexicographically smaller than $p^{(1)}_{\alpha'}$ and we are done if we further choose $\alpha'$ to be minimum.
	
So for every $\beta \in [\kappa]$ and $\alpha' \in [\alpha-1]$, $p^{(\beta)}_{\alpha'} = p^{(1)}_{\alpha'}$.
Hence for every $\beta \in [\kappa]$ and $\alpha' \in [\alpha-1]$ in which $p^{(1)}_{\alpha'}$ is a nonzero sequence, $Q^{(\beta)}_{\alpha'}$ defines $p^{(\beta)}_{\alpha'}$, and $A_{L^{(i_{t_1},-1,0)}}(V(Q^{(1)}_{\alpha'})) \cap X_{V(T_{t_1})}-X_{t_1} = A_{L^{(i_{t_\beta},-1,0)}}(V(Q^{(\beta)}_{\alpha'})) \cap X_{V(T_{t_\beta})}-X_{t_\beta} \subseteq X_{V(T_{t_\kappa})}-X_{t_\kappa}$.
For every $\alpha' \in [\alpha-1]$, since $V(Q^{(1)}_{\alpha'}) \cap X_{t_1} \neq \emptyset$, we know $A_{L^{(i_{t_1},-1,0)}}(V(Q^{(1)}_{\alpha'})) \cap X_{V(T_{t_1})}-X_{t_1} = A_{L^{(i_{t_\kappa},-1,0)}}(V(Q^{(\kappa)}_{\alpha'})) \cap X_{V(T_{t_\kappa})}-X_{t_\kappa} \subseteq X_{V(T_{t_\kappa})}-(X_{t_\kappa} \cup Z_{t_1}) \subseteq (X_{V(T_{t_\kappa})}-X_{t_\kappa}) \cap I_j^\circ$.
Since $(t_1,\dots,t_\kappa)$ is a subsequence of a tamed parade, for every $\beta \in [\kappa-1]$ and $\alpha' \in [\alpha-1]$, $(A_{L^{(i_{t_\beta},-1,0)}}(V(Q^{(\beta)}_{\alpha'})) \cap X_{V(T_{t_\beta})}-X_{t_\beta}) \cap Z_{t_\beta} = \emptyset$.

Let $\kappa'=\kappa_0+w_0+3$.
Since $A_{L^{(i_{t_{\beta}},-1,0)}}(V(Q^{(\beta)}_{\alpha'})) \cap X_{V(T_{t_{\beta}})}-X_{t_{\beta}} \subseteq (X_{V(T_{t_{\kappa}})}-X_{t_{\kappa}}) \cap I_j^\circ$ for every $\beta \in [\kappa-2]$ and $\alpha' \in [\alpha-1]$, we know that for any distinct $\beta_1,\beta_2 \in [\kappa]$ with $t_{\beta_2} \in V(T_{t_{\beta_1}})-\{t_{\beta_1}\}$, if $X_{t_{\beta_2}} \cap I_j \subseteq X_{t_{\beta_1}} \cap I_j$, then since $M_{\beta_2} \neq M_{\beta_1}$, some element in $E(M_{\beta_2})-E(M_{\beta_1})$ is a fake edge contained in $X_{t_{\beta_1}} \cap I_j$ by the rules of adding fake edges.
So for every $\beta \in [\kappa]$, there are at most ${w_0 \choose 2}$ elements $\beta' \in [\beta+1,\kappa]$ such that $X_{t_{\beta'}} \cap I_j \subseteq X_{t_{\beta}} \cap I_j$.
Hence there exists a subsequence $(t_{j_1},t_{j_2},...)$ of $(t_1,t_2,...,t_\kappa)$ of length at least $\kappa/w_0^2$ such that for any distinct $\beta_1,\beta_2 \in [\lceil \kappa/w_0^2 \rceil]$ with $\beta_1<\beta_2$, $X_{t_{j_{\beta_2}}} \cap I_j \not \subseteq X_{t_{j_{\beta_1}}} \cap I_j$.
Since $\kappa/w_0^2 \geq \kappa_1/w_0^2 \geq N_{\ref{paradefans}}(w_0,\kappa')$ and $(t_1,t_2,\dots,t_{\kappa})$ is a parade with $\lvert X_{t_\beta} \cap I_j \rvert \leq w_0$ for every $\beta \in [\kappa]$, by \cref{paradefans}, there exists a $(t_1',t'_{\kappa'},\ell^*)$-fan $(t_1',t_2',\dots,t'_{\kappa'})$ in $(T,\X|_{G[I_j]})$ of size $\kappa'$ (for some $\ell^* \in [0,w_0]$) that is a subsequence of $(t_1,t_2,\dots,t_\kappa)$ such that $X_{t_\beta'} \cap I_j-X_{t_1'} \neq \emptyset$ for every $\beta \in [\kappa']$.

Since $(t_1',t_2',\dots,t'_{\kappa'})$ is a subsequence of $(t_1,t_2,\dots,t_\kappa)$, for simplicity, by removing some nodes in $(t_1,t_2,\dots,t_\kappa)$, we may assume $(t_1,t_2,\dots,t_{\kappa'})=(t_1',t_2',\dots,t'_{\kappa'})$.

Let $\beta^* \in [\kappa'-\kappa_0-2]$.
Since $V(M_{\beta^*+1}) \neq V(M_{\beta^*})$ and $A_{L^{(i_{t_{\beta^*}},-1,0)}}(V(Q^{(\beta^*)}_{\alpha'})) \cap X_{V(T_{t_{\beta^*}})}-X_{t_{\beta^*}} = A_{L^{(i_{t_{\beta^*+1}},-1,0)}}(V(Q^{(\beta^*+1)}_{\alpha'})) \cap X_{V(T_{t_{\beta^*+1}})}-X_{t_{\beta^*+1}} \subseteq (X_{V(T_{t_{\kappa}})}-X_{t_{\kappa}}) \cap I_j^\circ$ for every $\alpha' \in [\alpha-1]$, there exists $e_{\beta^*} \in E(M_{\beta^*+1}) \cap E_{j,t_{\beta^*+1}}^{(i_{t_{\beta^*+1}})}-E(M_{\beta^*})$ such that $\lvert e \cap V(M_{\beta^*}) \rvert=1$.
So there exists $z \in V(T)$ with $T_{t_{\beta^*+1}} \subseteq T_z \subseteq T_{t_{\beta^*}}$ and $i_{t_{\beta^*}} \leq i_z < i_{t_{\beta^*+1}}$ such that $e_{\beta^*} \in E_{j,z}^{(i_z,\lvert V(G) \rvert)}-E_{j,z}^{(i_z)}$.
Hence there exists $\ell \in [0,\lvert V(G) \rvert-1]$ such that $e_{\beta^*} \in E_{j,z}^{(i_z,\ell+1)}-E_{j,z}^{(i_z,\ell)}$.
Let $t' \in \partial T_{j,z}$ be the witness for $X_{t'} \cap I_j \subseteq W_3^{(i_z,\ell')}$ for some $\ell' \in [0,\lvert V(T) \rvert]$ such that $e_{\beta^*} \subseteq X_{t'}$.
Since $V(M_{\beta^*}) \cap X_{t_1} \neq \emptyset \neq V(M_{\beta^*}) \cap X_{t'}$, $V(M_{\beta^*}) \cap X_z \neq \emptyset$.

Denote $e_{\beta^*}$ by $\{u,v\}$.
Let $M_u,M_v$ be the monochromatic $E_{j,z}^{(i_z,\ell)}$-pseudocomponents in \linebreak $G[Y^{(i_z,\lvert V(T) \rvert+1,s+2)}]$ containing $u,v$, respectively.
Let $M$ be the $\Se_j^\circ$-related monochromatic $E_{j,z}^{(i_z,\ell)}$-pseudocomponent in $G[Y^{(i_z,\lvert V(T) \rvert+1,s+2)}]$ such that $\sigma(M)$ is the $(\ell+1)$-th smallest among all monochromatic $E_{j,z}^{(i_z,\ell)}$-pseudocomponents in $G[Y^{(i_z,\lvert V(T) \rvert+1,s+2)}]$.
So $\sigma(M) < \min\{\sigma(M_u),\sigma(M_v)\}$ and $M$ is $\Se_j^\circ$-related.
Since $M_{\beta^*}$ contains $u$ or $v$, $M_{\beta^*}$ is contained in one of $M_u$ and $M_v$.
So $\sigma(M)<\sigma(M_{\beta^*})$.

Since $\beta^* \leq \kappa'-\kappa_0-2$ and $(t_1,\dots,t_{\kappa'})$ is a subsequence of a tamed parade, $t_{\kappa'-\kappa_0} \not \in V(T_{j,z})$.
For every $\alpha' \in [\alpha-1]$ in which $p^{(1)}_{\alpha'}$ is a nonzero sequence, let $Q^{(z)}_{\alpha'}$ be the monochromatic $E_{j,z}^{(i_z)}$-pseudocomponent in $G[Y^{(i_z,-1,0)}]$ containing $Q^{(1)}_{\alpha'}$.
Since $i_z<i_{t_{\kappa'-\kappa_0-1}}$, for every $\alpha' \in [\alpha-1]$, $A_{L^{(i_{z},\lvert V(T) \rvert+1,s+2)}}(V(Q^{(z)}_{\alpha'})) \cap X_{V(T_{z})}-X_{z} \subseteq X_{V(T_{t_{\kappa'}})}-(X_{t_{\kappa'}} \cup Z_z)$.
By \cref{claim:orderpreserving}, since $Q^{(1)}_{\alpha'}=Q^{(z)}_{\alpha'}=Q^{(\kappa)}_{\alpha'}$ for every $\alpha' \in [\alpha-1]$, we know $t_{\kappa'-\kappa_0} \in V(T_{t'})-\{t'\}$ and $\{Q^{(z)}_{\alpha'}: \alpha' \in [\alpha-1]$, $p^{(1)}_{\alpha'}$ is a nonzero sequence$\}$ contains the set of the monochromatic $E_{j,z}^{(i_z,\ell)}$-pseudocomponents $M'$ in $G[Y^{(i_z,\lvert V(T) \rvert+1,s+2)}]$ with $\sigma(M') \leq \sigma(M)$.
So $E_{j,z}^{(i_z,\ell)}-E_{j,z}^{(i_z)} \subseteq X_{t'}$.

Since $(t_1,t_2,\dots,t_{\kappa'})$ is a $(t_1,t_{\kappa'},\ell^*)$-fan in $(T,\X|_{G[I_j]})$, for every $\kappa'-\kappa_0 \leq \beta_1<\beta_2 \leq \kappa'$, $X_{t_{\beta_1}} \cap I_j-X_{t'}$ and $X_{t_{\beta_2}} \cap I_j-X_{t'}$ are disjoint non-empty sets.
Since $(V(M_u) \cup V(M_v)) \cap X_z \supseteq V(M_{\beta^*}) \cap X_z \neq \emptyset$ and $e_{\beta^*} \in E_{j,z}^{(i_z,\ell+1)}-E_{j,z}^{(i_z,\ell)}$, $(t_{\kappa'-(\psi_2(w_0,w_0)+\psi_3(w_0,w_0))+1}, t_{\kappa'-(\psi_2(w_0,w_0)+\psi_3(w_0,w_0))+2}, ..., t_{\kappa'})$ is not an $(\ell+1,w_0,w_0,\psi_2(w_0,w_0),\psi_3(w_0,w_0))$-blocker, so for every $x \in \{u,v\}$, 
\begin{align*}
 & A_{L^{(i_z,\lvert V(T) \rvert+1,s+2)}}(V(M_x)) \cap (X_{V(T_z)}-X_z) \cap X_{V(T_{t_{\kappa'-\kappa_0+1}})}-X_{t_{\kappa'-\kappa_0+1}} \\
 =\; & A_{L^{(i_z,\lvert V(T) \rvert+1,s+2)}}(V(M_x)) \cap (X_{V(T_z)}-X_z) \cap X_{V(T_{t_{\kappa'-(\psi_2(w_0,w_0)+\psi_3(w_0,w_0))+1}})} - X_{t_{\kappa'-(\psi_2(w_0,w_0)+\psi_3(w_0,w_0))+1}} \\
 \neq\; & \emptyset,
\end{align*}
and hence $V(M_x) \cap X_{t_{\kappa'-\kappa_0}} \cap I_j \neq \emptyset$.
Since $\lvert e_{\beta^*} \cap V(M_{\beta^*}) \rvert=1$, there exists $x \in \{u,v\}$ such that $(V(M_{\beta^*+1})-V(M_{\beta^*})) \cap X_{t_{\kappa'-\kappa_0}} \cap I_j \supseteq V(M_{x}) \cap X_{t_{\kappa'-\kappa_0}} \cap I_j \neq \emptyset$.

Since $\beta^*$ is an arbitrary element in $[\kappa'-\kappa_0-2]$, we know $\lvert X_{t_{\kappa'-\kappa_0}} \cap I_j \rvert \geq \kappa'-\kappa_0-2 \geq w_0+1$, a contradiction.
This proves the claim.
\end{proof}

\begin{claim} \label{claim:changingdecreasingsimple2}
Let $j \in [\lvert \V \rvert-1]$.
Let $\gamma^* \in [w_0]$ and $\kappa \in {\mathbb N}$.
Let $(t_1,t_2,\dots,t_{\kappa})$ be a parade that is a subsequence of a tamed parade. For every $\beta \in [\kappa]$, let $p^{(\beta)}=(p^{(\beta)}_1,p^{(\beta)}_2,\dots,p^{(\beta)}_{\lvert V(G) \rvert})$ be the $(i_{t_\beta},-1,0,j)$-pseudosignature. 
Let $\alpha \in [\lvert V(G) \rvert]$ such that $p^{(1)}_\alpha$ is not a zero sequence, and $\alpha$ is the $\gamma^*$-th smallest index $\ell$ such that $p^{(1)}_\ell$ is a nonzero sequence.
Let $M_1$ be the monochromatic $E^{(i_{t_1})}_{j,t_1}$-pseudocomponent in $G[Y^{(i_{t_1},-1,0)}]$ defining $p^{(1)}_\alpha$.
For every $\beta \in [2,\kappa]$, let $M_\beta$ be the monochromatic $E^{(i_{t_\beta})}_{j,t_\beta}$-pseudocomponent in $G[Y^{(i_{t_\beta},-1,0)}]$ containing $M_1$.
For every $\alpha' \in [\alpha-1]$ in which $p^{(1)}_{\alpha'}$ is a nonzero sequence, let $Q^{(1)}_{\alpha'}$ be the monochromatic $E^{(i_{t_1})}_{j,t_1}$-pseudocomponent in $G[Y^{(i_{t_1},-1,0)}]$ defining $p^{(1)}_{\alpha'}$, and for every $\beta \in [2,\kappa]$, let $Q^{(\beta)}_{\alpha'}$ be the monochromatic $E^{(i_{t_\beta})}_{j,t_\beta}$-pseudocomponent in $G[Y^{(i_{t_\beta},-1,0)}]$ containing $Q^{(1)}_{\alpha'}$.

Assume $(p^{(\beta)}_1,\dots,p^{(\beta)}_{\alpha-1})$ is identical for every $\beta \in [\kappa]$.
If $V(M_{\beta+1}) \neq V(M_{\beta})$ for every $\beta \in [\kappa]$, then $\kappa \leq \phi_1(\gamma^*)$.
\end{claim}

\begin{proof}
We shall prove this claim by induction on $\gamma^*$.
When $\gamma^*=1$, since $\phi_1(1)=\kappa_1$, this claim follows from \cref{claim:changingdesceasingsimple}.
So we may assume $\gamma^* \geq 2$ and this claim holds for smaller $\gamma^*$.

Suppose to the contrary that $\kappa > \phi_1(\gamma^*)$.
Let $Z=\{z_1,z_2,\dots,z_{\gamma^*}\}$ be the set of elements in $[\alpha]$ such that for each $\ell \in [\gamma^*]$, $z_\ell$ is the $\ell$-th smallest index $\ell'$ such that $p^{(1)}_{\ell'}$ is a nonzero sequence.
For every $\alpha' \in [\gamma^*]$, let $S_{\alpha'}=\{i \in [\kappa-1]: V(Q^{(\beta+1)}_{z_{\alpha'}}) \neq V(Q^{(\beta)}_{z_{\alpha'}})\}$.

Suppose that for every $\alpha' \in [\gamma^*-1]$, $\lvert S_{\alpha'} \rvert \leq \phi_1(\alpha')$.
Then there exists a set $R \subseteq [\kappa-1]$ of consecutive positive integers with $\lvert R \rvert \geq \frac{\kappa-1}{1+\sum_{\alpha'=1}^{\gamma^*-1}\lvert S_{\alpha'} \rvert} \geq \frac{\kappa-1}{1+\sum_{\alpha'=1}^{\gamma^*-1}\phi_1(\alpha')} \geq \kappa_1$ such that $R \cap \bigcup_{\alpha'=1}^{\gamma^*-1}S_{\alpha'} = \emptyset$.
But since $\lvert R \rvert \geq \kappa_1$ and $V(M_{\beta+1}) \neq V(M_\beta)$ for every $\beta \in R$, \cref{claim:changingdesceasingsimple} implies that there exists $\alpha^* \in [\alpha-1]$ such that $p^{(a)}_{\alpha^*} \neq p^{(b)}_{\alpha^*}$ for some distinct $a,b \in R \subseteq [\kappa]$, a contradiction.

So there exists the smallest element $\xi$ in $[\gamma^*-1]$ such that $\lvert S_\xi \rvert \geq \phi_1(\xi)+1$.
Since for every $\alpha' \in [\xi-1]$, $\lvert S_{\alpha'} \rvert \leq \phi_1(\alpha')$, there exists a set $R^* \subseteq S_\xi \subseteq [\kappa-1]$ with $\lvert R^* \rvert \geq \lvert S_\xi \rvert/(1+\sum_{\alpha'=1}^{\xi-1}\phi_1(\alpha')) \geq \kappa_1$ such that $R^* \cap \bigcup_{\alpha'=1}^{\xi-1}S_{\alpha'} = \emptyset$ and there exist no $\beta_1 < \beta_2 <\beta_3$ with $\beta_1,\beta_3 \in R^*$ and $\beta_2 \in \bigcup_{\alpha'=1}^{\xi-1}S_{\alpha'}$.
But since $\lvert R^* \rvert \geq \kappa_1$ and $V(Q^{(\beta+1)}_{z_{\xi}}) \neq V(Q^{(\beta)}_{z_{\xi}})$ for every $\beta \in R^*$, \cref{claim:changingdesceasingsimple} implies that there exists $\alpha^* \in [\alpha-1]$ such that $p^{(a)}_{\alpha^*} \neq p^{(b)}_{\alpha^*}$ for some distinct $a,b \in R^*$, a contradiction.
This proves the claim.
\end{proof}

\begin{claim} \label{claim:changingdecreasing}
Let $j \in [\lvert \V \rvert-1]$.
Let $\gamma^* \in [w_0]$ and $\kappa \in {\mathbb N}$.
Let $(t_1,t_2,\dots,t_{\kappa})$ be a parade that is a subsequence of a tamed parade.
For every $\beta \in [\kappa]$, let $p^{(\beta)}=(p^{(\beta)}_1,p^{(\beta)}_2,\dots,p^{(\beta)}_{\lvert V(G) \rvert})$ be the $(i_{t_\beta},-1,0,j)$-pseudosignature. 
Let $\alpha \in [\lvert V(G) \rvert]$ such that $p^{(1)}_\alpha$ is not a zero sequence, and $\alpha$ is the $\gamma^*$-th smallest index $\ell$ such that $p^{(1)}_\ell$ is a nonzero sequence.
For every $\alpha' \in [\alpha]$ in which $p^{(1)}_{\alpha'}$ is a nonzero sequence, let $Q^{(1)}_{\alpha'}$ be the monochromatic $E^{(i_{t_1})}_{j,t_1}$-pseudocomponent in $G[Y^{(i_{t_1},-1,0)}]$ defining $p^{(1)}_{\alpha'}$, and for every $\beta \in [2,\kappa]$, let $Q^{(\beta)}_{\alpha'}$ be the monochromatic $E^{(i_{t_\beta})}_{j,t_\beta}$-pseudocomponent in $G[Y^{(i_{t_\beta},-1,0)}]$ containing $Q^{(1)}_{\alpha'}$.
If for every $\beta \in [\kappa]$, there exists $\alpha_\beta \in [\alpha]$ such that $V(Q^{(\beta+1)}_{\alpha_\beta}) \neq V(Q^{(\beta)}_{\alpha_\beta})$, then $\kappa \leq \phi_3(\gamma^*)$.
\end{claim}

\begin{proof}
Suppose to the contrary that $\kappa \geq \phi_3(\gamma^*)+1$.
Let $Z=\{z_1,z_2,\dots,z_{\gamma^*}\}$ be the subset of $[\alpha]$ such that for each $\alpha' \in [\gamma^*]$, $z_{\alpha'}$ is the $\alpha'$-th smallest index $\ell$ such that $p^{(1)}_{\ell}$ is a nonzero sequence.
For every $\alpha' \in [\gamma^*]$, let $S_{\alpha'}=\{i \in [2,\kappa]: p^{(i)}_{z_{\alpha'}} \neq p^{(i-1)}_{z_{\alpha'}}\}$.

Suppose there exists $\xi \in [\gamma^*]$ such that $\lvert S_{\xi} \rvert \geq \phi_2(\xi)+1$.
We choose $\xi$ to be as small as possible.
So there exists a subset $R$ of $S_\xi$ with $\lvert R \rvert \geq \frac{\lvert S_\xi \rvert}{1+\sum_{\alpha'=1}^{\xi-1}\lvert S_{\alpha'} \rvert} \geq \frac{\phi_2(\xi)+1}{1+\sum_{\alpha'=1}^{\xi-1}\phi_2(\alpha')} \geq (\phi_1(\xi)+1) \cdot w_0f(\eta_5)+1$ such that $R \cap \bigcup_{\alpha'=1}^{\xi-1}S_{\alpha'} = \emptyset$ and there exist no distinct elements $a,b \in R$ such that $[a,b] \cap \bigcup_{\alpha'=1}^{\xi-1}S_{\alpha'} \neq \emptyset$.
Let $S'_\xi = \{\beta \in R: V(Q^{(\beta)}_{z_\xi}) \neq V(Q^{(\beta-1)}_{z_\xi})\}$.
Since $R \cap \bigcup_{\alpha'=1}^{\xi-1}S_{\alpha'} = \emptyset$ and there exist no distinct elements $a,b \in R$ such that $[a,b] \cap \bigcup_{\alpha'=1}^{\xi-1}S_{\alpha'} \neq \emptyset$, $\lvert S'_\xi \rvert \leq \phi_1(\xi)$ by \cref{claim:changingdecreasingsimple2}.
So there exists a subset $R'$ of $R$ with $\lvert R' \rvert \geq \frac{\lvert R \rvert}{\lvert S'_\xi \rvert+1} \geq w_0f(\eta_5)+1$ such that $R' \cap S'_\xi = \emptyset$ and for any distinct elements $a<b$ of $R'$, $[a,b] \cap S'_\xi = \emptyset$.
Since for every $\beta \in R'$, $\beta \in R-S'_\xi$, so $p^{(\beta)}_{z_{\xi}} \neq p^{(\beta-1)}_{z_{\xi}}$ and $V(Q^{(\beta)}_{z_\xi}) = V(Q^{(\beta-1)}_{z_\xi})$, and hence $p^{(\beta)}_{z_{\xi}} < p^{(\beta-1)}_{z_{\xi}}$.
So $\lvert R' \rvert \leq w_0f(\eta_5)$ by \cref{claim:pseudosignaturenumbernonzero}, a contradiction.

Therefore, for every $\alpha' \in [\gamma^*]$, $\lvert S_{\alpha'} \rvert \leq \phi_2(\alpha')$.
Hence there exists a subset $R''$ of $[2,\kappa]$ with 
$$\lvert R'' \rvert \geq \frac{\kappa-1}{1+\sum_{\alpha'=1}^{\gamma^*}\lvert S_{\alpha'} \rvert} \geq \frac{\kappa-1}{1+\sum_{\alpha'=1}^{\gamma^*}\phi_2(\alpha')} \geq \frac{\phi_3(\gamma^*)}{1+\sum_{\alpha'=1}^{\gamma^*}\phi_2(\alpha')}$$ 
such that $R''$ consists of consecutive integers and $R'' \cap \bigcup_{\alpha'=1}^{\gamma^*}S_{\alpha'} = \emptyset$.
So there exists $R^* \subseteq R''$ with $\lvert R^* \rvert \geq \lvert R'' \rvert/\gamma^* \geq \phi_1(\gamma^*)+1$ such that there exists $x^* \in Z$ such that for every $x \in R^*$, $V(Q^{(x+1)}_{x^*}) \neq V(Q^{(x)}_{x^*})$.
But it contradicts \cref{claim:changingdecreasingsimple2}.
This proves the claim.
\end{proof}

\begin{claim} \label{claim:nonzerotruncation}
Let $j \in [\lvert \V \rvert-1]$.
Let $M$ be a $c$-monochromatic component with $S_M  \in \Se_j^\circ$.
Let $t \in K^*(M)$ such that $K^*(M) \cap V(T_t)-\{t\} \neq \emptyset$.
Then:
	\begin{itemize}
		\item There exists a monochromatic $E_{j,t}^{(i_t)}$-pseudocomponent $M'$ in $G[Y^{(i_t,-1,0)}]$ such that $V(M') \cap X_{t} \neq \emptyset \neq V(M') \cap V(M)$ and $A_{L^{(i_t,-1,0)}}(V(M')) \cap X_{V(T_{t})}-X_t \neq \emptyset$.
		\item There exists a monochromatic $E_{j,t}^{(i_t)}$-pseudocomponent $M'$ in $G[Y^{(i_t,-1,0)}]$ such that $V(M') \cap X_{t} \neq \emptyset$ and $M'$ contains $v_M$, where $v_M$ is the vertex of $M$ with $\sigma(v_M)=\sigma(M)$.
		\item For every monochromatic $E_{j,t}^{(i_t)}$-pseudocomponent $M'$ in $G[Y^{(i_t,-1,0)}]$ with $V(M') \cap X_{t} \neq \emptyset \neq V(M') \cap V(M)$, $A_{L^{(i_t,-1,0)}}(V(M')) \cap X_{V(T_{t})}-X_t \neq \emptyset$.
	\end{itemize}
\end{claim}

\begin{proof}
Since $K^*(M) \cap V(T_t)-\{t\} \neq \emptyset$, there exists $t' \in K^*(M) \cap X_{V(T_t)}-\{t\}$.
So there exists $v \in V(M) \cap Y^{(i_{t'},\lvert V(T) \rvert+1,s+2)} \cap X_{V(T_{t'})} - Y^{(i_t,\lvert V(T) \rvert+1,s+2)}$.
Hence $v \not \in Y^{(i_t,-1,0)}$.
Let $P$ be a path in $M$ from $v$ to $v_M$, where $v_M$ is the vertex of $M$ with $\sigma(v_M)=\sigma(M)$.
Since $t \in K^*(M)$, $t \in V(T_{r_M})$.
So $V(P) \cap X_t \neq \emptyset$ and there exists a subpath $P'$ of $P$ from $v$ to $X_t$ internally disjoint from $X_t$.
Since $v \not \in Y^{(i_t,-1,0)}$, there exists a monochromatic $E_{j,t}^{(i_t)}$-pseudocomponent $M'$ in $G[Y^{(i_t,-1,0)}]$ such that $V(M') \cap X_{t} \neq \emptyset \neq V(M') \cap V(P)$ and $A_{L^{(i_t,-1,0)}}(V(M')) \cap X_{V(T_{t})}-X_t \neq \emptyset$.
Since $P \subseteq M$, $V(M') \cap V(M) \neq \emptyset$.
This proves the first statement of this claim.
In addition, since $V(P) \cap X_t \neq \emptyset$, there exists a subpath $P''$ from $v_M$ to a vertex $u''$ in $X_t$ internally disjoint from $X_{V(T_t)}$.
Let $M''$ be the monochromatic $E_{j,t}^{(i_t)}$-pseudocomponent in $G[Y^{(i_t,-1,0)}]$ containing $u''$.
So $P''$ is a $c$-monochromatic path in $G$ intersecting $V(M'') \cap X_t$ internally disjoint from $X_{V(T_t)}$.
Since $\sigma(v_M)=\sigma(M)$ and $P \subseteq M$, $\sigma(v_M)=\sigma(P'')$.
By \cref{claim:turelycontain}, $M''$ contains $v_M$.
This proves the second statement of this claim.

Now we prove the third statement of this claim.
Let $Z_1 = \bigcup_{M'''}V(M''')$, where the union is over all monochromatic $E_{j,t}^{(i_t)}$-pseudocomponents in $G[Y^{(i_t,-1,0)}]$ such that $V(M''') \cap X_{t} \neq \emptyset \neq V(M''') \cap V(M)$ and $A_{L^{(i_t,-1,0)}}(V(M''')) \cap X_{V(T_{t})}-X_t \neq \emptyset$.
By the first statment of this claim, $Z_1 \neq \emptyset$.
Let $Z_2 = \bigcup_{M'''}V(M''')$, where the union is over all monochromatic $E_{j,t}^{(i_t)}$-pseudocomponents in $G[Y^{(i_t,-1,0)}]$ such that $V(M''') \cap X_{t} \neq \emptyset \neq V(M''') \cap V(M)$ and $A_{L^{(i_t,-1,0)}}(V(M''')) \cap X_{V(T_{t})}-X_t = \emptyset$.
Suppose $Z_2 \neq \emptyset$.
There exists a path $P'$ in $M$ from $Z_2$ to $Z_1$ internally disjoint from $Z_1 \cup Z_2$.
Note that the neighbor $u$ of the vertex in $V(P') \cap Z_2$ in $P'$ belongs to $A_{L^{(i_t,-1,0)}}(Z_2)$.
So $u \not \in X_{V(T_t)}-X_t$.
Hence there exists a subpath of $P'$ from $Z_2$ to $Z_1$ internally disjoint from $Z_1 \cup Z_2 \cup X_{V(T_t)}$.
Let $Q_1$ be the monochromatic $E_{j,t}^{(i_t)}$-pseudocomponents in $G[Y^{(i_t,-1,0)}]$ such that $V(Q_1) \subseteq Z_1$ and $Q_1$ contains the vertex in $Z_1 \cap V(P')$. 
Since $P'$ is internally disjoint from $X_{V(T_t)}$, by \cref{claim:truelysame}, $Q_1$ equals the monochromatic $E_{j,t}^{(i_t)}$-pseudocomponents in $G[Y^{(i_t,-1,0)}]$ containing $V(P) \cap Z_2$.
So $V(Q_1) \subseteq Z_1 \cap Z_2$, a contradiction.
Hence $Z_2=\emptyset$.
This proves the third statement of this claim.
\end{proof}

For any $i \in [0,\lvert V(T) \rvert-1]$, $j \in [\lvert \V \rvert-1]$ and $c$-monochromatic component $M$ with $S_M \in \Se_j^\circ$, the \mathdef{$(i,j,M)$}{truncation} is the sequence $(b_1,\dots,b_{\lvert V(G) \rvert})$ defined as follows.
\begin{itemize}
	\item Let $(p_1,p_2,\dots,p_{\lvert V(G) \rvert})$ be the $(i,-1,0,j)$-pseudosignature.
	\item Let $\alpha^*$ be the largest index $\gamma$ such that $p_\gamma$ is a nonzero sequence and the monochromatic $E_{j,t}^{(i_t)}$-pseudocomponent (where $t$ is the node of $T$ with $i_t=i$) in $G[Y^{(i,-1,0)}]$ defining $p_\gamma$ intersects $M$. (If no such index $\gamma$ exists, then define $\alpha^*=0$.)
	\item For every $\alpha \in [\alpha^*]$, define $b_\alpha :=p_\alpha$.
	\item For every $\alpha \in [\lvert V(G) \rvert]-[\alpha^*]$, define $b_\alpha :=0$.
\end{itemize}

\begin{claim} \label{claim:truncationchange}
Let $j \in [\lvert \V \rvert-1]$.
Let $M$ be a $c$-monochromatic component with $S_M  \in \Se_j^\circ$.
Let $(t_1,t_2,\dots,t_{\eta_5+3})$ be a parade such that $t_\alpha \in K^*(M)$ for every $\alpha \in [\eta_5+3]$.
Then there exists $\alpha^* \in [2,\eta_5+2]$ such that either 
	\begin{itemize}
		\item the $(i_{t_{\alpha^*}},j,M)$-truncation is different from the $(i_{t_1},j,M)$-truncation, or
		\item for each $\alpha \in \{1,\alpha^*\}$, there exists a monochromatic $E_{j,t_\alpha}^{(i_{t_\alpha})}$-pseudocomponent $M_\alpha$ in $G[Y^{(i_{t_\alpha},-1,0)}]$ with $V(M_\alpha) \cap X_{t_\alpha} \neq \emptyset \neq V(M_\alpha) \cap V(M)$ and $A_{L^{(i_{t_\alpha},-1,0)}}(V(M_\alpha)) \cap X_{V(T_{t_\alpha})}-X_{t_\alpha} \neq \emptyset$ such that $M_1 \subseteq M_{\alpha^*}$ and $V(M_1) \neq V(M_{\alpha^*})$.
	\end{itemize}
\end{claim}

\begin{proof}
For every $\alpha \in [\eta_5+2]$, let $\C_\alpha=\{V(Q): Q$ is a monochromatic $E_{j,t_\alpha}^{(i_{t_\alpha})}$-pseudocomponent $M'$ in $G[Y^{(i_{t_\alpha},-1,0)}]$ with $V(M') \cap X_{t_\alpha} \neq \emptyset \neq V(M') \cap V(M)$ and $A_{L^{(i_{t_\alpha},-1,0)}}(V(M')) \cap X_{V(T_{t_\alpha})}-X_{t_\alpha} \neq \emptyset\}$.

Suppose to the contrary that this claim does not hold.
So for every $\alpha \in [\eta_5+2]$, the $(i_{t_\alpha},j,M)$-truncation equals the $(i_{t_1},j,M)$-truncation.
Hence, for every $\alpha \in [\eta_5+2]$ and every $Q_\alpha \in \C_\alpha$, there exists $Q_1' \in \C_1$ such that $\sigma(Q_\alpha)=\sigma(Q_1')$, so $Q_\alpha \supseteq Q'_1$.
Therefore for every $\alpha \in [\eta_5+2]$, $\C_\alpha = \C_1$, for otherwise second outcome of this claim holds.

For every $\alpha \in [2,\eta_5+2]$, since $t_\alpha \in K^*(M)$, there exists $v_\alpha \in V(M) \cap X_{V(T_{t_\alpha})} \cap Y^{(i_{t_\alpha},\lvert V(T) \rvert+1,s+2)}-Y^{(i_{t_{\alpha-1}},\lvert V(T) \rvert+1,s+2)}$.
So $\{v_\alpha: \alpha \in [2,\eta_5+2]\}$ is a set of $\eta_5+1$ different vertices.
By \cref{claim:sizebeltfull}, there exists $\alpha^* \in [2,\eta_5+2]$ such that $v_{\alpha^*} \not \in X_{V(T_{t_{\eta_5+2}})}$.

Let $v_M$ be the vertex of $M$ such that $\sigma(v_M)=\sigma(M)$.
Let $P$ be a path in $M$ from $v_{\alpha^*}$ to $v_M$.
Since $t_{\alpha^*-1} \in K^*(M)$, $V(P) \cap X_{t_{\alpha^*-1}} \neq \emptyset$.
Let $P'$ be the subpath of $P$ from $v_{\alpha^*}$ to $X_{t_{\alpha^*-1}}$ internally disjoint from $X_{t_{\alpha^*-1}}$.

Let $Q$ be the monochromatic $E_{j,t_{\alpha^*-1}}^{(i_{t_{\alpha^*-1}})}$-pseudocomponent in $G[Y^{(i_{t_{\alpha^*-1}},-1,0)}]$ containing the vertex in $V(P') \cap X_{t_{\alpha^*-1}}$.
Since $v_{\alpha^*} \not \in Y^{(i_{t_{\alpha^*-1}},-1,0)}$, $A_{L^{(i_{t_{\alpha^*-1}},-1,0)}}(V(Q)) \cap V(P')-X_{t_{\alpha^*-1}} \neq \emptyset$.
Since $P'$ is internally disjoint from $X_{t_{\alpha^*-1}}$, $V(Q) \in \C_{\alpha^*-1}$.
Since $\C_{\eta_5+2}=\C_1=\C_{\alpha^*-1}$, $V(Q) \in \C_{\eta_5+2}$.
Since both the $(i_{t_{\eta_5+2}},j,M)$-truncation and $(i_{t_{\alpha^*-1}},j,M)$-truncation equal the $(i_{t_1},j,M)$-truncation, $A_{L^{(i_{t_{\alpha^*-1}},-1,0)}}(V(Q)) \cap X_{V(T_{t_{\alpha^*-1}})}-X_{t_{\alpha^*-1}} \subseteq X_{V(T_{t_{\eta_5+2}})}-X_{t_{\eta_5+2}}$.
Hence $V(P') \cap X_{V(T_{t_{\eta_5+2}})} - X_{t_{\eta_5+2}} \neq \emptyset$.
Since $v_{\alpha^*} \not \in X_{V(T_{t_{\eta_5+2}})}$, there exists a subpath $P''$ of $P'$ from $v_{\alpha^*}$ to $X_{t_{\eta_5+2}}$ internally disjoint from $X_{t_{\eta_5+2}}$.

Let $Q'$ be the monochromatic $E_{j,t_{\eta_5+2}}^{(i_{t_{\eta_5+2}})}$-pseudocomponent in $G[Y^{(i_{t_{\eta_5+2}},-1,0)}]$ containing $X_{t_{\eta_5+2}} \cap V(P'')$.
Since $t_{\eta_5+3} \in K^*(M) \cap X_{V(T_{t_{\eta_5+2}})}-\{t_{\eta_5+2}\}$, by \cref{claim:nonzerotruncation}, $A_{L^{(i_{t_{\eta_5+2}},-1,0)}}(V(Q')) \cap X_{V(T_{t_{\eta_5+2}})}-X_{t_{\eta_5+2}} \neq \emptyset$.
So $V(Q') \in \C_{\eta_5+2}=\C_{\alpha^*-1}$.
Since $v_{t_{\alpha^*}} \not \in Y^{(i_{t_{\alpha^*-1}},-1,0)}$ and $V(Q') \in \C_{\alpha^*-1}$, $v_{t_{\alpha^*}} \not \in V(Q')$.
So $A_{L^{(i_{t_{\alpha^*-1}},-1,0)}}(V(Q')) \cap V(P'') \neq \emptyset$.
Let $u \in A_{L^{(i_{t_{\alpha^*-1}},-1,0)}}(V(Q')) \cap V(P'')$.
Since $u \not \in Y^{(i_{t_{\alpha^*-1}},-1,0)}$, $u \not \in X_{t_{\alpha^*-1}}$.
Since both the $(i_{t_{\eta_5+2}},j,M)$-truncation and $(i_{t_{\alpha^*-1}},-1,0)$-truncation equal the $(i_{t_1},j,M)$-truncation, $A_{L^{(i_{t_{\alpha^*-1}},-1,0)}}(V(Q')) \cap X_{V(T_{t_{\alpha^*-1}})}-X_{t_{\alpha^*-1}} \subseteq X_{V(T_{t_{\eta_5+2}})}-X_{t_{\eta_5+2}}$.
Since $V(P'') \subseteq X_{V(T_{t_{\alpha^*-1}})}$, $u \in A_{L^{(i_{t_{\alpha^*-1}},-1,0)}}(V(Q')) \cap X_{V(T_{t_{\alpha^*-1}})}-X_{t_{\alpha^*-1}} \subseteq X_{V(T_{t_{\eta_5+2}})}-X_{t_{\eta_5+2}}$.
But $P''$ is internally disjoint from $X_{V(T_{t_{\eta_5+2}})}$, a contradiction.
This proves the claim.
\end{proof}

\begin{claim} \label{claim:truncationremaining}
Let $j \in [\lvert \V \rvert-1]$.
Let $M$ be a $c$-monochromatic component with $S_M \in \Se_j^\circ$.
Let $t_1,t_2 \in K^*(M)$ such that $t_2 \in V(T_{t_1})-\{t_1\}$ and $K^*(M) \cap X_{V(T_{t_2})}-X_{t_2} \neq \emptyset$.
Let $\alpha^*$ be the largest index $\ell$ such that the $\ell$-th entry of the $(i_{t_1},j,M)$-truncation is not a zero sequence.
For $\alpha \in [2]$, let $(q^{(\alpha)}_1,\dots,q^{(\alpha)}_{\lvert V(G) \rvert})$ be the $(i_{t_\alpha},j,M)$-truncation.
Let $\alpha \in [0,\alpha^*-1]$.
If $q^{(1)}_\beta=q^{(2)}_\beta$ for every $\beta \in [\alpha]$, then either
	\begin{itemize}
		\item there exists $\beta^* \in [\alpha+1,\alpha^*]$ such that $q^{(2)}_{\beta^*}$ is not a zero sequence, or
		\item there exists $\beta^* \in [\alpha]$ such that $q^{(1)}_{\beta^*}$ and $q^{(2)}_{\beta^*}$ are nonzero sequences and the vertex-set of the monochromatic $E_{j,t_2}^{(i_{t_2})}$-pseudocomponent in $G[Y^{(i_{t_2},-1,0)}]$ defining $q^{(2)}_{\beta^*}$ is different from the vertex-set of the monochromatic $E_{j,t_1}^{(i_{t_1})}$-pseudocomponent in $G[Y^{(i_{t_1},-1,0)}]$ defining $q^{(1)}_{\beta^*}$.
	\end{itemize}
\end{claim}

\begin{proof}
Suppose this claim does not hold.
So for every $\beta \in [\alpha+1,\alpha^*]$, $q^{(2)}_\beta$ is a zero sequence.
And for every $\beta^* \in [\alpha]$, if $q^{(1)}_{\beta^*}$ and $q^{(2)}_{\beta^*}$ are nonzero sequences, then the vertex-set of the monochromatic $E_{j,t_2}^{(i_{t_2})}$-pseudocomponent in $G[Y^{(i_{t_2},-1,0)}]$ defining $q^{(2)}_{\beta^*}$ equals the vertex-set of the monochromatic $E_{j,t_1}^{(i_{t_1})}$-pseudocomponent in $G[Y^{(i_{t_1},-1,0)}]$ defining $q^{(1)}_{\beta^*}$.

Let $v_M$ be the vertex of $M$ such that $\sigma(v_M)=\sigma(M)$.
For every $k \in [2]$, by \cref{claim:nonzerotruncation}, there exists a monochromatic $E_{j,t_k}^{(i_{t_k})}$-pseudocomponent $M_k$ in $G[Y^{(i_{t_k},-1,0)}]$ with $V(M_k) \cap X_{t_k} \neq \emptyset$ containing $v_M$.
By \cref{claim:nonzerotruncation}, for each $k \in [2]$, $A_{L^{(i_{t_k},-1,0)}}(V(M_k)) \cap X_{V(T_{t_k})}-X_{t_k} \neq \emptyset$, so there exists $\gamma^*_k \in [\sigma(v_M)]$ such that $q^{(k)}_{\gamma^*_k}$ is not a zero sequence and $M_k$ defines $q^{(k)}_{\gamma^*_k}$.

By the definition of $\alpha^*$, $\gamma^*_1 \in [\alpha^*]$.
Since $v_M \in V(M_1) \cap V(M_2)$, $M_1 \subseteq M_2$.
So $\gamma^*_2 \leq \gamma^*_1$.
Since $q^{(2)}_{\gamma_2^*}$ is not a zero sequence, $\gamma^*_2 \not \in [\alpha+1,\alpha^*]$.
Since $\gamma^*_2 \leq \gamma^*_1 \leq \alpha^*$, $\gamma^*_2 \in [\alpha]$.
In particular, $\alpha \geq 1$.
Since $v_M \in V(M_2)$ and $V(M_2) \cap X_{t_2} \neq \emptyset$, $V(M_2) \cap X_{t_1} \neq \emptyset$.
So by \cref{claim:orderpreserving}, $q^{(1)}_{\gamma_2^*}$ is not a zero sequence.

For every $\gamma \in [\lvert V(G) \rvert]$, if $q^{(1)}_\gamma$ is not a zero sequence, then let $Q_\gamma$ be the monochromatic $E_{j,t_1}^{(i_{t_1})}$-pseudocomponent in $G[Y^{(i_{t_1},-1,0)}]$ defining $q^{(1)}_\gamma$; otherwise, let $V(Q_\gamma)=\emptyset$.

Let $Z_1 = \bigcup_{\gamma=1}^{\alpha}V(Q_\gamma)$. 
Since $q^{(1)}_{\gamma_2^*}$ is not a zero sequence and $\gamma_2^* \leq \alpha$, $Z_1 \neq \emptyset$.
Since $\gamma_2^* \in [\alpha]$, $V(Q_{\gamma_2^*})=V(M_2)$.
Since $M_2$ contains $v_M$, $Q_{\gamma_2^*}$ contains $v_M$.
Hence $Z_1 \cap V(M) \neq \emptyset$.

Let $Z_2 = \bigcup_{\gamma = \alpha+1}^{\alpha^*}(V(Q_\gamma))$.
Since $(q^{(1)}_1,\dots,q^{(1)}_{\lvert V(G) \rvert})$ is the $(i_{t_1},j,M)$-truncation, $V(Q_{\alpha^*}) \cap V(M) \neq \emptyset$.
So $Z_2 \cap V(M) \neq \emptyset$.

Since $Z_1 \cap V(M) \neq \emptyset \neq Z_2 \cap V(M)$, there exists a path $P$ in $M$ from $Z_1$ to $Z_2$ internally disjoint from $Z_1 \cup Z_2$.
By \cref{claim:truelysame,claim:nonzerotruncation}, $P$ is contained in $G[X_{V(T_{t_1})}]$ and is internally disjoint from $X_{t_1}$.
Let $\beta^*$ be the index in $[\alpha]$ such that $Q_{\beta^*}$ contains $V(P) \cap Z_1$.
Since $\beta^* \leq \alpha$, $q^{(2)}_{\beta^*}=q^{(1)}_{\beta^*}$ is not a zero sequence.
So there exists a monochromatic $E_{j,t_2}^{(i_{t_2})}$-pseudocomponent $Q^*$ in $G[Y^{(i_{t_2},-1,0)}]$ defining $q^{(2)}_{\beta^*}$.
Since $\beta^* \in [\alpha]$, $V(Q^*) = V(Q_{\beta^*})$.
Since $q_{\beta^*}^{(1)}=q_{\beta^*}^{(2)}$, $A_{L^{(i_{t_1},-1,0)}}(V(Q_{\beta^*})) \cap X_{V(T_{t_1})}-X_{t_1} = A_{L^{(i_{t_2},-1,0)}}(V(Q^*)) \cap X_{V(T_{t_2})}-X_{t_2} \subseteq X_{V(T_{t_2})}-X_{t_2}$.
Since $P$ is contained in $G[X_{V(T_{t_1})}]$ and is internally disjoint from $X_{t_1}$, $V(P) \cap A_{L^{(i_{t_1},-1,0)}}(V(Q_{\beta^*})) \cap X_{V(T_{t_1})}-X_{t_1} \neq \emptyset$.
So $V(P) \cap X_{V(T_{t_2})}-X_{t_2} \neq \emptyset$.

Let $v$ be the vertex in $Z_2 \cap V(P)$.
Let $\beta_v$ be the index in $[\lvert V(G) \rvert]$ such that $Q_{\beta_v}$ contains $v \in V(P) \cap Z_2 \subseteq V(M) \cap \bigcup_{\gamma = \alpha+1}^{\alpha^*}(V(Q_\gamma))$.
So $\beta_v \in [\alpha+1,\alpha^*]$.

Suppose there exists $\beta'' \in [\lvert V(G) \rvert]$ such that $q^{(2)}_{\beta''}$ is not a zero sequence and the monochromatic $E_{j,t_2}^{(i_{t_2})}$-pseudocomponent $Q'$ in $G[Y^{(i_{t_2},-1,0)}]$ defining $q^{(2)}_{\beta''}$ contains $Q_{\beta_v}$.
Since $Q'$ contains $Q_{\beta_v}$, $\beta'' \in [\beta_v]$.
Since $\beta'' \leq \beta_v \leq \alpha^*$ and $q^{(2)}_{\beta''}$ is a nonzero sequence, $\beta'' \in [\alpha]$, so $V(Q')=V(Q_{\beta''})$.
Hence $V(Q_{\beta''})$ contains $V(Q_{\beta_v})$.
So $Z_1 \cap Z_2 \supseteq V(Q_{\beta''}) \cap V(Q_{\beta_v}) \neq \emptyset$, a contradiction.

Hence there exists no $\beta'' \in [\lvert V(G) \rvert]$ such that $q^{(2)}_{\beta''}$ is not a zero sequence and the monochromatic $E_{j,t_2}^{(i_{t_2})}$-pseudocomponent in $G[Y^{(i_{t_2},-1,0)}]$ defining $q^{(2)}_{\beta''}$ contains $Q_{\beta_v}$.

Suppose $v \in X_{V(T_{t_2})}$.
Since $V(Q_{\beta_v}) \cap X_{t_1} \neq \emptyset$ and $Q_{\beta_v}$ contains $v$, $V(Q_{\beta_v}) \cap X_{t_2} \neq \emptyset$.
So by \cref{claim:nonzerotruncation}, there exists $\beta'' \in [\beta_v]$ such that $q^{(2)}_{\beta''}$ is not a zero sequence and the monochromatic $E_{j,t_2}^{(i_{t_2})}$-pseudocomponent in $G[Y^{(i_{t_2},-1,0)}]$ defining $q^{(2)}_{\beta''}$ contains $Q_{\beta_v}$, a contradiction.

Hence $v \in V(G)-X_{V(T_{t_2})}$.
Since $V(P) \cap X_{V(T_{t_2})}-X_{t_2} \neq \emptyset$, there exists $v' \in V(P) \cap X_{t_2}$ such that the subpath $P'$ of $P$ between $v$ and $v'$ is internally disjoint from $X_{V(T_{t_2})}$.
So there exists a monochromatic $E_{j,t_2}^{(i_{t_2})}$-pseudocomponent $R_2$ in $G[Y^{(i_{t_2},-1,0)}]$ containing $v'$.
By \cref{claim:nonzerotruncation}, there exists $\gamma_{R_2} \in [\lvert V(G) \rvert]$ such that $q^{(2)}_{\gamma_{R_2}}$ is not a zero sequence, and $R_2$ defines $q^{(2)}_{\gamma_{R_2}}$.
But by \cref{claim:turelycontain2}, $R_2 \supseteq Q_{\beta_v}$, a contradiction.
This proves the claim.
\end{proof}

We say a sequence $(a_1,a_2,\dots,a_m)$ is a \defn{substring} of another sequence $(b_1,b_2,\dots,b_n)$ (for some $m,n \in {\mathbb N}$) if there exists $\alpha \in [0,n-m]$ such that $a_i=b_{\alpha+i}$ for every $i \in [m]$.

\begin{claim} \label{claim:truncationmaintain}
Let $j \in [\lvert \V \rvert-1]$.
Let $M$ be a $c$-monochromatic component  with $S_M \in \Se_j^\circ$.
Let $\kappa \in {\mathbb N}$.
Let $(t_1,t_2,\dots,t_\kappa)$ be a parade in $T$ such that $t_\alpha \in K^*(M)$ for every $\alpha \in [\kappa]$.
For each $\alpha \in [\kappa]$, let $(a^{(\alpha)}_1,\dots,a^{(\alpha)}_{\lvert V(G) \rvert})$ be the $(i_{t_\alpha},j,M)$-truncation.
Let $\alpha^*$ be an index such that:
	\begin{itemize}
		\item $a^{(1)}_{\alpha^*}$ is a nonzero sequence, and
		\item for every $\alpha \in [\alpha^*-1]$, $a^{(1)}_\alpha = a^{(2)}_\alpha = \dots = a^{(\kappa)}_\alpha$.
	\end{itemize}
Let $n_0$ be the number of indices $\gamma \in [\alpha^*]$ such that $a^{(1)}_\gamma$ is a nonzero sequence.
Then $\kappa \leq h(w_0-n_0)$.
\end{claim}

\begin{proof}
Note that $w_0-n_0 \geq 0$ by \cref{claim:pseudosignaturenumbernonzero}.
Suppose to the contrary that this claim does not hold.
That is, $\kappa \geq h(w_0-n_0)+1$.
We further assume that $w_0-n_0$ is as small as possible among all counterexamples.

For every $t \in \{t_1,t_2,\dots,t_\kappa\}$ and $\alpha \in [\lvert V(G) \rvert]$, let $a^{(t)}_\alpha=a^{(\beta)}_\alpha$, where $\beta$ is the integer such that $t=t_\beta$. 
For every $\alpha \in [\lvert V(G) \rvert]$ and $t \in \{t_1,t_2,\dots,t_\kappa\}$, if $a^{(t)}_\alpha$ is not a zero sequence, then let $M_\alpha^{(t)}$ be the monochromatic $E_{j,t}^{(i_{t})}$-pseudocomponent in $G[Y^{(i_{t},-1,0)}]$ defining $a^{(t)}_\alpha$; otherwise, let $V(M^{(t)}_\alpha)=\emptyset$.
Since each $t_\alpha$ is in $K^*(M)$, every subsequence of $(t_1,t_2,\dots,t_{\kappa})$ is a subsequence of a tamed parade.
For every $t \in \{t_1,t_2,\dots,t_\kappa\}$, let $\alpha^*_t$ be the largest index $\gamma$ such that $a^{(t)}_\gamma$ is a nonzero sequence.

Let $\tau_1 = (\kappa-1)/(\eta_5+3)$.
By \cref{claim:truncationchange}, there exists a subsequence $(t_1',t_2',\dots, t'_{\tau_1})$ of $(t_1,t_2,\dots,t_{\kappa-1})$ such that for every $\alpha \in [\tau_1-1]$, 
	\begin{itemize}
		\item[(i)] either the $(i_{t'_\alpha},j,M)$-truncation is different from the $(i_{t'_{\alpha+1}},j,M)$-truncation, or there exists $\gamma_\alpha \in [\lvert V(G) \rvert]$ such that $V(M^{(t'_{\alpha+1})}_{\gamma_\alpha}) \neq V(M^{(t'_\alpha)}_{\gamma_\alpha})$.
	\end{itemize}
Since $(t_1',t_2',\dots, t'_{\tau_1})$ satisfies (i), by \cref{claim:changingdecreasing} and the assumption of this claim, there exists $\beta \in [\phi_3(w_0)+1]$ such that $a^*_{t'_\beta} \geq \alpha^*$.
For each $\gamma \in [\tau_1-\phi_3(w_0)]$, we redefine $t'_\gamma$ to be $t'_{\beta+\gamma-1}$.
So $a^*_{t_1'} \geq \alpha^*$.

Let $\tau_2 = (\tau_1-\phi_3(w_0))/2^{w_0}$.
By \cref{claim:zerosignature}, there exists a substring $(t_1'',t_2'',\dots,t''_{\tau_2})$ of $(t_1',t_2',\dots,t_{\tau_1-\phi_3(w_0)}')$ such that for each $\alpha \in [\alpha^*_{t_1'}]$, 
	\begin{itemize}
		\item[(ii)] either $a^{(t)}_\alpha$ is a zero sequence for every $t \in \{t_1'',t_2'',\dots,t''_{\tau_2}\}$, or $a^{(t)}_\alpha$ is a nonzero sequence for every $t \in \{t_1'',t_2'',\dots,t''_{\tau_2}\}$.
	\end{itemize}
Since $(t_1'',t_2'',\dots,t''_{\tau_2})$ is a substring of $(t_1',t_2',\dots,t'_{\tau_1})$, $(t_1'',t_2'',\dots,t''_{\tau_2})$ satisfies (i).

Let $\tau_3 = \tau_2/(\phi_3(w_0)+1)$.
By \cref{claim:changingdecreasing}, there exists a substring $(t_1''',t_2''',\dots,t_{\tau_3}''')$ of $(t_1'',t_2'',\dots,t''_{\tau_2})$ such that 
	\begin{itemize}
		\item[(iii)] for every $\alpha \in [\alpha^*_{t_1'}]$ and $t,t' \in \{t_1''',t_2''',\dots,t'''_{\tau_3}\}$, $V(M^{(t)}_\alpha)=V(M^{(t')}_\alpha)$.
	\end{itemize}
Since $(t_1''',t_2''',\dots,t'''_{\tau_3})$ is a substring of $(t_1'',t_2'',\dots,t''_{\tau_2})$, $(t_1''',t_2''',\dots,t'''_{\tau_3})$ satisfies (i)-(iii).

Let $\tau_4 = \frac{\tau_3}{f(\eta_5)w_0^2+1}$.
Since $(t_1''',t_2''',\dots,t'''_{\tau_3})$ satisfies (iii), for every $\alpha \in [\alpha^*_{t_1'}]$ and $1 \leq \beta < \beta' \leq \tau_3$, the sum of the entries of $a^{(t'''_{\beta'})}_\alpha$ is at most the sum of the entries of $a^{(t'''_{\beta})}_\alpha$.
By \cref{claim:pseudosignaturenumbernonzero}, the sum of the entries of $a^{(t'''_{\beta})}_\alpha$ is at most $f(\eta_5)w_0$.
So there exists a substring $(q_0,q_1,q_2,\dots,q_{\tau_4-1})$ of $(t'''_1,t'''_2,\dots,t'''_{\tau_3})$ such that 
	\begin{itemize}
		\item[(iv)] for every $\alpha \in [\alpha^*_{t_1'}]$ and $t,t' \in \{q_0,q_1,\dots,q_{\tau_4-1}\}$, $V(M^{(t)}_\alpha)=V(M^{(t')}_\alpha)$ and $a^{(t)}_\alpha=a^{(t')}_\alpha$.
	\end{itemize}
Since $(q_0,q_1,\dots,q_{\tau_4-1})$ is a substring of $(t_1''',t_2''',\dots,t'''_{\tau_3})$, $(q_0,q_1,\dots,q_{\tau_4-1})$ satisfies (i)-(iv).

Since $(q_0,q_1,\dots,q_{\tau_4-1})$ satisfies (i) and (iv), there exists $\beta^* \in \{0,1\}$ such that $\alpha^*_{q_{\beta^*}} \geq \alpha^*_{t_1'}+1$.
For each $\gamma \in [\tau_4-1]$, redefine $q_\gamma$ to be $q_{\beta^*+\gamma-1}$. 
Then $(q_1,q_2,\dots,q_{\tau_4-1})$ satisfies (i)-(iv) and $\alpha^*_{q_1} \geq \alpha^*_{t_1'}+1$.
For each $\beta \in [2,\tau_4-1]$, by taking $(t_1,t_2,\alpha^*,\alpha)=(q_1,q_{\beta},\alpha^*_{q_1},\alpha^*_{t_1'})$ in \cref{claim:truncationremaining}, since $(q_1,\dots,q_{\tau_4-1})$ satisfies (iv), there exists $\gamma^*_\beta \in [\alpha^*_{t_1'}+1,\alpha^*_{q_1}]$ such that $a^{(q_{\beta})}_{\gamma^*_\beta}$ is a nonzero sequence. 
For each $\beta \in [2,\tau_4-1]$, we choose $\gamma^*_\beta \neq \alpha^*_{q_1}$ if possible.

Let $\tau_5 = (\tau_4-1)/2$.
By \cref{claim:zerosignature}, if $\gamma^*_{\tau_5} \neq \alpha^*_{q_1}$, then $a^{(q_\beta)}_{\gamma^*_{\tau_5}}$ is a nonzero sequence for every $\beta \in [\tau_5]$.
Since we choose $\gamma^*_\beta \neq \alpha^*_{q_1}$ for each $\beta \in [2,\tau_4-1]$ if possible, we know if $\gamma^*_{\tau_5} = \alpha^*_{q_1}$, then $\gamma^*_\beta=\alpha^*_{q_1}$ for every $\beta \in [\tau_5,\tau_4-1]$ by \cref{claim:zerosignature}.
Hence there exists a substring $(q_1',q_2',\dots,q'_{\tau_5})$ of $(q_1,q_2,\dots,q_{\tau_4-1})$ such that 
\begin{itemize}
	\item[(v)] there exists $\gamma^* \in [\alpha^*_{t_1'}+1,\alpha^*_{q_1}]$ such that $a^{(t)}_{\gamma^*}$ is a nonzero sequence for every $t \in \{q_1',q_2',\dots,q'_{\tau_5}\}$.
\end{itemize}
Since $(q_1',q_2',\dots,q'_{\tau_5})$ is a substring of $(q_1,q_2,\dots,q_{\tau_4-1})$, $(q_1',q_2',\dots,q'_{\tau_5})$ satisfies (i)-(v).
Note that $\alpha^*_{q_1'} \geq \gamma^*$.

Let $\tau_6 = \frac{\tau_5}{2^{w_0} \cdot (\phi_3(w_0)+1) \cdot (f(\eta_5)w^2_0+1)}$.
By \cref{claim:changingdecreasing,claim:pseudosignaturenumbernonzero,claim:zerosignature}, there exists a substring $(q_1'',q_2'',\dots,q''_{\tau_6})$ of $(q_1',q_2',\dots,q'_{\tau_5})$ satisfying (i)-(v) and
\begin{itemize}
	\item[(vi)] for every $\alpha \in [\alpha^*_{q_1'}]$ and $t,t' \in \{q''_1,q''_2,\dots,q''_{\tau_6}\}$, $V(M^{(t)}_\alpha)=V(M^{(t')}_\alpha)$ and $a^{(t)}_\alpha=a^{(t')}_\alpha$.
\end{itemize}
Since $(q_1'',q_2'',\dots,q''_{\tau_6})$ satisifies (i) and (vi), there exists $\beta_1^* \in \{1,2\}$ such that $a^*_{q''_{\beta_1^*}} \geq \alpha^*_{q_1'} +1 \geq \gamma^*+1$. 
For every $\gamma \in [\tau_6-1]$, redefine $q''_\gamma = q''_{\gamma-1+\beta_1^*}$.
So there exists the smallest integer $\xi^* \in [\gamma^*+1,\lvert V(G) \rvert]$ such that $a^{(q_1'')}_{\xi^*}$ is a nonzero sequence.
By (vi) and \cref{claim:zerosignature}, for every $\alpha \in [\xi^*-1]$, $a^{(q_k'')}_\alpha$ is identical for every $k \in [\tau_6-1]$.

Let $Z_0=\{\gamma \in [\alpha^*]: a^{(1)}_\gamma$ is a nonzero sequence$\}$.
So $\lvert Z_0 \rvert = n_0$.
Let $Z'_0=\{\gamma \in [\xi^*]: a^{(q_1'')}_\gamma$ is a nonzero sequence$\}$.
Let $n_0'=\lvert Z_0' \rvert$. 
Recall that for every $\alpha \in [\alpha^*-1]$, $a_\alpha^{(1)}=a_\alpha^{(2)}=\dots=a_\alpha^{(\kappa)}$.
Since $\xi^* \geq \gamma^*+1 \geq \alpha^*+1$, $Z_0-\{\alpha^*\} \subseteq Z_0'$.
By (v), $\{\gamma^*,\xi^*\} \subseteq Z_0'-Z_0$.
So $n_0' \geq n_0+1$.
In particular, $w_0-n_0 \geq 1$.
By the minimality of $w_0-n_0$, $\tau_6-1 \leq h(w_0-n_0') \leq h(w_0-n_0-1)$.
But $\tau_6-1 \geq h(w_0-n_0-1)+1$ since $\kappa \geq h(w_0-n_0)+1$, a contradiction.
This proves the claim.
\end{proof}

\begin{claim} \label{claim:KMpath}
Let $j \in [\lvert \V \rvert-1]$.
Let $M$ be a $c$-monochromatic component  with $S_M \in \Se_j^\circ$.
Let $\tau \in {\mathbb N}$.
Let $t_1,t_2,\dots,t_\tau$ be elements in $K^*(M)$ such that $t_{i+1} \in V(T_{t_i})-\{t_i\}$ for every $i \in [\tau-1]$.
Then $\tau \leq \eta_6$.
\end{claim}

\begin{proof}
Suppose $\tau \geq \eta_6+1$.
For each $\alpha \in [\tau]$, let $(a^{(\alpha)}_1,a^{(\alpha)}_2,\dots,a^{(\alpha)}_{\lvert V(G) \rvert})$ be the $(i_{t_\alpha},j,M)$-truncation.
Since $\eta_6+1 \geq 2$, by \cref{claim:nonzerotruncation}, there exists $\gamma^*$ such that $a^{(1)}_{\gamma^*}$ is a nonzero sequence.
We may assume that $\gamma^*$ is as small as possible.
So $a^{(1)}_\gamma$ is a zero sequence for every $\gamma \in [\gamma^*-1]$.
By \cref{claim:zerosignature}, for every $\gamma \in [\gamma^*-1]$ and $\alpha \in [\tau]$, $a^{(\alpha)}_\gamma$ is a zero sequence.
By \cref{claim:truncationmaintain}, $\tau \leq h(w_0-1)$.
So $\eta_6 \leq h(w_0-1)-1$, a contradiction.
\end{proof}

For $j \in [\lvert \V \rvert-1]$ and a $c$-monochromatic component $M$  with $S_M \in \Se_j^\circ$, we define $T_M$ to be the rooted tree obtained from $T_{r_M}$ by contracting each maximal subtree of $T_{r_M}$ rooted at a node in $K^*(M)$ containing exactly one node in $K^*(M)$ into a node.
Note that $\lvert V(T_M) \rvert = \lvert K^*(M) \rvert$.

\begin{claim} \label{claim:TMheight}
Let $j \in [\lvert \V \rvert-1]$.
Let $M$ be a $c$-monochromatic component  with $S_M \in \Se_j^\circ$.
If $P$ is a directed path in $T_M$, then $\lvert V(P) \rvert \leq \eta_6$.
\end{claim}

\begin{proof}
By the definition of $T_M$, there exist nodes $t_1,t_2,\dots,t_{\lvert V(P) \rvert}$ in $K^*(M)$ such that for each $\alpha \in [\lvert V(P) \rvert-1]$, $t_{\alpha+1} \in V(T_{t_\alpha})-\{t_\alpha\}$.
By \cref{claim:KMpath}, $\lvert V(P) \rvert \leq \eta_6$.
\end{proof}

\begin{claim} \label{claim:TMdegree}
Let $j \in [\lvert \V \rvert-1]$.
Let $M$ be a $c$-monochromatic component  with $S_M \in \Se_j^\circ$.
Then the maximum degree of $T_M$ is at most $f(\eta_5)$.
\end{claim}

\begin{proof}
For every node $t \in K^*(M)-\{r_M\}$, let $q_t$ be the node in $K(M)$ such that $t \in \partial T_{j,q_t}$.
Let $x$ be a node of $T_M$.
It suffices to show that the degree of $x$ in $T_M$ is at most $f(\eta_5)$.

Let $R_x$ be the subtree of $T_{r_M}$ in $T$ such that $x$ is obtained by contracting $R_x$.
We say a node $q$ in $R_x$ is \defn{important} if $q=q_t$ for some $t \in K^*(M)-\{r_M\}$.
Note that the degree of $x$ equals the number of nodes $t$ in $K^*(M)-\{r_M\}$ such that there exists an important node $q$ in $R_x$ with $q=q_t$.
Let $r_x$ be the root of $R_x$.
Since $r_x$ is the only element in $V(R_x) \cap K^*(M)$, for every important node $q$, $V(M) \cap Y^{(i_q,\lvert V(T) \rvert+1,s+2)} \cap X_{V(T_q)} = V(M) \cap Y^{(i_{r_x},\lvert V(T) \rvert+1,s+2)} \cap X_{V(T_q)}$.

Suppose there exists an important node $q^*$ and node $t^* \in K^*(M)-\{r_M\}$ with $q^*=q_{t^*}$ such that $A_{L^{(i_{q^*},\lvert V(T) \rvert+1,s+2)}}(Y^{(i_{q^*},\lvert V(T) \rvert+1,s+2)} \cap V(M)) \cap X_{V(T_{t^*})}-X_{t^*} = \emptyset$.
Since $t^* \in K^*(M)-\{r_M\}$, there exists $v \in V(M) \cap (Y^{(i_{t^*},\lvert V(T) \rvert+1,s+2)}-Y^{(i_{q^*},\lvert V(T) \rvert+1,s+2)}) \cap X_{V(T_{t^*})}$.
By \cref{claim:isolation}, since $V(M) \cap X_{q^*} \neq \emptyset$ and $X_{q^*} \cap I_j \subseteq W_3^{(i_{q^*},-1)}$ and $t^* \in \partial T_{j,q^*}$, we have $v \not \in X_{t^*}$.
Since $M$ is connected, there exists a path $P$ in $M$ from $v$ to $V(M) \cap Y^{(i_{q^*},\lvert V(T) \rvert+1,s+2)}$ internally disjoint from $Y^{(i_{q^*},\lvert V(T) \rvert+1,s+2)}$.
Let $u$ be the neighbor of the end of $P$ in $V(M) \cap Y^{(i_{q^*},\lvert V(T) \rvert+1,s+2)}$.
So $u \in A_{L^{(i_{q^*},\lvert V(T) \rvert+1,s+2)}}(V(M) \cap Y^{(i_{q^*},\lvert V(T) \rvert+1,s+2)})$.
Since $A_{L^{(i_{q^*},\lvert V(T) \rvert+1,s+2)}}(Y^{(i_{q^*},\lvert V(T) \rvert+1,s+2)} \cap V(M)) \cap X_{V(T_{t^*})}-X_{t^*} = \emptyset$, $u \not \in X_{V(T_{t^*})}-X_{t^*}$.
Since $u \in V(M)$ and $t^* \in \partial T_{j,q^*}$, $u \not \in X_{t^*}$ by \cref{claim:isolation}.
Since $v \in X_{V(T_{t^*})}-X_{t^*}$, some internal node of $P$ other than $u$ belongs to $X_{t^*}$.
But $V(P) \cap X_{t^*} \subseteq V(M) \cap X_{t^*} \subseteq V(M) \cap X_{V(T_{j,q^*})} = V(M) \cap Y^{(i_{q^*},\lvert V(T) \rvert+1,s+2)} \cap X_{V(T_{j,q^*})}$ by \cref{claim:isolation}.
Hence $P$ is not internally disjoint from $Y^{(i_{q^*},\lvert V(T) \rvert+1,s+2)}$, a contradiction.

Therefore, for every important node $q$ and node $t \in K^*(M)-\{r_M\}$ with $q=q_t$, we have $A_{L^{(i_q,\lvert V(T) \rvert+1,s+2)}}(Y^{(i_q,\lvert V(T) \rvert+1,s+2)} \cap V(M)) \cap X_{V(T_t)}-X_t \neq \emptyset$.
Hence for every $t \in K^*(M)-\{r_M\}$ in which there exists an important node $q$ in $R_x$ with $q=q_t$, 
\begin{align*}
& A_{L^{(i_{r_x},\lvert V(T) \rvert+1,s+2)}}(Y^{(i_{r_x},\lvert V(T) \rvert+1,s+2)} \cap V(M)) \cap X_{V(T_t)}-X_t \\
=\;  & A_{L^{(i_{r_x},\lvert V(T) \rvert+1,s+2)}}(Y^{(i_{r_x},\lvert V(T) \rvert+1,s+2)} \cap V(M) \cap X_{V(T_q)}) \cap X_{V(T_t)}-X_t \\
=\;  & A_{L^{(i_{r_x},\lvert V(T) \rvert+1,s+2)}}(Y^{(i_q,\lvert V(T) \rvert+1,s+2)} \cap V(M) \cap X_{V(T_q)}) \cap X_{V(T_t)}-X_t \\
\supseteq\; & A_{L^{(i_q,\lvert V(T) \rvert+1,s+2)}}(Y^{(i_q,\lvert V(T) \rvert+1,s+2)} \cap V(M) \cap X_{V(T_t)}) \cap X_{V(T_t)}-X_t \\
=\; & A_{L^{(i_q,\lvert V(T) \rvert+1,s+2)}}(Y^{(i_q,\lvert V(T) \rvert+1,s+2)} \cap V(M)) \cap X_{V(T_t)}-X_t \\
\neq\; & \emptyset.
\end{align*}
Note that 
\begin{align*}
	& \lvert A_{L^{(i_{r_x},\lvert V(T) \rvert+1,s+2)}}(Y^{(i_{r_x},\lvert V(T) \rvert+1,s+2)} \cap V(M)) \cap X_{V(T_{r_x})} - X_{r_x} \rvert \\
	\leq\; & \lvert N^{\geq s}(Y^{(i_{r_x},\lvert V(T) \rvert+1,s+2)} \cap V(M) \cap X_{V(T_{r_x})}) \rvert \\
	\leq\; & f(\lvert Y^{(i_{r_x},\lvert V(T) \rvert+1,s+2)} \cap V(M) \cap X_{V(T_{r_x})} \rvert) \\
\leq\; & f(\eta_5)
\end{align*}
by \cref{claim:sizebeltfull}.
So there are at most $f(\eta_5)$ nodes $t \in K^*(M)-\{r_M\}$ in which there exists an important node $q$ in $R_x$ with $q=q_t$.
Therefore, the degree of $x$ is at most $f(\eta_5)$.
\end{proof}

\begin{claim} \label{claim:componentsizemiddlefinal}
If $M$ is a $c$-monochromatic component  with $V(M) \cap \bigcup_{j=1}^{\lvert \V \rvert-1}I_j^\circ \neq \emptyset$, then $\lvert V(M) \rvert \leq \eta_7$.
\end{claim}

\begin{proof}
Since $V(M) \cap \bigcup_{j=1}^{\lvert \V \rvert-1}I_j^\circ \neq \emptyset$, there exists $j \in [\lvert \V \rvert-1]$ such that $S_M \in \Se_j^\circ$.
By \cref{claim:TMheight,claim:TMdegree}, $T_M$ is a rooted tree with maximum degree at most $f(\eta_5)$ and the longest directed path in $T_M$ from the root contains at most $\eta_6$ nodes.
So $\lvert K^*(M) \rvert = \lvert V(T_M) \rvert \leq \sum_{k=0}^{\eta_6-1}(f(\eta_5))^k \leq (f(\eta_5))^{\eta_6}$.
By \cref{claim:bddbyK*}, $\lvert V(M) \rvert \leq \lvert K^*(M) \rvert \cdot \eta_5 \leq \eta_5 \cdot (f(\eta_5))^{\eta_6}=\eta_7$.
\end{proof}

We are now ready to complete the proof of the main lemma.

\begin{proof}[Proof of \cref{no apex 0}.]
Let $M$ be a $c$-monochromatic component.
If $V(M) \cap I_j^\circ=\emptyset$ for every $j \in [\lvert \V \rvert-1]$, then $\lvert V(M) \rvert \leq \eta_4 \leq \eta^*$ by \cref{claim:centralcomponentsize}.
If $V(M) \cap I_j^\circ \neq \emptyset$ for some $j \in [\lvert \V \rvert-1]$, then $\lvert V(M) \rvert \leq \eta_7 \leq \eta*$ by \cref{claim:componentsizemiddlefinal}.
Therefore, $\lvert V(M) \rvert \leq \eta^*$.
\end{proof}

\section{Allowing Apex Vertices}
\label{AllowingApexVertices}

The following lemma implies \cref{ltwmain} by taking $k=1$ and $Y_1=\emptyset$.

\begin{lemma} 
\label{bounded layered tw}
For all $s,t,w,k,\xi\in\mathbb{N}$, there exists $\eta^*\in\mathbb{N}$ such that for every graph $G$ containing no $K_{s,t}$-subgraph, if $Z$ is a subset of $V(G)$ with $\lvert Z \rvert \leq \xi$, $\V=(V_1,V_2,\dots,V_{\lvert \V \rvert})$ is a $Z$-layering of $G$, $(T,\X)$ is a tree-decomposition of $G-Z$ with $\V$-width at most $w$, $Y_1$ is a subset of $V(G)$ with $\lvert Y_1 \rvert \leq k$, $L$ is an $(s,\V)$-compatible list-assignment of $G$ such that $(Y_1,L)$ is an $(s,\V)$-standard pair, then there exists an $L$-coloring $c$ of $G$ such that every $c$-monochromatic component contains at most $\eta^*$ vertices.
\end{lemma}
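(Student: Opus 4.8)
The plan is to derive Lemma~\ref{bounded layered tw} from Lemma~\ref{no apex 0} (equivalently, from Lemma~\ref{no apex}) by a standard ``absorb the apex set into a single bag'' trick, exactly mirroring the proof of Lemma~\ref{no apex} from Lemma~\ref{no apex 0} but with two differences: here we must also fold the at most $\xi$ apex vertices of $Z$ into the tree-decomposition, and we must handle the fact that $G-Z$ (not $G$) is the graph with bounded $\V$-width. First I would set $\eta^* := \eta^*_{\ref{no apex 0}}(s,t,w+k+\xi)$, the constant supplied by Lemma~\ref{no apex 0} applied with layered-width parameter $w+k+\xi$.

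Given the data $G,Z,\V,(T,\X),Y_1,L$ as in the statement, the key step is to build a single tree-decomposition of the whole graph $G$ (not $G-Z$) whose $\V'$-width is at most $w+k+\xi$ for an appropriate layering $\V'$ of $G$, and in which some bag contains $Y_1$. Concretely: extend $\V$ to a layering $\V'$ of $G$ by placing every vertex of $Z$ into the first layer $V_1$ (this is legitimate since $Z$ is only required to satisfy the $Z$-layering condition on $G-Z$, and adding the finitely many vertices of $Z$ to $V_1$ keeps every edge of $G$ with both ends in two consecutive layers — edges incident to $Z$ now have an endpoint in $V_1$, and we can if necessary shift indices so that $V_1$ absorbs $Z \cup$ (old $V_1$); one checks the layering axiom directly). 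Then for each node $x \in V(T)$ set $X'_x := X_x \cup Z \cup Y_1$, and let $\X' = (X'_x : x \in V(T))$. Since $(T,\X)$ is a tree-decomposition of $G-Z$, adding the common set $Z \cup Y_1$ to every bag yields a tree-decomposition of $G$: every edge of $G$ not incident to $Z$ lies in some $X_x \subseteq X'_x$, every edge incident to $Z$ has an endpoint in $Z \subseteq X'_x$ for all $x$ and the other endpoint in some bag, and the subtree condition is preserved because $Z \cup Y_1$ is added to every node. Moreover every bag now contains $Y_1$, so we may pick any $t^* \in V(T)$ as the required bag. For the $\V'$-width: for any layer $V'_i$ and node $x$, $|X'_x \cap V'_i| \le |X_x \cap V'_i| + |Z| + |Y_1| \le w + \xi + k$, because $|X_x \cap V_i| \le w$ by hypothesis and $Z \cup Y_1$ contributes at most $\xi + k$ vertices (for $i \ne 1$ the contribution of $Z$ is $0$, but the crude bound suffices).

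With this in hand, the pair $(Y_1, L)$ is still a $\V'$-standard pair: the definition of $\V'$-standard pair and of $(s,\V')$-compatibility only constrains the lists via the layering, and moving $Z$ into $V_1$ while keeping $L$ unchanged preserves $(s,\V')$-compatibility (the condition ``$i \notin L(v)$ for $v$ in layers $\equiv i \pmod{s+2}$'' imposed no constraint on $Z$ before, and now imposes only the $V_1$-constraint, which is consistent after the index shift — one arranges the shift so that $V_1$ has the level of whatever residue class it had, and notes $Z$-vertices had unconstrained lists so this is harmless; alternatively one simply re-derives the standard-pair axioms (L1)--(L3), which refer only to $Y_1$ and $N^{<s}(Y_1)$ and are insensitive to which layer a vertex sits in). Then Lemma~\ref{no apex 0}, applied to $G$ with the layering $\V'$, the list-assignment $L$, the set $Y_1$, the pair $(Y_1,L)$, and the tree-decomposition $(T,\X')$ of $\V'$-width at most $w+k+\xi$ with bag $X'_{t^*} \supseteq Y_1$, produces an $L$-coloring $c$ of $G$ with clustering $\eta^*$. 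This is exactly the conclusion required, completing the proof.

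The main obstacle — really the only subtle point — is making the layering bookkeeping legitimate: one must be careful that placing $Z$ (and $Y_1$, which could lie partly inside $V(G)-Z$ and partly not) into a single layer genuinely yields a valid layering of $G$ and a valid $(s,\V')$-compatible standard pair, rather than silently breaking the compatibility condition for the residue class of layer $V_1$. The clean fix is to not move $Y_1$ at all (its vertices already lie in valid layers of $\V$, or in $Z$), and to handle $Z$ by the observation that vertices of $Z$ may be given arbitrary lists under $(s,\V)$-compatibility; so one can, if needed, reserve the first $s+2$ layers as a ``buffer'' containing $Z$ distributed according to the colors left in the lists, or more simply observe that since the definition of $(s,\V)$-compatible only forbids certain colors on $V(G)-Z$, and $\V'$ agrees with $\V$ on $V(G)-Z$, compatibility of $L$ is automatically inherited. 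Everything else is the same padding-a-tree-decomposition argument used to deduce Lemma~\ref{no apex} from Lemma~\ref{no apex 0}, just with the constant $\eta$ there replaced by $k+\xi$ here.
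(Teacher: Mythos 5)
Your reduction breaks at its very first step: placing all of $Z$ into the layer $V_1$ does not produce a layering of $G$. A vertex $z\in Z$ may be adjacent to vertices in arbitrarily many, arbitrarily distant layers of $\V$ --- that is exactly why the definition of a $Z$-layering exempts edges incident with $Z$ from the consecutive-layers condition. After your relocation, an edge $zv$ with $v\in V_j$ for $j\geq 3$ has its ends in $V_1$ and $V_j$, which are not consecutive, so $\V'$ is not a layering of $G$, and \cref{no apex 0} (whose entire belt/interface/segment machinery presupposes a genuine layering of the whole graph, so that each monochromatic component is confined to at most $s+1$ consecutive layers) cannot be invoked. The parenthetical ``one checks the layering axiom directly'' is precisely where the argument fails, and no index shift repairs it. Your closing paragraph addresses only the $(s,\V')$-compatibility of the lists, which is a separate and minor issue; the validity of the layering itself is never addressed.

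The paper's proof avoids this by \emph{splitting} each $z\in Z$ into one copy $z_i$ per layer $V_i$, with $z_i$ joined only to $N_G(z)\cap V_i$; the resulting graph $G''$ has a genuine layering of $\V''$-width at most $w+\xi$ and is checked to remain $K_{s,t}$-free. But splitting creates a further problem that any correct argument must face: a single monochromatic component of $G$ passing through $z$ corresponds to several components of $G''$ glued together at the copies of $z$, so bounding the components of $G''$ does not by itself bound those of $G$. The paper handles this with a preprocessing step: it precolors $Z\cup Y_1$ and then $s+2$ iterated sets $N^{\geq s}(\cdot)$, each of bounded size by \cref{BoundedGrowth}, and proves (\cref{bounded layered tw claim1}) that every monochromatic component meeting $Z\cup Y_1$ is trapped inside the bounded set $Y^{(s+1)}$; this also forces all copies $z_i$ of a given $z$ to carry the same singleton list, so the coloring of $G''$ descends to a well-defined coloring of $G$. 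Both ingredients --- the per-layer splitting and the neighborhood-absorption preprocessing --- are absent from your proposal, and neither is routine bookkeeping.
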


\begin{proof}
Let $s,t,w,k,\xi\in\mathbb{N}$.
Let $f$ be the function $f_{s,t}$ in \cref{BoundedGrowth}. 
Let $h_0$ be the identity function. 
For $i\in\mathbb{N}$, let $h_i$ be the function defined by $h_i(x):=x+f(h_{i-1}(x))$ for every $x \in \mathbb{N}_0$.
Let $\eta_1 := h_{s+1}(k+\xi)$.
Let $\eta_2$ be the number $\eta^*$ in \cref{no apex} taking $s=s$, $t=t$, $w=w+\xi$ and $\eta=h_{s+2}(k+\xi)+(s+1)\xi$.
Define $\eta^* := \max\{\eta_1,\eta_2\}$.

Let $G$ be a graph with no $K_{s,t}$-subgraph, $Z$ a subset of $V(G)$ with $|Z| \leq \xi$, $\V=(V_1,V_2,\dots,V_{\lvert \V \rvert})$ a $Z$-layering of $G$, $(T,\X)$ a tree-decomposition of $G-Z$ with $\V$-width at most $w$, $Y_1$ a subset of $V(G)$ with $\lvert Y_1 \rvert \leq k$, $L$ an $(s,\V)$-compatible list-assignment of $G$ such that $(Y_1,L)$ is an $(s,\V)$-standard pair.
Say $\X=(X_p: p \in V(T))$.

We may assume that $\lvert \V \rvert \geq 2$, since we may add empty layers into $\V$.
Let $(Y^{(0)},L^{(0)})$ be a $(Z \cup Y_1,0)$-progress of $(Y_1,L)$.
For each $i \in [s+2]$, define $(Y^{(i)},L^{(i)})$ to be an $(N_G^{\geq s}(Y^{(i-1)}),i)$-progress of $(Y^{(i-1)},L^{(i-1)})$.
Note that every $L^{(s+2)}$-coloring of $G$ is an $L$-coloring of $G$.
It is clear that $\lvert Y_1^{(i)} \rvert \leq h_i(\lvert Z \cup Y_1 \rvert) \leq h_i(\xi+k)$ for every $i \in [0,s+2]$ by \cref{BoundedGrowth} and induction on $i$.

\begin{claim}
\label{bounded layered tw claim1}
For every $L^{(s+2)}$-coloring of $G$, every monochromatic component intersecting $Z \cup Y_1$ is contained in $G[Y_1^{(s+1)}]$. 
\end{claim}

\begin{proof}
We shall prove that for each $i \in [s+2]$ and for every $L^{(i)}$-coloring of $G$, every monochromatic component colored $i$ and intersecting $Z \cup Y_1$ is contained in $G[Y^{(i-1)}]$.

Suppose to the contrary that there exist $i \in [s+2]$, an  $L^{(i)}$-coloring of $G$, and a monochromatic component $M$ colored $i$ and intersecting $Z \cup Y_1$ such that $V(M) \not \subseteq Y^{(i-1)}$. 
Since $M$ intersects $Z \cup Y_1 \subseteq Y^{(i-1)}$, we have $V(M) \cap Y^{(i-1)} \neq \emptyset$.
So there exist $y \in V(M) \cap Y^{(i-1)}$ and $v \in N_G(y) \cap V(M) -Y^{(i-1)}$.
In particular, $L^{(i-1)}(y)=\{i\} \subseteq L^{(i)}(v) \subseteq L^{(i-1)}(v)$.
Since $(Y^{(i-1)},L^{(i-1)})$ is an $(s,\V)$-standard pair, (L2) implies that $v \not \in N_G^{<s}(Y^{(i-1)})$.
Since $v \in N_G(y) \subseteq N_G(Y^{(i-1)})$, we have $v \in N_G^{\geq s}(Y^{(i-1)})$.
But since $(Y^{(i)},L^{(i)})$ is an $(N_G^{\geq s}(Y^{(i-1)}),i)$-progress, $i \not \in L^{(i)}(v)$, a contradiction.

This proves that for each $i \in [s+2]$ and for every $L^{(i)}$-coloring of $G$, every monochromatic component colored $i$ and intersecting $Z \cup Y_1$ is contained in $G[Y^{(i-1)}]$. Since every $L^{(s+2)}$-coloring of $G$ is an $L^{(i)}$-coloring of $G$ for every $i \in [s+2]$, every monochromatic component with respect to $L^{(s+2)}$ intersecting $Z \cup Y_1$ is contained in $G[\bigcup_{i=1}^{s+2}Y^{(i-1)}] \subseteq G[Y^{(s+1)}]$.
\end{proof}

Let $G'$ be the graph obtained from $G-Z$ by adding a copy $z_i$ of $z$ into $V_i$ for every $z \in Z$ and $i \in [\lvert \V \rvert]$ and adding an edge $z_iv$ for every edge $zv \in E(G)$ with $z \in Z$ and $v \in V_i$. 
For $i \in [\lvert \V \rvert]$, define $V_i' :=V_i \cup \{z_i: z \in Z\}$. 
Let $\V' :=(V_1',V_2',\dots,V_{\lvert \V \rvert}')$.
Then $\V'$ is a layering of $G'$.
Let $(T,\X')$ be the tree-decomposition of $G'$, where $\X':=(X_p': p \in V(T))$ and $X'_q := X_q \cup \{z_i: z \in Z, i \in [\lvert \V \rvert]\}$ for every $q \in V(T)$. 
Then $(T,\X')$ is a tree-decomposition of $G'$ with $\V'$-width at most $w+\xi$.

Let $Y_1':=Y^{(s+2)} \cup \{z_i: z \in Z, i \in [\lvert \V \rvert]\}$, let $L'(v):=L^{(s+2)}(v)$ for every $v \in V(G)-Z$, and let $L'(z_i):=L^{(s+2)}(z)$ for every $z \in Z$ and $i \in [\lvert \V \rvert]$.
Note that $(Y_1',L')$ satisfies (L1)--(L3).
But $(Y_1',L')$ is possibly not $(s,\V')$-compatible since some $L'(z_i)$ might contain a color that is not allowed for $V_i$.
For each $i \in [\lvert \V' \rvert]$ and $z \in Z$ with $L'(z_i)=\{i'\}$ for some $i' \in [s+2]$ with $i \equiv i' \pmod{s+2}$,  delete $z_i$ from $G'$ and add edges $z_{i-1}v$ for each edge $z_iv \in E(G')$ if $i>1$ (or delete $z_i$ from $G'$ and add edges $z_{i+1}v$ for each $z_iv \in E(G')$ if $i=1$). Note that if $z_i$ is deleted from $G'$, then $z_{i-1}$ and $z_{i+1}$ are not deleted from $G'$ (since $L'(z_{i-1})=L'(z_{i})=L'(z_{i+1})$). 
Let $G''$ be the resulting graph. 
For each $i \in [\lvert \V \rvert]$, let  $V_i'':=V_i' \cap V(G'')$.
Let $\V'':=(V_1'',V_2'',\dots,V_{\lvert \V \rvert}'')$.
So $\V''$ is a layering of $G''$.
Let $Y_1'':=Y_1' \cap V(G'')$ and $L'':=L'|_{V(G'')}$.
Then $L''$ is an $(s,\V'')$-compatible list-assignment of $G''$, and $(Y_1'',L'')$ is an  $(s,\V'')$-standard pair of $G''$. 
Let $X''_q:=X'_q \cap V(G'')$ for every $q \in V(T)$, and let $\X'':=(X''_q: q \in V(T))$.
Then $(T,\X'')$ is a tree-decomposition of $G''$ with $\V''$-width at most $w+\xi$.

\begin{claim}
\label{bounded layered tw claim2}
$G''$ does not contain $K_{s,t}$ as a subgraph.
\end{claim}

\begin{proof}
Suppose to the contrary that some subgraph $H''$ of $G''$ is isomorphic to $K_{s,t}$.
Note that for every $z \in Z$, $N_{G'}(z_i) \cap N_{G'}(z_j)=\emptyset$ for all distinct $i,j \in [\lvert \V'' \rvert]$.
So for every $z \in Z$, no vertex of $G''$ is adjacent to two vertices in $\{z_i \in V(G''): i \in [\lvert \V'' \rvert]\}$.
Hence for every $z \in Z$, each part of the bipartition of $H''$ contains at most one vertex in $\{z_i: i \in [\lvert \V'' \rvert]\}$.
Furthermore, if $z_iz'_j \in E(H'')$ for some $z,z' \in Z$ and $i,j \in [\lvert \V'' \rvert]$, then $z \neq z'$.
Therefore, for every $z \in Z$, $H''$ contains at most one vertex in $\{z_i: i \in [\lvert \V'' \rvert]\}$. 
Note that for every $z \in Z$ and $i \in [\lvert \V'' \rvert]$ with $z_i \in V(G'')$, we have $N_{G''}(z_i) \subseteq N_G(z)$.

Define $H$ to be the subgraph of $G$ obtained from $H''$ by replacing each vertex $z_i \in V(H'')$ (for some $z \in Z$ and $i \in [\lvert \V'' \rvert]$) by $z$.
Then $H$ is isomorphic to $K_{s,t}$, which is a contradiction.
So $G''$ does not contain $K_{s,t}$ as a subgraph.
\end{proof}

Note that for every $s$-segment $S$ of $\V''$, 
$$\lvert Y''_1 \cap S \rvert \leq \lvert Y^{(s+2)}_1 \cap S \rvert + (s+1)\lvert Z \rvert \leq h_{s+2}(k+\xi)+(s+1)\xi.$$
By \cref{no apex}, there exists an $L''$-coloring $c''$ of $G''$ such that every $c''$-monochromatic component contains at most $\eta_2$ vertices.
Let $c$ be the function defined by $c(v):=c''(v)$ for every $v \in V(G)-Z$ and $c(z):=c''(z_i)$ for every $z \in Z$ and some $i \in [\lvert \V'' \rvert]$ with $z_i \in V(G'')$.
Note that for each $z \in Z$, $L''(z_i)$ is a constant 1-element subset of $L^{(s+2)} (z)$ for all $i$, so $c$ is well-defined.
Therefore $c$ is an $L^{(s+2)}$-coloring of $G$ and hence an $L$-coloring of $G$.

Let $M$ be a $c$-monochromatic component.
Since $c$ is an $L^{(s+2)}$-coloring of $G$, if $V(M) \cap (Z \cup Y_1) \neq \emptyset$, then $M$ is contained in $G[Y_1^{(s+1)}]$ by \cref{bounded layered tw claim1}, which contains at most $h_{s+1}(k+\xi) =\eta_1 \leq \eta^*$ vertices.
If $V(M) \cap (Z \cup Y_1) = \emptyset$, then $M$ is a $c''$-monochromatic component contained in $G''$, so $M$ contains at most $\eta_2 \leq \eta^*$ vertices.
This proves that $c$ has clustering at most $\eta^*$. 
\end{proof}

\section{Open Problems}
\label{sec:openproblems}

The 4-Color Theorem \citep{RSST97} is best possible, even in the setting of clustered coloring. That is, for all $c$ there are planar graphs for which every 3-coloring has a monochromatic component of size greater than $c$; see \citep{WoodSurvey}. These examples have unbounded maximum degree. This is necessary since  \citet{EJ14} proved that every planar graph with bounded maximum degree is 3-colorable with bounded clustering. The  examples mentioned in \cref{Introduction}, in fact, contain large $K_{2,t}$ subgraphs. The following question naturally arises: does every planar graph with no $K_{2,t}$ subgraph have a 3-coloring with clustering $f(t)$, for some function $f$?  More generally, is every graph with layered treewidth $k$ and with no $K_{2,t}$ subgraph 3-colorable with clustering $f(k,t)$, for some function $f$? For $s\geq2$, is every graph with layered treewidth $k$ and with no $K_{s,t}$ subgraph $(s+1)$-colorable with clustering $f(k,s,t)$, for some function $f$? In our companion paper \citep{LW2} we prove an affirmative answer to the weakening of this question with ``layered treewidth'' replaced by ``treewidth'' for $s\geq 1$.

\bigskip

\noindent{\bf Note:} When a version of this paper was under review, \citet{d} answered the first question mentioned above in positive by proving that for any surface $\Sigma$ and positive integer $t$, every $P_t''$-subgraph free graph that can be drawn in $\Sigma$ is 3-colorable with bounded clustering, where $P_t''$ is the graph obtained from a $t$-vertex path by adding two vertices adjacent to all vertices in this path.
In fact, his result applies to list-coloring.
We refer interested readers to \cite{d}.

\bigskip

\noindent{\bf Acknowledgement:} The authors thank the anonymous reviewer for their careful reading and suggestions.

\printindex
\end{document}